\colorlet{pink}{red!40}
\colorlet{lightblue}{blue!30}
\colorlet{lightgreen}{green!30}
\renewcommand{\cite}[1]{\citep{#1}}
\def\Afwt{{Q}}
\def\Rfwt{{R}}
\def\fwt{{S}}
\def\fw{{\textnormal{\texttt{FW}}}\xspace}
\def\lmo{{\textnormal{\texttt{LMO}}}\xspace}
\newcommand{\subopt}{{\mathsf{subopt}}}
\newcommand{\gap}{{\mathsf{gap}}}
\newcommand{\primaldual}{{\mathsf{primaldual}}}
\def\aa{{\mathbf{a}}}
\def\bb{{\mathbf{b}}}
\def\cc{{\mathbf{c}}}
\def\uu{{\mathbf{u}}}
\def\vv{{\mathbf{v}}}
\def\xx{{\mathbf{x}}}
\def\yy{{\mathbf{y}}}
\def\zz{{\mathbf{z}}}
\newcommand{\N}{\mathbb{N}}
\newcommand{\R}{\mathbb{R}}
\newcommand\cC{{\ensuremath{\mathcal{C}}}\xspace}
\newcommand\cI{{\ensuremath{\mathcal{I}}}\xspace}
\newcommand\cO{{\ensuremath{\mathcal{O}}}\xspace}
\newcommand\cU{{\ensuremath{\mathcal{U}}}\xspace}
\newcommand\cV{{\ensuremath{\mathcal{V}}}\xspace}
\DeclareMathOperator{\relativeinterior}{rel.int}
\DeclareMathOperator{\relativeboundary}{rel.bnd}
\DeclareMathOperator{\nuc}{nuc}
\DeclareMathOperator{\trace}{tr}
\DeclareMathOperator{\vertices}{vert}
\DeclareMathOperator{\faces}{faces}
\DeclareMathOperator{\argmax}{argmax}
\DeclareMathOperator{\argmin}{argmin}
\def\fw{{\textnormal{\texttt{FW}}}\xspace}
\def\lmo{{\textnormal{\texttt{LMO}}}\xspace}
\newcommand{\rb}[1]{( #1 )}
\newcommand{\srb}[1]{\left( #1 \right)}
\newcommand{\abs}[1]{\lvert #1 \rvert}
\theoremstyle{plain} \numberwithin{equation}{section}
\newtheorem{theorem}{Theorem}[section]
\numberwithin{theorem}{section}
\newtheorem{lemma}[theorem]{Lemma}
\newtheorem{corollary}[theorem]{Corollary}
\newtheorem{proposition}[theorem]{Proposition}
\newtheorem{fact}[theorem]{Fact}
\theoremstyle{definition}
\newtheorem{definition}[theorem]{Definition}
\newtheorem{example}[theorem]{Example}
\theoremstyle{plain}
\newcommand{\hrulealg}[0]{\vspace{1mm} \hrule \vspace{1mm}}
\newcommand{\ip}[2]{\left\langle #1 , #2 \right\rangle}    % inner product
\newcommand{\change}[1]{{\color{blue}#1}}
\begin{document}

\title{Accelerated Affine-Invariant Convergence Rates of the Frank-Wolfe Algorithm with Open-Loop Step-Sizes}

 \author{\name Elias Wirth \email          \href{mailto:wirth@math.tu-berlin.de}{wirth.elias.samuel@gmail.com}\\
       \addr Institute of Mathematics\\
        Berlin Institute of Technology \\
        Strasse des 17. Juni 135,  Berlin,  Germany
        \AND
       \name Javier Pe\~na \email \href{mailto:jfp@andrew.cmu.edu}{jfp@andrew.cmu.edu} \\
       \addr Tepper School of Business\\
		Carnegie Mellon University\\
       Pittsburgh, PA, United States
       \AND
      \name Sebastian Pokutta \email \href{mailto:pokutta@zib.de}{pokutta@zib.de} \\
      \addr Institute of Mathematics \& AI in Society, Science, and Technology\\
      Berlin Institute of Technology \& Zuse Institute Berlin\\
      Strasse des 17. Juni 135, Berlin, Germany}

\maketitle

\begin{abstract}
Recent papers have shown that the Frank-Wolfe algorithm (\fw) with open-loop step-sizes exhibits rates of convergence faster than the iconic $\cO(t^{-1})$ rate.  In particular, when the minimizer of a strongly convex  function over a polytope lies on the boundary of the polytope, the \fw{} algorithm with open-loop step-sizes $\eta_t = \frac{\ell}{t+\ell}$ for $\ell \in \N_{\geq 2}$ has accelerated convergence $\cO(t^{-2})$ in contrast to the rate 
$\Omega(t^{-1-\epsilon})$ attainable with more complex line-search or short-step step-sizes.
Given the relevance of this scenario in data science problems, research has grown to explore the settings enabling acceleration in open-loop \fw{}. However, despite \fw{}'s well-known affine invariance, existing acceleration results for open-loop \fw{}  are affine-dependent.  This paper remedies this gap in the literature, by merging two recent research trajectories: affine invariance~\cite{pena2023affine} and open-loop step-sizes~\cite{wirth2023acceleration}. In particular, we extend all known non-affine-invariant convergence rates for \fw{} with open-loop step-sizes to affine-invariant results.
\end{abstract}

\section{Introduction}

We consider constrained convex optimization problems of the form
\begin{equation}\label{eq:opt}\tag{OPT}
\min_{\xx\in\cC}f(\xx),
\end{equation}
where $\cC\subseteq\R^n$ is a compact convex set and $f\colon \cC \to \R$ is a smooth convex function.
When projection onto $\cC$ is unavailable or computationally expensive, the problem \eqref{eq:opt} can be tackled via the \emph{Frank-Wolfe algorithm} (\fw{}) \cite{frank1956algorithm}, alternatively known as the \emph{conditional gradients algorithm} \cite{levitin1966constrained}. 
The \fw{} algorithm requires no projections, it relies solely on first-order access to the function $f$ and a \emph{linear minimization oracle} (\lmo{}) for $\cC$. The \lmo{} returns a point in $\argmin_{\xx\in\cC}\langle\cc,\xx\rangle$ given $\cc\in\R^n$. 
During each iteration, the \lmo{} is invoked by \fw{} to obtain a new vertex $\vv_t\in\cV$, where $\cV:=\vertices(\cC)$ denotes the set of vertices of $\cC$. Subsequently, \fw{} updates the current iterate $\xx_t$, leading to $\xx_{t+1}= \xx_t + \eta_t (\vv_t - \xx_t)$, where $\eta_t\in[0,1]$ is a specific step-size rule. Possible step-size rules include line-search
$\eta_t \in \argmin_{\eta \in [0, 1]} f(\xx_t + \eta (\vv_t - \xx_t))$,
short-step $\eta_t = \min\left\{1,\frac{\langle \nabla f(\xx_t), \xx_t -\vv_t \rangle}{L\|\xx_t -\vv_t\|_2^2}\right\}$, adaptive \cite{pedregosa2018step}, and open-loop $\eta_t = \frac{\ell}{t+\ell}$ for $\ell\in\N_{\geq 1}$ \cite{dunn1978conditional, wirth2023acceleration}.

\fw{} possesses a number of attractive properties: it is easy to implement, requires only first-order information, and avoids projections. Furthermore, the iterates of \fw{} and its variants \cite{braun2019blended,  garber2016linear,guelat1986some,holloway1974extension,lacoste2015global,  tsuji2022pairwise} store the iterate as a convex combination of vertices of the feasible region, and the sparsity of this convex combination increases by at most one after each iteration. The attractive properties of \fw{} and its variants have been exploited in a variety of settings, such as approximate vanishing ideal computations \cite{ wirth2023approximate,wirth2022conditional}, deep neural network training \cite{macdonald2022interpretable,ravi2018constrained}, kernel herding \cite{bach2012equivalence}, matrix recovery \cite{garber2018fast,mu2016scalable}, nonlinear dynamics identification \cite{carderera2021cindy}, optimal transport \cite{courty2016optimal, paty2019subspace}, tensor completion \cite{bugg2022nonnegative,guo2017efficient}, and video co-localization \cite{joulin2014efficient}.  See \cite{braun2022conditional} for a summary on the latest research.

A feature that sets \fw{} apart from other first-order algorithms is its {\em affine covariance} when combined with line-search or open-loop step-sizes rules.  Affine covariance means that an affine reparametrization of the problem induces the same affine reparametrization on the iterates generated by the algorithm, as formally articulated by Kerdreux et al. in~\cite{kerdreux2021affine}:

\begin{definition}[Affine covariance]
Given an invertible matrix $B \in\R^{n\times n}$ and vector $\bb\in\R^n$, consider the affine transformation $\yy = B \xx + \bb$ of \eqref{eq:opt}:
\begin{equation}\label{eq:aff_opt}\tag{AFF-OPT}
\min_{\yy\in B\cC+\bb}f(B^{-1}(\yy -\bb)).
\end{equation}
An algorithm is considered \emph{affine covariant} if its iterates $(\xx_t)_{t\in\N}$ for \eqref{eq:opt} and $(\yy_t)_{t\in\N}$ for \eqref{eq:aff_opt} satisfy
% \begin{align*}
$
\yy_t = B\xx_t + \bb
% \end{align*}
$
for all $t\in\N$.
\end{definition}

Although this property is sometimes referred to as {\em affine invariance,} that terminology is not accurate because the iterates generated by the algorithm actually change when the problem is reformulated via an affine transformation.  Instead, when an algorithm is affine covariance, some key features such as its convergence rate, must be affine-invariant.  Therefore, for line-search or open-loop step-sizes rules it is natural to expect the convergence rates derived for \fw{} to be affine-invariant as well.

\begin{algorithm}[t!]
\SetKwInput{Input}{Input} \SetKwInput{Output}{Output}
\SetKwComment{Comment}{$\triangleright$\ }{}
\caption{Frank-Wolfe algorithm (\fw{})}\label{alg:fw}
  \Input{$\xx_0\in \cC$.}
  \Output{$(\xx_t)_{t\in\N} \in \cC$.}
  \hrulealg
  \For{$t= 0, 1, \ldots $}{
        {$\vv_t \in \argmin_{\vv\in\cC} \left\langle\nabla f\left(\xx_t\right), \vv - \xx_t\right\rangle$\label{line:fw_LMO}}\\
        {$\xx_{t+1} \gets \xx_t + \eta_t \left( \vv_t - \xx_t\right)$ for some $\eta_t\in[0,1]$}\label{line:fw_update}}
\end{algorithm}

% \begin{algorithm}%[t!]
% %\SetKwInOut{Input}{Input}\SetKwInOut{Output}{Output}
% %\SetKwComment{Comment}{$\triangleright$\ }{}
% \caption{Frank-Wolfe algorithm (\fw{})}\label{alg:fw}
% %  \hrulealg
% {\bf Input:} {$\xx_0\in \cC$.}
  
% {\bf Output:} {$(\xx_t)_{t\in\N} \in \cC$.}

% \begin{algorithmic}[1]

% \For{$t= 0, 1, \ldots $}{

% $\vv_t \in \argmin_{\vv\in\cC} \left\langle\nabla f\left(\xx_t\right), \vv - \xx_t\right\rangle$\label{line:fw_LMO}

% $\xx_{t+1} \gets \xx_t + \eta_t \left( \vv_t - \xx_t\right)$ for some $\eta_t\in[0,1]$}\label{line:fw_update}
% \EndFor
% \end{algorithmic}
% \end{algorithm}

Several papers have explored affine-invariant convergence rates for various versions of the \fw{} algorithm \cite{gutman2021condition, kerdreux2021affine,lacoste2013affine, lacoste2015global,  pena2023affine}, predominantly when employing line-search or backtracking line-search techniques.  By contrast, only a limited number of results exist regarding \fw{} with open-loop step-sizes \cite{clarkson2010coresets, jaggi2013revisiting}, and they only cover rates of order $\cO(t^{-1})$.
This limitation of results is surprising given that open-loop step-sizes offer practical advantages over more involved step-size rules. First, they are easy-to-implement, taking the form $\eta_t = \frac{\ell}{t+\ell}$ for some $\ell\in\N_{\geq 1}$. Second, open-loop step-sizes adapt to challenging-to-compute parameters associated with growth properties. This stands in stark contrast to methods like \fw{} with short-step or adaptive step-sizes, where achieving accelerated rates requires knowledge of smoothness parameters \cite{garber2015faster, ghadimi2019conditional, kerdreux2021projection, nesterov2018complexity}. Most significantly, \cite{wirth2023acceleration} showed that open-loop \fw{} variants achieve acceleration in the challenging  \emph{Wolfe's lower bound setting}~\cite{wolfe1970convergence}:  This setting, common in practical applications, involves minimizing a smooth and strongly convex objective function over a polytope where the optimizer lies on the boundary of the polytope but is not a vertex.  When this is the case, \cite{wolfe1970convergence} showed that the rate of convergence of \fw{} with line-search or short-step converges cannot be better than $\Omega(t^{-1-\epsilon})$ for any $\epsilon > 0$. While active set variants of \fw{} have been introduced to achieve linear convergence rates in Wolfe's lower bound setting \cite{ garber2016linear, guelat1986some, lacoste2015global, tsuji2022pairwise}, they tend to be memory-intensive when iterates require substantial storage space.
For the same Wolfe's lower bound setting, recent work has established accelerated convergence rates for \fw{} with open-loop step-sizes under mild assumptions. The \emph{momentum-guided Frank-Wolfe algorithm} (MFW) \cite{li2021momentum}, employing open-loop step-size $\eta_t = \frac{2}{t+2}$ and momentum, achieves a rate of $\cO(t^{-2})$ within Wolfe's lower bound setting. For vanilla \fw{}, \cite{bach2021effectiveness} established an asymptotic convergence rate of $\cO(t^{-2})$ with open-loop step-sizes within  Wolfe's lower bound setting, although the result relies on assumptions on the third derivative of $f$. Most recently, 
\cite{wirth2023acceleration} derived a non-asymptotic version of the \cite{bach2021effectiveness} result with significantly weaker assumptions.
Furthermore, \cite{wirth2023acceleration} characterized several other settings for which \fw{} with open-loop step-sizes admits accelerated convergence rates. Most notably, they proved that \fw{} with open-loop step-size $\eta_t = \frac{\ell}{t+\ell}$ converges at a rate of order $\cO(t^{-\ell/2})$ when the feasible region is strongly convex and the gradients of $f$ are bounded away from zero. While numerical experiments in \cite{wirth2023acceleration} suggest a potential improvement to $\cO(t^{-\ell})$, their analysis is unable to capture this stronger rate.

\subsection{Contributions}\label{sec.contributions}
This paper combines two recent research paths, namely affine invariance and open-loop step-sizes as featured in \cite{pena2023affine} and \cite{wirth2023acceleration} respectively. 
This yields
the following developments for \fw{} with open-loop step-sizes of the form $\eta_t = \frac{\ell}{t+\ell}$, $\ell\in\N_{\geq 1}$:

To establish affine-invariant rates, we adapt and extend the growth properties initially introduced in~\cite{pena2023affine}. This adaptation includes the introduction of a new {\em gaps growth property}, which complements the previously defined {\em strong growth} and {\em weak growth} properties. The gaps growth property enables us to establish accelerated rates of convergence for \fw{} under the challenging Wolfe's lower bound setting~\cite{wolfe1970convergence}.
The growth properties are related to the {\em finite curvature} condition used by Jaggi~\cite{jaggi2013revisiting}. These properties, expressed in an affine-invariant manner, capture the essential characteristics of a problem instance of~\eqref{eq:opt} necessary for establishing accelerated convergence rates. This approach inherently avoids spurious dependencies on non-essential and typically affine-dependent objects. Consequently, we derive convergence results for \fw{} with open-loop steps that surpass the previous state-of-the-art results in~\cite{wirth2023acceleration}.

 We integrate and extend the developments in \cite{pena2023affine} and \cite{wirth2023acceleration}, providing affine-invariant versions of the accelerated rates initially derived in an affine-dependent manner in~\cite{wirth2023acceleration}.
Our strongest results are presented in Section~\ref{sec:strong_growth} under the strong growth property.  Theorem~\ref{thm:1_strong} and Theorem~\ref{thm:rate_strong} improve upon the previously best-known convergence rates from~\cite{wirth2023acceleration}. 
Specifically, we establish new accelerated convergence rates beyond $\cO(t^{-2})$ when the feasible region is uniformly convex, and the norm of the gradient of the objective function is bounded away from zero. Additionally, when the feasible region is strongly convex, we derive rates of order $\cO(t^{-\ell})$, surpassing the rates of order $\cO(t^{-\ell/2})$ derived in~\cite{wirth2023acceleration}.
The results in Section~\ref{sec:strong_growth}
yield convergence rates for both the suboptimality gap and the primal-dual gap, unlike the analysis in \cite{wirth2023acceleration}, which is limited to the suboptimality gap. Section~\ref{sec:weak_growth} and Section~\ref{sec:gaps_growth} present accelerated rates of order $\cO(t^{-2})$, namely Theorem~\ref{thm:rate-weak}, Theorem~\ref{thm:rate-gaps}, and 
Theorem~\ref{thm:rate-gaps-relaxed}, when the milder weak growth and gaps growth properties hold. 
These results improve over the corresponding results in \cite{wirth2023acceleration} in that our work is affine-invariant and holds for any $\ell\in\N_{\geq 1}$, whereas the rates in \cite{wirth2023acceleration} are affine-dependent and limited to $\ell = 4$.
Section~\ref{sec:gaps_growth} also includes Corollary~\ref{cor:rel_gap}, which establishes the first affine-invariant convergence rate guarantee of order $\cO(t^{-2})$ in the challenging Wolfe's lower bound setting~\cite{wolfe1970convergence}. This result is particularly significant as it highlights the superior performance of the simple open-loop \fw{} approach over the more intricate line-search or short-step alternatives for the polytope setting.

\subsection{Paper outline}

The remaining sections of the paper are organized as follows.  Section~\ref{sec:preliminaries} introduces notation and terminology used throughout, including the growth properties that provide the backbone for our main developments.

Section~\ref{sec:strong_growth},  Section~\ref{sec:weak_growth}, 
and Section~\ref{sec:gaps_growth} present our main results.  Each of these three sections develops convergence rates when $(\cC,f)$ satisfies respectively the strong growth, weak growth, and gaps growth properties in addition to the strong $(M,0)$ growth property. Each of these sections includes a subsection detailing sufficient conditions for the 
relevant growth property to hold and a subsection with numerical experiments on stylized examples.  For ease of exposition, in our stylized examples we take $\cC$ to be a unit $\ell_p$-norm ball for $1\le p\le\infty$.  However, the sufficient conditions that guarantee the growth properties in each of these sections apply to a much broader classes of domains.

Section~\ref{sec:numerical_experiments} reports computational experiments involving problem instances from data science applications.  Therein, \fw{} with open-loop steps attains accelerated convergence rates even in some cases that are not yet covered by our theoretical developments.  We hope the developments in this paper inspire future research to investigate new classes of problems that satisfy growth properties.  

We conclude the paper with a discussion in Section~\ref{sec.discussion}.

The reader interested in get a quick grasp of our main developments can easily do so by skipping the proofs of Theorem~\ref{thm:rate_strong}, Theorem~\ref{thm:rate-weak}, and Theorem~\ref{thm:rate-gaps-relaxed}. These are the most technically involved components of our paper.

\section{Preliminaries}\label{sec:preliminaries}
We rely on the following notation and growth properties as introduced in \cite{pena2023affine}.
Throughout, let $\cC\subseteq \R^n$ be a compact convex set and let $f\colon \cC \to \R$ be convex and differentiable in an open set containing $\cC$.
For a set $\cC\subseteq\R^n$, let $\relativeinterior(\cC)$ and $\relativeboundary(\cC)$ denote the relative interior and boundary of $\cC$, respectively. Given a norm $\|\cdot\|$, let $\|\cdot\|_*$ denote its dual norm.
\subsection{Measures of optimality}\label{sec.measures_of_optimality}
We recall several measures of optimality for the optimization problem \eqref{eq:opt}.
Recall that the \emph{Bregman divergence} $D_f:\cC\times\cC\rightarrow \R$  of $f$ is defined as follows
\begin{align*}
D_f(\yy,\xx) = f(\yy) - f(\xx) - \left\langle\nabla f\left(\xx\right), \yy-\xx \right\rangle.
\end{align*}
We will also rely on the {\em primal suboptimality gap} function $\subopt\colon\cC\rightarrow \R$ and {\em Wolfe gap} function 
$\gap:\cC\rightarrow \R$
defined as follows.  For $\xx \in \cC$ and $\vv \in \argmin_{\yy\in \cC}\left\langle\nabla f\left(\xx\right), \yy-\xx \right\rangle$, let
\begin{align*}
\subopt(\xx):= f(\xx) - \min_{\yy \in \cC} f(\yy) \qquad \text{and} \qquad \gap(\xx):= \left\langle\nabla f\left(\xx\right), \xx - \vv\right\rangle.
\end{align*}
Note that $\gap(\xx)$ is a particularly attractive measure of optimality for \fw{} as it has to be computed for each iteration in Line~\ref{line:fw_LMO} of Algorithm~\ref{alg:fw}.
A straightforward calculation shows that the iterates generated by Algorithm~\ref{alg:fw} satisfy the following equality that will be used repeatedly throughout the paper:
\begin{align}\label{eq:fw-step}
\subopt(\xx_{t+1}) = \subopt(\xx_t) -\eta_t \gap(\xx_t) + D_f(\xx_{t}+\eta_t(\vv_t-\xx_t),\xx_t).  
\end{align}
We give convergence results both for the suboptimality gap $\subopt(\xx_t)$ of the $t$-th iterate $\xx_t$ as well as for the best duality gap defined next. 
Consider the \emph{Fenchel dual} of~\eqref{eq:opt} namely
\begin{align*}%\begin{align}\label{eq:dual}
\max_{\uu\in \R^n} \left\{-f^*(\uu) - \sigma_{\cC}(-\uu)\right\},
\end{align*}%\end{align}
where $f^*\colon\R^n \to [-\infty, \infty]$ denotes the convex conjugate of $f$, that is, for $\yy\in\R^n$, 
\begin{align*}
    f^*(\yy) := \sup_{\xx\in\cC} \{\langle \yy, \xx\rangle - f(\xx)\}
\end{align*}
and $\sigma_{\cC}:\R^n\rightarrow \R$ denotes the support function of $\cC$, that is, for $\yy\in\R^n$,
\begin{align*}
    \sigma_{\cC} (\yy) = \max_{\xx\in \cC}\ip{\yy}{\xx}.
\end{align*}
For $t\in\N$, define
\begin{align*}
\primaldual(\xx_t) = \min_{k=0,\dots,t}\left\{f(\xx_t) + f^*(\nabla f(\xx_k)) + \sigma_{\cC}(-\nabla f(\xx_k))\right\}.
\end{align*}
A simple calculation shows that
\begin{align*}
    \primaldual(\xx_t) = \min_{k=0,\dots,t}\left\{f(\xx_t) - f(\xx_k) + \gap(\xx_k)\right\}.
\end{align*}
The quantity $\primaldual(\xx_t)$ is the duality gap between the primal iterate $\xx_t$ and the best of the dual iterates $\uu_k:=\nabla f(\xx_k)$ for $k=0,1,\dots,t$.  In particular, Fenchel duality and a simple calculation show that
\begin{equation}\label{eq:various.gaps}
\subopt(\xx_t) \leq \primaldual(\xx_t) \leq f(\xx_t) + f^*(\nabla f(\xx_t)) + \sigma_{\cC}(-\nabla f(\xx_t))=\gap(\xx_t).
\end{equation} 
Furthermore,~\eqref{eq:fw-step} implies that
\begin{align}\label{eq:gap}
\primaldual(\xx_{t+1}) \leq \primaldual(\xx_t)-\eta_t \gap(\xx_t) + D_f(\xx_{t}+\eta_t(\vv_t-\xx_t),\xx_t).  
\end{align}
To ease notation, sometimes we will write 
$\gap_t$, $\primaldual_t$, and $\subopt_t$
as shorthands for $\gap(\xx_t)$, $\primaldual(\xx_t)$, and $\subopt(\xx_t)$, respectively, when $\xx_t$ is the $t$-th iterate generated by Algorithm~\ref{alg:fw}. 

\subsection{Growth properties}\label{sec.growth.properties}
The following growth properties concerning the functions $D_f, \gap,$ and $\subopt$ are adapted from~\cite{pena2023affine} and will play a central role in our developments.

\begin{definition}[Growth properties]\label{def:growth_properties}
    Let $\cC\subseteq \R^n$ be a compact convex set, let $f\colon\cC \to \R$ be convex and differentiable in an open set containing $\cC$.
    For $M >0$, $m>0$, and $r\in[0,1]$, we say that $(\cC,f)$ satisfies the 
    \begin{enumerate}
        \item \emph{strong $(M,r)$-growth property} if for all $\xx \in \cC$ and $\vv \in \argmin_{\yy\in \cC}\left\langle\nabla f\left(\xx\right), \yy-\xx \right\rangle$, it holds that
    \begin{align}\label{eq:strong_gp}
        D_f(\xx+\eta(\vv-\xx),\xx) \leq \frac{M \eta^2}{2} \gap(\xx)^r \qquad \text{for all} \ \eta \in [0, 1].
    \end{align}
\item {\em weak $(M,r)$-growth property} if for all $\xx \in \cC$ and $\vv \in \argmin_{\yy\in \cC}\left\langle\nabla f\left(\xx\right), \yy-\xx \right\rangle$, it holds that
    \begin{align}\label{eq:weak_gp}
        D_f(\xx+\eta(\vv-\xx),\xx)\cdot \subopt(\xx)^{1-r} \leq \frac{M \eta^2}{2} \gap(\xx) \qquad \text{ for all} \ \eta \in [0, 1].
    \end{align}
    \item \emph{gaps $(m,r)$-growth property} if for all $\xx \in \cC$ it holds that
    \begin{align}\label{eq:gap_gp}
        m \cdot \subopt(\xx)^{1-r} \leq \gap(\xx).
    \end{align}
    \end{enumerate}
\end{definition}
Note that the gaps $(m, r)$-growth property follows from convexity for $m = 1$ and $r = 0$.
Since $\gap(\xx) \geq \subopt(\xx)$ for $\xx\in \cC$, it is evident that the 
strong $(M, r)$-growth property \eqref{eq:strong_gp} implies the weak $(M, r)$-growth property \eqref{eq:weak_gp}. 
In the special case when $r = 0$, the strong $(M,0)$-growth property is a slight relaxation of the curvature condition in~\cite{jaggi2013revisiting}. Indeed, for $r = 0$, Inequality~\eqref{eq:strong_gp} reads 
\begin{align}\label{eq.jaggi}
    D_f(\xx+\eta(\vv-\xx),\xx) \leq \frac{M \eta^2}{2} \qquad \text{for all} \ \eta \in [0, 1].
\end{align}
By combining~\eqref{eq:gap_gp} and~\eqref{eq.jaggi} it follows that the weak $(M/m,r)$-growth property holds if both the strong $(M,0)$-growth and the gaps $(m,r)$-growth properties hold.

The growth properties stated in Definition~\ref{def:growth_properties} are properties of the problem~\eqref{eq:opt} only.  In particular, they do not depend on any algorithmic approach to it.  Our developments concerning the behavior of \fw{} when $\cC$ is a polytope rely on the following {\em relaxed gaps growth property} stated in terms of the problem~\eqref{eq:opt} together with two sequences $(\xx_t)_{t\in\N}$ and $(\delta_t)_{t\in\N}$, where $\xx_t \in\cC$ and $\delta_t\in\R_{>0}$ for all $t\in\N$. The additional dependence on the two sequences enables the relaxed gaps growth property to depend on both the problem~\eqref{eq:opt} and on the algorithmic approach to it.  We rely on this feature for our developments in Section~\ref{sec.suff_relaxed_gaps_m_r}.

\begin{definition}[Relaxed gaps growth property]\label{def:relaxed_gaps_growth}
    Let $\cC\subseteq \R^n$ be a compact convex set, let $f\colon\cC \to \R$ be convex and differentiable in an open set containing $\cC$, and let $\xx_t\in \cC$ and $\delta_t > 0$ for $t\in\N$.  For $M >0$, $m>0$, and $r\in[0,1]$ we say that $(\cC,f,(\xx_t)_{t\in\N},(\delta_t)_{t\in\N})$ satisfies the {\em relaxed gaps growth property} if there exists a threshold $\fwt\in \N$ such that for all $t \in \N_{\geq \fwt}$ either
     $\subopt(\xx_t) < \delta_t$
    or
    \begin{equation}\label{eq:relaxed_growth}
    m \cdot \subopt(\xx_t)^{1-r} \le \gap(\xx_t).
    \end{equation}
\end{definition}

It is evident that the gaps growth property in Definition~\ref{def:growth_properties} implies that relaxed gaps growth property in Definition~\ref{def:relaxed_gaps_growth} for any two  sequences $(\xx_t)_{t\in\N}$ and $(\delta_t)_{t\in\N}$, where $\xx_t \in\cC$ and $\delta_t\in\R_{>0}$ for all $t\in\N$, since~\eqref{eq:gap_gp} implies~\eqref{eq:relaxed_growth} for any sequence $(\xx_t)_{t\in\N}$, where $\xx_t \in\cC$ for all $t\in\N$.
%Throughout the paper, we make the blanket assumption that~\eqref{eq.jaggi} is always satisfied. We formalize below.
%
%
\subsection{Curvature of the feasible region and objective}
We introduce several notions of curvature of the feasible region and objective function. Combinations of these properties and assumptions on the location of the optimizer of \eqref{eq:opt} imply various growth properties. Although the concepts below can be stated in terms of any norm in $\R^n$, we will restrict ourselves to the following affine-invariant $\cC$-norm intrinsic to the set $\cC$. This will guarantee affine invariance throughout our exposition.

\begin{definition}[$\cC$-norm]\label{def:C_norm}
Let $\cC \subseteq \R^n$ be a compact convex set. Define the \emph{$\cC$-norm} $\|\cdot\|_{\cC}$ on the linear subspace  
$\text{span}(\cC - \cC)$ as follows. For $\xx \in \text{span}(\cC - \cC)$ let
\begin{align*}
    \|\xx\|_\cC = \inf\left\{\lambda>0: \xx = \frac{\lambda}{2}(\uu-\vv) \text{ for some } \uu,\vv\in \cC\right\}.
\end{align*}
In other words, $\|\cdot\|_\cC$ is the gauge function of the set $\frac{1}{2}(\cC-\cC)$.
\end{definition}

To simplify notation, we will assume without loss of generality that $\text{span}(\cC - \cC) = \R^n$.

\begin{definition}[Uniformly convex set]\label{def:unif_cvx_C}
Let $\cC \subseteq \R^n$ be a compact convex set, $\alpha >0$, and $p>0$. We say that $\cC$ is \emph{$(\alpha, p)$-uniformly convex} with respect to $\|\cdot\|$ if for all $\xx, \yy \in \cC$, $\gamma \in [0,1]$, and $\zz \in \R^n$ such that $\|\zz\|=1$, it holds that
$$
    \gamma \xx + ( 1- \gamma) \yy + \gamma (1 - \gamma) \alpha \|\xx-\yy\|^p \zz \in \cC.
$$
We refer to $(\alpha, 2)$-uniformly convex sets as \emph{$\alpha$-strongly convex sets}. 
\end{definition}
Famous examples of uniformly convex sets are the $\ell_p$-balls: For $p\in[1, \infty]$, the $\ell_p$-norm is defined as
\begin{align*}
    \|\cdot\|_p \colon \R^n & \to \R_{\geq 0} \\
    \xx& \mapsto \begin{cases}
        (\sum_{i = 1} |x_i|^p)^\frac{1}{p}, & \text{if} \ p\in [1, \infty[\\
        \max\{|x_i|\mid i\in\{1, \ldots, n\}\}, & \text{if} \ p = \infty.
    \end{cases} 
\end{align*}
% and for $p = \infty$, the $\ell_p$-norm is defined as
% \begin{align*}
%     \|\cdot\|_\infty \colon & \R^n \to \R_{\geq 0}\\
%     & \xx \mapsto \max\{|x_i|\colon i\in\{1, \ldots, n\}\}.
% \end{align*}
%
For $p\geq 1$ the $\ell_p$-ball in $\R^n$ is the set $\{\xx\in \R^n \mid \|\xx\|_p \le 1\}.$
Throughout, we often rely on the following relationship between different $\ell_p$-norms.
\begin{fact}\label{fact:lp_norm}
 For $p > r \geq 1$ and $\xx\in\R^n$, it holds that $\|\xx\|_p \leq \|\xx\|_r \leq n^{\frac{1}{r}-\frac{1}{p}}\|\xx\|_p $.
\end{fact}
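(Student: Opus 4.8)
\textbf{Proof proposal for Fact~\ref{fact:lp_norm}.}

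The plan is to establish the two inequalities $\|\xx\|_p \le \|\xx\|_r$ and $\|\xx\|_r \le n^{1/r - 1/p}\|\xx\|_p$ separately, treating the case $p = \infty$ as a limiting instance. Both are classical, so the proof will be short; I will organize it around a single normalization trick and a single convexity argument.

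For the first inequality $\|\xx\|_p \le \|\xx\|_r$ with $\infty > p > r \ge 1$, I would reduce to the case $\|\xx\|_r = 1$ by homogeneity: if $\xx = \zeros$ the claim is trivial, and otherwise replace $\xx$ by $\xx/\|\xx\|_r$, so that $\sum_i |x_i|^r = 1$. Then each $|x_i| \le 1$, hence $|x_i|^p \le |x_i|^r$ because $p > r$ and $t \mapsto t^{p-r}$ is at most $1$ on $[0,1]$. Summing gives $\sum_i |x_i|^p \le \sum_i |x_i|^r = 1$, and taking $p$-th roots yields $\|\xx\|_p \le 1 = \|\xx\|_r$. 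The case $p = \infty$ follows since $\|\xx\|_\infty = \max_i |x_i| = \left(\max_i |x_i|^r\right)^{1/r} \le \left(\sum_i |x_i|^r\right)^{1/r} = \|\xx\|_r$.

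For the second inequality $\|\xx\|_r \le n^{1/r - 1/p}\|\xx\|_p$, I would apply H\"older's inequality to the pairing of the vector $\left(|x_i|^r\right)_{i=1}^n$ with the all-ones vector, using conjugate exponents $p/r > 1$ and its conjugate $q$ with $r/p + 1/q = 1$, i.e. $q = \frac{p}{p-r}$. This gives
\begin{align*}
\sum_{i=1}^n |x_i|^r \le \left(\sum_{i=1}^n |x_i|^{r \cdot p/r}\right)^{r/p} \left(\sum_{i=1}^n 1^{q}\right)^{1/q} = \|\xx\|_p^{r} \cdot n^{1 - r/p}.
\end{align*}
Raising to the power $1/r$ produces $\|\xx\|_r \le n^{1/r - 1/p}\|\xx\|_p$. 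For $p = \infty$ the bound reads $\|\xx\|_r \le n^{1/r}\|\xx\|_\infty$, which is immediate from $\sum_i |x_i|^r \le n \max_i |x_i|^r$. Alternatively, to avoid invoking H\"older one could apply Jensen's inequality to the convex function $t \mapsto t^{p/r}$ and the uniform average $\frac1n \sum_i |x_i|^r$, which gives the same constant.

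I do not anticipate a genuine obstacle here; the only points requiring mild care are (i) correctly identifying the H\"older conjugate exponent $q = p/(p-r)$ and the resulting exponent $1 - r/p = 1/q$ on $n$, so that $n^{1/q \cdot 1/r} = n^{1/r - 1/p}$ comes out exactly, and (ii) handling the edge cases $\xx = \zeros$ and $p = \infty$ so the statement is literally true as written. Everything else is direct computation.
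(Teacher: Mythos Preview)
Your proof is correct and entirely standard. The paper itself does not supply a proof of this fact---it is stated without argument as a well-known inequality between $\ell_p$-norms---so there is nothing to compare against; your normalization argument for the left inequality and H\"older/Jensen argument for the right inequality are exactly the classical proofs one would expect.
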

\begin{proposition}[\cite{hanner1956uniform}]\label{prop:unif_cvx}
    For $p\in]1, \infty[$, let $\cC\subseteq \R^n$ be the $\ell_p$-ball.
    Then, the following holds:
    \begin{enumerate}
        \item When $p\in]1, 2]$, then $\cC$ is $(p-1)/4$-strongly convex with respect to $\|\cdot\|_p$.
        \item When $p\in [2,\infty[$, then $\cC$ is $(1/(p2^{p-1}), p)$-uniformly convex with respect to $\|\cdot\|_p$.
    \end{enumerate}
\end{proposition}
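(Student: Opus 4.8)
The plan is to deduce the uniform convexity of the $\ell_p$-ball directly from Hanner's inequalities for the $\ell_p$-norm, which are the sharp refinements of the parallelogram law. Recall that for $p \in [2,\infty[$ and any $\xx,\yy \in \R^n$ one has
\begin{align*}
\snorm{\xx+\yy}_p^p + \snorm{\xx-\yy}_p^p \;\geq\; \bigl(\snorm{\xx}_p + \snorm{\yy}_p\bigr)^p + \bigl|\,\snorm{\xx}_p - \snorm{\yy}_p\,\bigr|^p,
\end{align*}
while for $p \in \;]1,2]$ the inequality reverses. I would start from the known two-point modulus of convexity of $(\R^n,\snorm{\cdot}_p)$ obtained from these inequalities: for $p\in\;]1,2]$ the space is $2$-uniformly convex with the classical constant, and for $p\in[2,\infty[$ it is $p$-uniformly convex. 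Concretely, writing $\xx,\yy$ on the unit sphere of $\snorm{\cdot}_p$, Hanner's inequality yields $\snorm{\tfrac{\xx+\yy}{2}}_p \leq 1 - c_p\snorm{\tfrac{\xx-\yy}{2}}_p^{q}$ with $q=2$, $c_p=(p-1)/2$ when $p\le 2$, and $q=p$, $c_p = 1/2^{p}$ (up to the exact bookkeeping) when $p\ge 2$. The same estimate extends from midpoints to arbitrary convex combinations $\gamma\xx+(1-\gamma)\yy$ with the factor $\gamma(1-\gamma)$ appearing, by a standard one-dimensional convexity argument applied along the segment.

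The second step is to convert this ``norm decreases under averaging'' statement into the set-inclusion form of Definition~\ref{def:unif_cvx_C}. Given $\xx,\yy$ in the $\ell_p$-ball $\cC$, $\gamma\in[0,1]$, and a unit vector $\zz$ (unit in $\snorm{\cdot}_p$), I must show $\gamma\xx+(1-\gamma)\yy + \gamma(1-\gamma)\alpha\snorm{\xx-\yy}_p^{p}\,\zz \in \cC$, i.e. that this point has $\snorm{\cdot}_p \le 1$. By the triangle inequality it suffices that $\snorm{\gamma\xx+(1-\gamma)\yy}_p \le 1 - \gamma(1-\gamma)\alpha\snorm{\xx-\yy}_p^{p}$, which is exactly the strengthened-convexity estimate from Step~1 with the appropriate $\alpha$; here one reduces to the case $\xx,\yy$ on the sphere by scaling, noting the worst case occurs there. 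Tracking the constants through Hanner's inequality gives $\alpha = (p-1)/4$ for $p\in\;]1,2]$ (with exponent $2$, hence strong convexity) and $\alpha = 1/(p2^{p-1})$ for $p\in[2,\infty[$ (with exponent $p$).

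The main obstacle is purely the constant-chasing: extracting the precise modulus-of-convexity constants $(p-1)/4$ and $1/(p2^{p-1})$ from Hanner's inequalities — in particular handling the $\gamma(1-\gamma)$ weighting correctly rather than just the $\gamma=1/2$ case, and verifying that the extremal configuration is indeed two antipodal sphere points so that no loss occurs in the scaling reduction. A convenient way to organize this is to first prove the clean sphere-to-sphere midpoint bound from Hanner, then upgrade $1/2$ to general $\gamma$ via the elementary inequality that for a convex function $\phi$ on $[0,1]$ with $\phi(0)=\phi(1)=0$ one has $\phi$ controlled by $\gamma(1-\gamma)$ times a curvature term, and finally absorb everything into the stated $\alpha$. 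Since Hanner's inequalities themselves are cited (the reference \citealp{hanner1956uniform} in the statement), I would treat them as a black box and only perform the reduction and constant computation here.
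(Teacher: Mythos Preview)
The paper does not prove Proposition~\ref{prop:unif_cvx} at all: it is stated with the attribution \citep{hanner1956uniform} and used as a black box, so there is no ``paper's own proof'' to compare against. Your sketch is therefore not wrong relative to the paper---it simply goes further than the paper does.

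That said, a couple of remarks on the sketch itself. First, the sharp $2$-uniform convexity constant $(p-1)$ for $\ell_p$ with $p\in\;]1,2]$ (which, after the $\gamma(1-\gamma)$ bookkeeping, becomes the $(p-1)/4$ in the statement) is not a direct consequence of Hanner's inequality; it is usually attributed to Ball--Carlen--Lieb or obtained via Clarkson-type inequalities, whereas Hanner's inequality for $p\le 2$ goes the other way and gives the modulus of smoothness. So if you actually carry this out, you should either invoke the correct two-point inequality for each regime or be prepared for Hanner alone not to deliver the stated constant when $p\le 2$. Second, your passage from the midpoint case to general $\gamma$ (``a standard one-dimensional convexity argument'') is where most failed attempts leak constants; the clean route is to prove the inequality $\snorm{\gamma\xx+(1-\gamma)\yy}_p^q \le \gamma\snorm{\xx}_p^q+(1-\gamma)\snorm{\yy}_p^q - c\,\gamma(1-\gamma)\snorm{\xx-\yy}_p^q$ directly (with $q=2$ for $p\le 2$ and $q=p$ for $p\ge 2$), rather than first restricting to the sphere and then scaling back. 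None of this affects the paper, which is content to cite the result.
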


When $\cC$ is the $\ell_p$-ball for $p\geq 1$, we have $\cC = \frac{1}{2}(\cC-\cC)$ and thus $\|\cdot\|_\cC$ is the gauge function of the $\ell_p$-ball, which is precisely $\|\cdot\|_p$~\cite{hiriU04}. 
The $\ell_p$-balls will serve as running examples throughout this paper. For the remainder of this section, we focus on the curvature of the objective.
\begin{definition}[Smoothness]\label{def:smooth_f}
Let $\cC \subseteq \R^n$ be a convex set, let $f\colon \cU \to \R$ be differentiable, where $\cU$ is an open set containing $\cC$, and let $L > 0$. We say that $f$ is \emph{$L$-smooth} over $\cC$ with respect to $\|\cdot\|$ if for all $\xx, \yy\in \cC$, it holds that
$$
    f(\yy) \leq f(\xx) + \langle \nabla f(\xx), \yy - \xx\rangle + \frac{L}{2}\|\xx-\yy\|^2.
$$
\end{definition}
A simple calculation shows that $f$ is \emph{$L$-smooth} over $\cC$ with respect to $\|\cdot\|$ if $\nabla f$ is $L$-Lipschitz continuous over $\cC$ with respect to $\|\cdot\|$, that is, if for all $\xx,\yy\in \cC$, it holds that
\begin{align*}
\|\nabla f(\xx)-\nabla f(\yy)\|_* \le L\|\xx-\yy\|.
\end{align*}
\begin{proposition}\label{prop.strong.M.0}
Suppose that $\cC \subseteq \R^n$ is a compact convex set and $f$ is \emph{$L$-smooth} over $\cC$ with respect to $\|\cdot\|_{\cC}$.  Then, $(\cC, f)$ satisfies the strong $(M,0)$-growth property where $M = 4L.$
\end{proposition}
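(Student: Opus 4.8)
The plan is to bound the Bregman divergence $D_f(\xx + \eta(\vv-\xx), \xx)$ using $L$-smoothness and then relate the resulting $\|\xx - \vv\|_{\cC}^2$ term to $\gap(\xx)$. First I would apply the definition of $L$-smoothness with $\yy = \xx + \eta(\vv-\xx)$: since $\yy - \xx = \eta(\vv-\xx)$, this gives
\begin{align*}
D_f(\xx + \eta(\vv-\xx), \xx) = f(\yy) - f(\xx) - \langle \nabla f(\xx), \yy - \xx\rangle \le \frac{L}{2}\|\xx - \yy\|_\cC^2 = \frac{L \eta^2}{2}\|\xx - \vv\|_\cC^2.
\end{align*}
So it remains to show $\|\xx - \vv\|_\cC^2 \le 4 \gap(\xx)$ for every $\xx \in \cC$ and every $\vv \in \argmin_{\yy\in\cC}\langle \nabla f(\xx), \yy - \xx\rangle$, which would yield the bound $\frac{L\eta^2}{2}\cdot 4\gap(\xx) = \frac{4L \eta^2}{2}\gap(\xx)$, i.e. the strong $(4L, 0)$-growth property.

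The key step is therefore the diameter-type bound $\|\xx - \vv\|_\cC \le 2$. This follows directly from the definition of the $\cC$-norm: for any $\uu, \vv \in \cC$, the point $\uu - \vv = \frac{2}{2}(\uu - \vv)$ exhibits $\lambda = 2$ as a valid choice in the infimum defining $\|\uu - \vv\|_\cC$, so $\|\uu-\vv\|_\cC \le 2$; applying this with $\uu = \xx$ gives $\|\xx - \vv\|_\cC \le 2$ and hence $\|\xx - \vv\|_\cC^2 \le 4$. Combining with the need to produce a factor $\gap(\xx)$ rather than a constant, I would instead argue: either $\gap(\xx) = 0$, in which case $\xx$ is a minimizer and (since $\nabla f$ vanishes in the relevant directions, or more carefully, $D_f(\xx+\eta(\vv-\xx),\xx) = 0$ by optimality considerations) the inequality is trivial; or $\gap(\xx) > 0$. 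In the latter case I want $\|\xx-\vv\|_\cC^2 \le 4\gap(\xx)$. This is \emph{not} automatic from $\|\xx - \vv\|_\cC \le 2$ alone when $\gap(\xx)$ is small, so the cleaner route is to bound $D_f$ differently: use smoothness to get $D_f(\xx+\eta(\vv-\xx),\xx) \le \frac{L\eta^2}{2}\|\xx-\vv\|_\cC^2 \le 2L\eta^2$, and separately note that since the strong $(M,0)$-growth inequality~\eqref{eq.jaggi} only needs the right-hand side $\frac{M\eta^2}{2}$ with $M = 4L$ — no $\gap(\xx)$ factor appears when $r=0$ — we are already done: $2L\eta^2 = \frac{4L \eta^2}{2}$.

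Thus the main obstacle is essentially bookkeeping around the $r=0$ case: recognizing that for $r = 0$ the factor $\gap(\xx)^r = 1$, so the claimed strong $(M,0)$-growth property is exactly inequality~\eqref{eq.jaggi}, and this reduces to the affine-invariant diameter bound $\sup_{\uu,\vv\in\cC}\|\uu-\vv\|_\cC \le 2$ together with one application of $L$-smoothness. I would present the proof in three short lines: (i) smoothness gives $D_f \le \frac{L\eta^2}{2}\|\xx-\vv\|_\cC^2$; (ii) the $\cC$-norm definition gives $\|\xx-\vv\|_\cC \le 2$ for all $\xx,\vv\in\cC$; (iii) combine to get $D_f \le 2L\eta^2 = \frac{(4L)\eta^2}{2} = \frac{(4L)\eta^2}{2}\gap(\xx)^0$, which is the strong $(4L,0)$-growth property.
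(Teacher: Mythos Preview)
Your final three-line argument is correct and is exactly the paper's approach: $L$-smoothness gives $D_f \le \frac{L\eta^2}{2}\|\xx-\vv\|_\cC^2$, the definition of $\|\cdot\|_\cC$ gives $\|\xx-\vv\|_\cC \le 2$ for all $\xx,\vv\in\cC$, and combining yields $D_f \le \frac{4L\eta^2}{2}$, which is precisely~\eqref{eq.jaggi}. The middle paragraph where you tried to get a factor of $\gap(\xx)$ was an unnecessary detour (since $r=0$ means $\gap(\xx)^r=1$), and you correctly abandoned it.
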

\begin{proof}
This readily follows from the definition of strong $(M,0)$-growth, $\|\cdot\|_{\cC}$, $L$-smoothness and the fact that $\|\xx-\yy\|_{\cC}\le 2$ for all $\xx,\yy\in\cC.$
\end{proof}

\begin{definition}[Hölderian error bound]\label{def:heb}
Let $\cC \subseteq \R^n$ be a compact convex set, let $f\colon \cC \to \R$ be convex,
let $\mu> 0$, and let $\theta \in [0, 1/2]$. We say that $f$ satisfies a \emph{$(\mu, \theta)$-Hölderian error bound} with respect to $\|\cdot\|$ if for all $\xx\in \cC$ and $\xx^*\in\argmin_{\xx\in\cC}f(\xx)$, it holds that
\begin{align*}
    \mu(f(\xx)-f(\xx^*))^\theta \geq \min_{\yy\in\argmin_{\zz\in\cC}f(\zz)} \|\xx-\yy\|.
\end{align*}
\end{definition}
Our running examples throughout the paper will rely on the following facts concerning the $\ell_p$-balls and the $L$-smooth functions satisfying a $(\mu, \theta)$-Hölderian error bound with respect to $\|\cdot\|_2$.

\begin{lemma}\label{lemma:lp_balls}
Suppose that $\cC\subseteq \R^n$ is the $\ell_p$-ball for some $p\geq 1$ and for $L>0$, $\mu > 0$ and $\theta \in[0,1/2]$, let $f\colon \cC \to \R$ be a convex and $L$-smooth function satisfying a $(\mu, \theta)$-Hölderian error bound with respect to $\|\cdot\|_2$. Then, the following properties hold:
\begin{enumerate}
\item
 When $p\in[1, 2]$, the function $f$ is $L$-smooth and satisfies a $(\mu n^{\frac{1}{p}-\frac{1}{2}}, \theta)$-Hölderian error bound with respect to $\|\cdot\|_{\cC}$. In particular, $(\cC,f)$ satisfies the strong $(4L,0)$-growth property.
\item When $p\geq 2$, the function $f$ is 
$Ln^{\frac{1}{2}-\frac{1}{p}}$-smooth and satisfies a $(\mu,\theta)$-Hölderian error bound with respect to $\|\cdot\|_{\cC}$.
In particular, $(\cC,f)$ satisfies the strong $(4Ln^{\frac{1}{2}-\frac{1}{p}},0)$-growth property.
\end{enumerate}
\end{lemma}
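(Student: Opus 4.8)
The plan is to translate the hypotheses stated in the Euclidean norm $\|\cdot\|_2$ into the intrinsic $\cC$-norm by comparing the two norms on the $\ell_p$-ball, and then to invoke Proposition~\ref{prop.strong.M.0} to obtain the strong $(M,0)$-growth conclusions. The key observation is that, as noted just after Proposition~\ref{prop:unif_cvx}, when $\cC$ is the $\ell_p$-ball we have $\cC = \frac{1}{2}(\cC-\cC)$, so the $\cC$-norm $\|\cdot\|_\cC$ coincides with $\|\cdot\|_p$. Hence everything reduces to relating $\|\cdot\|_p$ and $\|\cdot\|_2$ via Fact~\ref{fact:lp_norm}.

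\emph{Case $p\in[1,2]$.} Here Fact~\ref{fact:lp_norm} (applied with the roles $p=2$, $r=p$) gives $\|\xx\|_2 \le \|\xx\|_p \le n^{\frac1p-\frac12}\|\xx\|_2$ for all $\xx\in\R^n$. First I would push the smoothness inequality through: since $\|\xx-\yy\|_\cC = \|\xx-\yy\|_p \le n^{\frac1p-\frac12}\|\xx-\yy\|_2$ would only give a worse constant, I instead use $\|\xx-\yy\|_p \ge \|\xx-\yy\|_2$ in the $L$-smoothness bound $f(\yy)\le f(\xx)+\langle\nabla f(\xx),\yy-\xx\rangle+\frac L2\|\xx-\yy\|_2^2 \le f(\xx)+\langle\nabla f(\xx),\yy-\xx\rangle+\frac L2\|\xx-\yy\|_p^2$, so $f$ is $L$-smooth with respect to $\|\cdot\|_\cC$. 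Next, for the Hölderian error bound, I would take $\xx\in\cC$ and an optimizer $\yy$ realizing the minimum over the optimal set; the $\|\cdot\|_2$-bound reads $\mu(f(\xx)-f(\xx^*))^\theta \ge \|\xx-\yy\|_2 \ge n^{-(\frac1p-\frac12)}\|\xx-\yy\|_p = n^{\frac12-\frac1p}\|\xx-\yy\|_\cC$, so $f$ satisfies a $(\mu n^{\frac1p-\frac12},\theta)$-Hölderian error bound with respect to $\|\cdot\|_\cC$. The strong $(4L,0)$-growth claim then follows directly from Proposition~\ref{prop.strong.M.0} applied to the $L$-smoothness just established.

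\emph{Case $p\ge 2$.} Now Fact~\ref{fact:lp_norm} (with $p\ge 2=r$) gives $\|\xx\|_p \le \|\xx\|_2 \le n^{\frac12-\frac1p}\|\xx\|_p$. For smoothness, $\|\xx-\yy\|_2 \le n^{\frac12-\frac1p}\|\xx-\yy\|_p = n^{\frac12-\frac1p}\|\xx-\yy\|_\cC$, so $\frac L2\|\xx-\yy\|_2^2 \le \frac{Ln^{1-\frac2p}}{2}\|\xx-\yy\|_\cC^2$; wait — this overshoots. The right comparison is $\|\xx-\yy\|_2^2 \le n^{1-\frac2p}\|\xx-\yy\|_p^2$, giving $Ln^{1-2/p}$-smoothness, not $Ln^{1/2-1/p}$. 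I would therefore re-examine the exponent bookkeeping: the claimed constant $Ln^{\frac12-\frac1p}$ suggests the intended comparison uses only one factor of $n^{\frac12-\frac1p}$, i.e. the statement is using $\|\xx-\yy\|_2 \le n^{\frac14-\frac1{2p}}\cdots$, or more likely the smoothness is measured so that only a single power enters. The main obstacle will be getting these exponent conversions exactly right and matching the constants stated in the lemma; once the correct norm comparison is pinned down, the Hölderian error bound transfers with no loss (since $\|\xx-\yy\|_p\le\|\xx-\yy\|_2$ directly yields $\mu(f(\xx)-f(\xx^*))^\theta \ge \|\xx-\yy\|_2 \ge \|\xx-\yy\|_p = \|\xx-\yy\|_\cC$, i.e. a $(\mu,\theta)$-bound with respect to $\|\cdot\|_\cC$), and the strong growth conclusion again follows from Proposition~\ref{prop.strong.M.0}. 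So the substantive content is the two-line norm-comparison argument of Fact~\ref{fact:lp_norm} combined with the identity $\|\cdot\|_\cC = \|\cdot\|_p$; the only thing requiring care is the direction and power of each inequality so the advertised constants come out correctly.
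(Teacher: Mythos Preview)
Your approach is exactly the paper's: the proof there is a one-liner invoking Fact~\ref{fact:lp_norm} (the $\ell_p$--$\ell_2$ norm comparison) together with Proposition~\ref{prop.strong.M.0}, after noting that $\|\cdot\|_{\cC}=\|\cdot\|_p$ on the $\ell_p$-ball. Your Case~1 argument and your H\"olderian error bound argument in Case~2 are correct and match what the paper intends.

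Your hesitation in Case~2 about the smoothness constant is not a gap in your reasoning but a genuine discrepancy in the stated constant. From $\|\xx-\yy\|_2 \le n^{\frac12-\frac1p}\|\xx-\yy\|_p$ one gets $\frac{L}{2}\|\xx-\yy\|_2^2 \le \frac{L\,n^{1-\frac2p}}{2}\|\xx-\yy\|_p^2$, i.e.\ $Ln^{1-2/p}$-smoothness with respect to $\|\cdot\|_{\cC}$, not $Ln^{1/2-1/p}$. The paper's proof does not work through this computation, so it does not resolve the exponent; your derivation is the correct one, and the stated constant $Ln^{\frac12-\frac1p}$ appears to be a slip (which propagates harmlessly, since only the existence of \emph{some} finite smoothness constant is needed downstream). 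You should not look for a different argument here; just record the constant your computation actually yields.
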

\begin{proof}
The above facts are immediate consequences of Fact~\ref{fact:lp_norm} and Proposition~\ref{prop.strong.M.0}.
\end{proof}

\section{Strong $(M, r)$-growth}\label{sec:strong_growth}

This section presents affine-invariant convergence results for the Frank-Wolfe algorithm (\fw) with open-loop step-size of the form $\eta_t = \frac{\ell}{t+\ell}$ for $\ell \in \N_{\geq 2}$ when $(\cC, f)$ satisfies the strong $(M, r)$-growth property for some $r\in [0,1]$. As we detail in Section~\ref{sec.suff_strong_M_r} below, the developments in this section  answer 
several open questions raised in \cite{wirth2023acceleration} and also improve on some of the main convergence results therein.

\subsection{Convergence rates}\label{sec:strong_growth.results}

For ease of exposition, we consider three cases separately: $r=0$, $r=1$, and $r\in]0,1[$ as detailed in Theorem~\ref{thm:slow_conv}, Theorem~\ref{thm:1_strong}, and Theorem~\ref{thm:rate_strong}, respectively.  
The latter two theorems improve substantially the previous state-of-the-art rates derived in \cite{wirth2023acceleration}.

Theorem~\ref{thm:slow_conv} considers the one end of the spectrum when the strong $(M,0)$-growth property holds and shows that \fw{} with open-loop step-size as above attains convergence rate $\cO(t^{-1})$.  This is the same rate (modulo a small constant) that is attainable with step-sizes selected via exact line-search, approximate line-search, or short-step under the slightly stronger but nearly equivalent finite-curvature condition. 

\begin{theorem}[Strong $(M,0)$-growth]\label{thm:slow_conv}
    Let $\cC\subseteq \R^n$ be a compact convex set, let $f\colon\cC\to\R$ be convex and differentiable in an open set containing $\cC$, let $\eta_t = \frac{\ell}{t+\ell}$ for some $\ell\in\N_{\geq 2}$ and all $t\in\N$, and suppose that $(\cC, f)$ satisfies the strong $(M,0)$-growth property for some $M > 0$. Then, for the iterates of Algorithm~\ref{alg:fw} and $t\in\N_{\geq 1}$, it holds that 
\begin{equation}\label{eq.sublinear.rate}
\primaldual_t\leq \eta_t M = \cO(t^{-1}).
\end{equation}
\end{theorem}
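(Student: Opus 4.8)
The plan is to prove the bound $\primaldual_t \le \eta_t M$ by induction on $t$, using the recursion~\eqref{eq:gap} together with the strong $(M,0)$-growth property in the form~\eqref{eq.jaggi}. First I would record the two ingredients that drive the argument: from~\eqref{eq:gap} and~\eqref{eq.jaggi} we have, for every $t\in\N$,
\begin{align*}
\primaldual_{t+1} \;\le\; \primaldual_t - \eta_t \gap_t + \frac{M\eta_t^2}{2},
\end{align*}
and since $\primaldual_t \le \gap_t$ this yields the scalar recursion $\primaldual_{t+1} \le (1-\eta_t)\primaldual_t + \frac{M\eta_t^2}{2}$. With $\eta_t = \frac{\ell}{t+\ell}$ we have $1-\eta_t = \frac{t}{t+\ell}$, so the recursion becomes $\primaldual_{t+1} \le \frac{t}{t+\ell}\primaldual_t + \frac{M}{2}\cdot\frac{\ell^2}{(t+\ell)^2}$.

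Next I would set up the induction with hypothesis $\primaldual_t \le \eta_t M = \frac{\ell M}{t+\ell}$. For the base case $t=1$: from the $t=0$ step, $\eta_0 = 1$, so $\primaldual_1 \le (1-1)\primaldual_0 + \frac{M}{2} = \frac{M}{2} \le \frac{\ell M}{1+\ell} = \eta_1 M$, where the last inequality holds because $\ell \ge 2$ gives $\frac{\ell}{\ell+1} \ge \frac23 > \frac12$. For the inductive step, assuming $\primaldual_t \le \frac{\ell M}{t+\ell}$, plug into the recursion:
\begin{align*}
\primaldual_{t+1} \;\le\; \frac{t}{t+\ell}\cdot\frac{\ell M}{t+\ell} + \frac{M}{2}\cdot\frac{\ell^2}{(t+\ell)^2}
\;=\; \frac{\ell M}{(t+\ell)^2}\left(t + \frac{\ell}{2}\right).
\end{align*}
It then remains to check the purely arithmetic inequality $\frac{t+\ell/2}{(t+\ell)^2} \le \frac{1}{t+1+\ell}$, i.e.\ $(t+\ell/2)(t+\ell+1) \le (t+\ell)^2$; expanding both sides, the left is $t^2 + t\ell + t + \frac{t\ell}{2} + \frac{\ell^2}{2} + \frac{\ell}{2}$ and the right is $t^2 + 2t\ell + \ell^2$, so the claim reduces to $t + \frac{\ell}{2} \le \frac{t\ell}{2} + \frac{\ell^2}{2}$, which holds for all $t\ge 1$ and $\ell\ge 2$ (indeed $\frac{t\ell}{2} \ge t$ since $\ell\ge 2$, and $\frac{\ell^2}{2} \ge \frac{\ell}{2}$). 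This closes the induction and gives $\primaldual_{t+1}\le \frac{\ell M}{t+1+\ell} = \eta_{t+1}M$, and the asymptotic statement $\eta_t M = \cO(t^{-1})$ is immediate.

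I do not anticipate a serious obstacle here; the only delicate point is making sure the base case and the inductive arithmetic inequality both use $\ell\ge 2$ correctly (and not something weaker), and being careful that~\eqref{eq:gap} is applied with the correct sign so that the $\gap_t \ge \primaldual_t$ substitution goes in the right direction. If one wanted a cleaner exposition one could instead invoke a standard lemma on sequences satisfying $a_{t+1}\le (1-\frac{\ell}{t+\ell})a_t + \frac{C}{(t+\ell)^2}$, but since the paper seems to favor self-contained short proofs, carrying out the two-line induction directly is the natural route.
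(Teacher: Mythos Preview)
Your proof is correct and follows essentially the same approach as the paper: derive the recursion $\primaldual_{t+1}\le(1-\eta_t)\primaldual_t+\tfrac{M\eta_t^2}{2}$ from~\eqref{eq:gap}, the strong $(M,0)$-growth property, and $\gap_t\ge\primaldual_t$, then close by induction using the arithmetic inequality $(t+\ell)^2\ge(t+\ell/2)(t+\ell+1)$ for $\ell\ge2$. The only difference is that you expand and verify this inequality explicitly, whereas the paper simply asserts it.
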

\begin{proof}
We generalize the proof of Proposition~3.1 in \cite{wirth2023acceleration}.
Inequality~\eqref{eq:various.gaps}, Inequality~\eqref{eq:gap}, and the $(M,0)$-strong growth property imply that for $t\in \N$
\begin{equation}\label{eq:jaggi_early}
    \primaldual_{t+1}  \leq \primaldual_{t} - \eta_{t}\gap_{t} + \frac{\eta_{t}^2M}{2} \leq \primaldual_{t}(1 - \eta_{t}) + \frac{\eta_{t}^2M}{2}.
\end{equation}
We now proceed by induction. For $t=0$, we have $\eta_0=1, \; \eta_1 = \ell/(1+\ell) \geq 1/2,$ and thus~\eqref{eq:jaggi_early} implies that
\begin{align}\label{eq.pd1}
    \primaldual_1\le \frac{\eta_0^2M}{2} = \frac{M}{2} \leq \eta_1 M.
\end{align}
Hence~\eqref{eq.sublinear.rate} holds for $t=1$. 
Next, suppose~\eqref{eq.sublinear.rate} holds for $t\in \N_{\geq 1}$. Thus, \eqref{eq:jaggi_early} and $\eta_t = \frac{\ell}{t+\ell}$ imply that
\[
\primaldual_{t+1} \leq \frac{M\ell}{t+\ell}\cdot \frac{t}{t+\ell} + \frac{M\ell^2}{2(t+\ell)^2} =
\frac{M\ell}{t+\ell}\cdot\frac{t+\ell/2}{t+\ell} \leq \frac{M\ell}{t+\ell+1} = \eta_{t+1}M,
\]
where the last inequality follows because $(t+\ell)^2 \geq  (t+ \ell/2)(t+\ell +1)$ for $\ell \geq 2$. 
Thus, \eqref{eq.sublinear.rate} holds for $t+1$.
\end{proof}
Theorem~\ref{thm:1_strong} considers the other end of the spectrum when the strong $(M,1)$-growth property holds. In this case, the Frank-Wolfe algorithm with open-loop step-size $\frac{\ell}{t+\ell}$ for $\ell \in \N_{\geq 2}$ attains accelerated rate of convergence $\cO(t^{-\ell})$ once a threshold $\fwt\in\N$ is surpassed. There is a simple trade-off between $\ell$ and $\fwt$: the larger $\ell$, the faster the rate of convergence but also the higher the threshold $\fwt$.
\begin{theorem}[Strong $(M,1)$-growth]\label{thm:1_strong}
Let $\cC\subseteq \R^n$ be a compact convex set, let $f\colon\cC\to\R$ be convex and differentiable in an open set containing $\cC$, let $\eta_t = \frac{\ell}{t+\ell}$ for some $\ell\in\N_{\geq 1}$ and all $t\in\N$, and suppose that $(\cC, f)$ satisfies the strong $(M,1)$-growth property for some $M >0$.
Let $\fwt = \min\{i\in\N_{\geq 1} \mid M\eta_i/2 \leq 1 \} = \max\left\{1, \left\lceil\frac{\ell M }{2} - \ell\right\rceil\right\}$.  Then, for the iterates of Algorithm~\ref{alg:fw} and $t\in\N_{\geq \fwt}$, it holds that
\begin{align*}
        \primaldual_{t} \leq  \primaldual_{\fwt}  \srb{\frac{\eta_{t-1}}{\eta_{\fwt-1}}}^\ell \exp\left({\frac{M\ell^2}{2S}}\right) = \cO(t^{-\ell}).
\end{align*}
Let $c = \exp(\frac{1}{\ell})$ and $\epsilon \in ]0, 1[$. Then, for the iterates of Algorithm~\ref{alg:fw} and $t\in\N$, $t\geq \left\lceil c\cdot\frac{\ell M}{2(1-\epsilon)}-\ell + 1 \right\rceil$, it holds that
\begin{align*}
    \min_{i \in \{0, 1, \ldots, t\}} \gap_i 
    & \leq  \primaldual_{\fwt}  \srb{\frac{\eta_{t-1}}{\eta_{\fwt-1}}}^\ell \frac{\exp\left(3+{\frac{M\ell^2}{2S}}\right)}{\epsilon} = \cO(t^{-\ell}).
\end{align*}
\end{theorem}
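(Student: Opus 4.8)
The plan is to first establish the decay bound on $\primaldual_t$, and then bootstrap it into the bound on $\min_i \gap_i$ using the Frank-Wolfe step inequality~\eqref{eq:gap}.

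For the first part, start from~\eqref{eq:gap} together with the strong $(M,1)$-growth property~\eqref{eq:strong_gp} with $r=1$, which gives $D_f(\xx_t+\eta_t(\vv_t-\xx_t),\xx_t) \le \frac{M\eta_t^2}{2}\gap_t$. Substituting into~\eqref{eq:gap} and using $\gap_t \ge \primaldual_t$ yields
\begin{align*}
\primaldual_{t+1} \le \primaldual_t - \eta_t\gap_t\srb{1 - \tfrac{M\eta_t}{2}} \le \primaldual_t\srb{1 - \eta_t + \tfrac{M\eta_t^2}{2}}
\end{align*}
whenever $M\eta_t/2 \le 1$, i.e.\ for $t \ge \fwt$ (the closed form for $\fwt$ follows from solving $M\ell/(2(i+\ell)) \le 1$ for $i$). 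Then one unrolls the recursion: for $t \ge \fwt$,
\begin{align*}
\primaldual_t \le \primaldual_{\fwt} \prod_{k=\fwt}^{t-1}\srb{1 - \eta_k + \tfrac{M\eta_k^2}{2}}.
\end{align*}
The factor $1-\eta_k$ telescopes multiplicatively into $\prod_{k=\fwt}^{t-1}\frac{k}{k+\ell} = \prod (1-\eta_k)$; since $\eta_k = \ell/(k+\ell)$, one has $\prod_{k=\fwt}^{t-1}(1-\eta_k) = \frac{\fwt(\fwt+1)\cdots}{\cdots} = \srb{\frac{\eta_{t-1}}{\eta_{\fwt-1}}}^\ell$ up to the standard Pochhammer identity (this is exactly the kind of telescoping used for the open-loop step-size; I would state it as a one-line computation). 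The remaining correction $\prod_{k\ge\fwt}(1 + \tfrac{M\eta_k^2/2}{1-\eta_k+\cdots})$ is bounded by $\exp\srb{\sum_{k\ge\fwt}\tfrac{M\eta_k^2}{2(1-\eta_k)}} \le \exp\srb{\tfrac{M\ell^2}{2}\sum_{k\ge 1}\tfrac{1}{k^2}} = \exp\srb{\tfrac{M\pi^2\ell^2}{12}}$, using $1-\eta_k \ge$ a positive constant and $\eta_k^2/(1-\eta_k) \lesssim \ell^2/k^2$ after absorbing constants carefully. Combining the telescoped product with this exponential factor gives the stated bound $\primaldual_t \le \primaldual_{\fwt}\srb{\frac{\eta_{t-1}}{\eta_{\fwt-1}}}^\ell \exp\srb{\frac{M\pi^2\ell^2}{12}}$, which is $\cO(t^{-\ell})$ since $\eta_{t-1} = \Theta(t^{-1})$.

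For the second part, I would use~\eqref{eq:gap} rearranged as $\eta_t\gap_t \le \primaldual_t - \primaldual_{t+1} + \tfrac{M\eta_t^2}{2}\gap_t$, hence $\eta_t\gap_t(1 - \tfrac{M\eta_t}{2}) \le \primaldual_t - \primaldual_{t+1}$. One then sums over a suitable block of indices near $t$ (say from $\lceil t/2\rceil$ to $t$, or from the threshold $\lceil c\ell M/(2(1-\epsilon)) - \ell + 1\rceil$ onward, where $c=\exp(1/\ell)$ is chosen so that $1-M\eta_k/2 \ge 1-\epsilon$ on that range): the right-hand side telescopes to $\primaldual_{\text{start}} - \primaldual_{t+1} \le \primaldual_{\text{start}}$, so $(1-\epsilon)\sum_k \eta_k\gap_k \le \primaldual_{\text{start}}$, and lower-bounding $\min_{i\le t}\gap_i \cdot \sum_k\eta_k$ against this, with $\sum_{k=\text{start}}^{t}\eta_k = \Theta(\log(t/\text{start}))$ — no, that is not sharp enough to get $t^{-\ell}$. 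The correct move is to combine the telescoped gap sum with the already-established $\cO(t^{-\ell})$ decay of $\primaldual$: pick the block, bound $(1-\epsilon)\eta_t \min_{k\in\text{block}}\gap_k \cdot |\text{block}| \le \primaldual_{\text{start of block}}$, and since $\primaldual_{\text{start}}$ itself is $\cO(t^{-\ell})$ by the first part while $\eta_t|\text{block}| = \Theta(1)$ for a block of length $\Theta(t)$, one gets $\min_i\gap_i = \cO(t^{-\ell})$ with an explicit constant $\frac{\exp(2+M\pi^2\ell^2/12)}{\epsilon}$; the $\exp(2)$ absorbs the ratio $\primaldual_{\lceil t/2\rceil}/\primaldual_t$ of the decay bound at the two endpoints of the block and the $1/\epsilon$ comes from dividing by $(1-\epsilon)$ and from the block-length normalization.

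The main obstacle I anticipate is \textbf{getting the constants exactly right}, not the structure: namely (i) the precise telescoping identity for $\prod_{k=\fwt}^{t-1}(1-\eta_k)$ and its identification with $(\eta_{t-1}/\eta_{\fwt-1})^\ell$ for integer $\ell$, which requires the Pochhammer/Gamma-function bookkeeping to be done carefully (and may need a small slack when $\ell \ge 2$, paralleling the $(t+\ell)^2 \ge (t+\ell/2)(t+\ell+1)$ trick in Theorem~\ref{thm:slow_conv}); and (ii) choosing the summation block and the constant $c = \exp(1/\ell)$ so that both $1 - M\eta_k/2 \ge 1-\epsilon$ holds throughout the block and the endpoint ratio of the $\primaldual$-decay bound is controlled by $\exp(2)$. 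Everything else — substituting the growth property, the $\exp(\sum 1/k^2) = \exp(\pi^2/6)$ estimate, and the final $\cO(t^{-\ell})$ identification — is routine.
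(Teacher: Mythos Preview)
Your approach matches the paper's proof essentially line for line: the recursion from~\eqref{eq:gap} plus strong $(M,1)$-growth, the telescoping $\prod_{k=\fwt}^{t-1}(1-\eta_k)\le(\eta_{t-1}/\eta_{\fwt-1})^\ell$ (this is Lemma~\ref{lemma:telescope.simple}; it is a one-line inequality from $\prod k/(k+\ell)$ and needs no Gamma-function bookkeeping or slack), and the Basel bound $\exp\bigl(\tfrac{M\ell^2}{2}\sum_{i\ge 1}i^{-2}\bigr)$ on the residual product. For the gap bound the paper likewise sums $\primaldual_{i+1}\le\primaldual_i-\epsilon\,\eta_i\gap_i$ over a block $[R,t]$, bounds $\primaldual_R$ via part one, and chooses $R$ so that $(t+\ell)/(R+\ell)\approx c=e^{1/\ell}$ because this minimizes $x^\ell/\log x$; two small corrections to your sketch: the per-step requirement is $1-\tfrac{M\eta_k}{2}\ge\epsilon$ (not $\ge 1-\epsilon$), and $c$ is selected to optimize the endpoint-ratio-over-log tradeoff (whence the $\exp(2)$), not to enforce the $\epsilon$-threshold.
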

The proof of Theorem~\ref{thm:1_strong} relies on the following technical lemma.

\begin{lemma}\label{lemma:telescope.simple}
    Let $\eta_t = \frac{\ell}{t+\ell}$ for some $\ell\in\N_{\geq 1}$ and all $t\in\N$. Then, for $\fwt, t\in \N_{\geq 1}$ such that $\fwt \leq t$, it holds that 
    \begin{align*}
        \prod_{i=\fwt}^{t} \left(1-\eta_i\right)\leq \left(\frac{\eta_t}{\eta_{\fwt-1}}\right)^\ell.
    \end{align*}
\end{lemma}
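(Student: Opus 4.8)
The inequality to establish is $\prod_{i=\fwt}^{t}(1-\eta_i) \le (\eta_t/\eta_{\fwt-1})^\ell$ where $\eta_i = \ell/(i+\ell)$. The plan is to first rewrite each factor explicitly: $1-\eta_i = \frac{i}{i+\ell}$. Hence the product telescopes in a structured way,
\begin{align*}
\prod_{i=\fwt}^{t}(1-\eta_i) = \prod_{i=\fwt}^{t}\frac{i}{i+\ell} = \frac{\fwt(\fwt+1)\cdots t}{(\fwt+\ell)(\fwt+1+\ell)\cdots(t+\ell)} = \frac{(t-\fwt+1) \text{ terms in numerator}}{\cdots}.
\end{align*}
The key observation is that numerator and denominator share all the common factors between $\fwt+\ell$ and $t$ (assuming $t \ge \fwt+\ell$; the small cases where $t < \fwt+\ell$ should be checked separately or handled uniformly by the ratio-of-factorials formulation). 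After cancellation, the product equals $\frac{\fwt(\fwt+1)\cdots(\fwt+\ell-1)}{(t+1)(t+2)\cdots(t+\ell)}$, a ratio of two products of $\ell$ consecutive integers.

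The second step is to bound this ratio of $\ell$-fold products. On the right-hand side, $(\eta_t/\eta_{\fwt-1})^\ell = \left(\frac{\ell/(t+\ell)}{\ell/(\fwt-1+\ell)}\right)^\ell = \left(\frac{\fwt+\ell-1}{t+\ell}\right)^\ell$. So the claim reduces to
\begin{align*}
\frac{\fwt(\fwt+1)\cdots(\fwt+\ell-1)}{(t+1)(t+2)\cdots(t+\ell)} \le \left(\frac{\fwt+\ell-1}{t+\ell}\right)^\ell.
\end{align*}
I would prove this by matching up the $\ell$ factors on each side. For the numerator: each factor $\fwt, \fwt+1, \dots, \fwt+\ell-1$ is at most $\fwt+\ell-1$, so the numerator is at most $(\fwt+\ell-1)^\ell$. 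For the denominator: each factor $t+1, t+2, \dots, t+\ell$ is at least $t+1$ — but that only gives $(t+1)^\ell$, which is too weak. A cleaner pairing: compare the $j$-th factor of the left numerator, $\fwt+j-1$, against $\fwt+\ell-1$, and the $j$-th factor of the left denominator, $t+j$, against $t+\ell$; then show the per-factor inequality $\frac{\fwt+j-1}{t+j} \le \frac{\fwt+\ell-1}{t+\ell}$ for each $j\in\{1,\dots,\ell\}$. This per-factor inequality cross-multiplies to $(\fwt+j-1)(t+\ell) \le (\fwt+\ell-1)(t+j)$, i.e. $(\ell-j)(\fwt - 1) \le (\ell - j)t$ after expanding, which holds since $\ell \ge j$ and $t \ge \fwt \ge \fwt - 1$ (indeed $t \ge \fwt - 1$ suffices, and we also need $t \ge \fwt-1$ which is automatic). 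Multiplying these $\ell$ inequalities over $j=1,\dots,\ell$ yields exactly the desired bound.

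**Main obstacle.** The genuinely fiddly part is bookkeeping the telescoping cancellation cleanly, especially making sure the argument is valid when $t - \fwt + 1 < \ell$ (so numerator and denominator of $\prod \frac{i}{i+\ell}$ don't overlap in the naive index range). The safest route is to avoid case analysis by writing the product directly as $\prod_{i=\fwt}^t \frac{i}{i+\ell}$ and rearranging to $\prod_{j=1}^{\ell} \frac{\fwt + j - 1}{t + j}$ — one should verify this rearrangement by noting both equal $\frac{\Gamma(t+1)/\Gamma(\fwt)}{\Gamma(t+\ell+1)/\Gamma(\fwt+\ell)}$, or more elementarily by induction on $t$. Once that identity is in hand, the per-factor comparison in the previous paragraph is elementary and requires no further care. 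Everything else is routine.
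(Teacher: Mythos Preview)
Your proposal is correct and follows essentially the same route as the paper: rewrite $1-\eta_i = \frac{i}{i+\ell}$, telescope to $\frac{\fwt(\fwt+1)\cdots(\fwt+\ell-1)}{(t+1)(t+2)\cdots(t+\ell)}$, and bound this by $\left(\frac{\fwt+\ell-1}{t+\ell}\right)^\ell$. The paper states the last inequality without justification; your per-factor comparison $\frac{\fwt+j-1}{t+j}\le\frac{\fwt+\ell-1}{t+\ell}$ (equivalent to $(\ell-j)(t-\fwt+1)\ge 0$) is exactly the missing detail, and your concern about the case $t-\fwt+1<\ell$ is already resolved by the factorial/ratio identity you note, which the paper also uses implicitly.
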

\begin{proof}
Since $\eta_i = \ell/(i+\ell)$ for $i\in \N$ and $\ell \in \N_{\geq 1}$ it follows that
\[ \prod_{i=\fwt}^{t} \left(1- \eta_i\right)
        = \prod_{i=\fwt}^{t} \frac{i}{i+\ell} 
        = \frac{\fwt  \cdots \left(\fwt-1 + \ell\right)}{(t+1)  \cdots (t+\ell)}
        \leq \left(\frac{\fwt -1+ \ell}{t+\ell}\right)^\ell  
        = \left(\frac{\eta_t}{\eta_{\fwt-1}}\right)^{\ell}.\]
\end{proof}

With Lemma~\ref{lemma:telescope.simple}, the proof of Theorem~\ref{thm:1_strong} follows from straightforward calculations.
\begin{proof}[Proof of Theorem~\ref{thm:1_strong}]
    The statement is trivially satisfied for $t = \fwt$. Suppose that $t\in\N_{\geq\fwt}$. Inequality~\eqref{eq:gap} and the strong $(M,1)$-growth property imply that 
\begin{align}\label{eq:1_strong_eq_for_gap}
    \primaldual_{t+1} &\le \primaldual_{t} - \eta_{t} \gap_{t} + D_f(\xx_{t}+\eta_{t}(\vv_{t} - \xx_{t}),\xx_{t}) \notag \\ &\leq   \primaldual_{t} + \eta_{t} \gap_{t}  \left(\frac{\eta_{t}M}{2}  - 1\right).
\end{align}
Inequality~\eqref{eq:various.gaps} and $\frac{\eta_{t} M}{2} \leq 1$ for $t\in \N_{\geq S}$, it follows that for $t\in\N_{\geq \fwt}$, it holds that
\begin{align*}
    \primaldual_{t+1} & \leq \primaldual_{t} + \eta_{t}\primaldual_{t}  \srb{\frac{\eta_{t}M}{2}  - 1}\\
    &\leq  \primaldual_{\fwt} \prod_{i=\fwt}^{t}  \left(1-\eta_i+\frac{\eta_i^2 M}{2}\right)\\
    & = \primaldual_{\fwt} \prod_{i=\fwt}^{t} (1-\eta_i)  \left(1 + \frac{\eta_i^2 M}{2(1-\eta_i)}\right) & \text{$\triangleright$ since $\eta_i \neq 1$ for $i \in\N_{\geq \fwt}$}\\
     & \le \primaldual_{\fwt} \srb{\frac{\eta_t}{\eta_{\fwt-1}}}^\ell \prod_{i=\fwt}^{t} \left(1 + \frac{\eta_i^2 M}{2(1-\eta_i)}\right). & \text{$\triangleright$ by Lemma~\ref{lemma:telescope.simple}}
\end{align*}
To finish, it suffices to bound the very last product.  Indeed, for all $t\in\N_{\geq \fwt}$ it holds that 
\begin{align*}
\prod_{i=\fwt}^{t} \left(1 + \frac{\eta_i^2 M}{2(1-\eta_i)}\right)
    & = \prod_{i=\fwt}^{t} \left(1 + \frac{\ell^2(i+\ell) M}{2(i+\ell)^2i}\right)\\
    & = \prod_{i=\fwt}^{t} \left(1 + \frac{\ell^2 M}{2(i+\ell)i}\right)\\
    & \le \prod_{i=\fwt}^{t} \exp\left(\frac{\ell^2 M}{2(i+\ell)i}\right)
    & \text{$\triangleright$ since $1+x\leq \exp(x)$ for all $\xx\in\R$}\\
      & = \exp\left(\frac{M\ell}{2}\sum_{i=S}^t \left(\frac{1}{i}-\frac{1}{i+\ell} \right)\right) \\
      & \le \exp\left(\frac{M\ell}{2}\sum_{i=S}^{S+\ell-1}\frac{1}{i}\right) \\
      & \leq  \exp\left({\frac{M\ell^2}{2S}}\right).
\end{align*}

It remains to prove the convergence guarantee of $\min_{i\in\{0,1,\ldots,t\}}\gap_i$. 
% $\min_{i \in \{\Rfwt, \Rfwt + 1, \ldots, t\}} \gap_i$.  
For any $\Rfwt \geq \left\lceil\frac{\ell M }{2(1-\epsilon)} - \ell\right\rceil$ and $i\in\N_{\geq \Rfwt}$, by \eqref{eq:1_strong_eq_for_gap}, it holds that
% \begin{align*}
$
    \primaldual_{i+1} \leq \primaldual_{i} - \epsilon \eta_i \gap_i.
% \end{align*}
$
Thus, for any $t\in\N_{\geq\Rfwt +1}$, it holds that
\begin{align*}
    \primaldual_{t+1} &\leq \primaldual_{\Rfwt} -  \epsilon \ell \sum_{j=\Rfwt}^t\frac{1}{j+\ell} \cdot \min_{i \in \{\Rfwt, \Rfwt + 1, \ldots, t\}} \gap_i \\
    &\leq \primaldual_{\Rfwt} - \epsilon \ell \left(\int_{\Rfwt}^{t+1}\frac{1}{x+\ell} dx\right) \cdot \min_{i \in \{\Rfwt, \Rfwt + 1, \ldots, t\}} \gap_i,
    \\
    &\leq \primaldual_{\Rfwt} - \epsilon \ell \log\left(\frac{t+\ell}{\Rfwt+\ell}\right) \cdot \min_{i \in \{\Rfwt, \Rfwt + 1, \ldots, t\}} \gap_i
\end{align*}
Since $0 \leq \primaldual_{t+1}$, we have
\begin{align*}
    \min_{i \in \{\Rfwt, \Rfwt + 1, \ldots, t\}} \gap_i &\leq \frac{\primaldual_{\Rfwt}}{\epsilon\ell \log\left(\frac{t+\ell}{\Rfwt+\ell}\right)} 
    \\& \leq \frac{\primaldual_{\fwt}}{\epsilon \ell\log\left(\frac{t+\ell}{\Rfwt+\ell}\right)}  \srb{\frac{\eta_{\Rfwt-1}}{\eta_{\fwt-1}}}^\ell \exp\left({\frac{M\ell^2}{2S}}\right) \\
    & \leq \frac{\left(\frac{t+\ell-1}{\Rfwt+\ell-1}\right)^\ell \primaldual_{\fwt}}{\epsilon \ell\log\left(\frac{t+\ell}{\Rfwt+\ell}\right)}  \srb{\frac{\eta_{t-1}}{\eta_{\fwt-1}}}^\ell \exp\left({\frac{M\ell^2}{2S}}\right).
\end{align*}
We now choose $\Rfwt$ to roughly minimize the right-hand side. The derivative of $g(x) = \frac{x^\ell}{\log(x)}$ is $g'(x)= \frac{x^{\ell-1}(\ell (\log(x) - 1))}{\log^2(x)}$. Note that $g(x)$ is convex when restricted to the interval $[1, \infty[$. Thus, on this interval $g(x)$ is minimized at $\exp(\frac{1}{\ell})$. 
Let $c:= \exp(\frac{1}{\ell})$, $t\geq \left\lceil c\cdot\frac{\ell M}{2(1-\epsilon)}-\ell + 1 \right\rceil$, and $\Rfwt := \left\lfloor\frac{t+\ell}{c}-\ell\right\rfloor$. Then,
\begin{align*}
    c \cdot (\Rfwt + \ell +1) \geq  t+\ell \geq c \cdot (\Rfwt + \ell) \qquad \text{and} \qquad \Rfwt \geq  \left\lfloor\frac{\ell M}{2(1-\epsilon)} - \ell + 1\right\rfloor \geq \fwt.
\end{align*}
Thus, for the iterates of Algorithm~\ref{alg:fw} and $t\in\N$, $t\geq \left\lceil c\cdot\frac{\ell M}{2(1-\epsilon)}-\ell + 1 \right\rceil$, it holds that
\begin{align*}
    \min_{i \in \{0, 1, \ldots, t\}} \gap_i 
    & \leq \frac{ \left(c\cdot \frac{\Rfwt + \ell + 1 }{\Rfwt + \ell-1}\right)^\ell \primaldual_{\fwt}}{\epsilon}  \srb{\frac{\eta_{t-1}}{\eta_{\fwt-1}}}^\ell \exp\left({\frac{M\ell^2}{2S}}\right)\\
    & \leq \frac{ \left(\exp\left(\frac{1}{\ell}\right)\right)^\ell\left(1+\frac{2}{\ell}\right)^\ell \primaldual_{\fwt}}{\epsilon}  \srb{\frac{\eta_{t-1}}{\eta_{\fwt-1}}}^\ell \exp\left({\frac{M\ell^2}{2S}}\right)\\
    & \leq \primaldual_{\fwt}  \srb{\frac{\eta_{t-1}}{\eta_{\fwt-1}}}^\ell \frac{\exp\left(3+{\frac{M\ell^2}{2S}}\right)}{\epsilon},
\end{align*}
where the second inequality follows from $c^\ell = \left(\exp\left(\frac{1}{\ell}\right)\right)^\ell$ and $\frac{\Rfwt + \ell +1 }{\Rfwt + \ell-1} = \frac{\Rfwt + \ell - 1 }{\Rfwt + \ell-1} + \frac{2}{\Rfwt + \ell-1} \leq 1 + \frac{2}{\ell}$ and the third inequality follows from $\left(\exp\left(\frac{1}{\ell}\right)\right)^\ell = \exp(1)$ and $(1 + \frac{2}{\ell})^\ell \leq \exp\left(\frac{2}{\ell} \ell \right) = \exp(2)$.
\end{proof}
Theorem~\ref{thm:rate_strong} considers the case when the strong $(M, r)$-growth property holds for some $r\in[0,1[$.  In this case, the Frank-Wolfe algorithm with open-loop step-size $\frac{\ell}{t+\ell}$ for $\ell \in \N_{\geq 1}$ attains accelerated rate of convergence $\cO(t^{-\ell + \epsilon} + t^{-\frac{1}{1-r}})$ for any $\epsilon \in ]0,\ell[$. Note that the rate is capped at $\cO(t^{-\frac{1}{1-r}})$ when $r\in [0,1[$, in contrast to the uncapped rate 
$\cO(t^{-\ell})$ attainable when $r=1$ for sufficiently aggressive open-loop step-sizes.

\begin{theorem}[Strong $(M,r)$-growth, $r \in [0,1[$]\label{thm:rate_strong}
Let $\cC\subseteq \R^n$ be a compact convex set, let $f\colon\cC\to\R$ be convex and differentiable in an open set containing $\cC$, let $\eta_t = \frac{\ell}{t+\ell}$ for some $\ell\in\N_{\geq 1}$ and all $t\in\N$, and suppose that $(\cC, f)$ satisfies the strong $(M, r)$-growth property for some $M >0$ and $r\in [0, 1[$.
Let
$\epsilon \in ]0, \ell[$.  Then, for the iterates of Algorithm~\ref{alg:fw}, $k=\min\{\ell-\epsilon,\frac{1}{1-r}\}$ and $t\in\N_{\geq 1}$, it holds that
\begin{align}
    \primaldual_t 
     & \leq \exp\srb{\epsilon\ell}\max \left\{ \eta_{t-1}^{\ell-\epsilon} \frac{M}{2}, \eta_{t-1}^k \srb{\frac{\ell M}{2\epsilon}}^{\frac{1}{1-r}} 
    \right\} \label{eq:rate_strong_2}\\
    &=\cO(t^{-\ell + \epsilon} + t^{-\frac{1}{1-r}} ). \nonumber
\end{align}
Let $c = \exp\left(\frac{1}{\min\{\ell-\epsilon, \frac{1}{1-r}\}}\right)$. Then, for the iterates of Algorithm~\ref{alg:fw}, $t\in\N$, and $t\geq \left\lceil c\cdot(1+\ell) - \ell \right\rceil$, it holds that
\begin{align*}
    \min_{i \in \{0, 1, \ldots, t\}} \gap_i 
        &\leq \exp\srb{3+\epsilon\ell}\eta_{t-1}^k \max \left\{\frac{M}{2},  \srb{\frac{\ell M}{2\epsilon}}^{\frac{1}{1-r}} 
        \right\} =\cO(t^{-\ell + \epsilon} + t^{-\frac{1}{1-r}} ).
\end{align*}
\end{theorem}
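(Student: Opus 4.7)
The plan is to extend the proof of Theorem~\ref{thm:1_strong} to general $r \in [0,1[$ by introducing a case analysis on the magnitude of $\gap_t$ at each iteration. The max-of-two-terms structure in~\eqref{eq:rate_strong_2} mirrors two competing regimes: one where $\gap_t$ is large enough to yield clean contraction, and one where $\gap_t$ is already at a ``floor'' value determined by $\eta_t, M, r, \epsilon$. Starting from~\eqref{eq:gap} together with the strong $(M,r)$-growth property~\eqref{eq:strong_gp}, I would first rewrite the core recurrence as
\begin{equation*}
\primaldual_{t+1} \leq \primaldual_t + \eta_t \gap_t \left(\frac{M\eta_t \gap_t^{r-1}}{2} - 1\right),
\end{equation*}
and, for each $t \geq \fwt$, split into two cases. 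In case~(a), $\gap_t^{1-r} \geq M\eta_t \ell/(2\epsilon)$, so $M\eta_t \gap_t^{r-1}/2 \leq \epsilon/\ell$ and, using $\gap_t \geq \primaldual_t$, one obtains the contraction $\primaldual_{t+1} \leq \primaldual_t(1 - \eta_t(1 - \epsilon/\ell))$. In case~(b), $\gap_t < (M\eta_t \ell/(2\epsilon))^{1/(1-r)}$, and since $k := \min\{\ell-\epsilon, 1/(1-r)\} \leq 1/(1-r)$ and $\eta_t \leq 1$, this is dominated by $\eta_t^k (\ell M/(2\epsilon))^{1/(1-r)}$, matching the second argument of the max in~\eqref{eq:rate_strong_2}.

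The bound~\eqref{eq:rate_strong_2} would then follow by induction on $t \geq \fwt$. The base case $t = \fwt$ is immediate since the first argument of the max equals $\primaldual_\fwt$. For the inductive step, case~(a) preserves the first argument via a telescoping argument analogous to Lemma~\ref{lemma:telescope.simple}: I would bound $\prod_{i=\fwt}^{t-1}(1-\eta_i(1-\epsilon/\ell)) \leq ((\fwt+\ell)/(t+\ell))^{\ell-\epsilon}$ using $1-x \leq \exp(-x)$ and an integral comparison, then convert to the ratio $(\eta_{t-1}/\eta_{\fwt-1})^{\ell-\epsilon}$ at the cost of the multiplicative factor $\exp(\epsilon\ell/\fwt)$. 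Case~(b) directly yields the second argument (as above); the exponent $k$ is crucial here, since the second argument must decay no faster than the contraction rate $\ell-\epsilon$ in order to be preserved through transitions back into case~(a) at subsequent iterations.

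The gaps bound would follow the structure of the second half of Theorem~\ref{thm:1_strong}'s proof. In case~(a), $\primaldual_{t+1} \leq \primaldual_t - (1-\epsilon/\ell)\eta_t \gap_t$; in case~(b), $\gap_t$ is already small. Telescoping from some threshold $\Rfwt \geq \fwt$ to $t$, together with $\primaldual_{t+1} \geq 0$ and $\sum_{i=\Rfwt}^t \eta_i \geq \ell \log((t+\ell)/(\Rfwt+\ell))$, would give a bound on $\min_{i \in [\Rfwt, t]} \gap_i$ of roughly $\primaldual_\Rfwt / [(1-\epsilon/\ell)\ell \log((t+\ell)/(\Rfwt+\ell))]$ plus a small case-(b) correction. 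Substituting~\eqref{eq:rate_strong_2} for $\primaldual_\Rfwt$ and choosing $\Rfwt \approx (t+\ell)/c - \ell$ with $c = \exp(1/k)$, to roughly minimize $x^k / \log(x)$ where $x = (t+\ell)/(\Rfwt+\ell)$, would yield the claimed rate, with the $\exp(2)$ factor arising from $c^k (1 + 1/k)^k \leq \exp(2)$ exactly as in Theorem~\ref{thm:1_strong}.

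The hardest step will be the inductive argument for~\eqref{eq:rate_strong_2} when cases~(a) and~(b) alternate, especially the transition from case~(b) at $t$ to case~(a) at $t+1$: $\primaldual_{t+1}$ may still receive a positive contribution $M\eta_t^2 \gap_t^r/2$ even though $\primaldual_t$ was small. Identifying $k = \min\{\ell - \epsilon, 1/(1-r)\}$ as the correct exponent in the second argument of the max — so that it decays slowly enough to dominate within its regime, yet still captures the natural floor $\gap_t \leq (M\eta_t \ell/(2\epsilon))^{1/(1-r)}$ up to a harmless multiplicative factor — is the most delicate design choice in the proof.
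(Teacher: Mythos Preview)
Your proposal is correct and follows essentially the same route as the paper: the same threshold $(\eta_t\ell M/(2\epsilon))^{1/(1-r)}$ on $\gap_t$, the same contraction $\primaldual_{t+1}\le\primaldual_t(1-(1-\epsilon/\ell)\eta_t)$ in case~(a) handled via Lemma~\ref{lemma:telescope}, the same direct bound in case~(b), and the same choice $\Rfwt\approx(t+\ell)/c-\ell$ with $c=\exp(1/k)$ for the gap statement. For the alternation issue you flag as hardest, the paper's device is to first prove by induction a slightly stronger intermediate bound in which the second argument of the max is replaced by $\max_{\Rfwt\in\{\fwt,\dots,t\}}\eta_{\Rfwt-1}^{1/(1-r)}\bigl(\eta_{t-1}/\eta_{\Rfwt-1}\bigr)^{\ell-\epsilon}\bigl(\ell M/(2\epsilon)\bigr)^{1/(1-r)}\exp(\epsilon\ell/\Rfwt)$; in case~(b) one bounds $\primaldual_{t+1}\le(\eta_t\ell M/(2\epsilon))^{1/(1-r)}$ directly from the recurrence (via $\primaldual_t\le\gap_t$ and $\eta_t M/2\le 1$), which is exactly the term $\Rfwt=t+1$ in that max, so alternation is absorbed into the free index $\Rfwt$ and~\eqref{eq:rate_strong_2} then follows by bounding the max over $\Rfwt$ at the end.
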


The proof of Theorem~\ref{thm:rate_strong} relies on the following technical lemma that will also be used in the proof of Theorem~\ref{thm:rate-weak} in Section~\ref{sec:weak_growth} below.
\begin{lemma}\label{lemma:telescope}
    Let  $\eta_t = \frac{\ell}{t+\ell}$ for some $\ell\in\N_{\geq 1}$ and all $t\in\N$. Then, for any $\fwt, t\in \N_{\geq 1}$ such that $\fwt \leq t$ and $\epsilon \in [0,\ell]$, it holds that 
    \begin{align*}
        \prod_{i=\fwt}^{t} \left(1-\left(1-\frac{\epsilon}{\ell}\right)\eta_i\right)\leq \left(\frac{\eta_t}{\eta_{\fwt-1}}\right)^{\ell-\epsilon} \exp\left(\frac{\epsilon\ell}{\fwt}\right).
    \end{align*}
\end{lemma}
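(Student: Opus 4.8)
The plan is to use the explicit form of $\eta_i = \frac{\ell}{i+\ell}$ to rewrite the product as a ratio of rising factorials, peel off the part already controlled by Lemma~\ref{lemma:telescope.simple}, and estimate the remaining ``$\epsilon$-correction'' factor by a crude harmonic-sum bound. First I would simplify each factor: since $\eta_i = \frac{\ell}{i+\ell}$, a direct computation gives
\[
1 - \left(1-\frac{\epsilon}{\ell}\right)\eta_i \;=\; 1 - \frac{\ell-\epsilon}{i+\ell} \;=\; \frac{i+\epsilon}{i+\ell},
\]
so that $\prod_{i=\fwt}^{t}\left(1-(1-\epsilon/\ell)\eta_i\right) = \prod_{i=\fwt}^{t}\frac{i+\epsilon}{i+\ell}$. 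I would then split this product as
\[
\prod_{i=\fwt}^{t}\frac{i+\epsilon}{i+\ell} \;=\; \left(\prod_{i=\fwt}^{t}\frac{i}{i+\ell}\right)\left(\prod_{i=\fwt}^{t}\frac{i+\epsilon}{i}\right),
\]
which is legitimate because $\fwt\ge 1$, so no factor vanishes.

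For the first factor, note that $\prod_{i=\fwt}^t\frac{i}{i+\ell} = \prod_{i=\fwt}^t(1-\eta_i)$, which Lemma~\ref{lemma:telescope.simple} (whose hypotheses $\fwt,t\in\N_{\ge1}$ and $\fwt\le t$ are exactly ours) bounds by $\left(\frac{\eta_t}{\eta_{\fwt-1}}\right)^\ell = \left(\frac{\fwt-1+\ell}{t+\ell}\right)^\ell$. For the second factor I would use $1+x\le e^x$ together with the elementary estimate $\sum_{i=\fwt}^{t}\frac1i \le \frac1\fwt + \log\frac{t}{\fwt}$ — obtained by isolating the $i=\fwt$ term (which is needed so the bound stays finite when $\fwt=1$) and comparing the remaining sum with $\int_{\fwt}^{t}\frac{dx}{x}$ — to get $\prod_{i=\fwt}^{t}(1+\epsilon/i)\le e^{\epsilon/\fwt}(t/\fwt)^\epsilon$. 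Multiplying the two estimates gives
\[
\prod_{i=\fwt}^{t}\frac{i+\epsilon}{i+\ell} \;\le\; \left(\frac{\fwt-1+\ell}{t+\ell}\right)^{\ell}\, e^{\epsilon/\fwt}\left(\frac{t}{\fwt}\right)^{\epsilon}.
\]

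The final step is to reconcile this with the claimed bound $\left(\frac{\eta_t}{\eta_{\fwt-1}}\right)^{\ell-\epsilon}e^{\epsilon\ell/\fwt} = \left(\frac{\fwt-1+\ell}{t+\ell}\right)^{\ell-\epsilon}e^{\epsilon\ell/\fwt}$. Dividing one side by the other and assuming $\epsilon>0$ (the case $\epsilon=0$ being immediate), it remains to verify $\frac{(\fwt-1+\ell)\,t}{(t+\ell)\,\fwt}\le e^{(\ell-1)/\fwt}$, which follows from $\frac{t}{t+\ell}\le 1$ together with $\frac{\fwt-1+\ell}{\fwt}=1+\frac{\ell-1}{\fwt}\le e^{(\ell-1)/\fwt}$ (valid since $\ell\ge1$). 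I expect this reconciliation to be the only delicate point of the argument: one has to observe that the spurious factor $(t/\fwt)^\epsilon$ produced by the harmonic-sum bound is absorbed by exactly one of the $\ell$ powers of $\frac{\fwt-1+\ell}{t+\ell}$, turning the exponent $\ell$ into $\ell-\epsilon$ and leaving precisely the residual exponential $e^{\epsilon\ell/\fwt}$; everything else reduces to algebraic identities and one-line applications of $1+x\le e^x$.
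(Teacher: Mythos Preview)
Your proposal is correct and follows essentially the same approach as the paper: rewrite each factor as $\frac{i+\epsilon}{i+\ell}$, split off $\prod\frac{i}{i+\ell}$ and bound it via Lemma~\ref{lemma:telescope.simple}, then control $\prod(1+\epsilon/i)$ by $1+x\le e^x$ and an integral comparison of the harmonic sum. The only minor difference is that the paper shifts the harmonic sum by $\ell$ indices, bounding $\sum_{i=\fwt}^t\frac{1}{i}\le\frac{\ell}{\fwt}+\int_{\fwt+\ell-1}^{t+\ell}\frac{dx}{x}$, which produces exactly $\left(\frac{t+\ell}{\fwt-1+\ell}\right)^\epsilon$ and cancels cleanly against the first factor without your reconciliation step; your cruder bound $\frac{1}{\fwt}+\log\frac{t}{\fwt}$ works too but leaves the extra inequality $\frac{(\fwt-1+\ell)t}{(t+\ell)\fwt}\le e^{(\ell-1)/\fwt}$ to check, which you handle correctly.
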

\begin{proof}
    The proof is presented in Appendix~\ref{app:proof_lemma_telescope}
\end{proof}

\begin{proof}[Proof of Theorem~\ref{thm:rate_strong}]
We will prove a stronger statement.  We will show that for the iterates of Algorithm~\ref{alg:fw} and $t\in\N_{\geq 1}$, it holds that
\begin{equation}\label{eq:rate_strong}
\primaldual_t \leq  \max \left\{  \eta_{t-1}^{\ell-\epsilon}\frac{M}{2}\exp\srb{\epsilon\ell}, \max_{\Rfwt\in\{1, \ldots, t\}} \eta_{\Rfwt-1}^{\frac{1}{1-r}} \srb{\frac{\eta_{t-1}}{\eta_{\Rfwt-1}}}^{\ell-\epsilon}
\srb{\frac{\ell M}{2\epsilon}}^{\frac{1}{1-r}}
\exp\srb{\frac{\epsilon\ell}{\Rfwt}}
    \right\},
\end{equation}
which in turn implies~\eqref{eq:rate_strong_2} as we detail below.

We proceed by induction.  The statement~\eqref{eq:rate_strong} readily holds for $t = 1$. For the main inductive step, suppose that~\eqref{eq:rate_strong} holds for $t\in\N_{\geq1 }$. We distinguish between several cases:
\begin{enumerate}
    \item \label{case:good_progress} Suppose that
    \begin{align}\label{eq:smaller}
    \gap_{t} \leq \srb{\frac{\eta_{t}\ell M}{2\epsilon}}^{\frac{1}{1-r}}.
\end{align}
In that case, Inequality~\eqref{eq:various.gaps}, Inequality~\eqref{eq:gap}, and the strong $(M, r)$-growth property imply that 
\begin{align}\label{eq:good_growth}
    \primaldual_{t+1} &\leq (1- \eta_{t}) \gap_{t} + \frac{\eta_{t}^2M}{2}\gap_{t}^r \notag\\
    & \leq (1-\eta_{t})\srb{\frac{\eta_{t}\ell M}{2\epsilon}}^{\frac{1}{1-r}} + \eta_{t}\cdot \frac{\eta_t M}{2} \srb{\frac{\eta_{t}\ell M}{2\epsilon}}^{\frac{r}{1-r}}  \notag\\
    & \leq (1-\eta_{t})\srb{\frac{\eta_{t}\ell M}{2\epsilon}}^{\frac{1}{1-r}} + \eta_{t}\cdot \frac{\eta_t \ell M}{2\epsilon} \srb{\frac{\eta_{t}\ell M}{2\epsilon}}^{\frac{r}{1-r}}  \notag\\
    & =\srb{\frac{\eta_{t}\ell M}{2\epsilon}}^{\frac{1}{1-r}}.
    % & \leq (1-\eta_{t})\srb{\frac{\eta_{t}\ell M}{2\epsilon}}^{\frac{1}{1-r}} + \eta_{t}\srb{\frac{\eta_{t}\ell M}{2\epsilon}}^{\frac{r}{1-r}} & \text{$\triangleright$ since $\frac{\eta_t M}{2} \leq 1$ for all $t\in\N_{\geq1}$} \notag\\    
    % & = \srb{\frac{\eta_{t}\ell M}{2\epsilon}}^{\frac{1}{1-r}} .
\end{align}
Observe that 
\[
\srb{\frac{\eta_{t}\ell M}{2\epsilon}}^{\frac{1}{1-r}} \le 
\max_{\Rfwt\in\{\change{1}, \ldots, t+1\}} \eta_{\Rfwt-1}^{\frac{1}{1-r}} \srb{\frac{\eta_{t}}{\eta_{\Rfwt-1}}}^{\ell-\epsilon}
\srb{\frac{\ell M}{2\epsilon}}^{\frac{1}{1-r}}
\exp\srb{\frac{\epsilon\ell}{\Rfwt}}.
\]
Thus, \eqref{eq:rate_strong} holds for $t+1.$
\item \label{case:bad_progress} Suppose that \eqref{eq:smaller} does not hold. Let $\Rfwt\in \{\change{1}, \ldots, t\}$ be the smallest index such that
\begin{align}\label{eq:greater}
    \gap_{i} \geq \srb{\frac{\eta_{i}\ell M}{2\epsilon}}^{\frac{1}{1-r}}
\end{align}
for all $i\in\{\Rfwt, \Rfwt + 1, \ldots, t\}$.
Then, Inequality~\eqref{eq:various.gaps}, Inequality~\eqref{eq:gap}, and the strong  $(M, r)$-growth property imply that 
\begin{align}\label{eq:rec}
   \primaldual_{i+1} &\leq \primaldual_{i}+\eta_{i}\gap_{i}\srb{\frac{\eta_{i}M}{2\gap_{i}^{1-r}}-1} 
   \leq  \primaldual_{i} \left(1 -\srb{1-\frac{\epsilon}{\ell}} \eta_{i}\right)
\end{align}
for all $i\in \{\Rfwt, \Rfwt + 1, \ldots, t\}$.
By Lemma~\ref{lemma:telescope},
\begin{align}\label{eq:calc_big}
    \primaldual_{t+1} & \leq \primaldual_{\Rfwt}\prod_{i=\Rfwt}^{t} \left(1 - \srb{1-\frac{\epsilon}{\ell}}\eta_i\right) \leq \primaldual_\Rfwt \srb{\frac{\eta_{t}}{\eta_{\Rfwt-1}}}^{\ell-\epsilon}\exp\srb{\frac{\epsilon\ell}{\Rfwt}}.
\end{align}
We distinguish between two subcases:
\begin{enumerate}
\item \label{case:bad_progress.a}  $\Rfwt = 1$.  In this case, \eqref{eq:calc_big} and \eqref{eq.pd1} imply that
\begin{align}\label{eq:fwt_does_not_exist}
    \primaldual_{t+1} &\leq   \primaldual_{1} \eta_{t}^{\ell-\epsilon}\exp\srb{\epsilon\ell}
    \leq  \eta_{t}^{\ell-\epsilon}\frac{M}{2}\exp\srb{\epsilon\ell}.
\end{align}
\item \label{case:bad_progress.b} $\Rfwt > 1$. In this case, 
$
\gap_{\Rfwt-1} \leq \srb{\frac{\eta_{\Rfwt-1}\ell M}{2\epsilon}}^{\frac{1}{1-r}}
$.  Hence the bound \eqref{eq:good_growth} in Case~\ref{case:good_progress} can be applied 
%to $t=R-1$
to get that $\primaldual_{\Rfwt} \le \srb{\frac{ \eta_{\Rfwt-1}\ell M}{2\epsilon}}^{\frac{1}{1-r}}$
and thus~\eqref{eq:calc_big} implies that
\begin{align}\label{eq:fwt_exists}
    \primaldual_{t+1} & 
    \leq \eta_{\Rfwt-1}^{\frac{1}{1-r}} \srb{\frac{\eta_{t}}{\eta_{\Rfwt-1}}}^{\ell-\epsilon}\srb{\frac{ \ell M}{2\epsilon}}^{\frac{1}{1-r}}\exp\srb{\frac{\epsilon\ell}{\Rfwt}}.
\end{align}
\end{enumerate}
Thus, in both Case~\ref{case:bad_progress.a} and Case~\ref{case:bad_progress.b} the bound~\eqref{eq:rate_strong} holds for $t+1$.
\end{enumerate}
We next show that~\eqref{eq:rate_strong} implies~\eqref{eq:rate_strong_2}.  To that end, first observe that
\begin{align*}
\max_{\Rfwt\in\{1, \ldots, t\}} \eta_{\Rfwt-1}^{\frac{1}{1-r}}\srb{\frac{\eta_{t-1}}{\eta_{\Rfwt-1}}}^{\ell-\epsilon} &=
\max_{\Rfwt\in\{1, \ldots, t\}}\frac{\eta_{t-1}^{\ell-\epsilon}}{\eta_{\Rfwt-1}^{\ell-\epsilon -\frac{1}{1-r}}} \\
&= 
\left\{\begin{array}{ll} \eta_{t-1}^{\frac{1}{1-r}}, &  \text{ if } \ell - \epsilon \geq \frac{1}{1-r} \\
\eta_{t-1}^{\ell - \epsilon} , & \text{ if } \ell - \epsilon \leq \frac{1}{1-r}
\end{array}\right. \\
&\le \eta_{t-1}^{\min\{\ell-\epsilon, \frac{1}{1-r}\}}.
\end{align*}
Therefore \eqref{eq:rate_strong_2} follows from~\eqref{eq:rate_strong} and the fact that
\begin{align*}
    \max_{\Rfwt\in\{1, \ldots, t\}} \eta_{\Rfwt-1}^{\frac{1}{1-r}}\srb{\frac{\eta_{t-1}}{\eta_{\Rfwt-1}}}^{\ell-\epsilon}\srb{\frac{ \ell M}{2\epsilon}}^{\frac{1}{1-r}}\exp\srb{\frac{\epsilon\ell}{\Rfwt}}
    &\leq \max_{\Rfwt\in\{1, \ldots, t\}} \eta_{\Rfwt-1}^{\frac{1}{1-r}}\srb{\frac{\eta_{t-1}}{\eta_{\Rfwt-1}}}^{\ell-\epsilon}\srb{\frac{ \ell M}{2\epsilon}}^{\frac{1}{1-r}}\exp\srb{\epsilon\ell}\\
    &\le\eta_{t-1}^{\min\{\ell-\epsilon, \frac{1}{1-r}\}} \srb{\frac{\ell M}{2\epsilon}}^{\frac{1}{1-r}} \exp\srb{\epsilon\ell}.
\end{align*}

It remains to prove the convergence guarantee of $\min_{i\in\{0,1,\ldots,t\}}\gap_i$.
Recall that $k=\min\{\ell-\epsilon, \frac{1}{1-r}\}$.  Let $c:= \exp\left(\frac{1}{k}\right)$, let $t\geq \left\lceil c\cdot(1+\ell) - \ell \right\rceil$, and let $\Rfwt := \left\lfloor\frac{t+\ell}{c}-\ell\right\rfloor$. Then,
\begin{align*}
    c \cdot (\Rfwt + \ell+1) \geq  t+\ell \geq c \cdot (\Rfwt + \ell) \qquad \text{and} \qquad \Rfwt \geq  \left\lfloor\frac{\left\lceil c\cdot(1+\ell) - \ell\right\rceil +  \ell}{c} -\ell\right\rfloor \geq 1.
\end{align*}
Thus,
\begin{align}\label{eq:rate_strong_gap_cons}
    &\left(\frac{t+\ell-1}{\Rfwt+\ell-1}\right)^k \leq \left(c\cdot\frac{\Rfwt+\ell+1}{\Rfwt+\ell-1}\right)^k\leq \left(\exp\left(\frac{1}{k}\right)\right)^k\left(1+\frac{2}{\ell}\right)^k\leq \exp(3)\nonumber \\
    &\text{and} \qquad \log\left(\frac{t+\ell}{\Rfwt+\ell}\right) \geq \frac{1}{\ell-\epsilon}.
\end{align}
First, suppose that there exists a $j\in\{\Rfwt,\Rfwt+1,\ldots,t\}$ such that 
$
    \gap_j \leq \left(\frac{\eta_j\ell M}{2\epsilon}\right)^{\frac{1}{1-r}}.
$
Thus, by \eqref{eq:rate_strong_gap_cons}, it holds that
\begin{align*}
    \min_{i\in\{0,1,\ldots,t\}} \gap_i  \leq \gap_j \leq \left(\frac{\eta_j\ell M}{2\epsilon}\right)^{\frac{1}{1-r}} & \leq \left(\frac{\eta_{\Rfwt-1}\ell M}{2\epsilon}\right)^{\frac{1}{1-r}}\\
    & \leq \eta_{\Rfwt-1}^k\left(\frac{\ell M}{2\epsilon}\right)^{\frac{1}{1-r}}\\
    & \leq \left(\frac{t+\ell-1}{\Rfwt+\ell-1}\right)^k\eta_{t-1}^k\left(\frac{\ell M}{2\epsilon}\right)^{\frac{1}{1-r}}\\
    &\leq \exp\srb{3+\epsilon\ell}\eta_{t-1}^k \max \left\{\frac{M}{2},  \srb{\frac{\ell M}{2\epsilon}}^{\frac{1}{1-r}} 
    \right\}.\nonumber
\end{align*}
Second, suppose that instead
$
    \gap_i \geq \left(\frac{\eta_i\ell M}{2\epsilon}\right)^{\frac{1}{1-r}}
$
for all $i\in\{\Rfwt, \Rfwt+1, \ldots, t\}$. Then, by Inequality~\eqref{eq:various.gaps}, Inequality~\eqref{eq:gap}, and the strong $(M,r)$-growth property, it holds that
\[
    \primaldual_{i+1} \leq \primaldual_{i} - \left(1-\frac{M\eta_i}{2}\gap_i^{r-1}\right)\eta_i\gap_i \le
    \primaldual_{i} - \left(1-\frac{\epsilon}{\ell}\right) \eta_i \gap_i.
\]
Thus,
\begin{align*}
    \primaldual_{t+1} &\leq \primaldual_{\Rfwt} -  (\ell-\epsilon) \sum_{j=\Rfwt}^t\frac{1}{j+\ell} \cdot \min_{i \in \{\Rfwt, \Rfwt + 1, \ldots, t\}} \gap_i \\
    &\leq \primaldual_{\Rfwt} - (\ell-\epsilon) \log\left(\frac{t+\ell}{\Rfwt+\ell}\right) \cdot \min_{i \in \{\Rfwt, \Rfwt + 1, \ldots, t\}} \gap_i.
    % & \leq \primaldual_{\Rfwt} - \epsilon \ell \log(c) \cdot \min_{i \in \{\Rfwt, \Rfwt + 1, \ldots, t\}} \gap_i .
\end{align*}
Since $0 \leq \primaldual_{t+1}$, by \eqref{eq:rate_strong_2} and \eqref{eq:rate_strong_gap_cons}, it holds that
\begin{align*}
    \min_{i \in \{0, 1, \ldots, t\}} \gap_i &\leq \frac{\primaldual_{\Rfwt}}{(\ell-\epsilon) \log\left(\frac{t+\ell}{\Rfwt+\ell}\right)} \\
     &\leq \frac{\exp\srb{\epsilon\ell}}{(\ell-\epsilon) \log\left(\frac{t+\ell}{\Rfwt+\ell}\right)} \max \left\{ \eta_{\Rfwt-1}^{\ell-\epsilon} \frac{M}{2}, \eta_{\Rfwt-1}^k \srb{\frac{\ell M}{2\epsilon}}^{\frac{1}{1-r}} 
    \right\}\\
   &\leq \frac{\exp\srb{\epsilon\ell}\eta_{\Rfwt-1}^k}{(\ell-\epsilon) \log\left(\frac{t+\ell}{\Rfwt+\ell}\right)} \max \left\{\frac{M}{2} ,  \srb{\frac{\ell M}{2\epsilon}}^{\frac{1}{1-r}} 
    \right\}\\
    & = \frac{\left(\frac{t+\ell-1}{\Rfwt+\ell-1}\right)^k\exp\srb{\epsilon\ell}\eta_{t-1}^k}{(\ell-\epsilon) \log\left(\frac{t+\ell}{\Rfwt+\ell}\right)} \max \left\{\frac{M}{2} ,  \srb{\frac{\ell M}{2\epsilon}}^{\frac{1}{1-r}} 
    \right\}\\
    &\leq \exp\srb{3+\epsilon\ell}\eta_{t-1}^k \max \left\{\frac{M}{2} ,  \srb{\frac{\ell M}{2\epsilon}}^{\frac{1}{1-r}} 
    \right\},
\end{align*}
where the last equality follows from  
$\left(\frac{t+\ell-1}{\Rfwt+\ell-1}\right)^k\eta_{t-1}^k  = \left(\frac{\ell}{\Rfwt+\ell-1}\right)^k = \eta_{\Rfwt-1}^k$ and the last inequality follows from \eqref{eq:rate_strong_gap_cons}.
\end{proof}

The results derived in this section characterize the acceleration of \fw{} with open-loop step-sizes when the strong $(M, r)$-growth property is satisfied. It remains to discuss sufficient conditions for the strong $(M, r)$-growth property to be satisfied.

\subsection{Sufficient conditions for strong $(M, r)$-growth}\label{sec.suff_strong_M_r}
A simple setting for which the strong $(M,r)$-growth property holds is when the feasible region is uniformly convex and the norm of the gradient of the objective is bounded from below by a positive constant. We recall the following result by \cite{pena2023affine}.  Although this result holds for general norms, we restrict ourselves to the intrinsic norm $\|\cdot\|_{\cC}$ to guarantee affine invariance.

\begin{proposition}[Proposition~3.3, \cite{pena2023affine}]\label{prop:exterior}
    For $\alpha > 0$ and $p \geq 2$, let $\cC \subseteq \R^n$ be a compact $(\alpha, p)$-uniformly convex set with respect to $\|\cdot\|_{\cC}$, for $L>0$ and $\lambda>0$, let $f\colon\cC\to\R$ be a convex and $L$-smooth function with respect to $\|\cdot\|_{\cC}$ with lower-bounded gradients, that is, $\|\nabla f(\xx) \|_{\cC, *} \geq \lambda$ for all $\xx\in \cC$. Then,
    $(\cC,f)$ satisfies the strong $(M,r)$-growth property with $M = L (\alpha\lambda)^{-\frac{2}{p}}$ and $r = \frac{2}{p}$.
\end{proposition}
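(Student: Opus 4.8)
The plan is to reduce the strong $(M,r)$-growth inequality~\eqref{eq:strong_gp} to a lower bound on the Wolfe gap $\gap(\xx)$ in terms of $\|\vv-\xx\|_{\cC}$, and then to extract that lower bound from the $(\alpha,p)$-uniform convexity of $\cC$ combined with the optimality of the Frank-Wolfe vertex $\vv$. The lower-bounded gradient condition $\|\nabla f(\xx)\|_{\cC}^* \ge \lambda$ enters exactly once, at the end of that second step.

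First, fix $\xx\in\cC$ and $\vv\in\argmin_{\yy\in\cC}\langle\nabla f(\xx),\yy-\xx\rangle$, and take any $\eta\in[0,1]$. Since $f$ is $L$-smooth with respect to $\|\cdot\|_{\cC}$, the definition of the Bregman divergence immediately gives $D_f(\xx+\eta(\vv-\xx),\xx)\le\frac{L\eta^2}{2}\|\vv-\xx\|_{\cC}^2$. Comparing with~\eqref{eq:strong_gp} for the claimed values $M=L(\alpha\lambda)^{-2/p}$ and $r=2/p$, it therefore suffices to prove the single inequality
\[
\alpha\lambda\,\|\vv-\xx\|_{\cC}^{p}\ \le\ \gap(\xx)=\langle\nabla f(\xx),\xx-\vv\rangle ,
\]
since rearranging it yields $\|\vv-\xx\|_{\cC}^2\le(\alpha\lambda)^{-2/p}\gap(\xx)^{2/p}$, which plugged into the smoothness estimate closes the argument.

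To prove that inequality I would use uniform convexity with the pair $\xx,\vv\in\cC$ and a unit vector $\zz$ chosen so that $\langle\nabla f(\xx),\zz\rangle=-\|\nabla f(\xx)\|_{\cC}^*$; such a $\zz$ exists because the standing assumption $\mathrm{span}(\cC-\cC)=\R^n$ makes $\|\cdot\|_{\cC}$ a genuine norm on $\R^n$, whose dual norm is attained on the (compact) unit sphere. For every $\gamma\in(0,1]$, the point $\ww_\gamma:=\gamma\xx+(1-\gamma)\vv+\gamma(1-\gamma)\alpha\|\xx-\vv\|_{\cC}^{p}\zz$ lies in $\cC$, so the optimality of $\vv$ gives $\langle\nabla f(\xx),\ww_\gamma-\vv\rangle\ge0$. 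Expanding $\ww_\gamma-\vv=\gamma(\xx-\vv)+\gamma(1-\gamma)\alpha\|\xx-\vv\|_{\cC}^{p}\zz$, dividing by $\gamma>0$, and letting $\gamma\to0^+$ yields $\langle\nabla f(\xx),\xx-\vv\rangle\ge\alpha\|\vv-\xx\|_{\cC}^{p}\|\nabla f(\xx)\|_{\cC}^*$, and the hypothesis $\|\nabla f(\xx)\|_{\cC}^*\ge\lambda$ turns this into the desired bound.

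The routine parts — the smoothness estimate and the exponent bookkeeping converting $\gap(\xx)\ge\alpha\lambda\|\vv-\xx\|_{\cC}^{p}$ into the $\gap(\xx)^{2/p}$ bound — are immediate. The one place that needs care is the uniform-convexity step: one must justify the existence of the extremal unit vector $\zz$ (this is where $\mathrm{span}(\cC-\cC)=\R^n$ is genuinely used) and the passage to the limit $\gamma\to0^+$, which is legitimate because after dividing by $\gamma>0$ the inequality holds for all $\gamma\in(0,1]$, so we may pass to the supremum over $\gamma$ without loss.
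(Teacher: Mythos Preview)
Your proof is correct, and the argument is the standard one: bound $D_f$ via smoothness, then show $\gap(\xx)\ge\alpha\lambda\|\vv-\xx\|_{\cC}^p$ by probing uniform convexity with a unit vector aligned against the gradient and invoking optimality of $\vv$. The paper itself does not supply a proof of this proposition; it is quoted directly from \citet{pena2021affine}, and your argument is essentially the same route taken there.
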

The results established in this section address several open questions raised in \cite{wirth2023acceleration} and significantly advance the understanding of \fw{} with $\eta_t = \frac{\ell}{t+\ell}$ for $\ell\in\N_{\geq 1}$. In particular, our rates are affine-invariant, faster than those in \cite{wirth2023acceleration}, and in $\primaldual_t$, which bounds $\subopt_t$ from above. Furthermore, we derive bounds on the dual gap.

In the setting of Proposition~\ref{prop:exterior}, when the feasible region is strongly convex, Theorem~D.2 and Remark~D.3 in \cite{wirth2023acceleration} yields rates of order $\subopt_t = \cO(t^{-\ell/2})$. 
In this strong $(M, 1)$-growth setting, Theorem~\ref{thm:1_strong} yields rates of order $\primaldual_t = \cO(t^{-\ell})$ and $\min_{i\in\{1,\ldots, t\}} \gap_i = \cO(t^{-\ell})$.

In the setting of Proposition~\ref{prop:exterior}, when the feasible region is uniformly convex, Theorem~D.2 in \cite{wirth2023acceleration} yields rates of order $\subopt_t = \cO(t^{-2} + t^{-\frac{1}{1-r}})$ for \fw{} with $\eta_t = \frac{4}{t+4}$. 
In this strong $(M, r)$-growth setting, Theorem~\ref{thm:rate_strong} yields rates of order $\primaldual_t = \cO(t^{-\ell+\epsilon} + t^{-\frac{1}{1-r}})$ and $\min_{i\in\{1,\ldots, t\}} \gap_i = \cO(t^{-\ell+\epsilon} + t^{-\frac{1}{1-r}})$ for any $\epsilon > 0$. Thus, our analysis demonstrates that for small $r$, larger $\ell\in\N$ leads to faster convergence, an intricacy not captured by \cite{wirth2023acceleration}.

\subsection{Numerical experiments}\label{sec:strong_growth.examples}
\noindent
Proposition~\ref{prop:exterior} motivates the following example for which the strong $(M,r)$-growth property holds.  For ease of exposition, we take $\cC$ to be the unit $\ell_p$-norm ball for some $p >1$.  However, Proposition~\ref{prop:exterior} applies to the much broader class of uniformly convex domains.
\begin{example}[Strong $(M, r)$-growth]\label{ex:strong}
For $p> 1$, let $\cC = \{\xx\in\R^n \mid \|\xx\|_p \leq 1\}$ be an $\ell_p$-ball. 
Let $q\geq 1$ be such that $\frac{1}{p}+\frac{1}{q}=1$.
Let $\yy \in \R^n$ and $f(\xx):= \frac{1}{2}\|\xx-\yy\|_2^2$. 
We distinguish between two cases:
\begin{enumerate}
    \item When $p \in ]1, 2]$, 
    by Proposition~\ref{prop:unif_cvx}, $\cC$ is $(p-1)/4$-strongly convex with respect to $\|\cdot\|_{\cC} = \|\cdot\|_p$.
    Let $\yy$ be such that $\|\yy\|_q = 1 + \lambda$ for some $\lambda > 0$. Then, $\|\nabla f(\xx)\|_q = \|\xx - \yy\|_q \geq  \abs{\|\yy\|_q - \|\xx\|_q} \geq \|\yy\|_q - \|\xx\|_p = \lambda > 0$ for all $\xx\in\cC$.
    By Lemma~\ref{lemma:lp_balls}, $f$ is $1$-smooth with respect to $\|\cdot\|_{\cC}  $.
    Thus, by Proposition~\ref{prop:exterior}, $(\cC, f)$ satisfies the strong $(M, r)$-growth property with $M= \frac{4}{(p-1)\lambda}$ and $r=1$.
    \item When $p\geq 2$, 
    by Proposition~\ref{prop:unif_cvx}, $\cC$ is $(1/(p2^{p-1}), p)$-uniformly convex with respect to $\|\cdot\|_{\cC} = \|\cdot\|_p$.
    In addition, $q\le 2\le p.$
    Let $\yy$ be such that $\|\yy\|_q = n^{\frac{1}{q}-\frac{1}{p}} + \lambda$ for some $\lambda > 0$. Then, $\|\nabla f(\xx)\|_q = \|\xx - \yy\|_q \geq  \abs{\|\yy\|_q - \|\xx\|_q} \geq \|\yy\|_q - n^{\frac{1}{q}-\frac{1}{p}} = \lambda > 0$ for all $\xx\in\cC$. 
    By Lemma~\ref{lemma:lp_balls}, $f$ is $n^{\frac{1}{2}-\frac{1}{p}}$-smooth with respect to $\|\cdot\|_{\cC}$.
    Thus, by Proposition~\ref{prop:exterior}, $(\cC, f)$ satisfies the strong $(M, r)$-growth property with $M= n^{\frac{1}{2}-\frac{1}{p}} (\frac{p2^{p-1}}{\lambda} )^{\frac{2}{p}}$ and $r=\frac{2}{p}$.
\end{enumerate}
\end{example}

\subsubsection{Strong $(M, 1)$-growth experiment}\label{sec:strong_growth_setting}

\begin{figure}[t]
\captionsetup[subfigure]{justification=centering}
% \vspace{.3in}
\begin{tabular}{c c c c} 
\begin{subfigure}{.31\textwidth}
    \centering
        \includegraphics[width=1\textwidth]{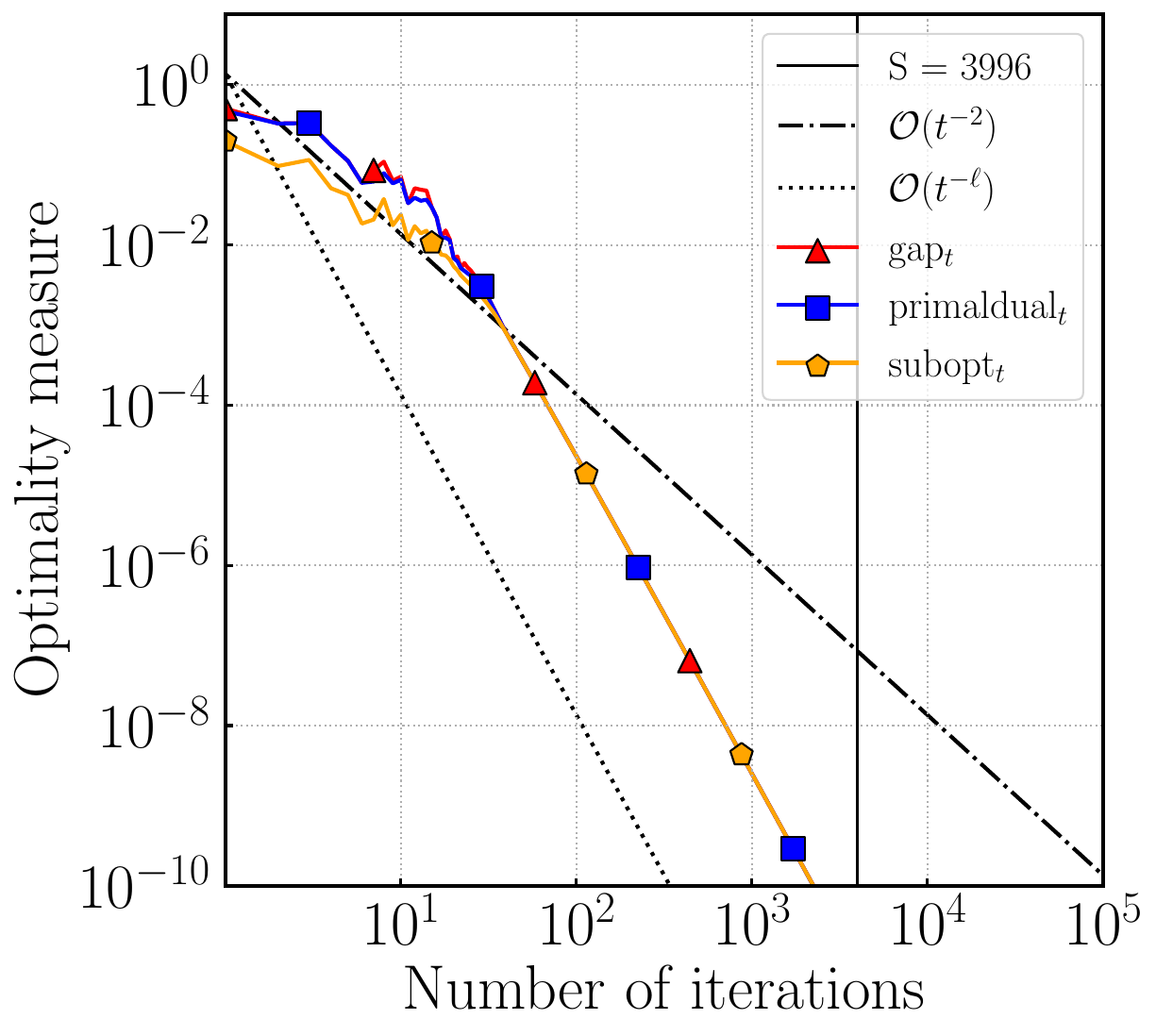}
        \caption{$\ell_{1.01}$-ball, $r=1$.}\label{fig:1_strong_1.01}
    \end{subfigure} & 
    \begin{subfigure}{.31\textwidth}
    \centering
        \includegraphics[width=1\textwidth]{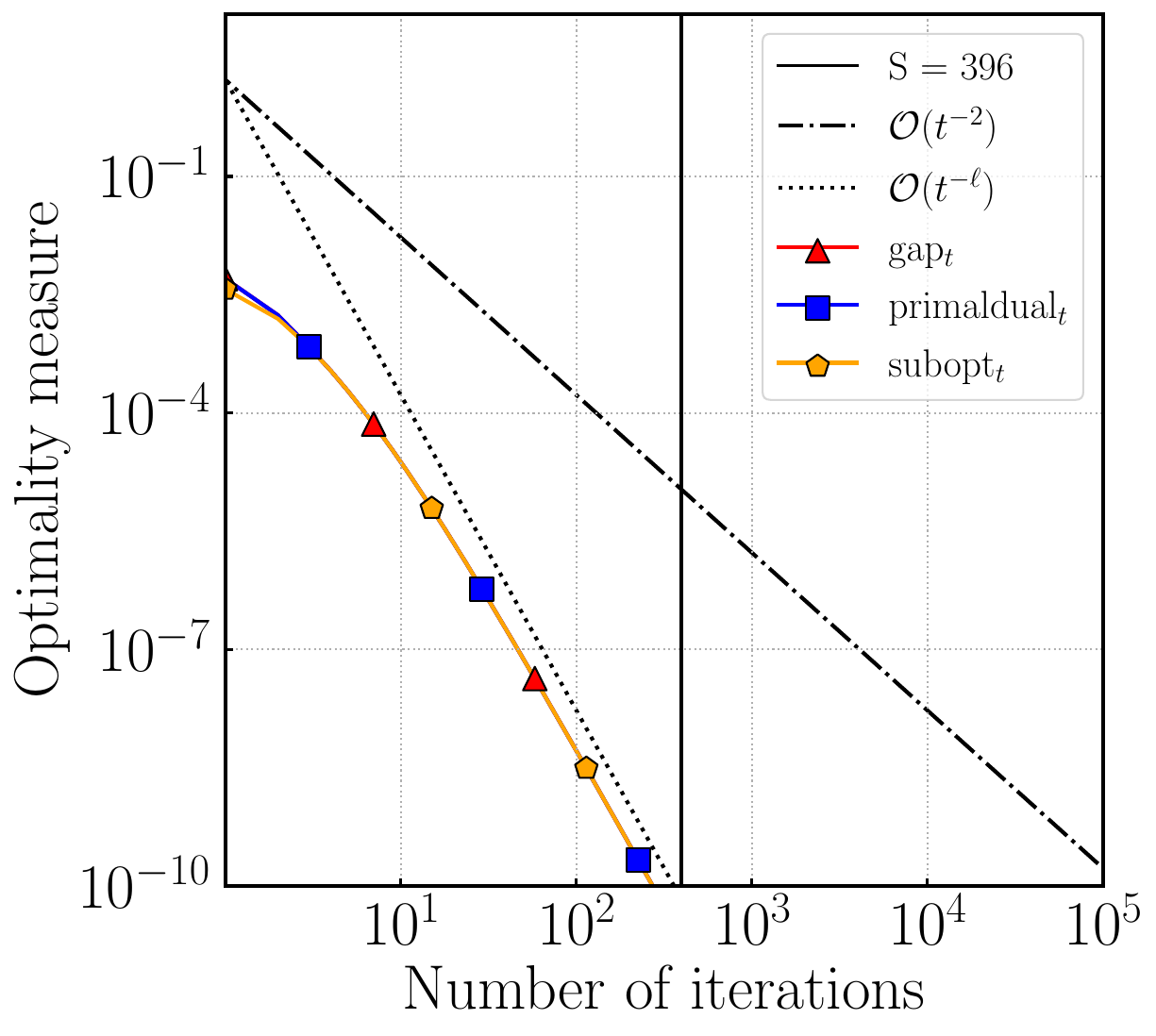}
        \caption{$\ell_{1.1}$-ball, $r=1$.}\label{fig:1_strong_1.1}
    \end{subfigure} & 
\begin{subfigure}{.31\textwidth}
    \centering
        \includegraphics[width=1\textwidth]{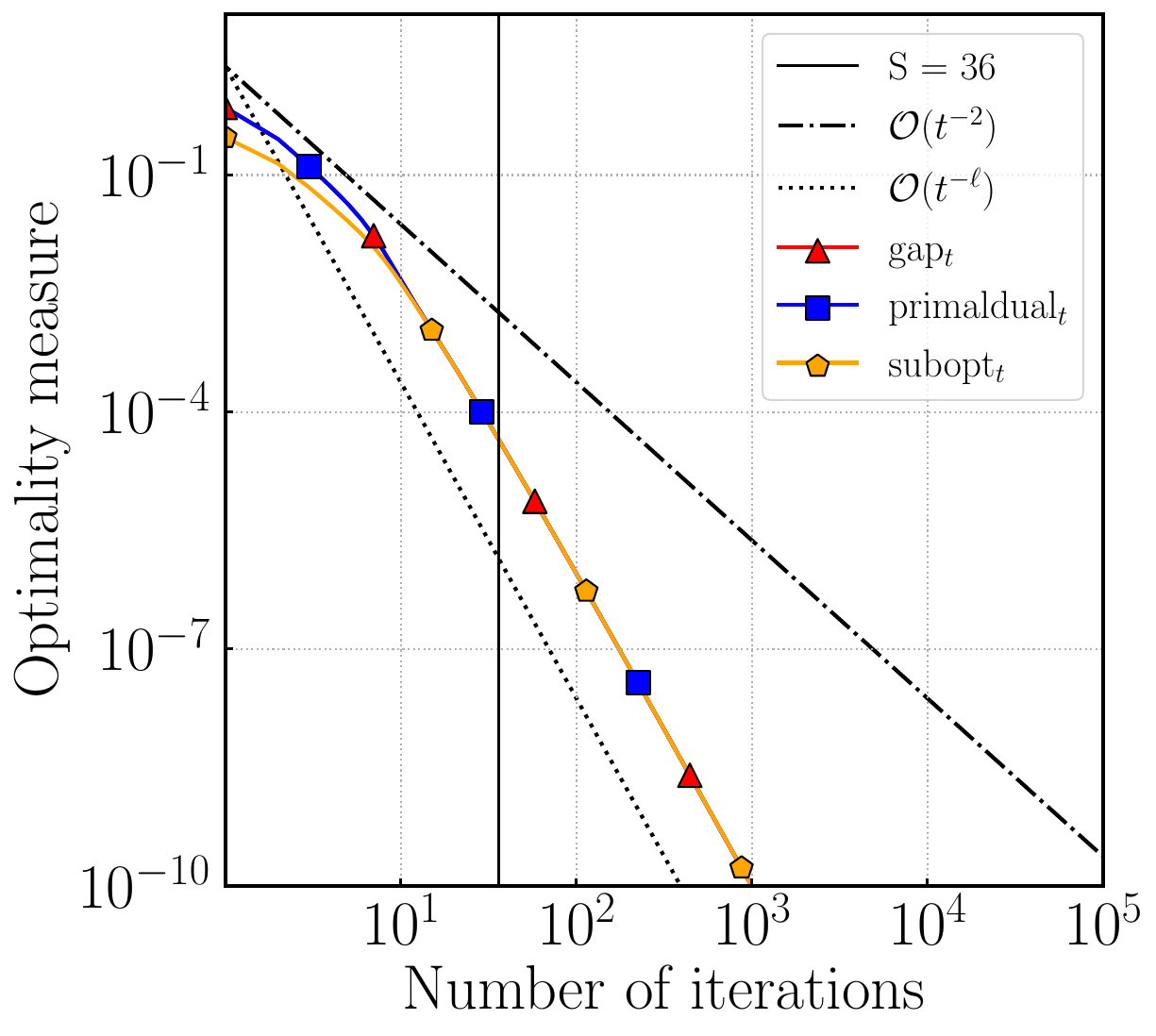}
        \caption{$\ell_{2}$-ball, $r=1$.}\label{fig:1_strong_2}
    \end{subfigure}
\end{tabular}
% \vspace{.3in}
\caption{\textbf{Strong $(M, 1)$-growth.} Optimality measure comparison of \fw{} with step-size $\eta_t = \frac{\ell}{t+\ell}$ for $\ell = 4$ and all $t\in\N$ when the feasible region $\cC\subseteq\R^n$ for $n = 100$ is an $\ell_p$-ball and the objective is $f(\xx) =  \frac{1}{2}\|\xx - \yy\|_2^2$ for some random vector $\yy\in\R^n$ with $\|\yy\|_q = 1 + \lambda$, where $q$ is such that $\frac{1}{p}+\frac{1}{q}=1$ and $\lambda = 0.2$. Axes are in log scale.
}\label{fig:1_strong}
\end{figure}
In Figure~\ref{fig:1_strong}, in the setting of Example~\ref{ex:strong} for $n = 100$, $p \in \{1.01, 1.1, 2\}$, $\lambda=0.2$, and \fw{} with open-loop step-size $\eta_t = \frac{\ell}{t+\ell}$ for $\ell = 4$ and all $t\in\N$, we compare $\gap_t$, $\primaldual_t$, and $\subopt_t$. We also plot $\cO(t^{-\ell})$ and $\cO(t^{-2})$ for better visualization.
For these settings, strong $(M, 1)$-growth holds and Theorem~\ref{thm:1_strong} applies. Since Theorem~\ref{thm:1_strong} only predicts acceleration after a problem-dependent iteration $\fwt$ has been reached, we also add a vertical line indicating $\fwt$ to the plots.

We observe the expected rate of order $\cO(t^{-\ell})$ for $\primaldual_t$, $\subopt_t$, and $\min_{i\in\{0,1,\ldots, t\}} \gap_i$ for all three settings.
\subsubsection{Strong $(M, r)$-growth experiment}
\begin{figure}[t]
\captionsetup[subfigure]{justification=centering}
% \vspace{.3in}
\begin{tabular}{c c c}
    \begin{subfigure}{.31\textwidth}
    \centering
        \includegraphics[width=1\textwidth]{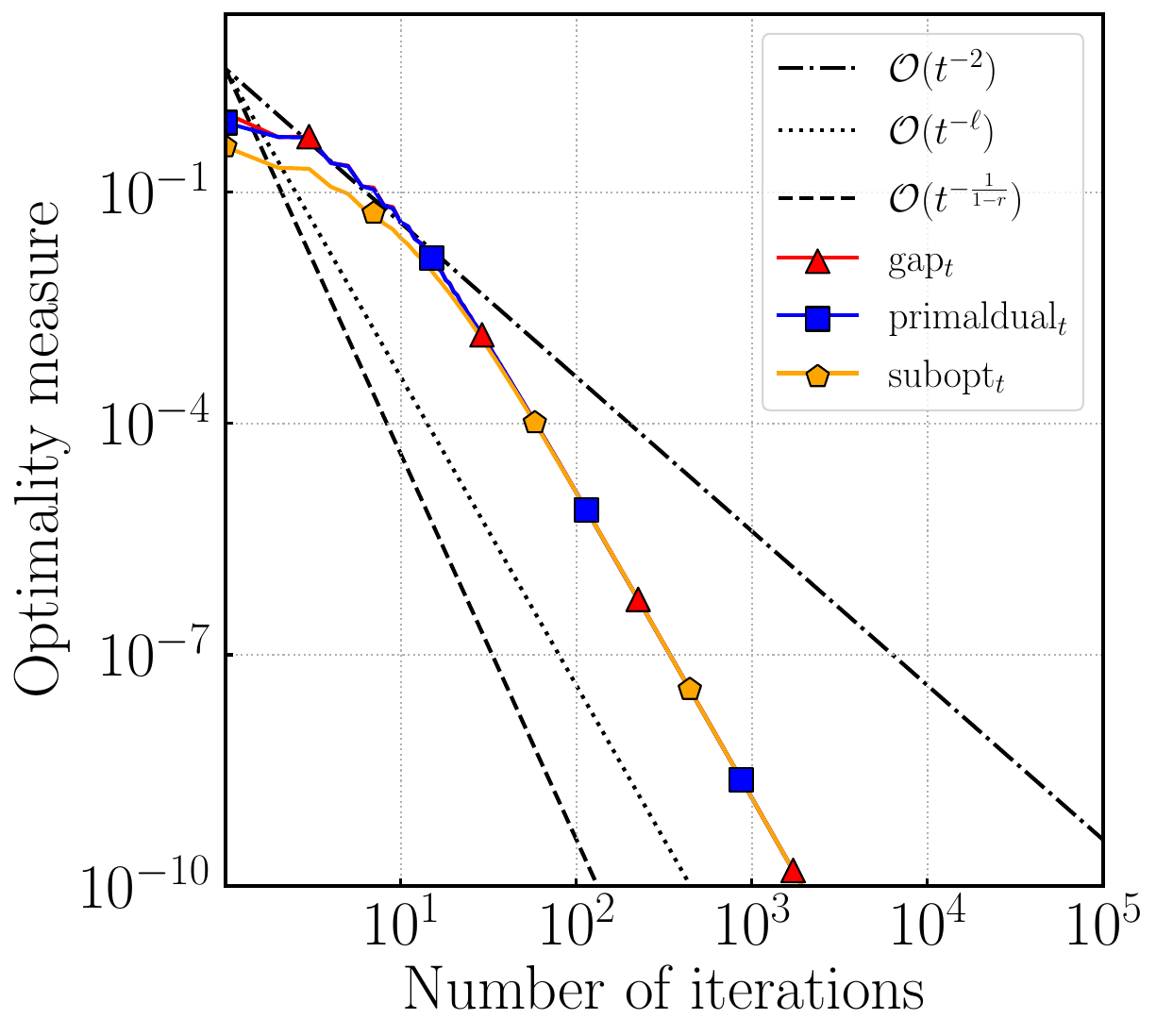}
        \caption{$\ell_{2.5}$-ball, $r=4/5$.}\label{fig:strong_2.5}
    \end{subfigure} & 
    \begin{subfigure}{.31\textwidth}
    \centering
        \includegraphics[width=1\textwidth]{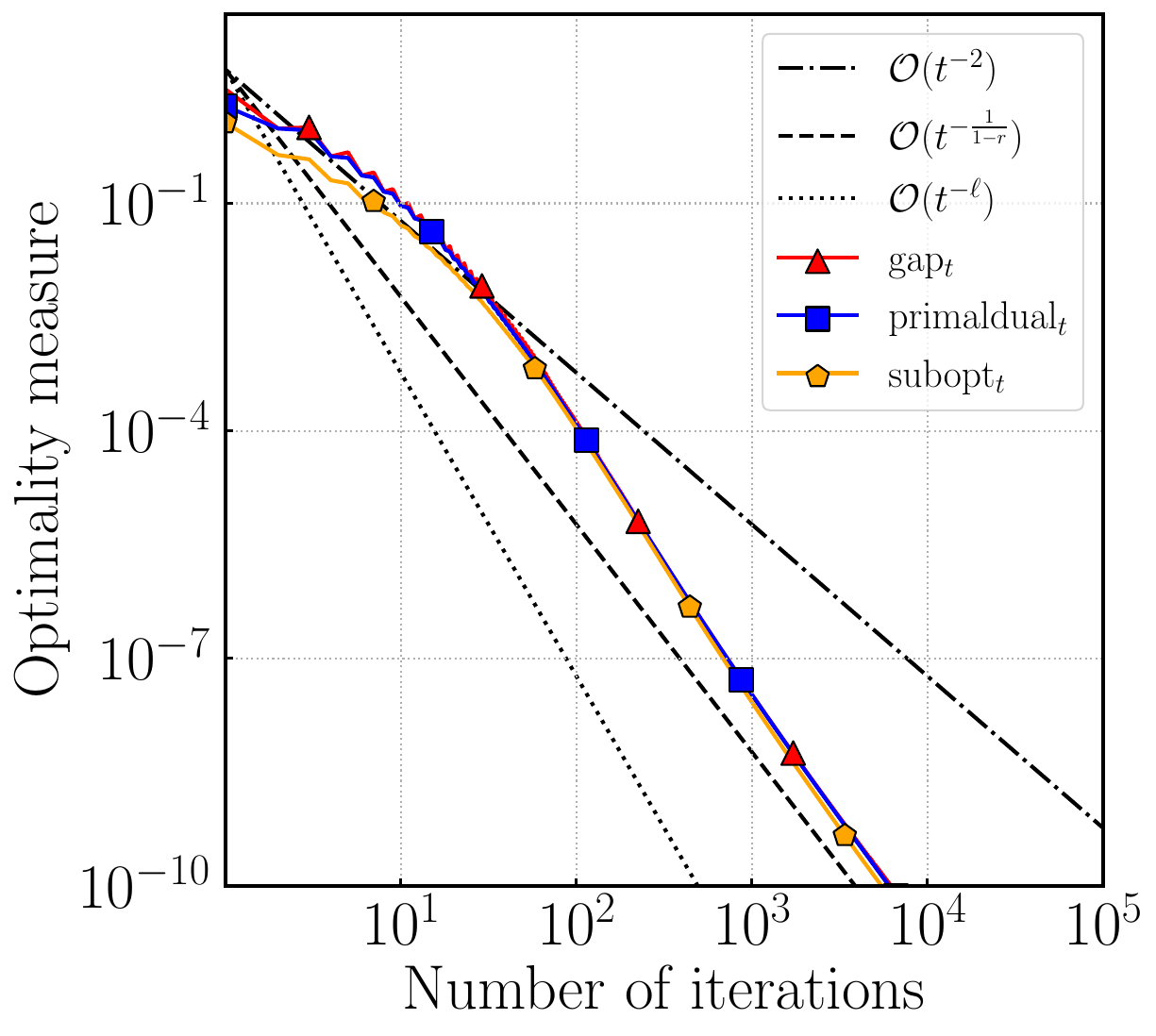}
        \caption{$\ell_3$-ball, $r=2/3$.}\label{fig:strong_3}
    \end{subfigure} & 
    \begin{subfigure}{.31\textwidth}
    \centering
        \includegraphics[width=1\textwidth]{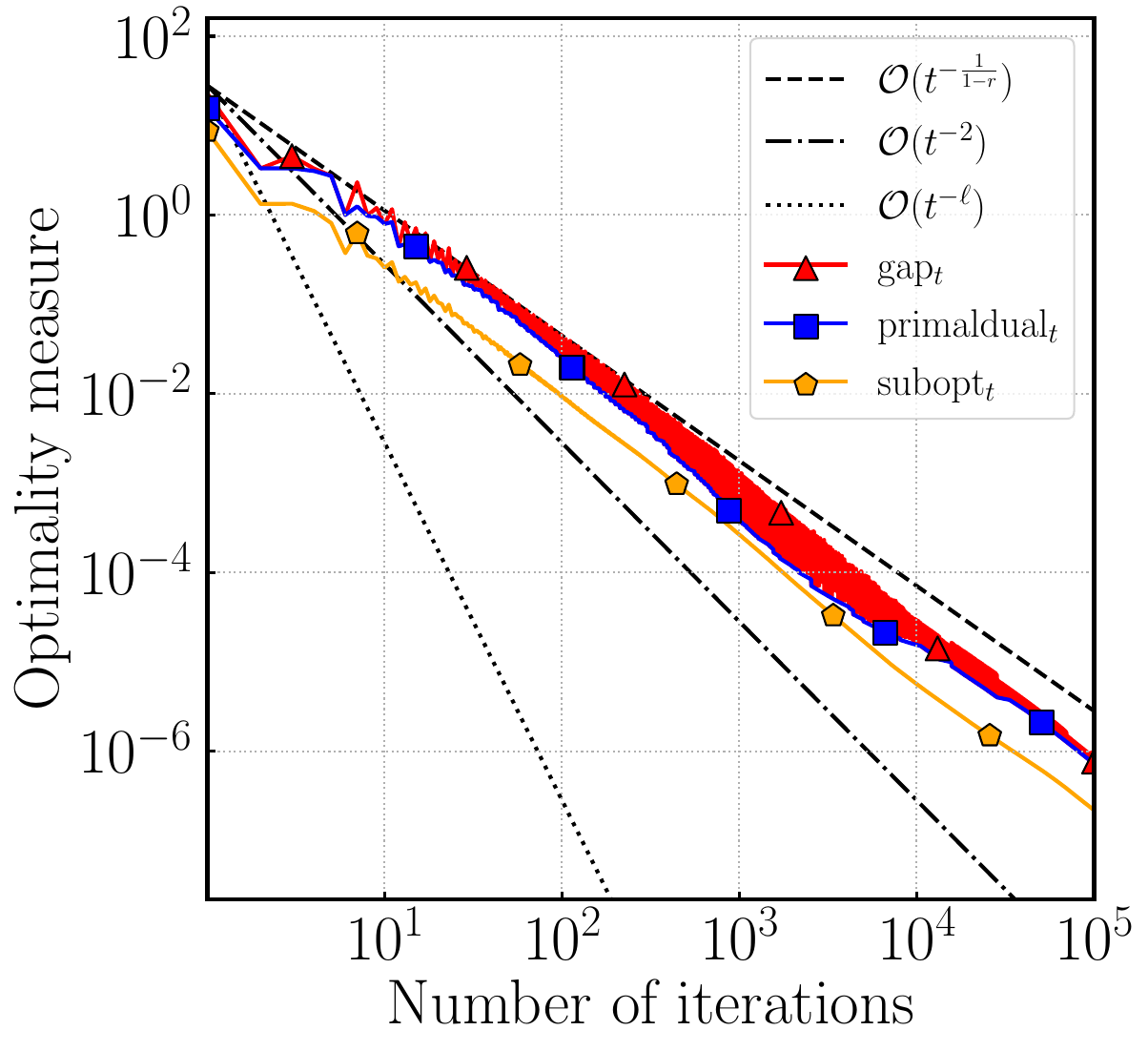}
        \caption{$\ell_7$-ball, $r=2/7$.}\label{fig:strong_7}
    \end{subfigure} 
\end{tabular}
% \vspace{.3in}
\caption{\textbf{Strong $(M, r)$-growth.} 
Optimality measure comparison of \fw{} with  step-size $\eta_t = \frac{\ell}{t+\ell}$ for $\ell = 4$ and all $t\in\N$ when the feasible region $\cC\subseteq\R^n$ for $n = 100$ is an $\ell_p$-ball and the objective is $f(\xx) =  \frac{1}{2}\|\xx - \yy\|_2^2$ for some random vector $\yy\in\R^n$ with $\|\yy\|_q = n^{\frac{1}{q}-\frac{1}{p}} + \lambda$, where $q$ is such that $\frac{1}{p}+\frac{1}{q}=1$ and $\lambda = 0.2$. Axes are in log scale.
}\label{fig:strong}
\end{figure}
In Figure~\ref{fig:strong}, in the setting of Example~\ref{ex:strong} for $n = 100$, $p \in \{2.5, 3, 7\}$, $\lambda=0.2$, and \fw{} with open-loop step-size $\eta_t = \frac{\ell}{t+\ell}$ for $\ell = 4$ and all $t\in\N$, we compare $\gap_t$, $\primaldual_t$, and $\subopt_t$. We also plot $\cO(t^{-\ell})$, $\cO(t^{-\frac{1}{1-r}})$, and $\cO(t^{-2})$ for better visualization.
For these settings, strong $(M, r)$-growth, $r<1$, holds and Theorem~\ref{thm:rate_strong} applies. 
% Since Theorem~\ref{thm:rate_strong} only predicts acceleration after a problem-dependent iteration $\fwt$ has been reached, we also add a vertical line indicating $\fwt$ to the plots.
%

%
We observe the expected rate of order $\cO(t^{-\ell} + t^{-\frac{1}{1-r}})$ for $\primaldual_t$, $\subopt_t$, and $\min_{i\in\{0,1,\ldots, t\}} \gap_i$ for all three settings.

\section{Strong $(M, 0)$-growth and weak $(M, r)$-growth}
\label{sec:weak_growth}
This section presents affine-invariant convergence rates when the strong $(M, 0)$-growth and weak $(M, r)$-growth properties hold in Theorem~\ref{thm:rate-weak}.  As we detail in Section~\ref{sec.suff_weak_M_r} below, the results in this section extend those in \cite{wirth2023acceleration} to a broader selection of open-loop step-sizes.

\subsection{Convergence rates}\label{sec:weak_growth.results}
The following result is similar in spirit to Theorem~\ref{thm:rate_strong}.  However, instead of relying on the strong $(M, r)$-growth property, it relies on the typically more relaxed weak $(M, r)$-growth property.  In contrast to Theorem~\ref{thm:rate_strong} that bounds the convergence rate of the best duality gap, Theorem~\ref{thm:rate-weak} bounds the suboptimality gap and the rate is capped at $\cO(t^{-2})$. This is the tradeoff for the more relaxed growth property assumption in Theorem~\ref{thm:rate-weak}.  

%% When $r\rightarrow 1$ we get $\cO(t^{-\1ll+\epsilon)$ because~\eqref{eq:l_faster}} does not hold or something similar.

\begin{theorem}[Strong $(M, 0)$-growth and weak $(M, r)$-growth]\label{thm:rate-weak}
Let $\cC\subseteq \R^n$ be a compact convex set, let $f\colon\cC\to\R$ be convex and differentiable in an open set containing $\cC$, let $\eta_t = \frac{\ell}{t+\ell}$ for some $\ell\in\N_{\geq 1}$ and all $t\in\N$, and suppose that $(\cC, f)$ satisfies the strong $(M,0)$- and weak $(M, r)$-growth properties for some $M >0$ and $r\in [0, 1[$.
Let
% $\fwt = \min\{i\in\N_{\geq 1} \mid M\eta_i/2 \leq 1 \} = \max\left\{1, \left\lceil\frac{\ell M }{2} - \ell\right\rceil\right\}$
% and 
$\epsilon \in ]0, \ell[$.
Then, for the iterates of Algorithm~\ref{alg:fw} and $t\in\N_{\geq 1}$, it holds that
\begin{align}
    \subopt_t 
    & \leq \exp\srb{\epsilon\ell}  \max \left\{\eta_{t-1}^{\ell-\epsilon} \frac{M}{2}, \eta_{t-1}^{\min\{\ell-\epsilon,\frac{1}{1-r},2\}}\srb{\srb{\frac{\ell M}{2\epsilon}}^{\frac{1}{1-r}} +\frac{M}{2}}
% \exp\srb{\epsilon\ell}  
    \right\} \label{eq:rate_weak_2}\\
    & = \cO\left(t^{-\ell+\epsilon}+t^{-\frac{1}{1-r}}+t^{-2}\right).\nonumber
\end{align}
Furthermore, either there  exists a subsequence $(t_i)_{i\in\N}$ for which
\begin{align}\label{eq:l_faster}
    \subopt_{t_i} %&\leq \srb{\frac{M}{2} \eta_{t_i-1}^{1-\epsilon}}^{\frac{1}{1-r}}
    \leq \srb{\frac{\eta_{t_i}\ell  M}{2\epsilon}}^{\frac{1}{1-r}},
\end{align}
or there exists an iteration $\Rfwt\in\N$ such that for all $t\in\N_{\geq\Rfwt+1}$, it holds that
\begin{align}\label{eq:l_slower}
\subopt_t \leq \subopt_\Rfwt \srb{\frac{\eta_{t-1}}{\eta_{\Rfwt -1}}}^{\ell-\epsilon} \exp\srb{\frac{\epsilon\ell}{\Rfwt}}.
\end{align} 
\end{theorem}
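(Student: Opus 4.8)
The plan is to mirror, essentially line for line, the proof of Theorem~\ref{thm:rate_strong}, replacing the single strong $(M,r)$-growth property by the pair consisting of the strong $(M,0)$-growth property (used whenever $\subopt_t$ is small) and the weak $(M,r)$-growth property (used whenever $\subopt_t$ is large), and bookkeeping the additional $\cO(t^{-2})$ contribution that the strong $(M,0)$-growth property injects. First I would record the two one-step recursions driving everything. Combining the identity \eqref{eq:fw-step}, the weak $(M,r)$-growth property, and $\gap_t \ge \subopt_t$ gives, as long as $\subopt_t > 0$,
\[
\subopt_{t+1} \le \subopt_t - \eta_t \gap_t\left(1 - \frac{M\eta_t}{2\,\subopt_t^{1-r}}\right).
\]
Hence, whenever $\subopt_t \ge \left(\tfrac{\eta_t \ell M}{2\epsilon}\right)^{1/(1-r)}$, the parenthesized factor is at least $1-\epsilon/\ell>0$, and using $\gap_t \ge \subopt_t$ once more yields the contraction recursion $\subopt_{t+1} \le \left(1 - (1-\tfrac{\epsilon}{\ell})\eta_t\right)\subopt_t$. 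On the other hand, combining \eqref{eq:fw-step} with the strong $(M,0)$-growth property (i.e. \eqref{eq.jaggi}) and $\gap_t\ge\subopt_t$ gives $\subopt_{t+1}\le(1-\eta_t)\subopt_t + \tfrac{M\eta_t^2}{2}$, so whenever $\subopt_t < \left(\tfrac{\eta_t\ell M}{2\epsilon}\right)^{1/(1-r)}$ one obtains the bounded-growth bound
\[
\subopt_{t+1} \le \left(\frac{\eta_t\ell M}{2\epsilon}\right)^{\frac{1}{1-r}} + \frac{M\eta_t^2}{2} \le \eta_t^{\min\{\frac{1}{1-r},2\}}\left(\left(\frac{\ell M}{2\epsilon}\right)^{\frac{1}{1-r}} + \frac{M}{2}\right),
\]
using $\eta_t\le 1$.

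For the uniform bound \eqref{eq:rate_weak_2} I would prove by induction on $t\in\N_{\ge\fwt}$ the stronger statement, analogous to \eqref{eq:rate_strong},
\[
\subopt_t \le \max\left\{\subopt_{\fwt}\left(\frac{\eta_{t-1}}{\eta_{\fwt-1}}\right)^{\ell-\epsilon}\!\!\exp\!\left(\frac{\epsilon\ell}{\fwt}\right),\ \max_{\Rfwt\in\{\fwt,\dots,t\}}\!\left(\left(\frac{\eta_{\Rfwt-1}\ell M}{2\epsilon}\right)^{\frac{1}{1-r}}\!\! + \frac{M\eta_{\Rfwt-1}^2}{2}\right)\!\left(\frac{\eta_{t-1}}{\eta_{\Rfwt-1}}\right)^{\ell-\epsilon}\!\!\exp\!\left(\frac{\epsilon\ell}{\Rfwt}\right)\right\}.
\]
In the inductive step: if $\subopt_t < \left(\tfrac{\eta_t\ell M}{2\epsilon}\right)^{1/(1-r)}$, the bounded-growth bound yields the claim for $t+1$ via the $\Rfwt=t+1$ term; otherwise, let $\Rfwt\in\{\fwt,\dots,t\}$ be the smallest index such that $\subopt_i \ge \left(\tfrac{\eta_i\ell M}{2\epsilon}\right)^{1/(1-r)}$ for all $i\in\{\Rfwt,\dots,t\}$, apply the contraction recursion on $\{\Rfwt,\dots,t\}$ together with Lemma~\ref{lemma:telescope}, and split into $\Rfwt=\fwt$ (giving the first term of the max) and $\Rfwt>\fwt$ (using the bounded-growth bound at step $\Rfwt-1$ to control $\subopt_{\Rfwt}$, giving a term in the second max). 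Collapsing the inner maximum exactly as in the passage from \eqref{eq:rate_strong} to \eqref{eq:rate_strong_2} — bounding $\left(\tfrac{\eta_{\Rfwt-1}\ell M}{2\epsilon}\right)^{1/(1-r)}\left(\tfrac{\eta_{t-1}}{\eta_{\Rfwt-1}}\right)^{\ell-\epsilon}\le \eta_{t-1}^{\min\{\ell-\epsilon,1/(1-r)\}}\left(\tfrac{\ell M}{2\epsilon}\right)^{1/(1-r)}$ and $\tfrac{M\eta_{\Rfwt-1}^2}{2}\left(\tfrac{\eta_{t-1}}{\eta_{\Rfwt-1}}\right)^{\ell-\epsilon}\le \eta_{t-1}^{\min\{\ell-\epsilon,2\}}\tfrac{M}{2}$, and pulling out $\exp(\epsilon\ell/\fwt)\ge\exp(\epsilon\ell/\Rfwt)$ — produces \eqref{eq:rate_weak_2} with exponent $\min\{\ell-\epsilon,\tfrac{1}{1-r},2\}$.

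Finally, for the dichotomy I would distinguish whether the set $\{t\in\N_{\ge\fwt} : \subopt_t < \left(\tfrac{\eta_t\ell M}{2\epsilon}\right)^{1/(1-r)}\}$ is infinite or finite. If infinite, its elements, listed in increasing order, form a subsequence $(t_i)_{i\in\N}$ satisfying \eqref{eq:l_faster}. If finite, let $\Rfwt$ be one more than its largest element (or $\Rfwt=\fwt$ if it is empty); then $\subopt_t \ge \left(\tfrac{\eta_t\ell M}{2\epsilon}\right)^{1/(1-r)}$ for every $t\ge\Rfwt$, so the contraction recursion $\subopt_{i+1}\le(1-(1-\tfrac{\epsilon}{\ell})\eta_i)\subopt_i$ holds for all $i\ge\Rfwt$, and telescoping from $\Rfwt$ to $t-1$ together with Lemma~\ref{lemma:telescope} (applied with the first index equal to $\Rfwt$ and the last equal to $t-1$) gives \eqref{eq:l_slower} for all $t\in\N_{\ge\Rfwt+1}$. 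The guarantee for $\min_{i\le t}\gap_i$, if also required here, would follow exactly as in Theorem~\ref{thm:rate_strong} by summing the refined one-step decrease $\primaldual_{i+1}\le\primaldual_i - (1-\tfrac{\epsilon}{\ell})\eta_i\gap_i$ over a suitably chosen window.

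The main obstacle is the bookkeeping in the collapsing step: one must check separately that the weak-growth contribution (decaying like $\eta^{1/(1-r)}$) and the strong $(M,0)$-growth contribution (decaying like $\eta^2$) are each correctly controlled after multiplication by the telescoped factor $(\eta_{t-1}/\eta_{\Rfwt-1})^{\ell-\epsilon}$ — this requires tracking the sign of the exponents $\tfrac{1}{1-r}-(\ell-\epsilon)$ and $2-(\ell-\epsilon)$ to decide whether the maximizing $\Rfwt$ is $\fwt$ or $t$ — so that the final exponent is exactly $\min\{\ell-\epsilon,\tfrac{1}{1-r},2\}$ and the prefactor $\exp(\epsilon\ell/\fwt)$, rather than an $\Rfwt$-dependent one, can be extracted uniformly. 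Everything else is a routine adaptation of the already-established Lemma~\ref{lemma:telescope} and the proof of Theorem~\ref{thm:rate_strong}.
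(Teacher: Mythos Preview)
Your proposal is correct and follows essentially the same approach as the paper's proof: the paper likewise proves an intermediate ``stronger statement'' by induction, splitting into the case $\subopt_t \le \left(\tfrac{\eta_t\ell M}{2\epsilon}\right)^{1/(1-r)}$ (handled via the strong $(M,0)$-growth bound $\subopt_{t+1}\le \subopt_t + \tfrac{M\eta_t^2}{2}$) and the complementary case (handled via the weak $(M,r)$-growth contraction $\subopt_{i+1}\le (1-(1-\tfrac{\epsilon}{\ell})\eta_i)\subopt_i$ telescoped with Lemma~\ref{lemma:telescope}), with the same $\Rfwt=\fwt$ versus $\Rfwt>\fwt$ subcases and the same collapse of the inner maximum to obtain~\eqref{eq:rate_weak_2}; the dichotomy for the second part is argued identically. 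The only cosmetic difference is that the paper pre-collapses the two additive contributions into $\eta_{\Rfwt-1}^{\min\{1/(1-r),2\}}\bigl((\ell M/(2\epsilon))^{1/(1-r)}+M/2\bigr)$ in the intermediate inequality whereas you keep them separate and collapse at the end; also note that the paper explicitly states it does \emph{not} know how to extend the $\min_{i\le t}\gap_i$ bound to this setting, so your final remark on that point is overly optimistic (though harmless, since the theorem does not claim it).
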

\begin{proof}
This proof is very similar to that of Theorem~\ref{thm:rate_strong} but a few minor tweaks are necessary and we restrict ourselves to presenting those. 

We will prove a stronger statement.  We will show that for the iterates of Algorithm~\ref{alg:fw} and $t\in\N_{\geq 1}$, it holds that
\begin{align}\label{eq:rate_weak}
    \subopt_t \leq \max &\left\{\eta_{t-1}^{\ell-\epsilon}\frac{M}{2}\exp\srb{\epsilon\ell}, \right.\nonumber\\
    &\left. \max_{\Rfwt\in\{1, \ldots, t\}}
\eta_{R-1}^{\min\{\frac{1}{1-r},2\}}\srb{\frac{\eta_{t-1}}{\eta_{\Rfwt -1}}}^{\ell-\epsilon}\srb{\srb{\frac{\ell M}{2\epsilon}}^{\frac{1}{1-r}} +\frac{M}{2}}
 \exp\srb{\frac{\epsilon\ell}{\Rfwt}}    
    \right\},
\end{align}
which in turn implies~\eqref{eq:rate_weak_2} as we detail below.
We next prove~\eqref{eq:rate_weak} by induction.  The bound~\eqref{eq:rate_weak} readily holds for $t=1$. For the main inductive  step, we distinguish between two main cases:
\begin{enumerate}
    \item \label{case:good_progress.weak} Suppose that
    \begin{align}\label{eq:smaller.weak}
    \subopt_{t} \leq \srb{\frac{\eta_{t}\ell  M}{2\epsilon}}^{\frac{1}{1-r}}.
\end{align}
Hence, Equality~\eqref{eq:fw-step} and the strong $(M,0)$-growth property imply that
\begin{align}
 \label{eq:good_growth.weak}
    \subopt_{t+1} &\leq \subopt_{t} + \frac{\eta_{t}^2M}{2} \nonumber\\
    &\leq \srb{\frac{\eta_{t}\ell M}{2\epsilon}}^{\frac{1}{1-r}}+\frac{\eta_{t}^2M}{2}\nonumber\\
    &\le \eta_{t}^{\min\{\frac{1}{1-r},2\}}\srb{\srb{\frac{\ell M}{2\epsilon}}^{\frac{1}{1-r}} +\frac{M}{2}}.
\end{align}
Observe that
\begin{align*}
    &\eta_{t}^{\min\{\frac{1}{1-r},2\}}\srb{\srb{\frac{\ell M}{2\epsilon}}^{\frac{1}{1-r}} +\frac{M}{2}} \\
    &\le \max_{\Rfwt\in\{1, \ldots, t+1\}}
\eta_{R-1}^{\min\{\frac{1}{1-r},2\}}\srb{\frac{\eta_{t}}{\eta_{\Rfwt -1}}}^{\ell-\epsilon}\srb{\srb{\frac{\ell M}{2\epsilon}}^{\frac{1}{1-r}} +\frac{M}{2}}
 \exp\srb{\frac{\epsilon\ell}{\Rfwt}}.
\end{align*}
Thus, \eqref{eq:rate_weak} holds for $t+1.$
\item 
\label{case:bad_progress.weak} Suppose that \eqref{eq:smaller.weak} does not hold. Let $\Rfwt\in \{1, \ldots, t\}$ be the smallest index such that
\begin{align}\label{eq:greater.weak}
    \subopt_{i} \geq \srb{\frac{\eta_{i}\ell M}{2\epsilon}}^{\frac{1}{1-r}}
\end{align}
for all $i\in\{\Rfwt, \Rfwt + 1, \ldots, t\}$.
Then, Equality~\eqref{eq:fw-step}, 
Inequality~\eqref{eq:various.gaps}, and the
the weak $(M, r)$-growth property  imply that 
\begin{align}\label{eq:rec.weak}
   \subopt_{i+1} \leq \subopt_{i}+\eta_{i}\gap_{i}\srb{\frac{\eta_{i}M}{2\subopt_{i}^{1-r}}-1}
   % \leq \subopt_{i} + \eta_{i} \gap_{i}\srb{\epsilon - 1}
   \leq  \subopt_{i} \left(1 - \srb{1-\frac{\epsilon}{\ell}}\eta_{i}\right)
\end{align}
for all $i\in \{\Rfwt, \Rfwt + 1, \ldots, t\}$.
By repeatedly applying \eqref{eq:rec.weak} and by Lemma~\ref{lemma:telescope},
\begin{align}\label{eq:calc_big.weak}
    \subopt_{t+1} & \leq \subopt_{\Rfwt}\prod_{i=\Rfwt}^{t} \left(1 - \srb{1-\frac{\epsilon}{\ell}}\eta_i\right) \leq \subopt_\Rfwt \srb{\frac{\eta_t}{\eta_{\Rfwt -1}}}^{\ell-\epsilon} \exp\srb{\frac{\epsilon\ell}{\Rfwt}}.
\end{align}
Next we proceed as in Case~\ref{case:bad_progress} in Theorem~\ref{thm:rate_strong}.  If $R=1$ then~\eqref{eq:calc_big.weak}, \eqref{eq.pd1}, and Inequality~\eqref{eq:various.gaps} imply that
\[
\subopt_{t+1} \le \subopt_1 \eta_t^{\ell-\epsilon} \exp\srb{\epsilon\ell} \leq \eta_t^{\ell-\epsilon} \frac{M}{2}\exp\srb{\epsilon\ell},
\]
and if $R>1$ then the bound~\eqref{eq:good_growth.weak} in Case~\ref{case:good_progress.weak} and~\eqref{eq:calc_big.weak} imply that
\[
\subopt_{t+1} \le \eta_{R-1}^{\min\{\frac{1}{1-r},2\}}\srb{\frac{\eta_t}{\eta_{\Rfwt -1}}}^{\ell-\epsilon}\srb{\srb{\frac{\ell M}{2\epsilon}}^{\frac{1}{1-r}} +\frac{M}{2}}
 \exp\srb{\frac{\epsilon\ell}{\Rfwt}}.
\]
Thus, \eqref{eq:rate_weak} holds for $t+1$.
\end{enumerate}
We next show that~\eqref{eq:rate_weak} implies~\eqref{eq:rate_weak_2}.  Indeed, by proceeding as in the proof of Theorem~\ref{thm:rate_strong}, we have that
\begin{align*}
    &\max_{\Rfwt\in\{1, \ldots, t\}}
\eta_{R-1}^{\min\{\frac{1}{1-r},2\}}\srb{\frac{\eta_{t-1}}{\eta_{\Rfwt -1}}}^{\ell-\epsilon}\srb{\srb{\frac{\ell M}{2\epsilon}}^{\frac{1}{1-r}} +\frac{M}{2}}
 \exp\srb{\frac{\epsilon\ell}{\Rfwt}} \\
 & \le 
\eta_{t-1}^{\min\{\ell-\epsilon,\frac{1}{1-r},2\}}\srb{\srb{\frac{\ell M}{2\epsilon}}^{\frac{1}{1-r}} +\frac{M}{2}}
 \exp\srb{\epsilon\ell}.
\end{align*}
Thus, \eqref{eq:rate_weak_2} follows from~\eqref{eq:rate_weak}.
To prove the second statement of the theorem, suppose that there does not exist a subsequence $(t_i)_{i\in\N}$ for which \eqref{eq:l_faster} holds. Thus, there has to exist some iteration $\Rfwt\in\N_{\geq 1}$ such that 
$
    \subopt_t \geq \rb{\frac{\eta_{t-1}\ell M}{2\epsilon}}^{\frac{1}{1-r}}
$
for all $t\in\N_{\geq \Rfwt+1}$. Then, \eqref{eq:calc_big.weak} implies that \eqref{eq:l_slower} holds for all $t\in\N_{\geq \Rfwt+1}$.
\end{proof}

The astute reader may wonder if the analyses in Theorem~\ref{thm:1_strong} and Theorem~\ref{thm:rate_strong} could be adapted to derive a bound on $\min_{i\in\{0, 1, \ldots, t\}} \gap_i$ in the setting of Theorem~\ref{thm:rate-weak}.  We have not been able to do so and suspect that this is an inevitable tradeoff of relying on the more relaxed weak growth property.  Furthermore, the numerical experiments discussed in Section~\ref{sec:gaps_growth.examples} and in Section~\ref{sec:constrained regression} suggest that this is not possible.  Indeed, Figure~\ref{fig:gaps}(b) in Section~\ref{sec:gaps_growth.examples} and Figure~\ref{fig:reg}(e,f) in Section~\ref{sec:constrained regression} show experiments where $\subopt_t$ converges to zero at accelerated rate $\cO(t^{-2})$ while both $\primaldual_t$ and $\min_{i\in\{0, 1, \ldots, t\}} \gap_i$ converge to zero at the slower rate  $\cO(t^{-1})$.

\medskip

The following result complements the second statement in Theorem~\ref{thm:rate-weak} when $\ell$ is sufficiently large.  It gives a bound on the number of iterations before the rate~\eqref{eq:l_faster} is next observed.

\begin{lemma}\label{lemma:rate-weak}
Let $\cC\subseteq \R^n$ be a compact convex set, let $f\colon\cC\to\R$ be convex and differentiable in an open set containing $\cC$,  let $\eta_t = \frac{\ell}{t+\ell}$ for some $\ell\in\N_{\geq 1}$ and all $t\in\N$, and suppose that $(\cC, f)$ satisfies the strong $(M,0)$-growth property and the weak $(M, r)$-growth property for some $M >0$ and  $r\in [0, 1[$ with $\ell > \frac{1}{1-r}$. Let
% $\fwt = \min\{i\in\N_{\geq 1} \mid M\eta_i/2 \leq 1 \} = \max\left\{1, \left\lceil\frac{\ell M }{2} - \ell\right\rceil\right\}$, and  
$\epsilon \in ]0, \ell-\frac{1}{1-r}]$.
Then, for any $\Rfwt\in\N_{\geq 1}$ 
 there exists an $i \in \{\Rfwt, \Rfwt + 1, \ldots, U\}$ such that 
 $
    \subopt_{i} %&\leq \srb{\frac{M}{2} \eta_{t_i-1}^{1-\epsilon}}^{\frac{1}{1-r}}
    \leq \srb{\frac{\eta_{i}\ell M}{2\epsilon} }^{\frac{1}{1-r}}
$, where
\begin{align*}
    U = \left\lceil\ell\left(\subopt_\Rfwt \eta_{\Rfwt}^{\epsilon -\ell} \srb{\frac{2\epsilon}{\ell M}}^{\frac{1}{1-r}} \exp\left(\frac{\epsilon \ell}{\Rfwt}\right)\right)^{\frac{1}{\ell-\epsilon-\frac{1}{1-r}}} -\ell  \right\rceil.
\end{align*}
\end{lemma}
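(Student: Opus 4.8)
The plan is to argue by contradiction. Fix $\Rfwt\in\N_{\ge\fwt}$ and suppose, towards a contradiction, that $\subopt_i > \srb{\frac{\eta_i\ell M}{2\epsilon}}^{\frac{1}{1-r}}$ for every $i\in\{\Rfwt,\ldots,U\}$; this is precisely the negation of the claimed conclusion, and in particular it forces $\subopt_i>0$ throughout that range. Under this hypothesis, the recursion \eqref{eq:rec.weak} derived inside the proof of Theorem~\ref{thm:rate-weak} — obtained via \eqref{eq:fw-step}, the weak $(M,r)$-growth property, $\gap_i\ge\subopt_i$, and the assumed lower bound on $\subopt_i$ — applies verbatim on the index range $\{\Rfwt,\ldots,U-1\}$, giving $\subopt_{i+1}\le\subopt_i\srb{1-\srb{1-\frac{\epsilon}{\ell}}\eta_i}$ for each such $i$.

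I would then chain this inequality from $i=\Rfwt$ to $i=U-1$ and invoke Lemma~\ref{lemma:telescope} (with threshold $\Rfwt$ and final index $U-1$), together with the elementary monotonicity $\eta_{U-1}/\eta_{\Rfwt-1}\le\eta_U/\eta_\Rfwt$ (valid since $\Rfwt\le U$), to obtain
\[
\subopt_U\ \le\ \subopt_\Rfwt\prod_{i=\Rfwt}^{U-1}\srb{1-\srb{1-\tfrac{\epsilon}{\ell}}\eta_i}\ \le\ \subopt_\Rfwt\srb{\frac{\eta_U}{\eta_\Rfwt}}^{\ell-\epsilon}\exp\srb{\frac{\epsilon\ell}{\Rfwt}}.
\]
Combining this with the contradiction hypothesis evaluated at $i=U$ yields $\srb{\frac{\eta_U\ell M}{2\epsilon}}^{\frac{1}{1-r}}<\subopt_\Rfwt\srb{\frac{\eta_U}{\eta_\Rfwt}}^{\ell-\epsilon}\exp\srb{\frac{\epsilon\ell}{\Rfwt}}$. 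Collecting the powers of $\eta_U$ and abbreviating $C:=\subopt_\Rfwt\,\eta_\Rfwt^{\epsilon-\ell}\srb{\frac{2\epsilon}{\ell M}}^{\frac{1}{1-r}}\exp\srb{\frac{\epsilon\ell}{\Rfwt}}$, this is equivalent to $\eta_U^{-(\ell-\epsilon-\frac{1}{1-r})}<C$. Since $\ell-\epsilon-\frac{1}{1-r}>0$ under the hypotheses $\ell>\frac{1}{1-r}$ and $\epsilon\in\,]0,\ell-\frac{1}{1-r}[$ (the endpoint case $\epsilon=\ell-\frac{1}{1-r}$, where $U$ is not even well defined, should be excluded), and since $\eta_U=\frac{\ell}{U+\ell}$, this rearranges to $U+\ell<\ell\,C^{1/(\ell-\epsilon-\frac{1}{1-r})}$, i.e.\ $U<\ell\,C^{1/(\ell-\epsilon-\frac{1}{1-r})}-\ell$. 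But $U=\sceil{\ell\,C^{1/(\ell-\epsilon-\frac{1}{1-r})}-\ell}\ge\ell\,C^{1/(\ell-\epsilon-\frac{1}{1-r})}-\ell$, a contradiction. Hence some $i\in\{\Rfwt,\ldots,U\}$ satisfies $\subopt_i\le\srb{\frac{\eta_i\ell M}{2\epsilon}}^{\frac{1}{1-r}}$, which is exactly the inequality appearing in \eqref{eq:l_faster}.

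I do not expect a genuine obstacle here: the lemma is essentially a quantitative restatement of the dichotomy already established in Theorem~\ref{thm:rate-weak}, and the remaining work is routine algebra with the power-type step-sizes. The only points that need mild care are (i) checking that \eqref{eq:rec.weak} may legitimately be applied on the \emph{entire} range $\{\Rfwt,\ldots,U-1\}$, with the threshold hypothesis at $i=U$ used separately — both being supplied by the contradiction hypothesis; (ii) the harmless off-by-one when passing from the $\eta_{U-1}$ produced by Lemma~\ref{lemma:telescope} to $\eta_U$; and (iii) bookkeeping the sign of the exponent $\ell-\epsilon-\frac{1}{1-r}$, whose strict positivity is exactly what makes $U\mapsto\eta_U^{-(\ell-\epsilon-\frac{1}{1-r})}$ strictly increasing and so lets us convert the inequality into an explicit upper bound on $U$.
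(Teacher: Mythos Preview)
Your proposal is correct and takes essentially the same approach as the paper: argue by contradiction, apply the per-step contraction \eqref{eq:rec.weak} together with Lemma~\ref{lemma:telescope} (this is exactly \eqref{eq:calc_big.weak}), and arrive at an inequality incompatible with the definition of $U$. The only cosmetic difference is order of presentation---the paper starts from the definition of $U$ and unwinds it into the bound $\subopt_U\le\srb{\frac{\eta_U\ell M}{2\epsilon}}^{\frac{1}{1-r}}$, whereas you start from the contraction bound and rearrange to a strict upper bound on $U$; the algebra is identical. Your side remark that the endpoint $\epsilon=\ell-\tfrac{1}{1-r}$ should be excluded (since the exponent in $U$ becomes $1/0$) is a valid observation that the paper's statement does not address.
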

\begin{proof}
Suppose towards a contradiction that there exists $\Rfwt\in\N_{\geq 1}$ such that
$
    \subopt_{i} 
    > \srb{\frac{\eta_{i}\ell M}{2\epsilon} }^{\frac{1}{1-r}}
$
for all $i \in \{\Rfwt, \Rfwt + 1, \ldots, U\}$. By the construction of $U$, it holds that
\begin{align*}
     & \left\lceil\ell \left(\subopt_\Rfwt \eta_{\Rfwt}^{\epsilon -\ell} \srb{\frac{2\epsilon}{\ell M}}^{\frac{1}{1-r}} \exp\left(\frac{\epsilon \ell}{\Rfwt}\right)\right)^{\frac{1}{\ell-\epsilon-\frac{1}{1-r}}} -\ell \right\rceil =  U\\
    \Rightarrow \qquad & \subopt_\Rfwt \eta_{\Rfwt}^{\epsilon -\ell} \srb{\frac{2\epsilon}{\ell M}}^{\frac{1}{1-r}} \exp\left(\frac{\epsilon \ell}{\Rfwt}\right) \leq  \eta_{U}^{\frac{1}{1-r}-\ell+\epsilon}  \\
    \Rightarrow \qquad & \subopt_\Rfwt \srb{\frac{\eta_{U}}{\eta_{\Rfwt}}}^{\ell-\epsilon}\exp\left(\frac{\epsilon\ell}{\Rfwt}\right) \leq  \srb{\frac{\eta_{U}\ell M}{2\epsilon} }^{\frac{1}{1-r}}.
    \end{align*}
By \eqref{eq:calc_big.weak}, it thus holds that
\begin{align*}
    \subopt_U  \leq \subopt_\Rfwt \srb{\frac{\eta_{U-1}}{\eta_{\Rfwt-1}}}^{\ell-\epsilon}\exp\left(\frac{\epsilon\ell}{\Rfwt}\right) \le 
    \subopt_\Rfwt \srb{\frac{\eta_{U}}{\eta_{\Rfwt}}}^{\ell-\epsilon}\exp\left(\frac{\epsilon\ell}{\Rfwt}\right)\leq  \srb{\frac{\eta_{U}\ell M}{2\epsilon} }^{\frac{1}{1-r}},
\end{align*}
a contradiction, proving the lemma.
\end{proof}
The results derived in this section characterize the acceleration of \fw{} with open-loop step-sizes when the weak $(M, r)$-growth property is satisfied. It remains to discuss sufficient conditions for the weak $(M, r)$-growth property to be satisfied.

\subsection{Sufficient conditions for weak $(M,r)$-growth}\label{sec.suff_weak_M_r}
A simple setting for which the weak $(M, r)$-growth property holds is when the feasible region is uniformly convex and the objective satisfies a Hölderian error bound. We recall the following result by \cite{pena2023affine}. 
Although this result holds for general norms, we restrict ourselves to the intrinsic norm $\|\cdot\|_{\cC}$ to guarantee affine invariance.

\begin{proposition}[Proposition~4.3, \cite{pena2023affine}]\label{prop:unif_heb}
    For $\alpha > 0$ and $p \geq 2$, let $\cC \subseteq \R^n$ be a compact $(\alpha, p)$-uniformly convex set with respect to $\|\cdot\|_{\cC}$, for $L>0$, $\mu > 0$, and $\theta \in [0, 1/2]$, let $f\colon\cC\to\R$ be a convex and $L$-smooth function satisfying a $(\mu, \theta)$-Hölderian error bound with respect to $\|\cdot\|_{\cC}$. Then,
    $(\cC,f)$ satisfies the weak $(M,r)$-growth property with $M = L \left(\frac{\mu}{\alpha}\right)^{\frac{2}{p}}$ and $r = \frac{2\theta}{p}$.
\end{proposition}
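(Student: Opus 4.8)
The plan is to mirror the argument behind Proposition~\ref{prop:exterior} (Proposition~3.3 of \citet{pena2021affine}), replacing the lower bound on $\|\nabla f(\xx)\|_{\cC}^*$ that came there from the ``lower-bounded gradients'' hypothesis by one extracted here from the Hölderian error bound. Fix $\xx\in\cC$ and let $\vv\in\argmin_{\yy\in\cC}\langle\nabla f(\xx),\yy-\xx\rangle$. Since $r=2\theta/p<1$, the left-hand side of~\eqref{eq:weak_gp} vanishes whenever $\subopt(\xx)=0$, so I would dispose of that case at once and assume $\subopt(\xx)>0$; by convexity this forces $\nabla f(\xx)\neq 0$ and $\vv\neq\xx$. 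Using $L$-smoothness with respect to $\|\cdot\|_{\cC}$ I would record the crude estimate $D_f(\xx+\eta(\vv-\xx),\xx)\le\frac{L\eta^2}{2}\|\vv-\xx\|_{\cC}^2$, which reduces the whole statement to controlling $\|\vv-\xx\|_{\cC}$ in terms of $\gap(\xx)$ and $\subopt(\xx)$.

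The first ingredient I would establish is the scaling inequality for linear minimizers over a uniformly convex set. For $\gamma\in(0,1]$ and any $\zz$ with $\|\zz\|_{\cC}=1$, the point $\gamma\xx+(1-\gamma)\vv+\gamma(1-\gamma)\alpha\|\xx-\vv\|_{\cC}^{p}\,\zz$ lies in $\cC$; evaluating the linear form $\langle\nabla f(\xx),\cdot\rangle$ at it, invoking the optimality of $\vv$, dividing by $\gamma$, and choosing $\zz$ so that $\langle\nabla f(\xx),\zz\rangle=-\|\nabla f(\xx)\|_{\cC}^*$ (such a unit vector exists by compactness since $\nabla f(\xx)\neq 0$) gives $\langle\nabla f(\xx),\xx-\vv\rangle\ge(1-\gamma)\alpha\|\xx-\vv\|_{\cC}^{p}\|\nabla f(\xx)\|_{\cC}^*$, and letting $\gamma\downarrow 0$ yields
\[
    \gap(\xx)=\langle\nabla f(\xx),\xx-\vv\rangle\ \ge\ \alpha\,\|\xx-\vv\|_{\cC}^{p}\,\|\nabla f(\xx)\|_{\cC}^*.
\]
The second ingredient is a lower bound on $\|\nabla f(\xx)\|_{\cC}^*$. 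Picking $\xx^*\in\argmin_{\yy\in\cC}f(\yy)$ that realizes the distance estimate of the Hölderian error bound, convexity and the generalized Cauchy--Schwarz inequality give
\[
    \subopt(\xx)\ \le\ \langle\nabla f(\xx),\xx-\xx^*\rangle\ \le\ \|\nabla f(\xx)\|_{\cC}^*\,\|\xx-\xx^*\|_{\cC}\ \le\ \mu\,\|\nabla f(\xx)\|_{\cC}^*\,\subopt(\xx)^{\theta},
\]
hence $\|\nabla f(\xx)\|_{\cC}^*\ge\mu^{-1}\subopt(\xx)^{1-\theta}$.

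Combining the two displays gives $\|\xx-\vv\|_{\cC}^{p}\le \frac{\mu\,\gap(\xx)}{\alpha\,\subopt(\xx)^{1-\theta}}$ and therefore $\|\xx-\vv\|_{\cC}^{2}\le(\mu/\alpha)^{2/p}\,\gap(\xx)^{2/p}\,\subopt(\xx)^{-2(1-\theta)/p}$. I would then feed this into the smoothness estimate and multiply through by $\subopt(\xx)^{1-r}$ with $r=2\theta/p$; the exponents of $\subopt(\xx)$ collapse to $1-\frac{2\theta}{p}-\frac{2(1-\theta)}{p}=1-\frac{2}{p}\ge 0$ (since $p\ge 2$), so that
\[
    \subopt(\xx)^{1-r}\,D_f(\xx+\eta(\vv-\xx),\xx)\ \le\ \frac{L\eta^2}{2}\,(\mu/\alpha)^{2/p}\,\gap(\xx)^{2/p}\,\subopt(\xx)^{1-2/p}.
\]
Finally, $\subopt(\xx)\le\gap(\xx)$ together with $1-2/p\ge 0$ gives $\gap(\xx)^{2/p}\subopt(\xx)^{1-2/p}\le\gap(\xx)$, which establishes~\eqref{eq:weak_gp} with $M=L(\mu/\alpha)^{2/p}$ and $r=2\theta/p$.

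The hard part is the scaling inequality: I expect the subtlety to be extracting the sharp constant $\alpha$ — a fixed $\gamma=\tfrac12$ would only give $\alpha/2$ and inflate $M$ by a factor $2^{2/p}$, so the limit $\gamma\downarrow 0$ matters — while keeping everything phrased in the non-Euclidean dual norm $\|\cdot\|_{\cC}^*$ so that the dual-norm-attaining unit vector genuinely exists, and correctly dispatching the degenerate cases $\subopt(\xx)=0$ and $\nabla f(\xx)=0$. The rest is routine exponent bookkeeping, and since the statement is Proposition~5 of \citet{pena2021affine}, the write-up may alternatively just cite that reference.
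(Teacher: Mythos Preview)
Your argument is correct. The paper does not supply its own proof of this proposition; it is quoted verbatim as Proposition~5 of \citet{pena2021affine} and simply cited, exactly as you anticipate in your closing remark. Your derivation---smoothness to bound $D_f$, uniform convexity plus a dual-norm-attaining direction (with the $\gamma\downarrow 0$ limit) to get $\gap(\xx)\ge\alpha\|\xx-\vv\|_{\cC}^p\|\nabla f(\xx)\|_{\cC}^*$, the H\"olderian error bound to lower-bound $\|\nabla f(\xx)\|_{\cC}^*$, and then the exponent bookkeeping using $\subopt(\xx)\le\gap(\xx)$---is the standard route and matches how the result is established in the cited reference.
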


The results in this section improve over prior work in \cite{wirth2023acceleration} as follows: In the setting of Proposition~\ref{prop:unif_heb}, Theorem~E.1 in \cite{wirth2023acceleration} derived affine-dependent rates of order $\subopt_t = \cO(t^{-\frac{1}{1-r}}+t^{-2})$ for \fw{} with $\eta_t = \frac{4}{t+4}$. In contrast, Theorem~\ref{thm:rate-weak} yields affine-invariant rates of order $\subopt_t = \cO(t^{-\ell+\epsilon}+t^{-\frac{1}{1-r}}+t^{-2})$ for any $\ell\in\N_{\geq 1}$. That is, our main contributions for this setting are that our analysis is affine-invariant and captures a larger class of step-sizes than those in \cite{wirth2023acceleration}.

\subsection{Numerical experiments}\label{sec:weak_growth.examples}

\noindent
Proposition~\ref{prop:unif_heb} motivates the following example for which the weak $(M,r)$-growth property holds.  For ease of exposition, we take $\cC$ to be the unit $\ell_p$-norm ball for $p> 1$.  However, Proposition~\ref{prop:unif_heb} applies to the much broader class of uniformly convex domains.
\begin{example}[Strong $(M, 0)$-growth and weak $(M, r)$-growth]\label{ex:weak_boundary}
For $p > 1$, let $\cC = \{\xx\in\R^n \mid \|\xx\|_p \leq 1\}$ be an $\ell_p$-ball. 
Let $q\geq 1$ be such that $\frac{1}{p}+\frac{1}{q}=1$.
Let $\yy \in \R^n$ and $f(\xx):= \frac{1}{2}\|\xx-\yy\|_2^2$, where $\yy$ is a vector such that $\|\yy\|_p = 1$. 
We distinguish between two cases:
\begin{enumerate}
    \item When $p \in ]1, 2]$, 
    by Proposition~\ref{prop:unif_cvx}, $\cC$ is $(p-1)/4$-strongly convex with respect to $\|\cdot\|_{\cC} = \|\cdot\|_p$.
    By Lemma~\ref{lemma:lp_balls}, $f$ is $1$-smooth and satisfies a $(\sqrt{2}n^{\frac{1}{2}-\frac{1}{p}}, 1/2)$-Hölderian error bound with respect to $\|\cdot\|_{\cC}$.
    Thus, by Proposition~\ref{prop:unif_heb}, $(\cC, f)$ satisfies the weak $(M, r)$-growth property with $M= \frac{4\sqrt{2}n^{\frac{1}{2}-\frac{1}{p}}}{p-1}$ and $r=1$ and, by Lemma~\ref{lemma:lp_balls}, the strong $(M_0, 0)$-growth property with $M_0 = 4$.
    \item When $p\geq 2$, 
    by Proposition~\ref{prop:unif_cvx}, $\cC$ is $(1/(p2^{p-1}), p)$-uniformly convex with respect to $\|\cdot\|_{\cC}  = \|\cdot\|_p$.
    By Lemma~\ref{lemma:lp_balls}, $f$ is $n^{\frac{1}{2}-\frac{1}{p}}$-smooth and satisfies a $(\sqrt{2}, 1/2)$-Hölderian error bound with respect to $\|\cdot\|_p$.
    Thus, by Proposition~\ref{prop:unif_heb}, $(\cC, f)$ satisfies the strong $(M, r)$-growth property with $M= n^{\frac{1}{2}-\frac{1}{p}}(\sqrt{2}p2^{p-1})^{\frac{2}{p}}$ and $r=\frac{2}{p}$ and, by Lemma~\ref{lemma:lp_balls}, the strong $(M_0, 0)$-growth property with $M_0 = 4n^{\frac{1}{2}-\frac{1}{p}}$.
\end{enumerate}
\end{example}
\begin{figure}[t]
\captionsetup[subfigure]{justification=centering}
% \vspace{.3in}
\begin{tabular}{c c c}
    \begin{subfigure}{.31\textwidth}
    \centering
        \includegraphics[width=1\textwidth]{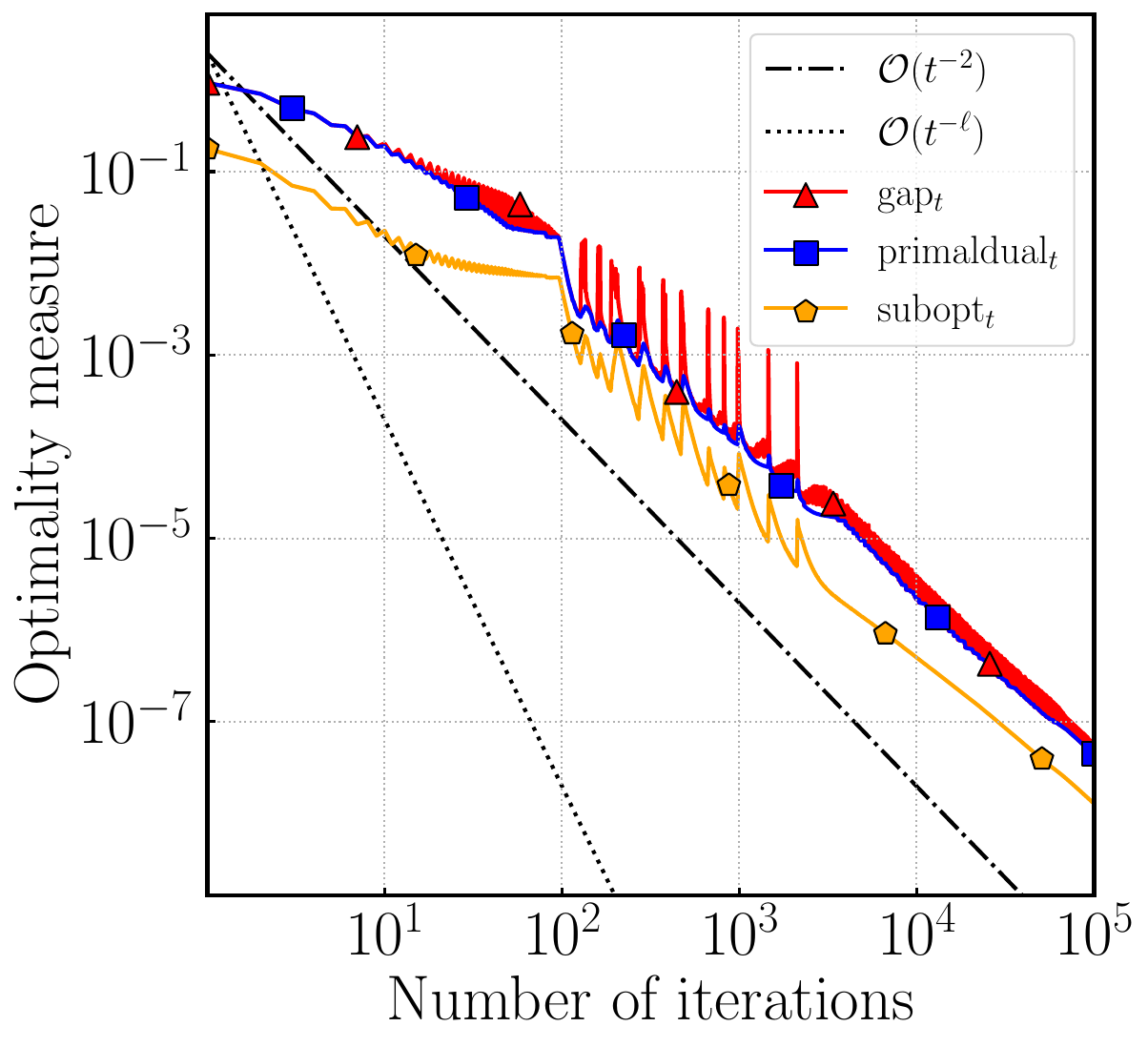}
        \caption{$\ell_{1.02}$-ball, $r=1/2$.}\label{fig:weak_boundary_1.02}
    \end{subfigure}& 
        \begin{subfigure}{.31\textwidth}
    \centering
        \includegraphics[width=1\textwidth]{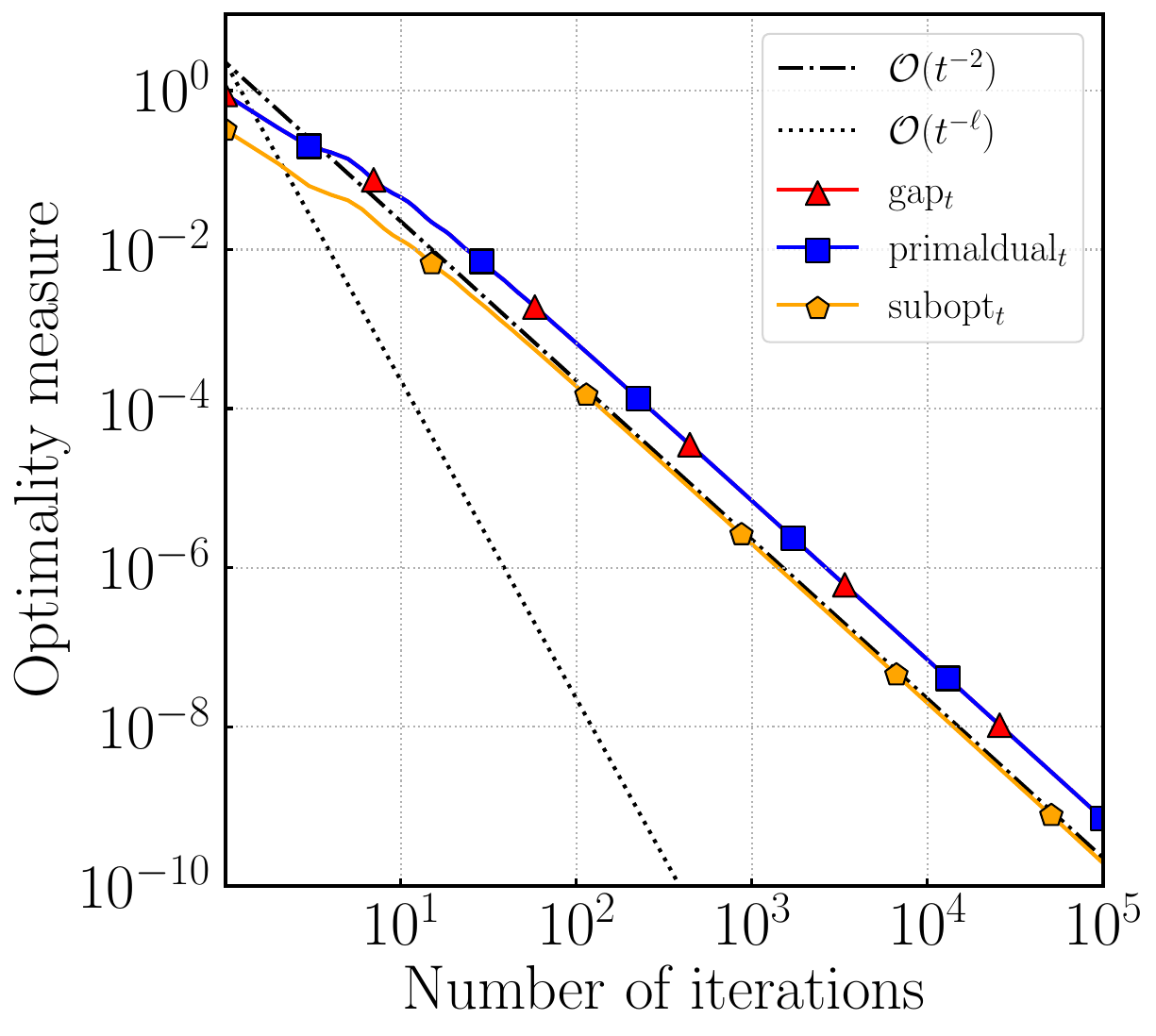}
        \caption{$\ell_{2}$-ball, $r=1/2$.}\label{fig:weak_boundary_2}
    \end{subfigure}& 
    \begin{subfigure}{.31\textwidth}
    \centering
        \includegraphics[width=1\textwidth]{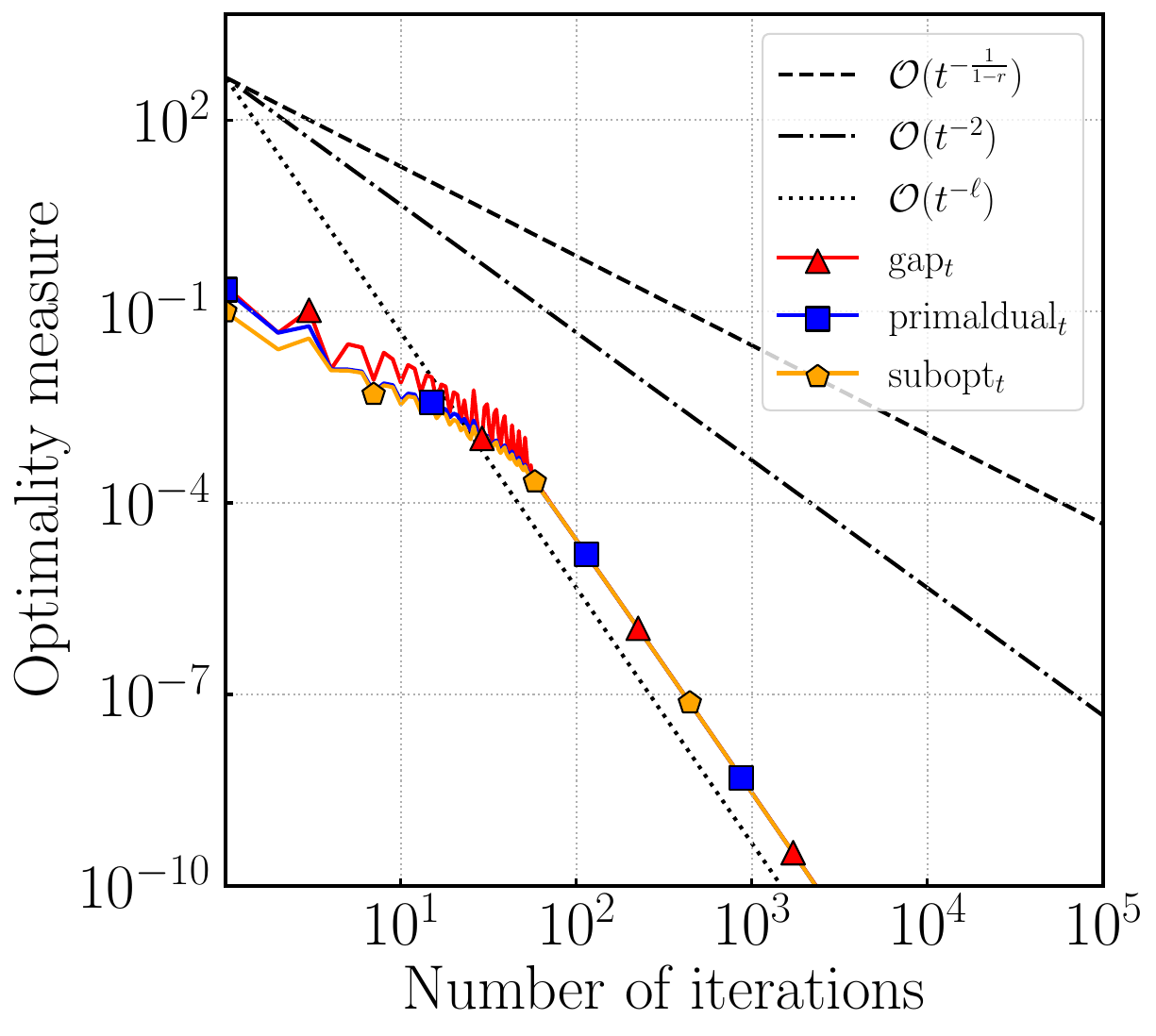}
        \caption{$\ell_7$-ball, $r=1/7$.}\label{fig:weak_boundary_7}
    \end{subfigure}
\end{tabular}
% \vspace{.3in}
\caption{\textbf{Strong $(M, 0)$-growth and weak $(M, r)$-growth.} Optimality measure comparison of \fw{} with  step-size $\eta_t = \frac{\ell}{t+\ell}$ for $\ell = 4$ and all $t\in\N$ when the feasible region $\cC\subseteq\R^n$ for $n = 100$ is an $\ell_p$-ball and the objective is $f(\xx) =  \frac{1}{2}\|\xx - \yy\|_2^2$ for some random vector $\yy\in\R^n$ with $\|\yy\|_p = 1$. Axes are in log scale. 
}\label{fig:weak_unif_hold}
\end{figure}
In Figure~\ref{fig:weak_unif_hold}, in the setting of Example~\ref{ex:weak_boundary} for $n = 100$, $p\in\{1.02, 2, 7\}$, randomly chosen $\yy$ with $\|\yy\|_p=1$, and \fw{} with open-loop step-size $\eta_t = \frac{\ell}{t+\ell}$ for $\ell = 4$ and all $t\in\N$, we compare $\gap_t$, $\primaldual_t$, and $\subopt_t$. We also plot $\cO(t^{-\ell})$, $\cO(t^{-\frac{1}{1-r}})$, and $\cO(t^{-2})$ for better visualization, but we omit the second rate when $r = 1/2$. For these settings, weak $(M, r)$-growth, $r<1$, holds and Theorem~\ref{thm:rate-weak} applies. 
% Since Theorem~\ref{thm:rate-weak} only predicts acceleration after a problem-dependent iteration $\fwt$ has been reached, we also add a vertical line indicating $\fwt$ to the plots.
%

In Figure~\ref{fig:weak_boundary_1.02}, we observe that the convergence plot for $\gap_t$ forms a wedge, with the locally worst iterations converging at a rate of $\cO(t^{-1})$ and the locally best iterates converging at a rate of $\cO(t^{-2})$. The wedge ultimately collapses after which $\gap_t$ converges at a rate of $\cO(t^{-2})$.
In Figures~\ref{fig:weak_boundary_1.02} and~\ref{fig:weak_boundary_2}, all optimality measures converge at a rate of $\cO(t^{-2})$, whereas in Figure~\ref{fig:weak_boundary_7}, all optimality measures converge at a rate of order $\cO(t^{-\ell})$, which is better than predicted by Theorem~\ref{thm:rate-weak}. Note that the accelerated convergence rates for $\gap_t$ and $\primaldual_t$ are not explained by the current theory for any of the three figures.

\section{Strong $(M, 0)$-growth and (relaxed) gaps $(m, r)$-growth}\label{sec:gaps_growth}

We next discuss the convergence of open-loop \fw{} when the strong $(M,0)$-growth and the gaps $(m,r)$-growth properties hold. Although the combination of these two properties implies weak $(M/m,r)$-growth and hence is covered by Theorem~\ref{thm:rate-weak}, we discuss this case separately for two reasons.  First, Theorem~\ref{thm:rate-gaps} below gives a stronger convergence 
 result than Theorem~\ref{thm:rate-weak}. 
 Second, Theorem~\ref{thm:rate-gaps-relaxed} shows accelerated convergence when strong $(M,0)$-growth and the relaxed gaps $(m,r)$-growth properties hold. This latter case is not not covered by 
Theorem~\ref{thm:rate_strong} or Theorem~\ref{thm:rate-weak}.
As we detail in Section~\ref{sec.suff_gaps_m_r} below, the results in this section extend those in \cite{wirth2023acceleration} to a broader selection of open-loop step-sizes and under a more general context when $\cC$ is a polytope.

\subsection{Convergence rates}\label{sec:gaps_growth.results}

The following results relies on the gaps $(m, r)$-growth property. As in Theorem~\ref{thm:rate-weak}, the rates are only in suboptimality gap and capped by $\cO(t^{-2})$.

\begin{theorem}[Strong $(M, 0)$-growth and gaps $(m, r)$-growth]\label{thm:rate-gaps}
Let $\cC\subseteq \R^n$ be a compact convex set, let $f\colon\cC\to\R$ be convex and differentiable in an open set containing $\cC$, let $\eta_t = \frac{\ell}{t+\ell}$ for some $\ell\in\N_{\geq 1}$ and all $t\in\N$.  Suppose that $(\cC, f)$ satisfies the strong $(M,0)$-property for some $M>0$ and the gaps $(m, r)$-growth property for some $m>0$ and $r\in [0, 1[$.
Let $\fwt = \min\left\{i\in\N_{\geq 1} \mid \eta_i M\leq \frac{m^\frac{1}{r}}{2^\frac{1}{r}}\right\} = \max\left\{1, \left\lceil\frac{2^\frac{1}{r}\ell M }{m^\frac{1}{r}} - \ell\right\rceil\right\}$.
Then, for the iterates of Algorithm~\ref{alg:fw} and $t\in\N_{\geq \fwt}$, it holds that
\begin{align}\label{eq:rate_gaps_2}
    \subopt_t &\leq \max \left\{\subopt_{\fwt} \srb{\frac{\eta_{t-1}}{\eta_{\fwt-1}}}^{\ell}, \eta_{t-1}^{\min\{\ell, \frac{1}{1-r},2\}} \srb{\srb{\frac{M}{m}}^{\frac{1}{1-r}} + \frac{M}{2}}\right\}  \\ & = \cO(t^{-\ell} + t^{-\frac{1}{1-r}}+t^{-2}).\notag
\end{align}
\end{theorem}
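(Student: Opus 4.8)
The plan is to follow the template of Theorem~\ref{thm:rate_strong}, replacing the strong $(M,r)$-growth argument by the combination of strong $(M,0)$-growth and gaps $(m,r)$-growth, and tracking the suboptimality gap $\subopt_t$ throughout. The key observation is that when $\subopt_t$ is not too small, the gaps growth property $\gap_t \ge m\,\subopt_t^{1-r}$ lets us lower-bound the linear decrease term $\eta_t\gap_t$ in~\eqref{eq:fw-step}, while the strong $(M,0)$-growth controls the Bregman error term by $\tfrac{M\eta_t^2}{2}$; the threshold $\fwt$ is exactly chosen so that for $t \ge \fwt$ this Bregman error is dominated by a fraction of the linear term whenever $\subopt_t \gtrsim (\eta_t M/m)^{1/(1-r)}$, yielding geometric-type decrease $\subopt_{t+1} \le \subopt_t(1-(1-\tfrac{\epsilon'}{\ell})\eta_t)$ for an appropriate $\epsilon'$ — and crucially, because the gaps property is exact (no $\epsilon$-slack needed as in the $r=1$ analysis), we get the full exponent $\ell$ rather than $\ell-\epsilon$, which is why the rate is $\cO(t^{-\ell})$ and not $\cO(t^{-\ell+\epsilon})$.

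The concrete steps I would carry out, in order: (i) state a strengthened invariant analogous to~\eqref{eq:rate_strong}/\eqref{eq:rate_weak}, namely that for all $t \ge \fwt$, $\subopt_t \le \max\{\subopt_{\fwt}(\eta_{t-1}/\eta_{\fwt-1})^\ell,\ \max_{R\in\{\fwt,\dots,t\}} \eta_{R-1}^{\min\{1/(1-r),2\}}(\eta_{t-1}/\eta_{R-1})^\ell((M/m)^{1/(1-r)}+M/2)\}$, and prove it by induction on $t$; (ii) in the "good progress" case $\subopt_t \le (\eta_t M/m)^{1/(1-r)}$, combine~\eqref{eq:fw-step} with strong $(M,0)$-growth to get $\subopt_{t+1} \le \subopt_t + \tfrac{\eta_t^2 M}{2} \le (\eta_t M/m)^{1/(1-r)} + \tfrac{\eta_t^2 M}{2} \le \eta_t^{\min\{1/(1-r),2\}}((M/m)^{1/(1-r)}+M/2)$, then check this is absorbed by the max on the right-hand side for $t+1$; (iii) in the "bad progress" case, let $R$ be the smallest index past which $\subopt_i > (\eta_i M/m)^{1/(1-r)}$ holds up to $t$, use gaps growth and the threshold definition of $\fwt$ to get $\subopt_{i+1} \le \subopt_i(1-\eta_i)$ (or $\le \subopt_i(1-(1-\tfrac\epsilon\ell)\eta_i)$ — here one should be able to take the clean contraction factor $1-\eta_i$ since no slack is needed, giving Lemma~\ref{lemma:telescope.simple} with exponent exactly $\ell$), telescope via Lemma~\ref{lemma:telescope.simple}, and split into subcases $R=\fwt$ (direct) and $R>\fwt$ (apply the step~(ii) bound at index $R-1$, since then $\subopt_{R-1}$ fell into the good-progress regime); (iv) finally collapse the strengthened invariant to the stated form~\eqref{eq:rate_gaps_2} by the same $\max_R \eta_{R-1}^{1/(1-r)}(\eta_{t-1}/\eta_{R-1})^\ell \le \eta_{t-1}^{\min\{\ell,1/(1-r)\}}$ computation (separating whether $\ell \gtrless 1/(1-r)$) already used at the end of the proof of Theorem~\ref{thm:rate_strong}, and similarly for the $\eta_{R-1}^{\min\{1/(1-r),2\}}$ term to produce the $\min\{\ell,1/(1-r),2\}$ exponent.

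The main obstacle I anticipate is getting the bookkeeping of the threshold $\fwt = \max\{1,\lceil 2^{1/r}\ell M/m^{1/r} - \ell\rceil\}$ exactly right so that in the bad-progress case the inequality $\tfrac{M\eta_i}{2}\gap_i^{r-1} \le$ (desired fraction) actually holds: one needs $\gap_i \ge m\,\subopt_i^{1-r}$ together with $\subopt_i \ge (\eta_i M/m)^{1/(1-r)}$ to conclude $\gap_i \ge \eta_i M$ (up to the constant baked into $\fwt$), hence $\tfrac{M\eta_i}{2}\gap_i^{r-1} \le \tfrac12$, which feeds $\subopt_{i+1}\le \subopt_i(1-\tfrac12\eta_i)$ — but to land the exponent $\ell$ rather than $\ell/2$ in the telescoped product one must instead be more careful and show $\subopt_{i+1}\le\subopt_i(1-\eta_i)$, which forces the stronger threshold condition $\eta_i M \le \tfrac12\gap_i \cdot(\text{something})$; reconciling the precise power of $2$ and the $m^{1/r}$ in $\fwt$ with this requirement is the delicate computation. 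A secondary subtlety is verifying the "absorption" inequalities in steps (ii) and (iii) — that $\eta_t^{\min\{1/(1-r),2\}}(\cdots) \le \max_{R\in\{\fwt,\dots,t+1\}}\eta_{R-1}^{\min\{1/(1-r),2\}}(\eta_t/\eta_{R-1})^\ell(\cdots)\exp(\cdots)$ — which requires $\eta_{R-1}^{\min\{1/(1-r),2\}-\ell}(\eta_t/\eta_{R-1})^{-\ell}$-type monotonicity in $R$ and is exactly where the $\min\{\ell,1/(1-r),2\}$ in the final bound comes from; it is routine but must be done in the correct direction.
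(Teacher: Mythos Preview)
Your proposal follows the paper's approach almost exactly: the strengthened invariant you write down in step~(i), the two-case induction split on whether $\subopt_t \le (\eta_t M/m)^{1/(1-r)}$, the telescoping via Lemma~\ref{lemma:telescope.simple} with the clean exponent~$\ell$ (no~$\epsilon$), the subcases $R=\fwt$ versus $R>\fwt$, and the final collapse to $\min\{\ell,1/(1-r),2\}$ are all precisely what the paper does.

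The obstacle you flag in your last paragraph is real, and you have not found the resolution. You correctly want $\subopt_{i+1} \le \subopt_i(1-\eta_i)$ in the bad-progress case, but the threshold~$\fwt$ by itself does not deliver this, and routing through $\gap_i^{r-1}$ is a vestige of the strong-$(M,r)$ template from Theorem~\ref{thm:rate_strong} that does not apply here (you only have strong $(M,0)$-growth, so the Bregman term is the raw $M\eta_i^2/2$). The missing ingredient is Theorem~\ref{thm:slow_conv}, i.e.\ the baseline bound $\subopt_i \le \eta_i M$. The paper's chain in the bad-progress case reads
\[
\subopt_{i+1} \;\le\; \subopt_i - \eta_i m\,\subopt_i^{1-r} + \tfrac{\eta_i^2 M}{2}
\;\le\; \subopt_i - \tfrac{\eta_i m}{2}\,\subopt_i^{1-r}
\;=\; \subopt_i\Bigl(1 - \tfrac{\eta_i m}{2}\,\subopt_i^{-r}\Bigr),
\]
where the middle step uses the bad-progress assumption $m\,\subopt_i^{1-r} \ge \eta_i M$ to absorb the Bregman term into half of the linear term. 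Now Theorem~\ref{thm:slow_conv} gives $\subopt_i \le \eta_i M$, hence $\tfrac{m}{2}\subopt_i^{-r} \ge \tfrac{m}{2(\eta_i M)^r}$, and the definition of~$\fwt$ (namely $\eta_i M \le (m/2)^{1/r}$ for $i \ge \fwt$) makes this at least~$1$, yielding $\subopt_{i+1} \le \subopt_i(1-\eta_i)$. So the ``stronger threshold condition'' you anticipate is not a condition on $\gap_i$ at all: it is the combination of the a-priori $\cO(t^{-1})$ rate with the threshold on $\eta_i M$, and this is what explains the precise form $\eta_i M \le (m/2)^{1/r}$ in the definition of~$\fwt$.
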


Theorem~\ref{thm:rate-gaps} is a consequence of the stronger Theorem~\ref{thm:rate-gaps-relaxed} stated next.  The latter result provides the crux of the proof of accelerated convergence when $\cC$ is a polytope in cases when neither Theorem~\ref{thm:rate_strong} nor Theorem~\ref{thm:rate-weak} apply.

\begin{theorem}
[Strong $(M, 0)$-growth and relaxed gaps $(m, r)$-growth]
\label{thm:rate-gaps-relaxed}
Let $\cC\subseteq \R^n$ be a compact convex set, let $f\colon\cC\to\R$ be convex and differentiable in an open set containing $\cC$, let $\eta_t = \frac{\ell}{t+\ell}$ for some $\ell\in\N_{\geq 2}$ and all $t\in\N$. Suppose that $(\cC, f)$ satisfies the strong $(M,0)$-property for some $M>0$, and suppose there exists $r\in [0, 1[$ and $m>0$, and a threshold $\fwt \geq \max\left\{1, \left\lceil\frac{2^\frac{1}{r}\ell M }{m^\frac{1}{r}} - \ell\right\rceil\right\}$ 
such that 
$(\cC,f,(\xx_t)_{t\in\N},(\delta_t)_{t\in\N})$ satisfies the relaxed gaps property for the iterates $\xx_t$ generated by Algorithm~\ref{alg:fw}, $\delta_t = \srb{\frac{\eta_tM}{m}}^\frac{1}{1-r}$, and $t\in\N_{\geq\fwt}$.
Then, for the iterates of Algorithm~\ref{alg:fw} and $t\in\N_{\geq \fwt}$ it holds that
\begin{equation}\label{eq:rate_gaps_relaxed}
    \subopt_t \leq \max \left\{\subopt_{\fwt} \srb{\frac{\eta_{t-1}}{\eta_{\fwt-1}}}^{\ell}, \eta_{t-1}^{\min\{\ell, \frac{1}{1-r},2\}} \srb{\srb{\frac{M}{m}}^{\frac{1}{1-r}} + \frac{M}{2}}\right\}.
\end{equation}
\end{theorem}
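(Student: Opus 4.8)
The plan is to retrace the proof of Theorem~\ref{thm:rate-gaps} almost verbatim, with the single change that every appeal to the gaps $(m,r)$-growth property is replaced by an appeal to the relaxed gaps growth property for the sequences $(\xx_t)_{t\in\N}$ and $\delta_t = \srb{\frac{\eta_t M}{m}}^{\frac{1}{1-r}}$. The key observation is that the two-case split driving that earlier proof---Case~\ref{case:good_progress.gaps}, where $\subopt_t \le \srb{\frac{\eta_t M}{m}}^{\frac{1}{1-r}}$, versus Case~\ref{case:bad_progress.gaps}, where it fails---is exactly the dichotomy ``$\subopt(\xx_t)<\delta_t$ or $m\,\subopt(\xx_t)^{1-r}\le\gap(\xx_t)$'' of Definition~\ref{def:relaxed_gaps_growth} for this choice of $\delta_t$. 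Concretely, I would again prove by induction on $t\in\N_{\geq\fwt}$ the stronger bound
\begin{align*}
    \subopt_t \leq \max \left\{\subopt_{\fwt} \srb{\frac{\eta_{t-1}}{\eta_{\fwt-1}}}^{\ell},\ \max_{\Rfwt\in\{\fwt, \ldots, t\}}  \eta_{\Rfwt-1}^{\min\{\frac{1}{1-r},2\}}\srb{\frac{\eta_{t-1}}{\eta_{\Rfwt-1}}}^{\ell}  \srb{\left(\frac{M}{m}\right)^{\frac{1}{1-r}} +\frac{M}{2}} \right\},
\end{align*}
with the base case $t=\fwt$ trivial, and then deduce~\eqref{eq:rate_gaps_relaxed} from it by the same one-line monotonicity computation used to pass from~\eqref{eq:rate_gaps} to~\eqref{eq:rate_gaps_2}.

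For the inductive step I would distinguish the same two cases. If $\subopt_t \le \delta_t$, only Equality~\eqref{eq:fw-step} and the strong $(M,0)$-growth property are used, so~\eqref{eq:good_progress.gaps} goes through unchanged and yields $\subopt_{t+1}\le\eta_t^{\min\{\frac{1}{1-r},2\}}\srb{\left(\frac{M}{m}\right)^{\frac{1}{1-r}}+\frac{M}{2}}$, which is absorbed into the claimed bound; here the relaxed gaps property is not invoked at all. Otherwise $\subopt_t > \delta_t$, and I would pick the smallest $\Rfwt\in\{\fwt,\dots,t\}$ with $\subopt_i\ge\delta_i$ for every $i\in\{\Rfwt,\dots,t\}$. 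Every such $i$ satisfies $i\ge\fwt$ and $\subopt_i\ge\delta_i$, so the relaxed gaps property lands in its second branch and gives $m\,\subopt_i^{1-r}\le\gap_i$ for all $i\in\{\Rfwt,\dots,t\}$---precisely the hypothesis needed to run the recursion~\eqref{eq:rec.gaps} (still using Theorem~\ref{thm:slow_conv}, valid since $\ell\geq 2$, for the bound $\subopt_i\le\eta_iM$, and using the lower bound $\fwt\geq\max\{1,\lceil 2^{1/r}\ell M/m^{1/r}-\ell\rceil\}$ together with monotonicity of $\eta_i$ to justify $\frac{m}{2M^r\eta_i^r}\ge 1$). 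Telescoping via Lemma~\ref{lemma:telescope.simple} then gives $\subopt_{t+1}\le\subopt_{\Rfwt}\srb{\eta_t/\eta_{\Rfwt-1}}^\ell$, after which I would split on $\Rfwt=\fwt$ versus $\Rfwt>\fwt$: in the latter case minimality of $\Rfwt$ forces $\subopt_{\Rfwt-1}<\delta_{\Rfwt-1}$, so the good-progress bound~\eqref{eq:good_progress.gaps} applied at index $\Rfwt-1$ controls $\subopt_{\Rfwt}$, exactly as in Case~\ref{case:bad_progress.gaps} of Theorem~\ref{thm:rate-gaps}.

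I do not expect a genuine obstacle: the argument is a bookkeeping adaptation. The only points worth double-checking are (i) that the induction ranges over exactly the set $\N_{\geq\fwt}$ on which the relaxed gaps property is assumed to hold, so the property is always available when it is invoked; (ii) that enlarging the threshold from the fixed value used in Theorem~\ref{thm:rate-gaps} to the assumed $\fwt$ does not break any inequality in~\eqref{eq:rec.gaps}, which it does not, since $\eta_i$ is decreasing in $i$ and $\fwt$ is at least that fixed value; and (iii) the minor strictness bookkeeping of the dichotomy (when $\subopt_t\le\delta_t$ fails we are forced into the first branch, and when it holds the inclusive ``or'' supplies the second). All three are immediate, so the real content of Theorem~\ref{thm:rate-gaps-relaxed} is simply the recognition that the proof of Theorem~\ref{thm:rate-gaps} never uses the gaps growth property outside the regime $\subopt_t\ge\delta_t$.
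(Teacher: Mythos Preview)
Your proposal is correct and matches the paper's own treatment exactly: the paper does not give a separate proof of Theorem~\ref{thm:rate-gaps-relaxed} but simply remarks that ``a closer look at the above proof [of Theorem~\ref{thm:rate-gaps}] reveals that the gaps growth property can be relaxed,'' which is precisely the observation you spell out---that the inequality $m\,\subopt_i^{1-r}\le\gap_i$ is invoked only for indices $i\ge\fwt$ with $\subopt_i\ge\delta_i$, where the relaxed gaps property supplies it. Your three sanity checks (the induction range, the enlarged threshold, and the dichotomy bookkeeping) are exactly the points that need verifying, and they all go through as you describe.
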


\begin{proof}
This proof is very similar to that of Theorems~\ref{thm:rate_strong} and~\ref{thm:rate-weak} but a few minor tweaks are necessary and we restrict ourselves to presenting those. 
We will prove a stronger statement.  We will show that for the iterates of Algorithm~\ref{alg:fw} and $t\in\N_{\geq \fwt}$, it holds that
\begin{align}\label{eq:rate_gaps}
    \subopt_t \leq \max \left\{\subopt_{\fwt} \srb{\frac{\eta_{t-1}}{\eta_{\fwt-1}}}^{\ell},  \max_{\Rfwt\in\{\fwt, \ldots, t\}}  \eta_{R-1}^{\min\{\frac{1}{1-r},2\}}\srb{\frac{\eta_{t-1}}{\eta_{\Rfwt-1}}}^{\ell}  \srb{\left(\frac{M}{m}\right)^{\frac{1}{1-r}} +\frac{M}{2}}
    \right\},
    \end{align}
which in turn implies~\eqref{eq:rate_gaps_relaxed} as we detail below.
We next prove~\eqref{eq:rate_gaps} by induction.  The bound~\eqref{eq:rate_gaps} readily holds for $t=S$. For the main inductive  step, we distinguish between two main cases:
\begin{enumerate}
    \item\label{case:good_progress.gaps} Suppose that
    \begin{align}\label{eq:smaller.gaps}
    \subopt_{t} \leq \srb{\frac{\eta_{t}M}{m}}^{\frac{1}{1-r}}.
\end{align}
Hence, Equality~\eqref{eq:fw-step} and the strong $(M,0)$-growth property imply that
\begin{align}\label{eq:good_progress.gaps}
    \subopt_{t+1} &\leq \subopt_{t} + \frac{\eta_{t}^2M}{2} \leq \srb{\frac{\eta_{t}M}{m}}^{\frac{1}{1-r}}+\frac{\eta_{t}^2M}{2}\le \eta_{t}^{\min\{\frac{1}{1-r},2\}} \srb{\left(\frac{M}{m}\right)^{\frac{1}{1-r}} +\frac{M}{2}}.
\end{align}
Observe that
\[
\eta_{t}^{\min\{\frac{1}{1-r},2\}} \srb{\left(\frac{M}{m}\right)^{\frac{1}{1-r}} +\frac{M}{2}} \le \max_{\Rfwt\in\{\fwt, \ldots, t+1\}}  \eta_{R-1}^{\min\{\frac{1}{1-r},2\}}\srb{\frac{\eta_{t}}{\eta_{\Rfwt-1}}}^{\ell}  \srb{\left(\frac{M}{m}\right)^{\frac{1}{1-r}} +\frac{M}{2}}.
\]
Thus, \eqref{eq:rate_gaps} holds for $t+1$.
\item
\label{case:bad_progress.gaps} Suppose that \eqref{eq:smaller.gaps} does not hold. Let $\Rfwt\in \{\fwt, \ldots, t\}$ be the smallest index such that 
\begin{align}\label{eq:greater.gaps}
    \subopt_{i} \geq \srb{\frac{\eta_{i}M}{m}}^{\frac{1}{1-r}}
\end{align}
for all $i\in\{\Rfwt, \Rfwt + 1, \ldots, t\}$.
Then, Equality~\eqref{eq:fw-step}, Inequality~\eqref{eq:various.gaps}, and the relaxed gaps $(m, r)$-growth property imply that
\begin{align}\label{eq:rec.gaps}
   &\subopt_{i+1}  \leq \subopt_i -\eta_i \cdot \gap_i + \frac{\eta_i^2 M}{2}\nonumber\\
   & \leq \subopt_i -\eta_i \cdot m \cdot  \subopt_i^{1-r} + \frac{\eta_i^2 M}{2}\nonumber\\
   & \leq \subopt_i -\eta_i \cdot m \cdot \subopt_i^{1-r} + \frac{\eta_i \cdot m \cdot \subopt_i^{1-r}}{2}& \text{$\triangleright$ since $\subopt_i \geq \srb{\frac{\eta_{i}M}{m}}^{\frac{1}{1-r}}$}\nonumber\\
   & \leq \subopt_i -  \subopt_i^{1-r} \frac{\eta_i m}{2}\nonumber\\
   & \leq \subopt_i \srb{1 -\eta_i^{1-r} \frac{m}{2 M^r}} & \text{$\triangleright$ by Theorem~\ref{thm:slow_conv}, $\subopt_i \leq \eta_i M$} \nonumber \\
   & \leq \subopt_i (1 -\eta_i) & \text{$\triangleright$ since $\frac{m}{2M^r\eta_i^r}\geq 1$ for all $i\geq \fwt$} 
\end{align}
for all $i\in \{\Rfwt, \Rfwt + 1, \ldots, t\}$.
By repeatedly applying \eqref{eq:rec.gaps} and by Lemma~\ref{lemma:telescope.simple}, 
\begin{align}\label{eq:calc_big.gaps}
    \subopt_{t+1} & \leq \subopt_{\Rfwt}\left(\frac{\eta_t}{\eta_{\Rfwt-1}}\right)^\ell.
\end{align}
We next proceed as in Case~\ref{case:bad_progress.weak} in Theorem~\ref{thm:rate-weak}.  If $R=S$ then~\eqref{eq:calc_big.gaps} implies that
\[
\subopt_{t+1}  \leq \subopt_{\fwt}\left(\frac{\eta_t}{\eta_{\fwt-1}}\right)^\ell,
\]
and if $R>S$ then bound~\eqref{eq:good_progress.gaps} in Case~\ref{case:good_progress.gaps} and~\eqref{eq:calc_big.gaps} imply that
\[
\subopt_{t+1} \le
\eta_{R-1}^{\min\{\frac{1}{1-r},2\}}\srb{\frac{\eta_{t}}{\eta_{\Rfwt-1}}}^{\ell}  \srb{\left(\frac{M}{m}\right)^{\frac{1}{1-r}} +\frac{M}{2}}.
\]
Thus, \eqref{eq:rate_gaps} holds for $t+1$. 
\end{enumerate}
As in the proof of Theorems~\ref{thm:rate_strong} and~\ref{thm:rate-weak}, we next show that
\eqref{eq:rate_gaps} implies~\eqref{eq:rate_gaps_relaxed}. Indeed, by proceeding as in the proof of Theorem~\ref{thm:rate_strong}, we have that 
\begin{align*}
    \max_{\Rfwt\in\{\fwt, \ldots, t\}}
\eta_{R-1}^{\min\{\frac{1}{1-r},2\}}\srb{\frac{\eta_{t-1}}{\eta_{\Rfwt-1}}}^{\ell}  \srb{\left(\frac{M}{m}\right)^{\frac{1}{1-r}} +\frac{M}{2}} \leq \eta_{t-1}^{\min\{\ell, \frac{1}{1-r},2\}} \srb{\left(\frac{M}{m}\right)^{\frac{1}{1-r}} +\frac{M}{2}}.
\end{align*}
Thus, \eqref{eq:rate_gaps_relaxed} follows from~\eqref{eq:rate_gaps}.
\end{proof}

The following sanity check concerning the scaling behavior of the expressions in Theorem~\ref{thm:rate-gaps} is reassuring: the expression for $\fwt$ is scale invariant as  both sides of~\eqref{eq:rate_gaps_2} scale in the same fashion.  Indeed if $f$ is scaled by $\gamma > 0$ then $M$ scales by $\gamma$ whereas $m$ scales by $\gamma^r$. Thus, $\fwt$
is scale invariant and both sides of~\eqref{eq:rate_gaps_2} scale by $\gamma$.

\medskip

The results derived in this section characterize the acceleration of \fw{} with open-loop step-sizes when the gaps $(m,r)$-growth property is satisfied. It remains to discuss sufficient conditions for the gaps $(m,r)$-growth and relaxed 
gaps $(m,r)$-growth properties to be satisfied.

\subsection{Sufficient conditions for gaps and relaxed gaps growth}
\label{sec.suff_gaps_m_r}

We next present sufficient conditions for both the gaps $(m, r)$-growth property and for the relaxed gaps $(m, r)$-growth property.  The sufficient conditions for the latter one are substantially more involved but they also lead to one of our most interesting developments, namely Corollary~\ref{cor:rel_gap}.  This result establishes the first affine-invariant accelerated convergence rate guarantee in the challenging Wolfe's lower bound setting~\cite{wolfe1970convergence}.

\subsubsection{Sufficient conditions for gaps $(m,r)$ growth}

A simple setting for which the gaps $(m, r)$-growth property holds is when \change{the} feasible region is compact and the objective satisfies a Hölderian error bound. The following result that gives sufficient conditions for gaps $(m, r)$-growth is  analogous to 
Proposition~\ref{prop:exterior} and
Proposition~\ref{prop:unif_heb} which give sufficient conditions for strong and weak growth.  Observe that this result is affine-invariant as all of our previous developments.

\begin{proposition}\label{prop:gaps_interior}
Let $\cC \subseteq \R^n$ be a compact convex set. For $\mu > 0$, and $\theta \in [0, 1/2]$, let $f\colon\cC\to\R$ be a convex function satisfying a $(\mu, \theta)$-Hölderian error bound with respect to $\|\cdot\|_{\cC}$. Suppose the optimal set $X^* = \argmin_{\xx\in \cC}f(\xx)$ satisfies $X^*\subseteq \relativeinterior(\cC)$.
Let 
\begin{equation}\label{eq.def.rho}
\rho:=\min\{\|\xx-\xx^*\|_{\cC} \mid \xx^* \in X^*, \xx \in \relativeboundary(\cC)\}.
\end{equation}
Then,
$(\cC,f)$ satisfies the gaps $(m,r)$-growth property with $m = \frac{\rho}{\mu} > 0$ and $r = \theta$.
\end{proposition}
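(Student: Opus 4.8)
The plan is to prove, for every $\xx\in\cC$, the two inequalities
$$\gap(\xx)\ \ge\ \rho\,\|\nabla f(\xx)\|_\cC^* \qquad\text{and}\qquad \|\nabla f(\xx)\|_\cC^*\ \ge\ \tfrac1\mu\,\subopt(\xx)^{1-\theta},$$
and then to multiply them out to get $\gap(\xx)\ge\frac{\rho}{\mu}\subopt(\xx)^{1-\theta}$, which is exactly the gaps $(\rho/\mu,\theta)$-growth property. As everywhere else in the paper I would assume without loss of generality that $\mathrm{span}(\cC-\cC)=\R^n$, so $\|\cdot\|_\cC$ is a genuine norm and $\relativeinterior(\cC)$, $\relativeboundary(\cC)$ are the ordinary interior and boundary; it also suffices to treat $\xx$ with $\subopt(\xx)>0$, since $\subopt(\xx)=0$ forces $\xx=\xx^*$ by uniqueness of the minimizer, in which case both sides vanish.

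I would first record two geometric facts. (i) $\rho>0$ (hence $m=\rho/\mu>0$): the set $\relativeboundary(\cC)$ is compact and does not contain $\xx^*$, so the continuous function $\xx\mapsto\|\xx-\xx^*\|_\cC$ attains a strictly positive minimum on it. (ii) $\{\zz\in\R^n:\|\zz\|_\cC\le\rho\}+\xx^*\subseteq\cC$: if some $\yy$ with $\|\yy-\xx^*\|_\cC<\rho$ lay outside $\cC$, the segment $[\xx^*,\yy]$ would meet $\relativeboundary(\cC)$ at a point $\zz$ with $\|\zz-\xx^*\|_\cC\le\|\yy-\xx^*\|_\cC<\rho$, contradicting the definition of $\rho$; the boundary case $\|\yy-\xx^*\|_\cC=\rho$ then follows from closedness of $\cC$. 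Granting (ii), the first inequality is immediate from the dual description of $\|\cdot\|_\cC^*$:
$$\gap(\xx)=\langle\nabla f(\xx),\xx\rangle-\min_{\yy\in\cC}\langle\nabla f(\xx),\yy\rangle\ \ge\ \max_{\yy\in\cC}\langle-\nabla f(\xx),\yy-\xx^*\rangle\ \ge\ \max_{\|\zz\|_\cC\le\rho}\langle-\nabla f(\xx),\zz\rangle=\rho\,\|\nabla f(\xx)\|_\cC^*,$$
where the first inequality uses $\langle\nabla f(\xx),\xx-\xx^*\rangle\ge f(\xx)-f(\xx^*)\ge0$ (convexity) and the second uses (ii).

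For the second inequality I would chain convexity, the generalized Cauchy--Schwarz inequality $\langle\gg,\zz\rangle\le\|\gg\|_\cC^*\|\zz\|_\cC$, and the error bound: $0<\subopt(\xx)\le\langle\nabla f(\xx),\xx-\xx^*\rangle\le\|\nabla f(\xx)\|_\cC^*\,\|\xx-\xx^*\|_\cC\le\|\nabla f(\xx)\|_\cC^*\cdot\mu\,\subopt(\xx)^{\theta}$, the last step invoking the $(\mu,\theta)$-Hölderian error bound together with uniqueness of $\xx^*$; dividing by $\subopt(\xx)^\theta>0$ gives $\|\nabla f(\xx)\|_\cC^*\ge\frac1\mu\subopt(\xx)^{1-\theta}$. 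Multiplying the two displayed bounds finishes the argument. The only genuinely non-routine step is fact (ii) — that the $\cC$-norm ball of radius $\rho$ around $\xx^*$ sits inside $\cC$ — which is exactly where convexity of $\cC$ and the definition of $\rho$ as the distance from $\xx^*$ to $\relativeboundary(\cC)$ enter; everything else is the first-order convexity inequality, the norm--dual-norm inequality, and a one-line rearrangement of the error bound.
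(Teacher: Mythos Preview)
Your proof is correct. Both your argument and the paper's hinge on the same geometric observation --- that the closed $\|\cdot\|_\cC$-ball of radius $\rho$ around $\xx^*$ is contained in $\cC$ --- but they exploit it differently. The paper picks a single test point on this ball along the ray from $\xx$ through $\xx^*$, obtains
\[
\|\xx-\xx^*\|_\cC\cdot\gap(\xx)\ \ge\ \rho\cdot\subopt(\xx)
\quad\text{and}\quad
\|\xx-\xx^*\|_\cC\cdot\gap(\xx)\ \ge\ \rho\cdot\Bigl(\tfrac{\|\xx-\xx^*\|_\cC}{\mu}\Bigr)^{1/\theta},
\]
and then takes the $(1-\theta,\theta)$ geometric mean of the two right-hand sides to cancel $\|\xx-\xx^*\|_\cC$. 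You instead maximize over the \emph{entire} $\rho$-ball to get $\gap(\xx)\ge\rho\,\|\nabla f(\xx)\|_\cC^*$, and then lower-bound the dual norm directly via convexity, Cauchy--Schwarz, and the error bound. Your route is arguably more transparent --- the dual norm $\|\nabla f(\xx)\|_\cC^*$ is a natural pivot and the geometric-mean step disappears --- while the paper's version never introduces the dual norm and stays entirely in primal quantities. The two arguments are of comparable length and neither is strictly more general.
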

\begin{proof}
The construction~\eqref{eq.def.rho} of $\rho$ implies that for all \change{$\xx \in \cC\setminus X^*$}
\[
\xx^*+\frac{\rho}{\|\xx-\xx^*\|_{\cC}} (\xx-\xx^*) \in \cC
\]
where $\xx^*\in X^*$ is the optimal solution closest to $\xx$.
Thus
\begin{align*}
\gap(\xx) &= \argmax_{\yy\in \cC} \left\langle\nabla f\left(\xx\right), \xx-\yy \right\rangle \\ &\geq \srb{1+\frac{\rho}{\|\xx-\xx^*\|_{\cC}}} \left\langle\nabla f\left(\xx\right), \xx-\xx^* \right\rangle 
\\ &\geq \frac{\rho}{\|\xx-\xx^*\|_{\cC}} (f(\xx) - f(\xx^*)).
\end{align*}
Thus, the $(\mu,\theta)$ H\"olderian error bound implies that
\[
\|\xx-\xx^*\|_{\cC}\cdot \gap(\xx) \geq \rho \cdot\subopt(\xx) \geq \rho \cdot \left(\frac{\|\xx-\xx^*\|_{\cC}}{\mu} \right)^\frac{1}{\theta}.
\]
Therefore, by taking the $(1-\theta, \theta)$ geometric mean of the last two quantities, it follows that
\[
\|\xx-\xx^*\|_{\cC}\cdot \gap(\xx) \geq \frac{\rho}{\mu}\cdot \subopt(\xx)^{1-\theta} \cdot \|\xx-\xx^*\|_{\cC}.
\]
Hence, $(\cC,f)$ satisfies the gaps $(m,r)$-growth property with $m = \frac{\rho}{\mu}$ and $r = \theta$.  Observe that $m>0$ because $\xx^*\in\relativeinterior(\cC^*)$ implies that $\rho > 0.$
\end{proof}
% {\color{red}{why does this corollary not automatically imply the accelerated case for the polytope? I think I am not fully understanding things.}}
% \begin{corollary}\label{corollary:gaps_interior}
% %
% Let $\cC \subseteq \R^n$ be a polytope with vertex set $\cV$. For $\mu > 0$, and $\theta \in [0, 1/2]$, let $f\colon\cC\to\R$ be a convex function satisfying a $(\mu, \theta)$-Hölderian error bound with respect to $\|\cdot\|_{\cC}$. Suppose $\xx^* = \argmin_{\xx\in \cC}f(\xx)$ is unique and satisfies $\xx^*\in \relativeinterior(\cC)$.
% Let 
% \begin{equation*}
% \rho:=\min\{\|\xx-\xx^*\|_{\cC} \mid \xx \in \cV\}.
% \end{equation*}
% Then,
% $(\cC,f)$ satisfies the gaps $(m,r)$-growth property with $m = \frac{\rho}{\mu} > 0$ and $r = \theta$.
% \end{corollary}
% \begin{proof}
%     The statement of the corollary follows from the fact that the $\argmax$ in the proof of Proposition~\ref{prop:gaps_interior} is always attained at a vertex.
% \end{proof}
%

The results in this section improve over prior work in \cite{wirth2023acceleration} as follows: In the setting of Proposition~\ref{prop:gaps_interior}, Theorem~3.6 in \cite{wirth2023acceleration} derived affine-dependent rates of order $\subopt_t = \cO(t^{-\frac{1}{1-r}}+t^{-2})$ for \fw{} with $\eta_t = \frac{4}{t+4}$. In contrast, Theorem~\ref{thm:rate-gaps} yields affine-invariant rates of order $\subopt_t = \cO(t^{-\ell}+t^{-\frac{1}{1-r}}+t^{-2})$ for any $\ell\in\N_{\geq 1}$. That is, our main contributions for this setting are that our analysis is affine-invariant and captures a larger class of step-sizes than those in \cite{wirth2023acceleration}.

\subsubsection{Sufficient conditions for relaxed gaps $(m, r)$-growth}\label{sec.suff_relaxed_gaps_m_r}

A simple setting for which the relaxed gaps $(m, r)$-growth property holds is when the feasible region is a polytope and the objective satisfies a Hölderian error bound. Proposition~\ref{prop:polytope} below is analogous to  
Proposition~\ref{prop:gaps_interior} and gives sufficient conditions for relaxed gaps growth. We should highlight that Proposition~\ref{prop:polytope} relies heavily on the feasible region being a polytope and on the choice of open-loop step-sizes of the form $\frac{\ell}{t+\ell}$ for $\ell \in \N_{\geq 2}.$  As a result, we get accelerated  rate of convergence even for a class of problems where it is well-known that exact line-search and short-step do not accelerate beyond $\cO(t^{-1})$ such as in the case when the minimizer lies in the interior of a proper face of the polytope of dimension at least one as shown in~\cite{wolfe1970convergence}.

To rigorously address this special polytope setting, we first introduce several additional definitions and assumptions that we will rely on throughout this subsection. 

\begin{enumerate}%[label=\textbf{A\arabic*.},ref=A\arabic*]
\item\label{assum.cont} The set $\cC\subseteq \R^n$ is a polytope, and $f\colon \cC \to \R$ is convex and continuously differentiable in an open set containing $\cC$.
\item\label{assum.relativeinterior}
The point $\xx^* := \argmin_{\xx\in\cC}f(\xx)$ exists and is unique, and $\cC^*\in\faces(\cC)$ is the unique face of $\cC$ such that $\xx^*\in\relativeinterior(\cC^*)$.  For $\xx\in \cC$ let $\bar \xx:=\argmin_{\yy\in \cC^*}\|\xx-\yy\|_{\cC}\in \cC^*$.
\item\label{assum.B.rho} Let  $B$ and $\rho$ be as follows
\begin{align*}
B&:=\max_{\yy\in \cC, \xx\in \cC\setminus \cC^*} \frac{|\ip{\nabla f(\yy)}{\xx-\bar\xx}|}{\|\xx-\bar\xx\|_{\cC}}\\
\rho &:= \min\{\|\xx-\xx^*\|_{\cC} \mid \xx\in \relativeboundary(\cC^*)\}.
\end{align*}
Assumption~\ref{assum.cont} implies that $B<\infty$ and 
Assumption~\ref{assum.relativeinterior} implies that $\rho > 0$.
\item\label{assum.active} There exists $\Afwt\in \N$ such that 
\[
\argmin_{\yy\in\cC} \ip{\nabla f(\xx_t)}{\yy} \in \cC^* \qquad \text{for all} \ t\geq \Afwt.
\]

\end{enumerate}

We consider the optimization problem \eqref{eq:opt} with Algorithm~\ref{alg:fw} and step-size rule $\eta_t = \frac{\ell}{t+\ell}$ for some $\ell\in\N_{\geq 2}$.
In this context, Theorem~\ref{thm:rate-gaps-relaxed} and Proposition~\ref{prop:polytope} below yield accelerated rate of convergence.

\begin{proposition}\label{prop:polytope}
Suppose that Assumptions~\ref{assum.cont}--~\ref{assum.active} hold.  
Suppose also that $\nabla f$ is $L$-Lipschitz continuous on $C$ with respect to $\|\cdot\|_{\cC}$ and $f$ satisfies a $(\mu, \theta)$-Hölderian error bound with respect to $\|\cdot\|_{\cC}$ for some $L > 0$, $\mu > 0$, and $\theta\in[0,1/2]$. In addition, suppose that $\eta_t = \frac{\ell}{t+\ell}$ for $\ell \in \N_{\geq 2}$ in  Algorithm~\ref{alg:fw}.
Let \[
m = \frac{\rho}{2^{2-\theta}\mu} \; \text{ and } \;
M = \max\left\{\frac{m}{\eta_\Afwt}(4B)^{1-\theta},2\frac{2L+B}{\eta_\Afwt},4L\right\}.
\]
Then, $(\cC, f)$ satisfies the strong $(M,0)$-growth property, and 
% Choose EITHER
$(\cC,f,(\xx_t)_{t\in\N},(\delta_t)_{t\in\N})$ satisfies the relaxed gaps property for the iterates $\xx_t$ generated by Algorithm~\ref{alg:fw}, $\delta_t =\left(\frac{\eta_t M}{m}\right)^\frac{1}{1-\theta}$, and $t\in\N_{\geq \Afwt}$.  More precisely,
% OR choose
%the iterates of Algorithm~\ref{alg:fw} satisfy the following relaxed gaps growth property: 
if $t\in\N_{\geq \Afwt}$ and $\subopt_t  \geq \left(\frac{\eta_t M}{m}\right)^\frac{1}{1-\theta}$, then 
\begin{equation}\label{eq.relaxed}
m\cdot \subopt_t^{1-\theta} \leq \gap_t.
\end{equation}
\end{proposition}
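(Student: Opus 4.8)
The plan is to prove the two assertions separately. The strong $(M,0)$-growth property is immediate: since $\nabla f$ is $L$-Lipschitz with respect to $\|\cdot\|_\cC$, $f$ is $L$-smooth with respect to $\|\cdot\|_\cC$, so Proposition~\ref{prop.strong.M.0} gives the strong $(4L,0)$-growth property, and $M \geq 4L$ by construction. The bulk of the work is \eqref{eq.relaxed}. The key structural idea, borrowed from~\citet{pena2021affine} and~\citet{wirth2023acceleration}, is to split the Wolfe gap at $\xx_t$ along the decomposition of $\xx_t - \vv_t$ (where $\vv_t$ is the FW vertex at step $t$) into a component pointing toward $\cC^*$ and a component inside the affine hull of $\cC^*$, using the point $\bar\xx_t = \argmin_{\yy\in\cC^*}\|\xx_t - \yy\|_\cC$.

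First I would observe that for $t \geq \Afwt$, Assumption~\ref{assum.active} guarantees $\vv_t \in \cC^*$, so $\langle \nabla f(\xx_t), \xx_t - \vv_t\rangle = \langle \nabla f(\xx_t), \xx_t - \bar\xx_t\rangle + \langle \nabla f(\xx_t), \bar\xx_t - \vv_t\rangle$, and since $\vv_t$ minimizes $\langle \nabla f(\xx_t), \cdot\rangle$ over all of $\cC \supseteq \cC^*$, the second inner product is $\geq$ the analogous ``inner'' gap $\langle \nabla f(\xx_t), \bar\xx_t - \vv_t'\rangle$ where $\vv_t'$ minimizes over $\cC^*$; one then argues this inner gap dominates $\subopt(\bar\xx_t) := f(\bar\xx_t) - f(\xx^*)$ up to the Hölderian error bound applied \emph{within} $\cC^*$ (where $\xx^*$ lies in the relative interior, so the interior-point argument of Proposition~\ref{prop:gaps_interior} with radius $\rho$ applies). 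Meanwhile the first term $\langle \nabla f(\xx_t), \xx_t - \bar\xx_t\rangle$ is controlled below by $-B\|\xx_t - \bar\xx_t\|_\cC$ via the definition of $B$ in Assumption~\ref{assum.B.rho}, and controlled above in the sense that $\|\xx_t - \bar\xx_t\|_\cC$ must itself be small: here I would invoke the distance-to-face estimate combining $L$-smoothness, the Hölderian error bound, and $\subopt_t \leq \eta_t M$ from Theorem~\ref{thm:slow_conv} to bound $\|\xx_t - \bar\xx_t\|_\cC$ by something of order $\sqrt{\subopt_t}$ or $\subopt_t^\theta$, hence by a small multiple of $\eta_t$-dependent quantities. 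Assembling: $\gap_t \geq (\text{const})\cdot \subopt(\bar\xx_t)^{1-\theta} - B\|\xx_t-\bar\xx_t\|_\cC$, and $\subopt_t \leq \subopt(\bar\xx_t) + B\|\xx_t-\bar\xx_t\|_\cC$, so under the hypothesis $\subopt_t \geq (\eta_t M/m)^{1/(1-\theta)}$ — which forces $\subopt_t$ to dominate the error terms by the choice of $M$ relative to $B$, $L$, and $\eta_\Afwt$ — the cross terms are absorbed and one lands on $m\cdot\subopt_t^{1-\theta} \leq \gap_t$ with $m = \rho/(2^{2-\theta}\mu)$.

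The main obstacle I anticipate is the bookkeeping in the last assembly step: one must verify that the three competing lower bounds built into $M = \max\{(m/\eta_\Afwt)(4B)^{1-\theta},\, 2(2L+B)/\eta_\Afwt,\, 4L\}$ are exactly the thresholds needed so that, whenever $\subopt_t \geq (\eta_t M/m)^{1/(1-\theta)}$, each error term (the $B\|\xx_t-\bar\xx_t\|_\cC$ contribution to both $\gap_t$ from below and $\subopt_t$ from above, and the smoothness/telescoping slack) is at most, say, a quarter of $\subopt_t^{1-\theta}$ after dividing through. In particular the monotonicity direction matters: since $\eta_t$ is decreasing, $\delta_t = (\eta_t M/m)^{1/(1-\theta)}$ is decreasing, so the hypothesis gets \emph{easier} to satisfy as $t$ grows, which is consistent with the relaxed-gaps framework. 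The factor $2^{2-\theta}$ in $m$ and the factor $4B$ inside $M$ are precisely the slack needed to swallow these constants, so I would carry the constants symbolically through the decomposition and only pin them down at the very end. Everything else — the within-$\cC^*$ interior-point gap inequality, the $L$-smoothness distance bound, and the Theorem~\ref{thm:slow_conv} input — is routine once the decomposition is set up.
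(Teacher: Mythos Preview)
Your decomposition and overall plan are sound and track the paper closely: split $\gap_t$ via $\bar\xx_t$, use the interior-point inequality within $\cC^*$ (as in Proposition~\ref{prop:gaps_interior}), control the cross terms by $B$ and $L$, and absorb them using the three branches of $M$. But there is one genuine gap, and it is precisely the step you flag as ``routine'': the bound on $\|\xx_t-\bar\xx_t\|_{\cC}$.

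You propose to bound $\|\xx_t-\bar\xx_t\|_{\cC}$ by $\|\xx_t-\xx^*\|_{\cC}\le \mu\,\subopt_t^{\theta}$ (H\"olderian error bound) and then $\subopt_t\le \eta_t M$ (Theorem~\ref{thm:slow_conv}), giving $\|\xx_t-\bar\xx_t\|_{\cC}=O(\eta_t^{\theta})$. This is too weak. With that estimate, the term $B\|\xx_t-\bar\xx_t\|_{\cC}\le B\mu\,\subopt_t^{\theta}$ can be absorbed into $\tfrac12\subopt_t$ only if $\subopt_t^{1-\theta}\ge 2B\mu$, and the term $(2L+B)\|\xx_t-\bar\xx_t\|_{\cC}=O(\eta_t^{\theta})$ can be absorbed into $M\eta_t$ only if $\eta_t^{1-\theta}$ stays bounded below. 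Neither follows from the hypothesis $\subopt_t\ge(\eta_t M/m)^{1/(1-\theta)}$, because $\delta_t\to 0$ with $\eta_t$; the thresholds built into $M$ cannot compensate since they would have to scale like $\eta_t^{-1}$ for every $t$, not just $\eta_{\Afwt}^{-1}$.

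What the paper actually uses here is an \emph{algorithmic} bound, not a problem-only bound: once $t\ge\Afwt$ all FW vertices lie in $\cC^*$, so for $\ell\ge 2$ the iterates contract toward $\cC^*$ at rate
\[
\|\xx_t-\bar\xx_t\|_{\cC}\le 2\prod_{i=\Afwt}^{t-1}(1-\eta_i)\le \frac{2\eta_t^2}{\eta_{\Afwt}^2}\le 2\Bigl(\frac{\eta_t}{\eta_{\Afwt}}\Bigr)^{\frac{1}{1-\theta}},
\]
where the last step uses $\theta\le 1/2$. This $O(\eta_t^2)$ decay (Lemma~\ref{lemma:distance_to_active_set}) is exactly what makes the three branches of $M$ work: $M\ge (m/\eta_{\Afwt})(4B)^{1-\theta}$ turns the bound into $B\|\xx_t-\bar\xx_t\|_{\cC}\le \tfrac12(\eta_t M/m)^{1/(1-\theta)}\le\tfrac12\subopt_t$, and $M\ge 2(2L+B)/\eta_{\Afwt}$ gives $(2L+B)\|\xx_t-\bar\xx_t\|_{\cC}\le M\eta_t\le m\,\subopt_t^{1-\theta}$. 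This is also where the hypothesis $\ell\ge 2$ enters, and why the relaxed gaps property is stated for the \emph{sequence} $(\xx_t)$ rather than for $(\cC,f)$ alone. Replace your H\"olderian distance estimate with this contraction argument and the rest of your outline goes through.
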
 

The results in this section improve over prior work in \cite{wirth2023acceleration} as follows: In the setting of Proposition~\ref{prop:polytope}, Theorem~4.3 in \cite{wirth2023acceleration} derived affine-dependent rates of order $\subopt_t = \cO(t^{-2})$ for \fw{} with $\eta_t = \frac{4}{t+4}$, however, only when the objective is strongly convex. In contrast, Theorem~\ref{thm:rate-gaps-relaxed} yields affine-invariant rates of order $\subopt_t = \cO(t^{-\ell}+t^{-\frac{1}{1-r}}+t^{-2})$ for any $\ell\in\N_{\geq 1}$. Thus, our main contributions for this setting are that our analysis is affine-invariant, captures a larger class of step-sizes than those in \cite{wirth2023acceleration}, and is more general in that we capture any objectives satisfying Hölderian error bounds as opposed to only strongly convex functions. Note, this setting is highly significant to \fw{} research as it is one of the few settings in which open-loop \fw{} outperforms \fw{} with line-search and short-step. Indeed, \fw{} with the latter two step-sizes converges at rates of order at most $\Omega(t^{-1-\epsilon})$ for any $\epsilon > 0$ as shown in the proof of Wolfe's lower bound \cite{wolfe1970convergence}. This is also illustrated in the experiments of Section~\ref{sec.num_w_lb}.

Theorem~\ref{thm:rate-gaps-relaxed} and Proposition~\ref{prop:polytope} readily imply the accelerated convergence rate $\cO(t^{-\frac{1}{1-\theta}})$  as formally stated in the following corollary.

\begin{corollary}\label{cor:rel_gap}
Suppose that Assumptions~\ref{assum.cont}--~\ref{assum.active} hold.  
Suppose also that $\nabla f$ is $L$-Lipschitz continuous on $C$ with respect to $\|\cdot\|_{\cC}$ and $f$ satisfies a $(\mu, \theta)$-Hölderian error bound with respect to $\|\cdot\|_{\cC}$ for some $L > 0$, $\mu > 0$, and $\theta\in[0,1/2]$. In addition, suppose that $\eta_t = \frac{\ell}{t+\ell}$ for $\ell \in \N_{\geq 2}$ in  Algorithm~\ref{alg:fw}.
Let \[
m = \frac{\rho}{2^{2-\theta}\mu}, \;
M = \max\left\{\frac{m}{\eta_\Afwt}(4B)^{1-\theta},2\frac{2L+B}{\eta_\Afwt},4L\right\},  \; 
S:= \max\left\{1,\left\lceil\frac{2^\frac{1}{\theta}\ell M}{m^\frac{1}{\theta}}-\ell\right\rceil,\Afwt\right\}.
\]
Then, for the iterates of Algorithm~\ref{alg:fw} and $t\in\N_{\geq \fwt}$ it holds that
\[
    \subopt_t \leq \max \left\{\subopt_{\fwt} \srb{\frac{\eta_{t-1}}{\eta_{\fwt-1}}}^{\ell}, \eta_{t-1}^{\min\{\ell, \frac{1}{1-\theta},2\}} \srb{\srb{\frac{M}{m}}^{\frac{1}{1-\theta}} + \frac{M}{2}}\right\} = \cO(t^{-\frac{1}{1-\theta}}).
\]
\end{corollary}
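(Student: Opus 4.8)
The plan is to derive the corollary as an immediate consequence of Proposition~\ref{prop:polytope} combined with Theorem~\ref{thm:rate-gaps-relaxed}, so the argument is essentially a verification of hypotheses rather than a new computation. First I would invoke Proposition~\ref{prop:polytope}: under Assumptions~\ref{assum.cont}--\ref{assum.active}, the $L$-Lipschitz continuity of $\nabla f$, and the $(\mu,\theta)$-Hölderian error bound, it gives, for exactly the constants $m=\frac{\rho}{2^{2-\theta}\mu}$ and $M=\max\{\frac{m}{\eta_\Afwt}(4B)^{1-\theta},2\frac{2L+B}{\eta_\Afwt},4L\}$ appearing in the statement, that $(\cC,f)$ satisfies the strong $(M,0)$-growth property and that $(\cC,f,(\xx_t)_{t\in\N},(\delta_t)_{t\in\N})$ satisfies the relaxed gaps property with $\delta_t=\srb{\frac{\eta_t M}{m}}^{\frac{1}{1-\theta}}$ for the \fw{} iterates and all $t\in\N_{\geq\Afwt}$.

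Next I would check that the threshold $\fwt:=\max\{1,\lceil\frac{2^{1/\theta}\ell M}{m^{1/\theta}}-\ell\rceil,\Afwt\}$ fed into Theorem~\ref{thm:rate-gaps-relaxed} (with $r=\theta$) meets that theorem's two requirements. Requirement one, $\fwt\geq\max\{1,\lceil\frac{2^{1/\theta}\ell M}{m^{1/\theta}}-\ell\rceil\}$, holds by construction. Requirement two, that the relaxed gaps property holds for all $t\in\N_{\geq\fwt}$ with the stated sequences $(\xx_t)$ and $(\delta_t)$, follows from the conclusion of Proposition~\ref{prop:polytope} together with the observation that $\fwt\geq\Afwt$: the pointwise alternative ``$\subopt_t<\delta_t$ or $m\cdot\subopt_t^{1-\theta}\leq\gap_t$'' established there for every $t\geq\Afwt$ is, a fortiori, valid for every $t\geq\fwt$. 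With both requirements in place, Theorem~\ref{thm:rate-gaps-relaxed} applies verbatim and yields, for all $t\in\N_{\geq\fwt}$,
\[
\subopt_t \leq \max\left\{\subopt_{\fwt}\srb{\frac{\eta_{t-1}}{\eta_{\fwt-1}}}^{\ell},\ \eta_{t-1}^{\min\{\ell,\frac{1}{1-\theta},2\}}\srb{\srb{\frac{M}{m}}^{\frac{1}{1-\theta}}+\frac{M}{2}}\right\},
\]
which is exactly the bound asserted in the corollary.

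Finally I would reduce this to the rate $\cO(t^{-\frac{1}{1-\theta}})$. Since $\theta\in[0,1/2]$ we have $\frac{1}{1-\theta}\in[1,2]$, and since $\ell\in\N_{\geq 2}$ we get $\min\{\ell,\frac{1}{1-\theta},2\}=\frac{1}{1-\theta}$ and also $\ell\geq\frac{1}{1-\theta}$. Because $\eta_{t-1}=\Theta(t^{-1})$ and $\fwt$ is a fixed constant, the first term in the maximum is $\Theta(t^{-\ell})=\cO(t^{-\frac{1}{1-\theta}})$, so the maximum is dominated by the second term, which is $\Theta(t^{-\frac{1}{1-\theta}})$; hence the right-hand side is $\cO(t^{-\frac{1}{1-\theta}})$. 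I do not anticipate any real obstacle, since everything substantive is already contained in Proposition~\ref{prop:polytope} and Theorem~\ref{thm:rate-gaps-relaxed}; the only points needing a moment of care are the monotonicity step (enlarging the relaxed-gaps threshold preserves the property) and the bookkeeping that ensures the constants $m$, $M$, $\fwt$ from the corollary coincide with those demanded by the theorem.
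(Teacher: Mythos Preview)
Your proposal is correct and follows exactly the route the paper takes: the paper simply states that the corollary is an immediate consequence of Proposition~\ref{prop:polytope} and Theorem~\ref{thm:rate-gaps-relaxed}, and your write-up carries out precisely that verification of hypotheses, including the threshold bookkeeping $\fwt\ge\Afwt$ and the final simplification $\min\{\ell,\frac{1}{1-\theta},2\}=\frac{1}{1-\theta}$ for $\theta\in[0,1/2]$ and $\ell\ge 2$.
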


The proof of Proposition~\ref{prop:polytope} 
relies on the following lemma that bounds the distance of iterate $\xx_t$ to the active set in affine-invariant fashion.  The bound~\eqref{eq.distance} relies crucially on the choice of open-loop step-size $\eta_t = \frac{\ell}{t+\ell}$ with $\ell\in \N_{\geq 2}$.

\begin{lemma}[Distance to active set] 
\label{lemma:distance_to_active_set}
Let $\cC\subseteq \R^n$ be a polytope, let $f\colon\cC\to\R$ be convex and differentiable in an open set containing $\cC$, let $\eta_t = \frac{\ell}{t+\ell}$ for some $\ell\in\N_{\geq 2}$ and all $t\in\N$, and suppose $\Afwt<\infty$. Then, for the iterates of Algorithm~\ref{alg:fw} and $t\in\N_{\geq \Afwt}$, it holds that 
\begin{align*}
    \|\bar \xx_t-\xx_t\|_{\cC} \leq 2\prod_{i=\Afwt}^{t-1}(1-\eta_i) = \frac{2\Afwt\cdots(\Afwt-1+\ell)}{t\cdots (t-1+\ell)}.
\end{align*}
In particular, for $t\in\N_{\geq  \Afwt}$, it holds that
\begin{equation}\label{eq.distance}
     \|\bar \xx_t-\xx_t\|_{\cC} \leq \frac{2\Afwt(\Afwt-1+ \ell)}{t(t-1+\ell)} \leq \frac{2(\Afwt+ \ell)^2}{(t+\ell)^2} = \frac{2\eta_t^2}{\eta_\Afwt^2}.
\end{equation}
\end{lemma}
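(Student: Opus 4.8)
The plan is to turn the Frank--Wolfe update into a one-step contraction for the $\|\cdot\|_\cC$-distance of the iterate to the optimal face $\cC^*$, and then unroll it. The key observation is the following: for $t\in\N_{\geq\Afwt}$, Assumption~\ref{assum.active} guarantees $\vv_t\in\cC^*$, so the update reads $\xx_{t+1}=(1-\eta_t)\xx_t+\eta_t\vv_t$. Since $\cC^*$ is convex and $\bar\xx_t\in\cC^*$, the point $\zz_t:=(1-\eta_t)\bar\xx_t+\eta_t\vv_t$ also lies in $\cC^*$. Hence, using that $\bar\xx_{t+1}$ minimizes the $\cC$-distance from $\cC^*$ to $\xx_{t+1}$, and that $\xx_{t+1}-\zz_t=(1-\eta_t)(\xx_t-\bar\xx_t)$ together with positive homogeneity of $\|\cdot\|_\cC$,
\[
\snorm{\bar\xx_{t+1}-\xx_{t+1}}_\cC \;\leq\; \snorm{\zz_t-\xx_{t+1}}_\cC \;=\; (1-\eta_t)\snorm{\bar\xx_t-\xx_t}_\cC .
\]
This is the heart of the argument; it uses only convexity of $\cC^*$ and $\vv_t\in\cC^*$, so it is automatically affine invariant once phrased via $\|\cdot\|_\cC$.

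Iterating the displayed inequality from $t$ down to $\Afwt$, with base case $\snorm{\bar\xx_\Afwt-\xx_\Afwt}_\cC\leq 2$ (valid since $\snorm{\uu-\vv}_\cC\leq 2$ for all $\uu,\vv\in\cC$), yields $\snorm{\bar\xx_t-\xx_t}_\cC\leq 2\prod_{i=\Afwt}^{t-1}(1-\eta_i)$ for all $t\in\N_{\geq\Afwt}$, the empty product at $t=\Afwt$ being $1$. Substituting $1-\eta_i=\frac{i}{i+\ell}$, the product telescopes: the numerators $\Afwt,\dots,t-1$ cancel against the denominators $\Afwt+\ell,\dots,t-1+\ell$, leaving $\prod_{i=\Afwt}^{t-1}(1-\eta_i)=\frac{\Afwt\cdots(\Afwt-1+\ell)}{t\cdots(t-1+\ell)}$, which is the first displayed claim of the lemma.

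For the sharpened bound~\eqref{eq.distance}, I would write the product as $\prod_{j=0}^{\ell-1}\frac{\Afwt+j}{t+j}$. Since $\Afwt\leq t$, every factor is at most $1$, so discarding all but the $j=0$ and $j=\ell-1$ factors---this is the one place where $\ell\geq 2$ is genuinely used---gives $\prod_{i=\Afwt}^{t-1}(1-\eta_i)\leq\frac{\Afwt(\Afwt-1+\ell)}{t(t-1+\ell)}$. To pass from here to $\frac{(\Afwt+\ell)^2}{(t+\ell)^2}$ I would check that $s\mapsto\frac{s(s-1+\ell)}{(s+\ell)^2}=1-\frac{(\ell+1)s+\ell^2}{(s+\ell)^2}$ is nondecreasing for $s\geq 0$; a one-line derivative computation shows $\frac{(\ell+1)s+\ell^2}{(s+\ell)^2}$ is nonincreasing, so the value at $s=\Afwt$ is at most the value at $s=t$, i.e. $\frac{\Afwt(\Afwt-1+\ell)}{t(t-1+\ell)}\leq\frac{(\Afwt+\ell)^2}{(t+\ell)^2}$. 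Finally $\frac{(\Afwt+\ell)^2}{(t+\ell)^2}=\srb{\frac{\eta_t}{\eta_\Afwt}}^2$ since $\frac{\eta_t}{\eta_\Afwt}=\frac{\Afwt+\ell}{t+\ell}$, which completes~\eqref{eq.distance}.

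There is no real obstacle here: once the contraction is in hand, everything reduces to bookkeeping with products of consecutive integers. The only step meriting care is the discarding of factors, which fails for $\ell=1$ (one is left with only $\Afwt/t$, too weak for the claimed quadratic decay), together with the elementary monotonicity check in the last step.
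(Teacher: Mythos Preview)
Your proof is correct and follows essentially the same route as the paper: the one-step contraction $\|\bar\xx_{t+1}-\xx_{t+1}\|_\cC\le(1-\eta_t)\|\bar\xx_t-\xx_t\|_\cC$ via the auxiliary point $(1-\eta_t)\bar\xx_t+\eta_t\vv_t\in\cC^*$, together with the base case $\|\bar\xx_\Afwt-\xx_\Afwt\|_\cC\le 2$, is exactly the paper's induction. You additionally supply careful justifications for the elementary inequalities in~\eqref{eq.distance}, which the paper simply states.
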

\begin{proof}
We proceed by induction.  For $t=\Afwt$ we readily have $\|\xx_{\Afwt} - \bar \xx_{\Afwt}\|_{\cC} \le 2$.
Suppose the bound holds for $t\geq \Afwt$. Thus, for some $\vv\in \cC^*$, we have
\begin{align*}
\xx_{t+1} = (1-\eta_{t})\xx_{t} + \eta_{t} \vv \qquad \Leftrightarrow \qquad  
\xx_{t+1}- ((1-\eta_{t})\bar \xx_{t} + \eta_{t} \vv) = (1-\eta_{t})(\xx_t - \bar \xx_{t}).
\end{align*}
The latter equation and the induction hypothesis imply that
\begin{align*}
\|\bar \xx_{t+1}-\xx_{t+1}\|_{\cC} \leq \|(1-\eta_{t})\bar \xx_{t} + \eta_{t} \vv-\xx_{t+1}\|_{\cC} \leq 
(1-\eta_{t})\|\bar \xx_t-\xx_t\|_{\cC} \leq 2\prod_{i=\Afwt}^{t}(1-\eta_i).
\end{align*}
\end{proof}
\begin{proof}[Proof of Proposition~\ref{prop:polytope}]
Since $M\geq 4L$, the strong $(M,0)$-growth property follows from the $L$-Lipschitz continuity of $f$ and Proposition~\ref{prop.strong.M.0}.  
 We next show~\eqref{eq.relaxed} when $t\in \N_{\geq \Afwt}$ and $\subopt_t  \geq \left(\frac{\eta_tM}{m}\right)^\frac{1}{1-\theta}$. 
The construction of $\rho$ implies that
\begin{align*}
\max_{\yy\in \cC^*}\langle\nabla f(\bar \xx_t),\bar \xx_t-\yy\rangle &\ge
\frac{\rho}{\|\bar \xx_t-\xx^*\|_{\cC}}\ip{\nabla f(\bar \xx_t)}{\bar \xx_t - \xx^*}
\\&\geq \frac{\rho}{\|\bar \xx_t-\xx^*\|_{\cC}}(f(\bar \xx_t) - f(\xx^*)) 
\\&\geq \frac{\rho}{\|\bar \xx_t-\xx^*\|_{\cC}}\left(\frac{\|\bar \xx_t - \xx^*\|_{\cC}}{\mu}\right)^\frac{1}{\theta}.
\end{align*}
Thus, the first expression is bounded from below by the $(1-\theta,\theta)$ geometric mean of the last two quantities, that is,
\begin{equation}\label{eq:lowerbound.g}
\max_{\yy\in \cC^*}\langle\nabla f(\bar \xx_t),\bar \xx_t-\yy\rangle \geq \frac{\rho}{\mu}(f(\bar \xx_t) - f(\xx^*))^{1-\theta}.
\end{equation}
Since $t\in \N_{\geq \Afwt}$, we have
\[
\gap_t = \max_{\yy\in\cC}\ip{\nabla f(\xx_t)}{\xx_t-\yy} \\
= \max_{\yy\in \cC^*}\ip{\nabla f(\xx_t)}{\xx_t-\yy}.
\]
Therefore the construction of $B$, $L$-Lipschitz continuity of $\nabla f$, Inequality~\eqref{eq:lowerbound.g}, convexity of $f$ imply that
\begin{align}\label{eq:bound.gap}
\gap_t 
&= \max_{\yy\in \cC^*}\langle\nabla f(\bar \xx_t),\bar \xx_t-\yy\rangle + 
\langle\nabla f(\bar \xx_t),\xx_t - \bar \xx_t\rangle + \langle\nabla f(\xx_t)-\nabla f(\bar \xx_t),\xx_t-\yy\rangle \notag
\\
&\geq \max_{\yy\in \cC^*}\langle\nabla f(\bar \xx_t),\bar \xx_t-\yy\rangle - (2L+B)\|\bar \xx_t-\xx_t\|_{\cC}\notag
\\
&\geq \frac{\rho}{\mu}(f(\bar \xx_t) - f(\xx^*))^{1-\theta} - (2L+B)\|\bar \xx_t-\xx_t\|_{\cC} \notag\\
&\geq \frac{\rho}{\mu}(f(\xx_t) - f(\xx^*) + \ip{\nabla f(\xx_t)}{\bar \xx_t-\xx_t})^{1-\theta}- (2L+B)\|\bar \xx_t-\xx_t\|_{\cC}
\notag  \\
&\geq \frac{\rho}{\mu}\left(\subopt_t -B \|\bar \xx_t-\xx_t\|_{\cC}\right)^{1-\theta}- (2L+B)\|\bar \xx_t-\xx_t\|_{\cC}.
\end{align}
Next we bound each of the two terms in~\eqref{eq:bound.gap}.
Lemma~\ref{lemma:distance_to_active_set},  $t\in \N_{\geq \Afwt},$ and $\theta\in [0,1/2]$ imply that
\[
\|\bar \xx_t-\xx_t\|_{\cC} \le 2\frac{\eta_t^2}{\eta_\Afwt^2} \le 2\left(\frac{\eta_t}{\eta_\Afwt}\right)^\frac{1}{1-\theta}.
\]
Hence, the construction of $M$ implies that 
\begin{equation}\label{eq:bound.norm.1}
B\|\bar \xx_t-\xx_t\|_{\cC} \le 2B \frac{\eta_t^2}{\eta_\Afwt^2}\leq 2B \left(\frac{\eta_t}{\eta_\Afwt}\right)^\frac{1}{1-\theta} \leq \frac{1}{2}\left(\frac{\eta_t M}{m} \right)^\frac{1}{1-\theta},
\end{equation}
and
\begin{equation}\label{eq:bound.norm.2}
(2L+B)\|\bar \xx_t-\xx_t\|_{\cC} \le 2(2L+B) \frac{\eta_t^2}{\eta_\Afwt^2} \le 2(2L+B) \frac{\eta_t}{\eta_\Afwt} \leq  M\eta_t.
\end{equation}
Finally, by putting together~\eqref{eq:bound.gap},~\eqref{eq:bound.norm.1},~\eqref{eq:bound.norm.2}, the construction of $m$, and $\subopt_t  \geq \left(\frac{\eta_t M}{m} \right)^\frac{1}{1-\theta}$, we get
\begin{align*}
\gap_t  & \geq \frac{\rho}{\mu}\left(\frac{\subopt_t}{2}\right)^{1-\theta}- M \eta_t 
\geq 2m \cdot \subopt_t^{1-\theta} - M \eta_t \geq m \cdot \subopt_t^{1-\theta}.
\end{align*}
\end{proof}

The following sanity check is reassuring: the two sides of each of the  inequalities $\subopt_t  \geq   \left(\frac{\eta_t M}{m}\right)^\frac{1}{1-\theta}$ and $m\cdot\subopt_t^{1-\theta} \leq \gap_t$ scale consistently. Indeed, if $f$ is scaled by $\gamma > 0$, then $B$ scales by $\gamma$ whereas $m$ scales by $\gamma^\theta$. Thus, $M$ scales by $\gamma$ and $M/m$ scales by $\gamma^{1-\theta}$.
Consequently, both $\subopt_t$ and $\left(\frac{\eta_t M}{m}\right)^\frac{1}{1-\theta} $  scale by $\gamma$ as do both $m\cdot\subopt_t^{1-\theta}$ and $\gap_t$.

The following lemma shows that Assumption~\ref{assum.active} holds provided that some suitable strict complementarity condition holds.

\begin{lemma}[Active-set identification]\label{lemma.active}
Suppose Assumption~\ref{assum.cont} and Assumption~\ref{assum.relativeinterior} hold. 
Suppose also that $\nabla f$ is $L$-Lipschitz continuous on $C$ with respect to $\|\cdot\|_{\cC}$ and $f$ satisfies a $(\mu, \theta)$-Hölderian error bound with respect to $\|\cdot\|_{\cC}$ for some $L,\mu > 0$, and $\theta\in[0,1/2]$.
In addition suppose that the following strict complementarity condition holds for some $\kappa > 0$:
\begin{equation}\label{eq:strict_comp}
\langle\nabla f(\xx^*),\vv - \xx^*\rangle \geq \kappa \text{ for }
\vv\in \vertices(\cC) \setminus \vertices(\cC^*)\; \text{ and } \;
\langle\nabla f(\xx^*),\vv - \xx^*\rangle =0 \text{ for }
\vv\in \vertices(\cC^*).
\end{equation}
Then, for all $t \geq \left\lceil 4L\ell\cdot (\frac{4L\mu}{\kappa})^\frac{1}{\theta}\right\rceil$ 
the iterates generated by Algorithm~\ref{alg:fw} satisfy the following active-set identification property
\begin{equation}\label{eq:active_id}
\argmin_{\yy\in\cC} \ip{\nabla f(\xx_t)}{\yy} \in \cC^*.
\end{equation}
\end{lemma}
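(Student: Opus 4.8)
The plan is to turn active-set identification into a \emph{separation} statement: once the iterate $\xx_t$ is sufficiently close to $\xx^*$, the linear functional $\langle\nabla f(\xx_t),\cdot\rangle$ is strictly larger on every vertex of $\cC$ \emph{not} lying on $\cC^*$ than at $\xx^*$, so none of those vertices can minimize $\langle\nabla f(\xx_t),\cdot\rangle$ over $\cC$; since the minimizing set over a polytope is a face, it must then be a subface of $\cC^*$, which is exactly~\eqref{eq:active_id}. The required proximity of $\xx_t$ to $\xx^*$ is obtained by feeding the baseline $\cO(t^{-1})$ rate of open-loop \fw{} into the Hölderian error bound.

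Concretely, I would first note that $L$-Lipschitz continuity of $\nabla f$ with respect to $\|\cdot\|_{\cC}$ is equivalent to $L$-smoothness of $f$ with respect to $\|\cdot\|_{\cC}$ (the remark after Definition~\ref{def:smooth_f}), so Proposition~\ref{prop.strong.M.0} gives the strong $(4L,0)$-growth property. Applying Theorem~\ref{thm:slow_conv} with $M=4L$ (recall $\ell\in\N_{\geq 2}$ throughout this subsection) yields $\subopt_t \le \primaldual_t \le 4L\eta_t = \tfrac{4L\ell}{t+\ell}\le\tfrac{4L\ell}{t}$ for all $t\in\N_{\geq 1}$. Since $\xx^*$ is the unique minimizer, the $(\mu,\theta)$-Hölderian error bound of Definition~\ref{def:heb} reads $\|\xx_t-\xx^*\|_{\cC}\le\mu\,\subopt_t^{\theta}$, so for any $t\ge\lceil 4L\ell(4L\mu/\kappa)^{1/\theta}\rceil$ one has $\subopt_t\le(\kappa/(4L\mu))^{1/\theta}$ and therefore $\|\xx_t-\xx^*\|_{\cC}\le\kappa/(4L)$.

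Next, fix a vertex $\vv\in\vertices(\cC)\setminus\vertices(\cC^*)$. Decomposing
\[
\langle\nabla f(\xx_t),\vv-\xx^*\rangle = \langle\nabla f(\xx^*),\vv-\xx^*\rangle + \langle\nabla f(\xx_t)-\nabla f(\xx^*),\vv-\xx^*\rangle,
\]
I would bound the first term from below by $\kappa$ using the strict complementarity assumption~\eqref{eq:strict_comp}, and the second term in absolute value by $\|\nabla f(\xx_t)-\nabla f(\xx^*)\|_{\cC}^{*}\cdot\|\vv-\xx^*\|_{\cC}\le 2L\|\xx_t-\xx^*\|_{\cC}$, where $\|\vv-\xx^*\|_{\cC}\le 2$ because $\|\cdot\|_{\cC}$ is the gauge of $\tfrac12(\cC-\cC)$ (Definition~\ref{def:C_norm}). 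Combining with the previous paragraph gives $\langle\nabla f(\xx_t),\vv-\xx^*\rangle\ge\kappa-2L\cdot\tfrac{\kappa}{4L}=\tfrac{\kappa}{2}>0$, that is, $\langle\nabla f(\xx_t),\vv\rangle>\langle\nabla f(\xx_t),\xx^*\rangle\ge\min_{\yy\in\cC}\langle\nabla f(\xx_t),\yy\rangle$. Hence no vertex outside $\vertices(\cC^*)$ attains the minimum; since $\cC$ is a polytope, $\argmin_{\yy\in\cC}\langle\nabla f(\xx_t),\yy\rangle$ is a nonempty face of $\cC$, hence the convex hull of the minimizing vertices, all of which lie in $\vertices(\cC^*)$, so this set is contained in $\conv(\vertices(\cC^*))=\cC^*$, proving~\eqref{eq:active_id}.

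The argument is essentially bookkeeping once the ingredients are lined up, and I do not expect a serious obstacle; the points needing the most care are (i) the passage from ``all minimizing vertices lie on $\cC^*$'' to ``$\argmin_{\yy\in\cC}\langle\nabla f(\xx_t),\yy\rangle\subseteq\cC^*$'', which relies on the minimizing set of a linear functional over a polytope being a face and hence spanned by its vertices, and (ii) checking that the stated threshold $\lceil 4L\ell(4L\mu/\kappa)^{1/\theta}\rceil$ — which carries a factor-$2$ cushion over the bare requirement $\|\xx_t-\xx^*\|_{\cC}<\kappa/(2L)$ — indeed delivers that strict inequality. Implicitly the statement presumes $\theta>0$, as the threshold is otherwise vacuous.
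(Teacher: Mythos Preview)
Your proof is correct and follows essentially the same approach as the paper: feed the baseline $\cO(t^{-1})$ rate into the H\"olderian error bound to get $\|\xx_t-\xx^*\|_{\cC}$ small, then use Lipschitz continuity of $\nabla f$ together with strict complementarity to separate vertices outside $\cC^*$ from the minimum. The only cosmetic difference is that the paper decomposes $\langle\nabla f(\xx_t),\vv-\xx_t\rangle$ into three terms and compares vertices in $\cC^*$ against vertices outside $\cC^*$ directly, whereas you decompose $\langle\nabla f(\xx_t),\vv-\xx^*\rangle$ into two terms and compare against the value at $\xx^*\in\cC$; both yield the same $\kappa/2$ margin and the same threshold.
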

\begin{proof}
For $\vv\in \vertices(\cC)$, we have
\begin{align*}
\ip{\nabla f(\xx_t)}{\vv-\xx_t} = \ip{\nabla f(\xx^*)}{\vv-\xx^*}+\ip{\nabla f(\xx^*)}{\xx^*-\xx_t} + \ip{\nabla f(\xx_t) - \nabla f(\xx^*)}{\vv-\xx_t}.
\end{align*}
Thus, the strict complementarity assumption implies that
\begin{align}\label{eq.active.step1}
\ip{\nabla f(\xx_t)}{\vv-\xx_t}   \left\{\begin{array}{ll} \geq \ip{\nabla f(\xx^*)}{\xx^*-\xx_t}  -|\ip{\nabla f(\xx_t) - \nabla f(\xx^*)}{\vv-\xx_t}| + \kappa & \text{ if } \vv \not\in \cC^*\\
\leq \ip{\nabla f(\xx^*)}{\xx^*-\xx_t} + |\ip{\nabla f(\xx_t) - \nabla f(\xx^*)}{\vv-\xx_t}| & \text{ if } \vv\in \cC^*.
\end{array}\right.
\end{align}

The $L$-Lipschitz continuity of $\nabla f$ and Theorem~\ref{thm:slow_conv}, implies that $\subopt_t \leq \frac{4L \ell}{t+\ell}$.  Next, $L$-Lipschitz continuity of $\nabla f$ again and the H\"olderian error bound assumption of $f$ imply that
\begin{align*}
    \left\vert\ip{\nabla f(\xx_t) - \nabla f(\xx^*)}{\vv-\xx_t}\right\vert \leq 2L \|\xx_t - \xx^*\|_{\cC} \leq 2L\mu\cdot\subopt_t^\theta &\leq 
   2L\mu\left(\frac{4L\ell}{t+\ell}\right)^\theta\\ &< 2L\mu\left(\frac{4L\ell}{t}\right)^\theta.
\end{align*}
Thus, for $t \geq 4L\ell\cdot (\frac{4L\mu}{\kappa})^\frac{1}{\theta}$, we have
\begin{align}\label{eq.active.step2}
\left\vert\ip{\nabla f(\xx_t) - \nabla f(\xx^*)}{\vv-\xx_t}\right\vert < \frac{\kappa}{2}.
\end{align}
Combining~\eqref{eq.active.step1} and~\eqref{eq.active.step2}, it follows that 
$\argmin_{\yy\in\cC} \ip{\nabla f(\xx_t)}{\yy} \in \cC^*$ whenever $t \geq \left\lceil 4L\ell\cdot (\frac{4L\mu}{\kappa})^\frac{1}{\theta}\right\rceil$.
\end{proof}

Once again, the following sanity check is reassuring: the lower bound $4L\ell\cdot (\frac{4L\mu}{\kappa})^\frac{1}{\theta}$ on $\Afwt$ is scale invariant.  Indeed if $f$ is scaled by $\gamma > 0$ then both $L$ and $\kappa$ scale by $\gamma$ whereas $\mu$ scales by $1/\gamma^\theta$.  Thus, $4L\ell\cdot (\frac{4L\mu}{\kappa})^\frac{1}{\theta}$ is scale invariant.

\begin{figure}[t]
\captionsetup[subfigure]{justification=centering}
% \vspace{.3in}
\begin{tabular}{c c c}
    \begin{subfigure}{.31\textwidth}
    \centering
        \includegraphics[width=1\textwidth]{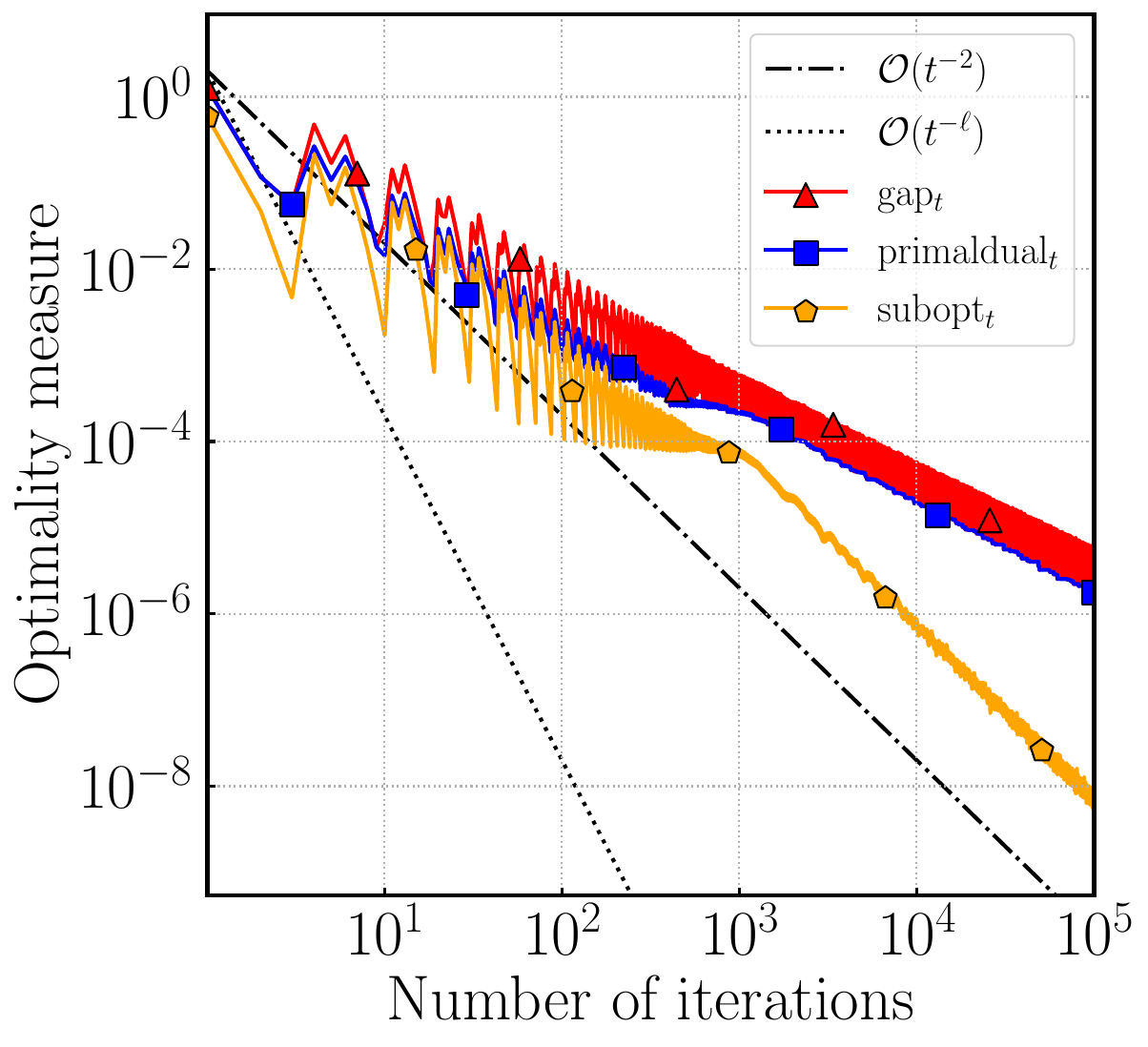}
        \caption{$\kappa = 0.0001$.}\label{fig:polytope.0001}
    \end{subfigure}& 
    \begin{subfigure}{.31\textwidth}
    \centering
        \includegraphics[width=1\textwidth]{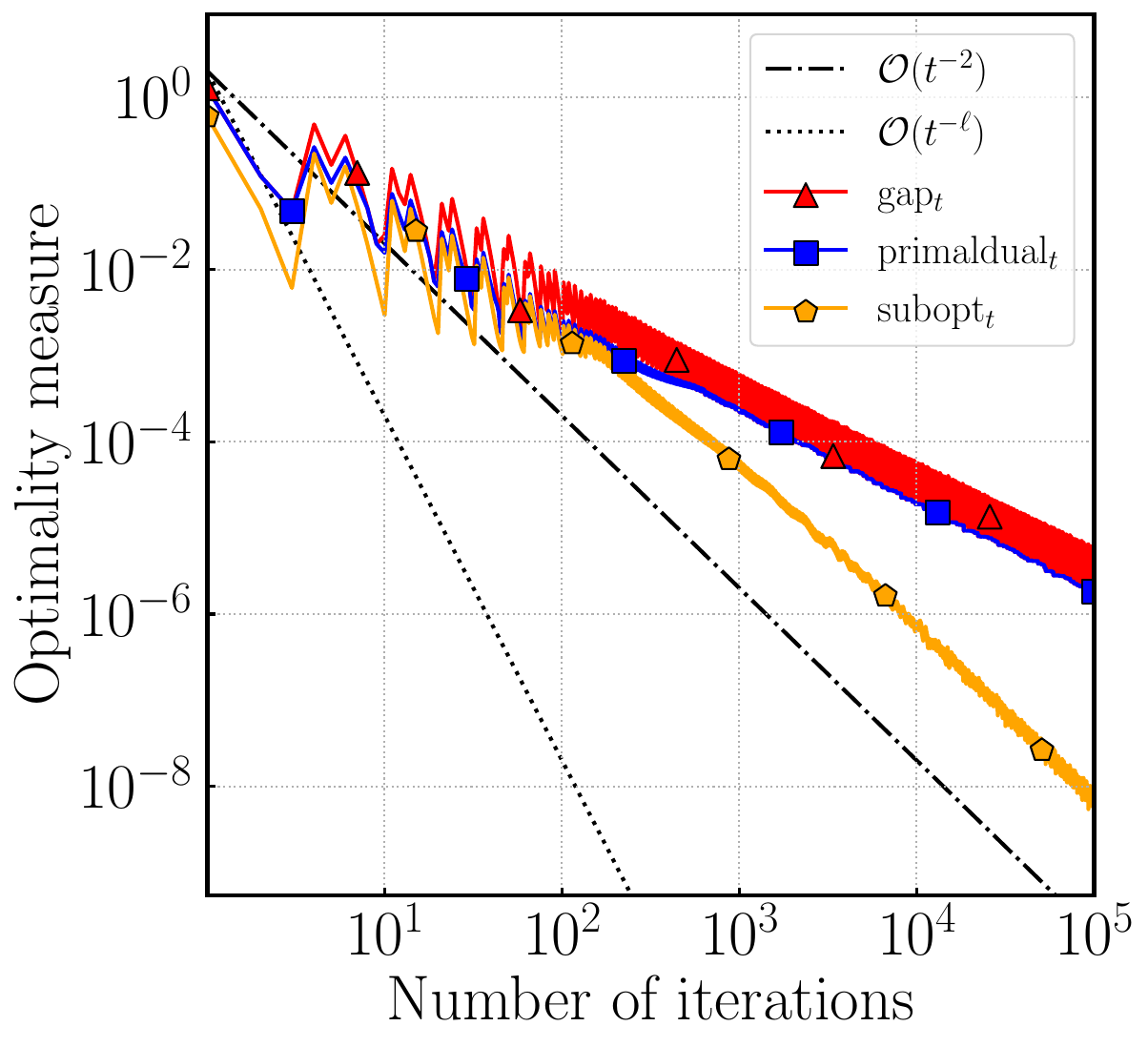}
        \caption{$\kappa = 0.01$.}\label{fig:polytope.01}
    \end{subfigure}& 
    \begin{subfigure}{.31\textwidth}
    \centering
        \includegraphics[width=1\textwidth]{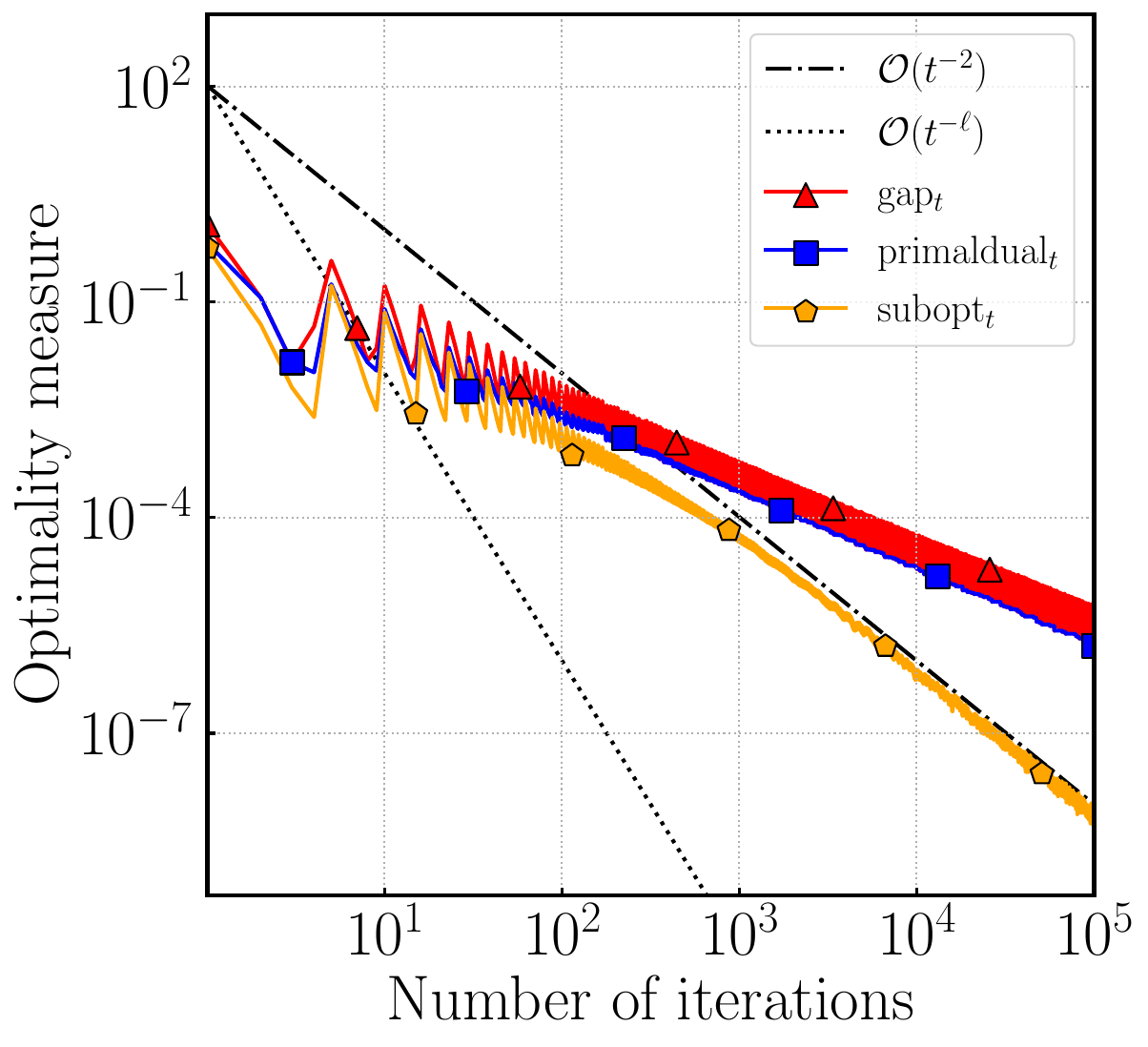}
        \caption{$\kappa=100$.}\label{fig:polytope.1}
    \end{subfigure}
\end{tabular}
% \vspace{.3in}
\caption{\textbf{Strong $(M, 0)$-growth and relaxed gaps $(m, r)$-growth.} Optimality measure comparison of \fw{} with  step-size $\eta_t = \frac{\ell}{t+\ell}$ for $\ell = 4$ and all $t\in\N$ when the feasible region $\cC\subseteq\R^{n}$ for $n = 100$ is the $\ell_1$-ball and the objective is $f(\xx) =  \frac{1}{2}\|\xx - \yy\|_2^2$ for some random vector $\yy\in\R^n$ such that $\|\xx - \xx^*\|_1 \geq \rho = 0.1 $ for all $\xx\in\cV = \vertices(\cC)$, where $\xx^*=\argmin_{\xx\in\cC}f(x)$, and strict complementarity constant $\kappa > 0$ as in Lemma~\ref{lemma.active}. Axes are in log scale. 
}\label{fig:polytope}
\end{figure}

\subsection{Numerical experiments}\label{sec:gaps_growth.examples}
\noindent
Below, we characterize an example setting for which \fw{} with open-loop step-size rule admits accelerated rates according to Proposition~\ref{prop:polytope}.  For ease of exposition, we take $\cC$ to be the unit $\ell_1$-norm ball.  However, Proposition~\ref{prop:polytope} applies to the much broader class of polytopes.

\begin{example}[Strong $(M, 0)$-growth and relaxed gaps $(m, r)$-growth]\label{ex:polytope}
Let $\cC = \{\xx\in\R^n \mid \|\xx\|_1 \leq 1\}$ be the $\ell_1$-ball. 
Let $f(\xx):= \frac{1}{2}\|\xx-\yy\|_2^2$, where for $\rho\in ]0, 1[$ and $\kappa > 0$, $\yy \in \R^n$ is constructed as follows:
Construct a vector $\zz \in\cC$, $\|\zz\|_\cC = \|\zz\|_1 = 1$ with nonnegative entries such that $z_0 = 0$, $z_1 = 1- \rho$, $z_i \in ]0, 1- \rho]$ for all $i\in \{3, \ldots, n\}$.\footnote{Our implementation of \fw{} starts at the first unit vector when optimizing over the $\ell_1$-ball. To avoid initializing the algorithm in the optimal face, we set $z_0 = 0$.} Then, let $\yy := \kappa \vv + \zz$, where $\vv$ has entry $0$ for $i=1$ and $1$ for all $i\in \{2, \ldots, \lceil n/2\rceil\}$.
By Lemma~\ref{lemma:lp_balls}, $f$ is $1$-smooth and satisfies a $(\sqrt{2n}, 1/2)$-Hölderian error bound with respect to $\|\cdot\|_{\cC}$.
Furthermore,
$\xx^* = \zz$. Thus, $\|\xx - \xx^*\|_\cC \geq \rho$ for all $\xx\in \relativeboundary(\cC^*)$. Furthermore, we have
\begin{align*}
B&=\max_{\yy\in \cC, \xx\in \cC\setminus \cC^*} \frac{|\ip{\nabla f(\yy)}{\xx-\bar\xx}|}{\|\xx-\bar\xx\|_{\cC}} \leq \max_{\yy\in\cC}\|\nabla f (\yy)\|_\infty = 2 - \rho.
\end{align*}
Finally, the strict complementarity condition as in Lemma~\ref{lemma.active} holds, that is, $Q = \left\lceil 4L\ell\cdot (\frac{4L\mu}{\kappa})^\frac{1}{\theta}\right\rceil $ in Assumption~\ref{assum.active}.  
By Proposition~\ref{prop:polytope}, $(\cC, f)$ satisfies the strong $(M, 0)$-growth property and the relaxed gaps growth property, with
$m = \frac{\rho}{2^{2-\theta}\mu}$, $M = \max\left\{\frac{m}{\eta_\Afwt}(4B)^{1-\theta},2\frac{2L+B}{\eta_\Afwt},4L\right\}$, and $Q = \left\lceil 4L\ell\cdot (\frac{4L\mu}{\kappa})^\frac{1}{\theta}\right\rceil$.
\end{example}
In Figure~\ref{fig:polytope}, in the setting of Example~\ref{ex:polytope} for $n = 100$, $\rho = 0.1$, $\kappa \in\{0.0001, 0.01, 100\}$, and \fw{} with open-loop step-size $\eta_t = \frac{\ell}{t+\ell}$ for $\ell = 4$ and all $t\in\N$, we compare $\gap_t$, $\primaldual_t$, and $\subopt_t$.
We also plot $\cO(t^{-\ell})$ and $\cO(t^{-2})$ for better visualization. 
For this setting, as was already the case for the gaps growth setting in Figure~\ref{fig:gaps}, $\fwt$ is not a good predictor for when acceleration kicks in due to the dependence of $\fwt$ on the dimension. We therefore omit $\fwt$ from the plots.
% Since Theorem~\ref{thm:rate-gaps-relaxed} only predicts acceleration after a problem-dependent iteration $\fwt$ has been reached, we also add a vertical line indicating $\fwt$ to the plots.

%
For all three settings, we observe that only $\subopt_t$ enjoys the accelerated convergence rate of order $\cO(t^{-2})$ as predicted by Theorem~\ref{thm:rate-gaps-relaxed} unlike in previous settings, see, for example, Figures~\ref{fig:1_strong}, \ref{fig:strong}, and~\ref{fig:weak_unif_hold}, where also $\gap_t$ and $\primaldual_t$ enjoyed the accelerated rates.

Proposition~\ref{prop:gaps_interior} motivates the following example for which the gaps $(m,r)$-growth property holds.  For ease of exposition, we take $\cC$ to be the unit $\ell_1$-norm ball.  However, Proposition~\ref{prop:gaps_interior} applies to the much broader class of compact convex sets.
\begin{example}[Strong $(M, 0)$-growth and gaps $(m,r)$-growth]\label{ex:gaps}
Let $\cC = \{\xx\in\R^n \mid \|\xx\|_1 \leq 1\}$ be the $\ell_1$-ball. 
Let $f(\xx):= \frac{1}{2}\|\xx-\yy\|_2^2$, where for $\rho > 0$, $\yy \in \R^n$ is a normalized vector such that $\|\yy\|_\cC = 1 - \rho$ and thus, $\|\xx - \xx^*\|_\cC=\|\xx - \yy\|_\cC \geq \rho$ for all $\xx\in \relativeboundary(\cC)$.
By Lemma~\ref{lemma:lp_balls}, $f$ is $1$-smooth and satisfies a $(\sqrt{2n}, 1/2)$-Hölderian error bound with respect to $\|\cdot\|_{\cC} = \|\cdot\|_1$.
By Proposition~\ref{prop:gaps_interior}, $(\cC, f)$ satisfies the gaps $(m,r)$-growth property with $m=\frac{\rho}{\sqrt{2n}}$ and $r=\frac{1}{2}$ and, by Lemma~\ref{lemma:lp_balls}, the strong $(M_0, 0)$-growth property with $M_0 = 4$.
\end{example}

% \begin{example}[Strong $(M, 0)$-growth  and gaps $(m,r)$-growth properties]\label{ex:gaps}
% %
% Let $\cC = \{\xx\in\R^n \mid \|\xx\|_1 \leq 1\}$ be the $\ell_1$-ball. 
% %
% Let $f(\xx):= \frac{1}{2}\|\xx-\yy\|_2^2$, where for $\rho > 0$, we construct $\yy \in \R^n$ such that $\|\xx - \xx^*\|_1=\|\xx - \yy\|_1  \geq \rho$ for all $\xx\in \cV$, where $\cV$ is the set of vertices of $\cC$.
% By Lemma~\ref{lemma:lp_balls}, $f$ is $1$-smooth and satisfies a $(\sqrt{2n}, 1/2)$-Hölderian error bound with respect to $\|\cdot\|_{\cC}$.
% %
% Thus, $(\cC, f)$ satisfies the gaps $(m,r)$-growth property with $m=\frac{\rho}{\sqrt{2n}}$ and $r=\frac{1}{2}$ and, by Lemma~\ref{lemma:lp_balls}, the strong $(M_0, 0)$-growth property with $M_0 = 4$.
% \end{example}
%

\begin{figure}[t]
\captionsetup[subfigure]{justification=centering}
% \vspace{.3in}
\begin{tabular}{c c c}
    \begin{subfigure}{.31\textwidth}
    \centering
        \includegraphics[width=1\textwidth]{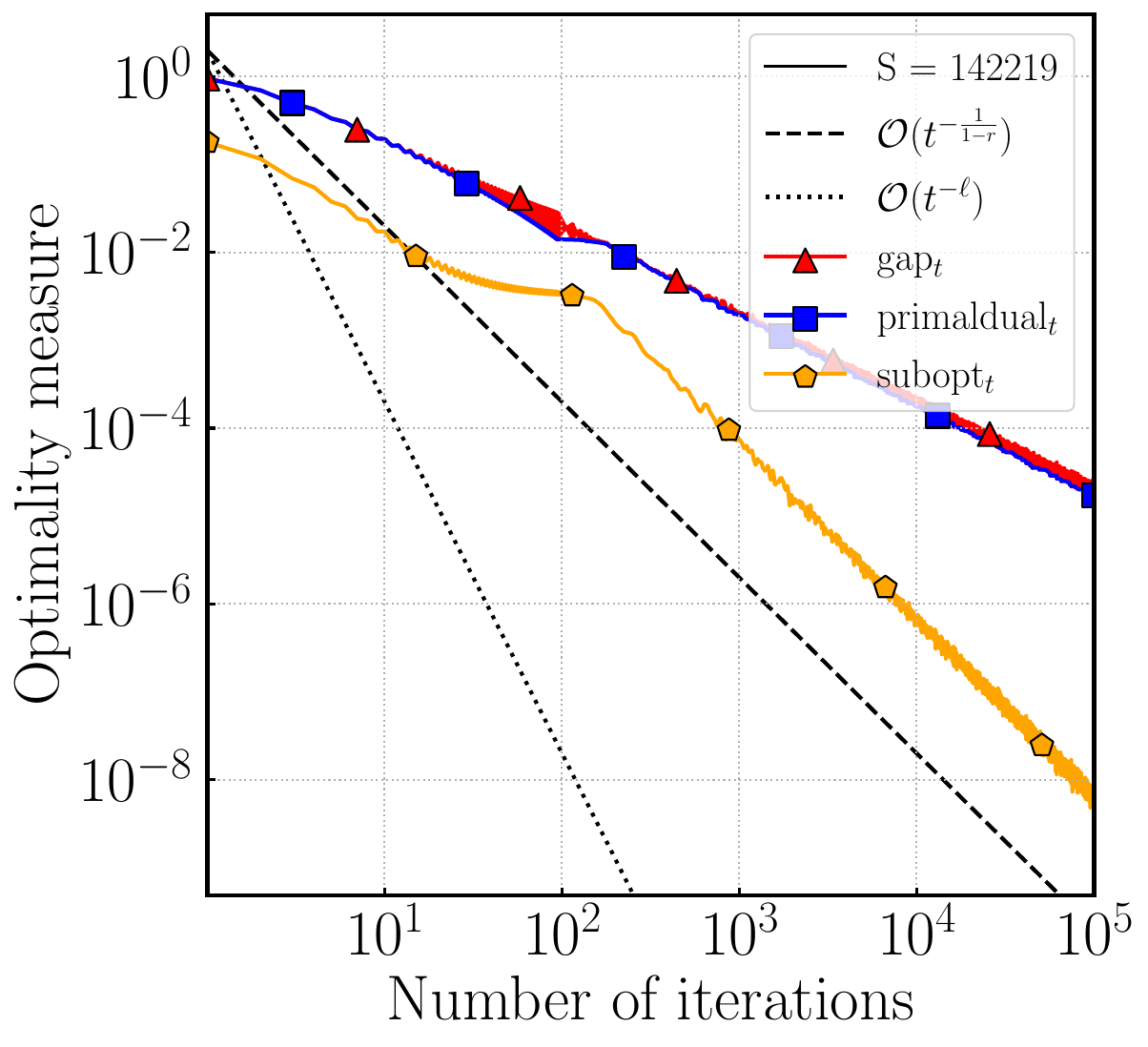}
        \caption{$\rho = 0.3$.}\label{fig:gaps_0.3}
    \end{subfigure}& 
    \begin{subfigure}{.31\textwidth}
    \centering
        \includegraphics[width=1\textwidth]{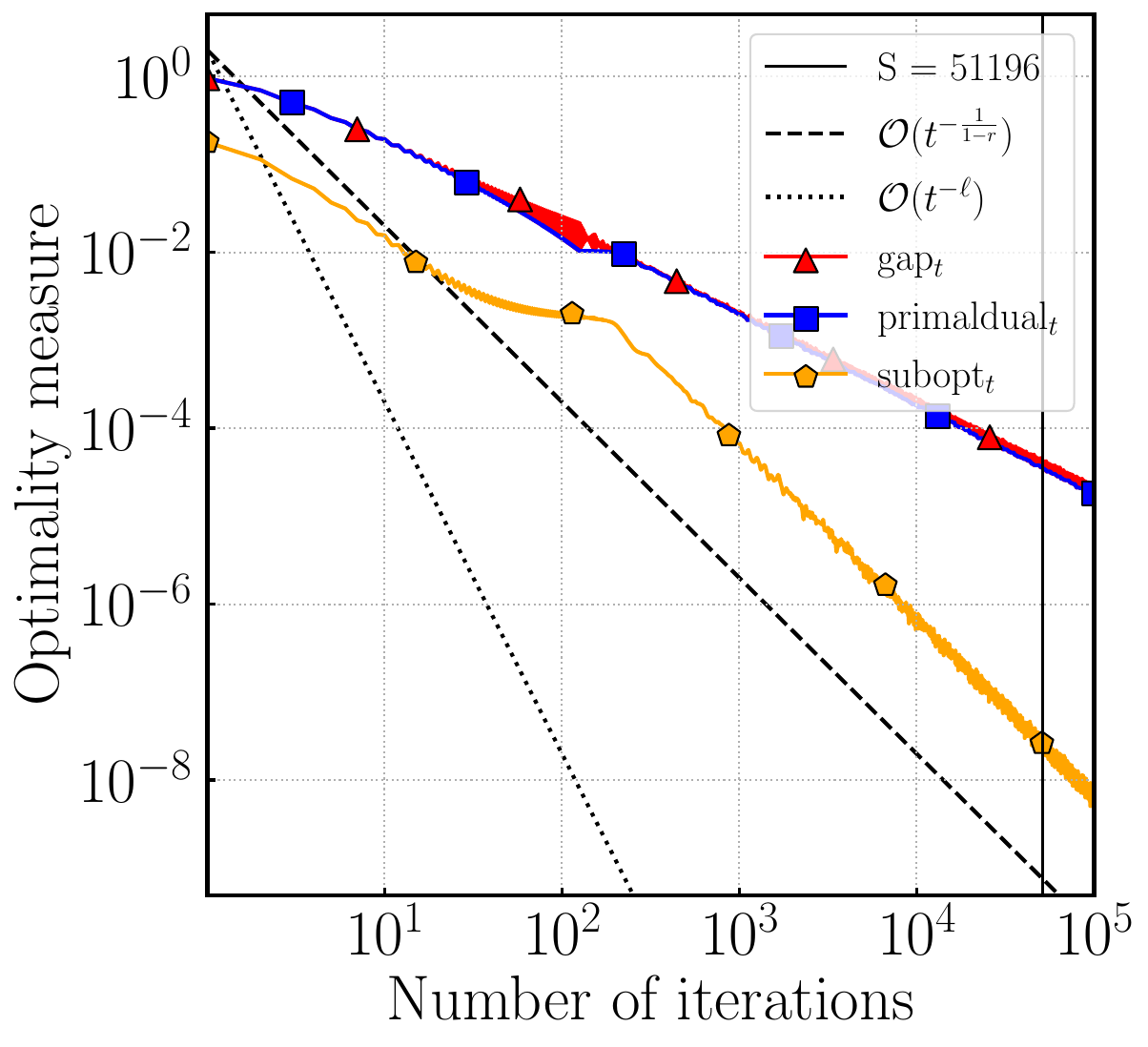}
        \caption{$\rho = 0.5$.}\label{fig:gaps_0.5}
    \end{subfigure}& 
    \begin{subfigure}{.31\textwidth}
    \centering
        \includegraphics[width=1\textwidth]{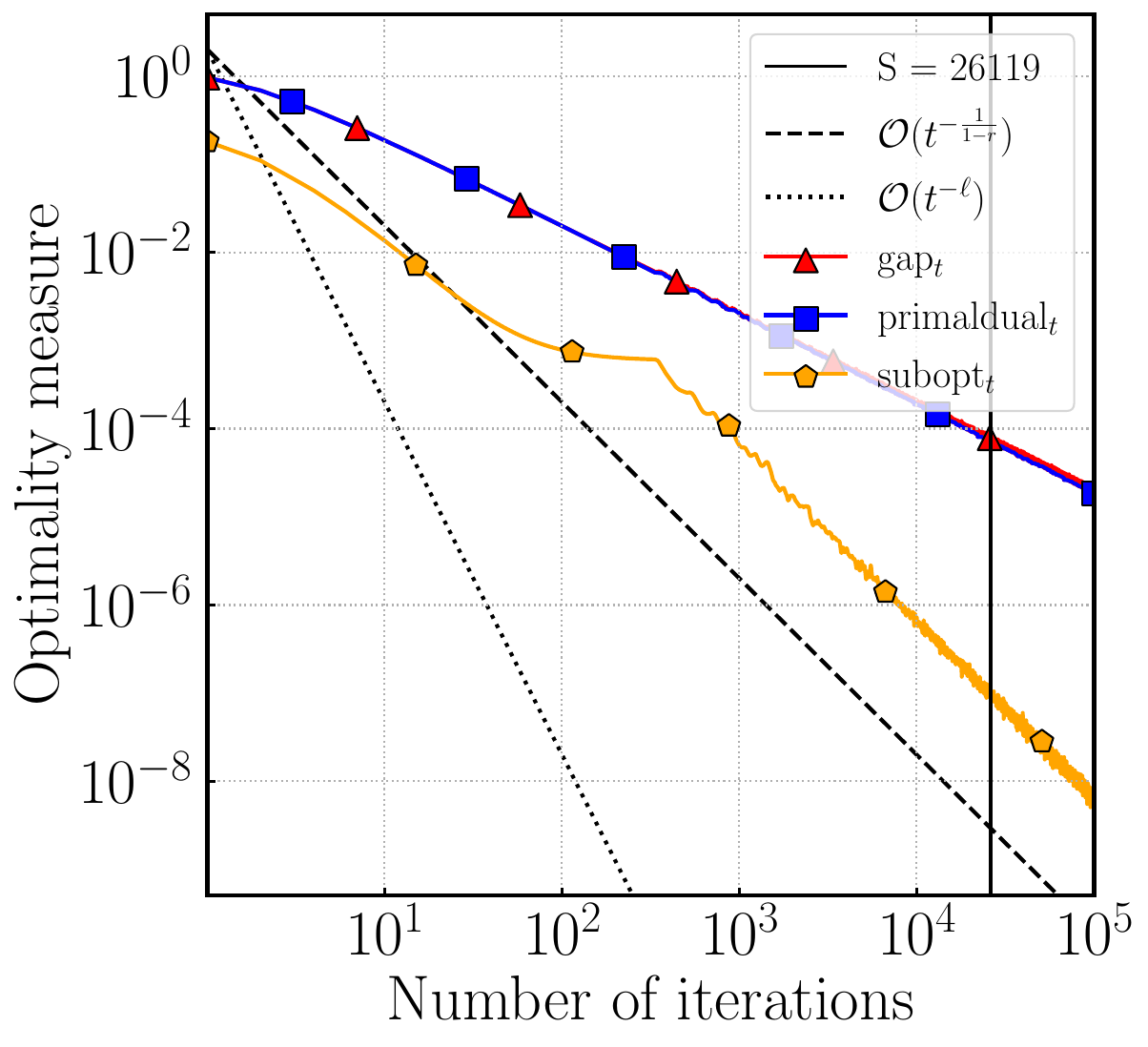}
        \caption{$\rho = 0.7$.}\label{fig:gaps_0.7}
    \end{subfigure}
\end{tabular}
% \vspace{.3in}
\caption{\textbf{Strong $(M, 0)$-growth and gaps $(m, r)$-growth.} Optimality measure comparison of \fw{} with  step-size $\eta_t = \frac{\ell}{t+\ell}$ for $\ell = 4$ and all $t\in\N$ when the feasible region $\cC\subseteq\R^n$ for $n = 100$ is the $\ell_1$-ball and the objective is $f(\xx) =  \frac{1}{2}\|\xx - \yy\|_2^2$ for some random vector $\yy\in\R^n$ such that $\|\yy\|_1 = 1 -\rho$ for some $\rho\in ]0, 1[$. Axes are in log scale. 
}\label{fig:gaps}
\end{figure}

In Figure~\ref{fig:gaps}, in the setting of Example~\ref{ex:gaps} for $n = 100$, $\rho\in\{0.3, 0.5, 0.7\}$, and \fw{} with open-loop step-size $\eta_t = \frac{\ell}{t+\ell}$ for $\ell = 4$ and all $t\in\N$, we compare $\gap_t$, $\primaldual_t$, and $\subopt_t$. We also plot $\cO(t^{-\ell})$ and $\cO(t^{-\frac{1}{1-r}})$ for better visualization. For these settings, gaps $(m, r)$-growth, $r = 1/2$, holds and Theorem~\ref{thm:rate-gaps} applies. Since Theorem~\ref{thm:rate-gaps} only predicts acceleration after a problem-dependent iteration $\fwt$ has been reached, we also add a vertical line indicating $\fwt$ to the plots.

For all three settings, observe that only $\subopt_t$ enjoys the accelerated convergence rate of order $\cO(t^{-2})$ as predicted by Theorem~\ref{thm:rate-gaps}, unlike in previous settings, see, for example, Figures~\ref{fig:1_strong}, \ref{fig:strong}, and~\ref{fig:weak_unif_hold}, where also $\gap_t$ and $\primaldual_t$ enjoyed the accelerated rates.

Note that $\fwt$ is a very coarse predictor for when acceleration kicks in. This is explained by the fact that, in our affine-invariant formulation, the Hölderian error bound depends on the dimension.
\section{Other Numerical Experiments}\label{sec:numerical_experiments}
In this section, we present additional numerical experiments on problem instances from data science applications. The aim of our empirical comparisons is to demonstrate that the accelerated rates derived in this paper a) occur on real data and not just in toy examples and b) study how \fw{} with open-loop step-sizes admits very different convergence rates for problems that differ only by very few parameters. All of our numerical experiments are implemented in \textsc{Python} and performed on an Nvidia GeForce RTX 3080 GPU with 10GB RAM and an Intel Core i7 11700K 8x CPU at 3.60GHz with 64 GB RAM. Our code is publicly available on 
\href{https://github.com/ZIB-IOL/affine_invariant_open_loop_fw}{GitHub}.

\subsection{Ablation study for open-loop step-sizes}

We study the impact of different values of $\ell\in\N_{\geq 1}$ on the performance of $\fw$ with open-loop step-size $\eta_t = \frac{\ell}{t+\ell}$ when the unconstrained optimizer is in the relative exterior of a uniformly convex feasible region.

In Figure~\ref{fig:ablation}, in the setting of Example~\ref{ex:strong} for $n = 100$, $p =2$, and $\lambda = 0.2$, we compare the convergence rate of \fw{} with open-loop step-sizes $\eta_t = \frac{\ell}{t+\ell}$ for $\ell \in\{1, 2, 5, 10\}$ and all $t\in\N$. The comparison is performed for the three different optimality measures $\gap_t$, $\primaldual_t$, $\subopt_t$. We also plot $\cO(t^{-2})$ for better visualization.

\begin{figure}[ht]
\captionsetup[subfigure]{justification=centering}
% \vspace{.3in}
\begin{tabular}{c c c}
    \begin{subfigure}{.31\textwidth}
    \centering
        \includegraphics[width=1\textwidth]{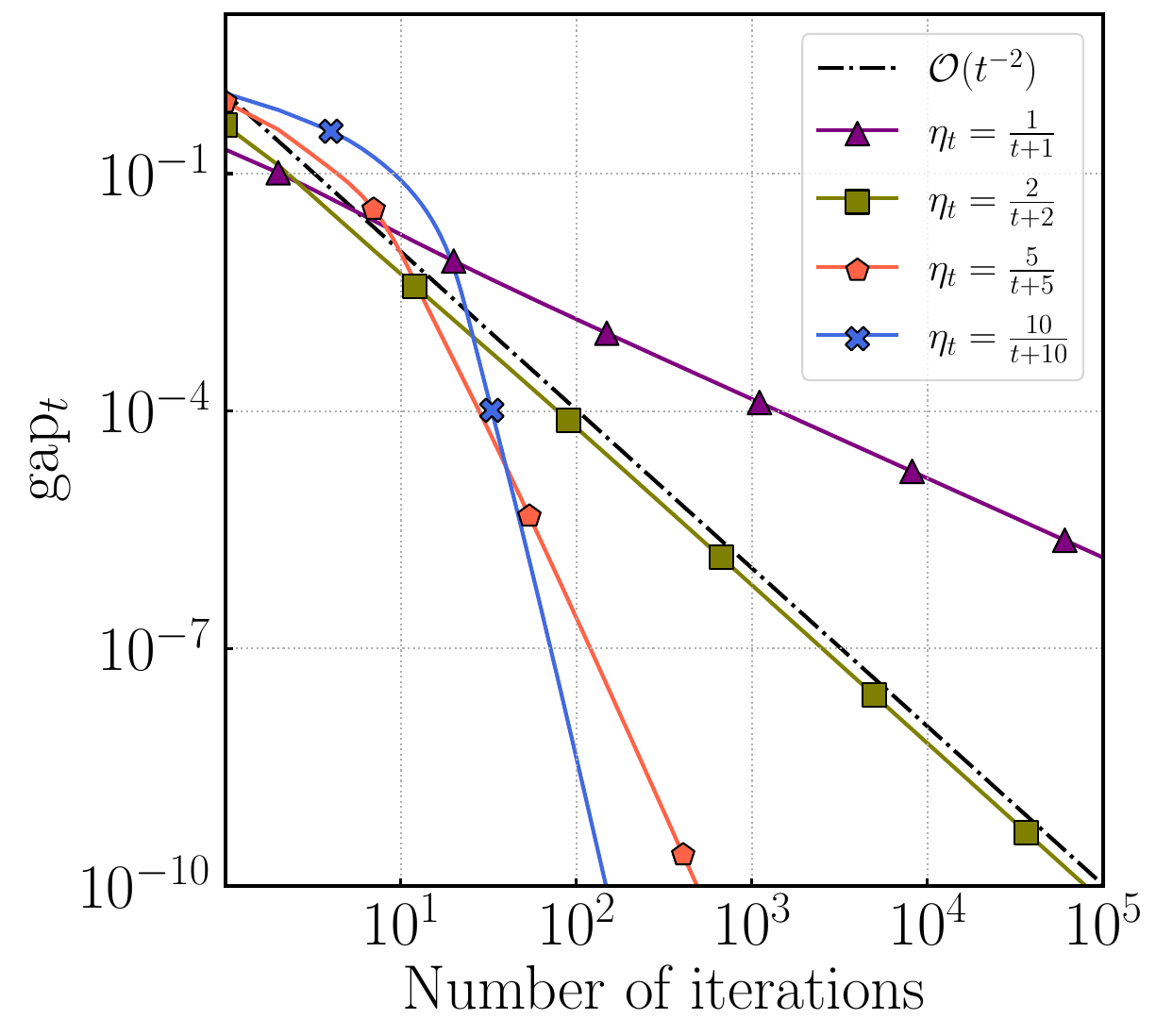}
        \caption{$\gap_t$.}\label{fig:ablation_gap}
    \end{subfigure}& 
    \begin{subfigure}{.31\textwidth}
    \centering
        \includegraphics[width=1\textwidth]{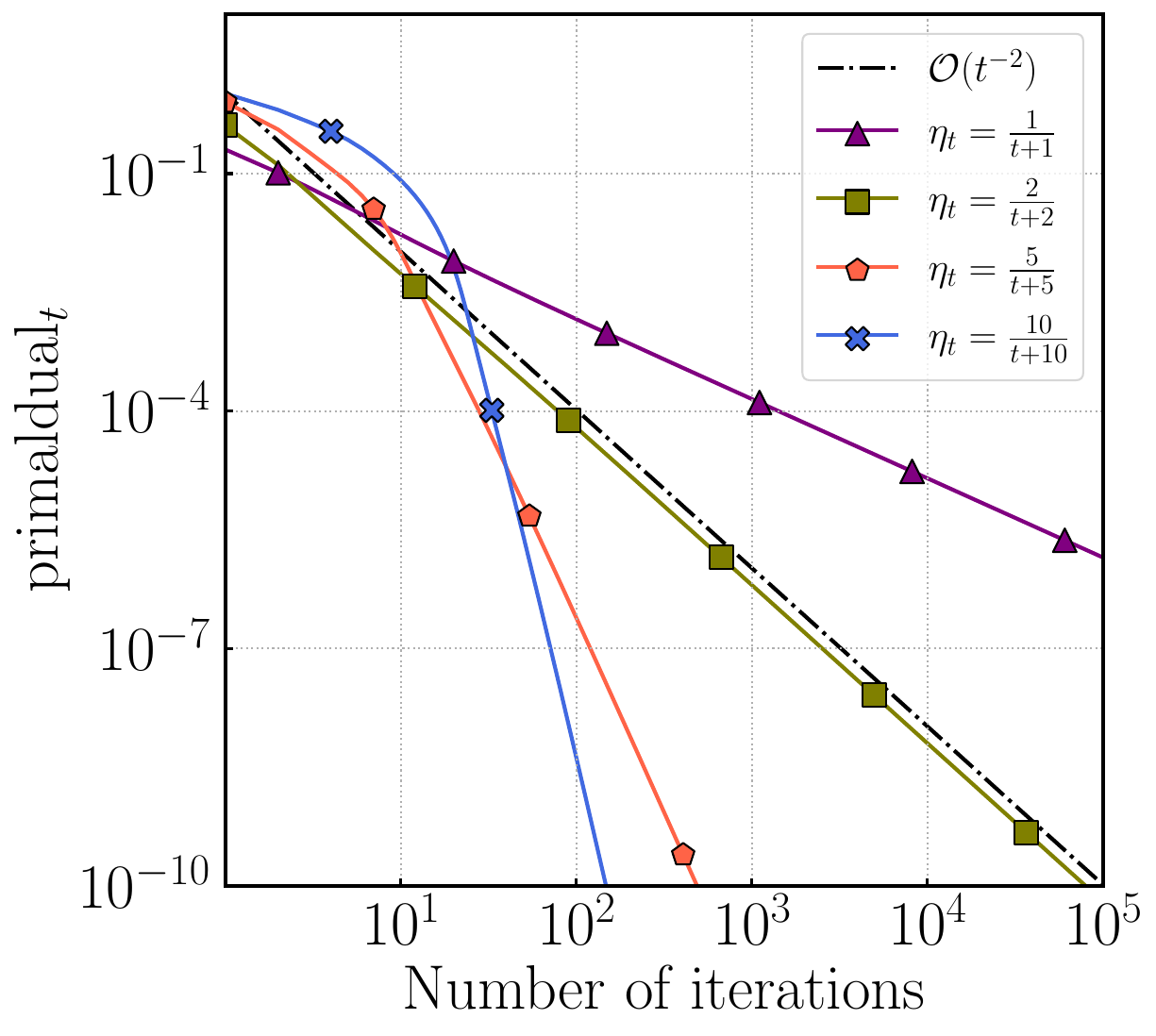}
        \caption{$\primaldual_t$.}\label{fig:ablation_primaldual}
    \end{subfigure}& 
    
    \begin{subfigure}{.31\textwidth}
    \centering
        \includegraphics[width=1\textwidth]{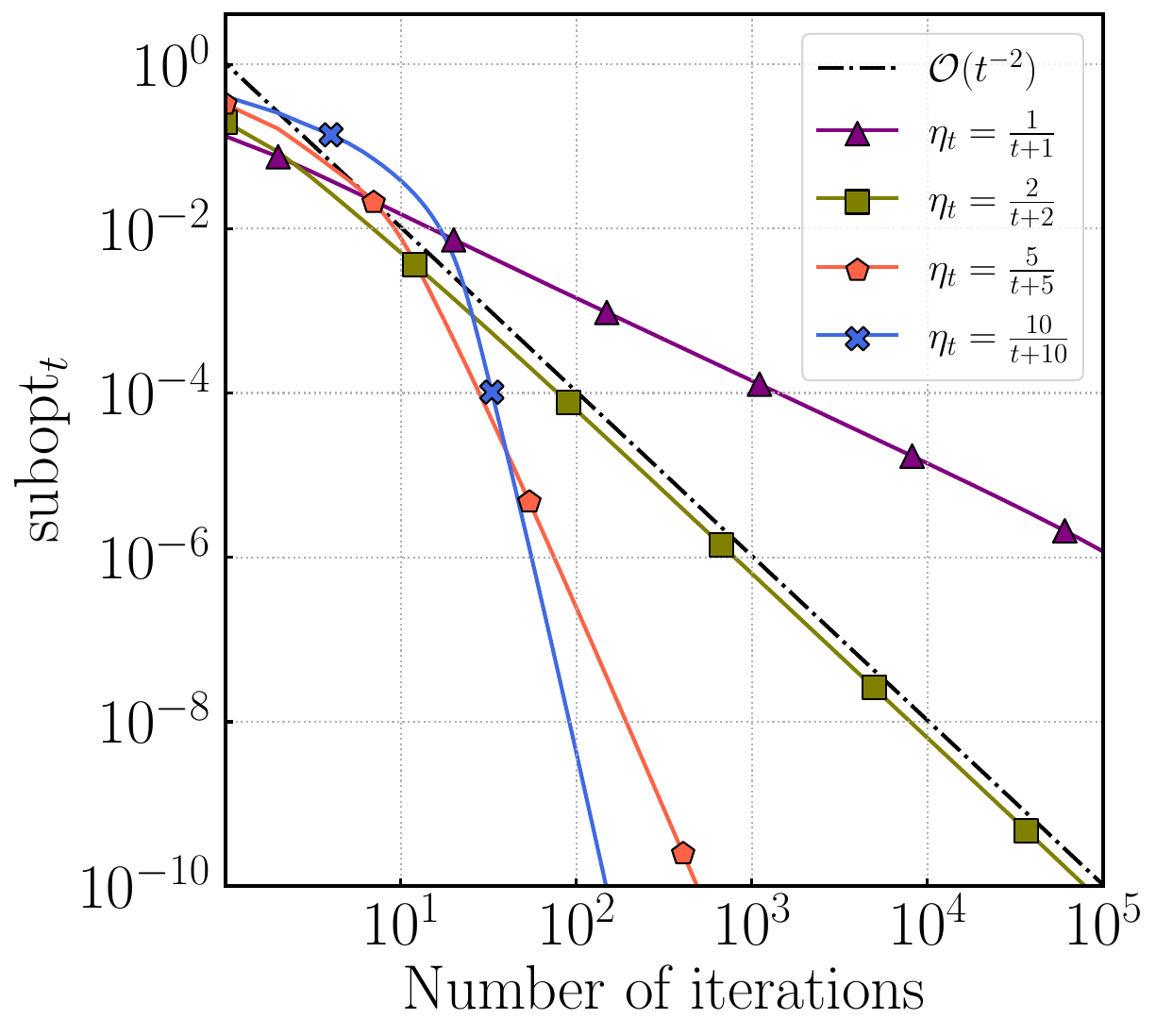}
        \caption{$\subopt_t$.}\label{fig:ablation_subopt}
    \end{subfigure}
\end{tabular}
% \vspace{.3in}
\caption{\textbf{Ablation study for open-loop step-sizes.} Convergence rate comparison of \fw{} with step-sizes $\eta_t = \frac{\ell}{t+\ell}$ for $\ell \in\{1, 2, 5, 10\}$ and all $t\in\N$ when the feasible region $\cC\subseteq\R^n$ for $n = 100$ is the $\ell_2$-ball and the objective is $f(\xx) =  \frac{1}{2}\|\xx - \yy\|_2^2$ for some random vector $\yy\in\R^n$ with $\|\yy\|_2 = 1 + \lambda$, where $\lambda = 0.2$. The comparison is performed for the three different optimality measures $\gap_t$, $\primaldual_t$, and $\subopt_t$. Axes are in log scale.
}\label{fig:ablation}
\end{figure}

% In particular, let $\cC = \{\xx\in\R^n\mid \|\xx\|_2 \leq 1\}$ be the $\ell_2$-ball and $f(\xx) := \frac{1}{2} \|\xx-\yy\|_2^2$, where $\yy$ is such that $\|\yy\|_2 = 1 + \lambda$ for $\lambda = 0.2$ as in Example~\ref{ex:strong} and Section~\ref{sec:strong_growth_setting}.

In Figures~\ref{fig:ablation_gap}, \ref{fig:ablation_primaldual}, and~\ref{fig:ablation_subopt}, we observe that the larger $\ell\in\N$, the later acceleration of order $\cO(t^{-\ell})$ kicks in, as predicted by Theorem~\ref{thm:1_strong}.

\subsection{\fw{} with line-search can be slower than with open-loop step-sizes}
\label{sec.num_w_lb}
As established by \cite{wolfe1970convergence, bach2021effectiveness, wirth2023acceleration} and again discussed in Section~\ref{sec.suff_relaxed_gaps_m_r} of this paper, \fw{} with open-loop step-size $\eta_t = \frac{\ell}{t+\ell}$ for $\ell\in\N_{\geq 2}$ sometimes converges faster than \fw{} with line-search in the setting of the lower bound due to \cite{wolfe1970convergence}. As it turns out, Example~\ref{ex:polytope} characterizes such a setting.

In Figure~\ref{fig:polytope_ls_ol}, in the setting of Example~\ref{ex:polytope} for $n = 100$, $\rho = 0.1$, $\kappa = 0.0001$, and \fw{} with step-sizes $\eta_t = \frac{\ell}{t+\ell}$ for $\ell\in\{1, 2, 4\}$ and all $t\in\N$ and line-search, we compare $\gap_t$, $\primaldual_t$, and $\subopt_t$. We also plot $\cO(t^{-2})$ for better visualization.

\begin{figure}[ht]
\captionsetup[subfigure]{justification=centering}
% \vspace{.3in}
\begin{tabular}{c c c}
    \begin{subfigure}{.31\textwidth}
    \centering
        \includegraphics[width=1\textwidth]{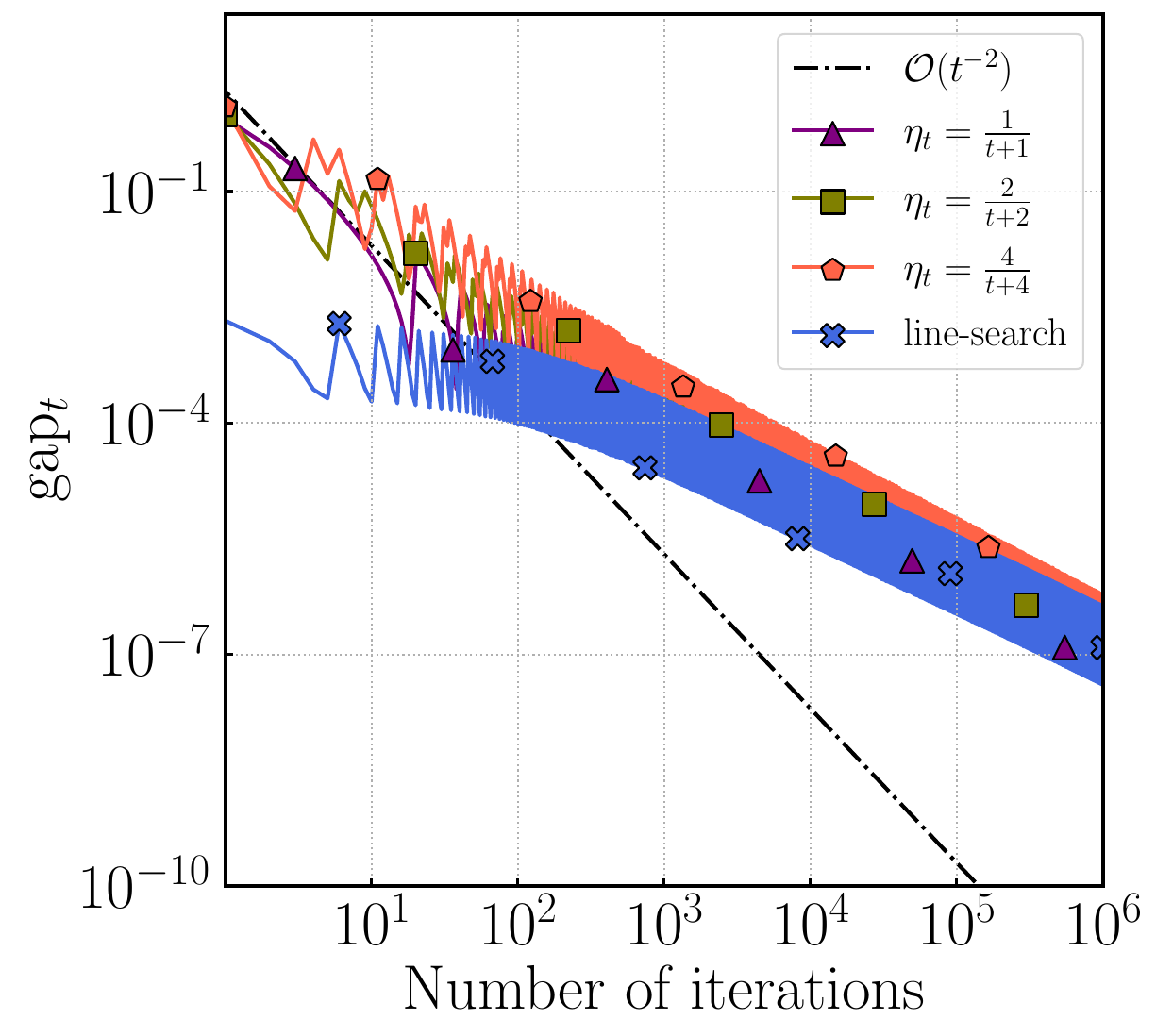}
        \caption{$\gap_t$.}\label{fig:polytope_ls_ol_gap}
    \end{subfigure}& 
    \begin{subfigure}{.31\textwidth}
    \centering
        \includegraphics[width=1\textwidth]{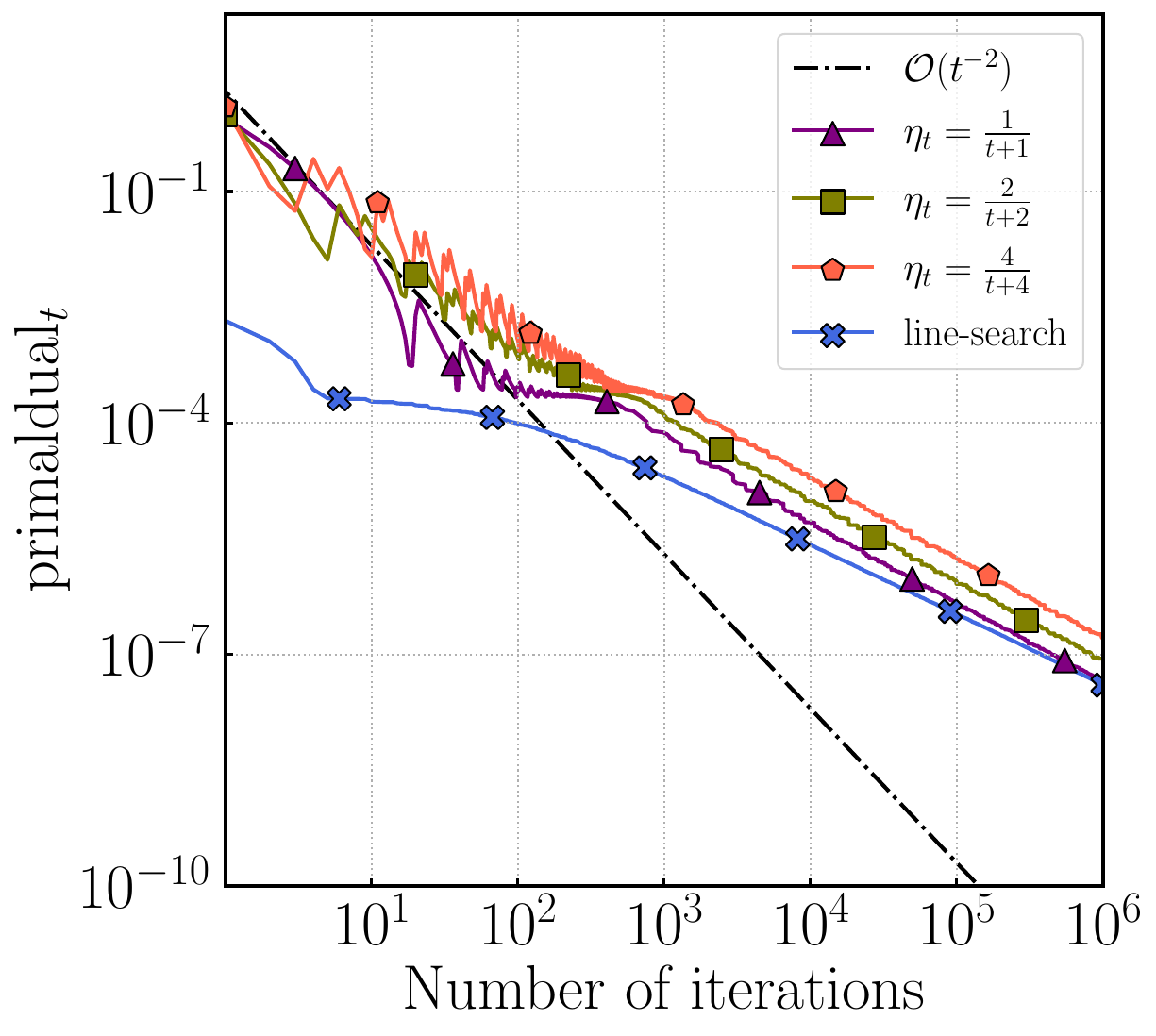}
        \caption{$\primaldual_t$.}\label{fig:polytope_ls_ol_primaldual}
    \end{subfigure}& 
    
    \begin{subfigure}{.31\textwidth}
    \centering
        \includegraphics[width=1\textwidth]{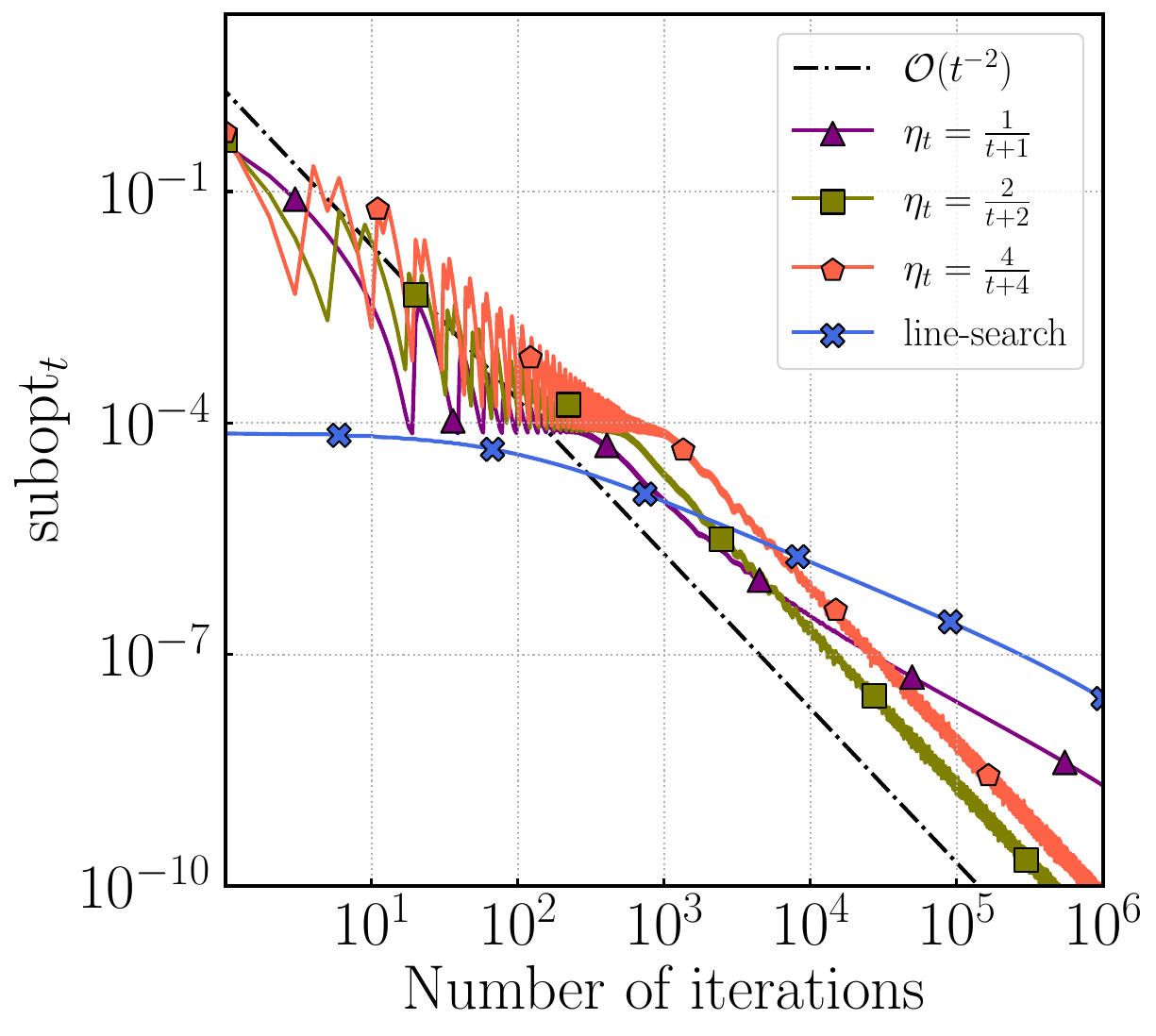}
        \caption{$\subopt_t$.}\label{fig:polytope_ls_ol_subopt}
    \end{subfigure}
\end{tabular}
% \vspace{.3in}
\caption{\textbf{\fw{} with line-search can be slower than with open-loop step-sizes.} Convergence rate comparison of \fw{} with step-sizes $\eta_t = \frac{\ell}{t+\ell}$ for $\ell\in\{1, 2, 4\}$ and all $t\in\N$ and line-search when the feasible region $\cC\subseteq\R^n$ for $n = 100$ is the $\ell_1$-ball and the objective is $f(\xx) =  \frac{1}{2}\|\xx - \yy\|_2^2$ for some random vector $\yy\in\R^n$ such that $\|\xx - \xx^*\|_1 \geq \rho = 0.1 $ for all $\xx\in\cV = \vertices(\cC)$, where $\xx^*=\argmin_{\xx\in\cC}f(x)$, and strict complementarity constant $\kappa =0.0001$ as in Lemma~\ref{lemma.active}. The comparison is performed for the three different optimality measures $\gap_t$, $\primaldual_t$, and $\subopt_t$.  Axes are in log scale.
}\label{fig:polytope_ls_ol}
\end{figure}

In Figures~\ref{fig:polytope_ls_ol_gap} and \ref{fig:polytope_ls_ol_primaldual}, we observe that $\gap_t$ and $\primaldual_t$ both converge at a rate of order $\cO(t^{-1})$ for \fw{} with any of the step-size rules. In Figure~\ref{fig:polytope_ls_ol_subopt}, we observe that \fw{} with step-sizes $\eta_t = \frac{1}{t+1}$ and line-search converges at a rate of order $\cO(t^{-1})$ and \fw{} with step-sizes $\eta_t = \frac{\ell}{t+\ell}$ for $\ell\in\{2, 4\}$ converges at a rate of order $\cO(t^{-2})$.

\subsection{Constrained regression}\label{sec:constrained regression}
\begin{figure}[ht]
\captionsetup[subfigure]{justification=centering}
% \vspace{.3in}
\begin{tabular}{c c c}
    \begin{subfigure}{.31\textwidth}
    \centering
        \includegraphics[width=1\textwidth]{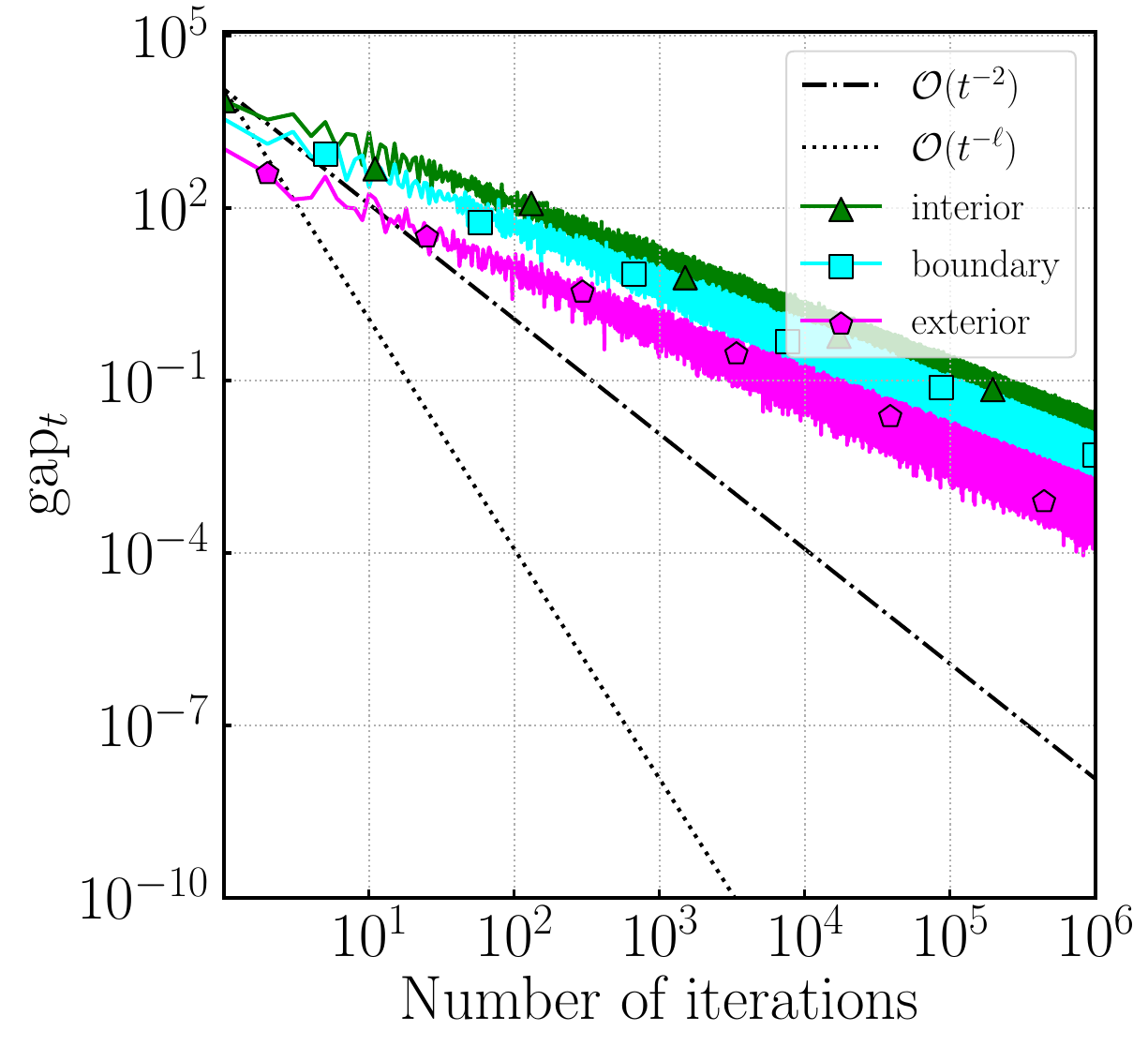}
        \caption{$\ell_1$-ball, $\gap_t$.}\label{fig:reg_1_gap}
    \end{subfigure}& 
    \begin{subfigure}{.31\textwidth}
    \centering
        \includegraphics[width=1\textwidth]{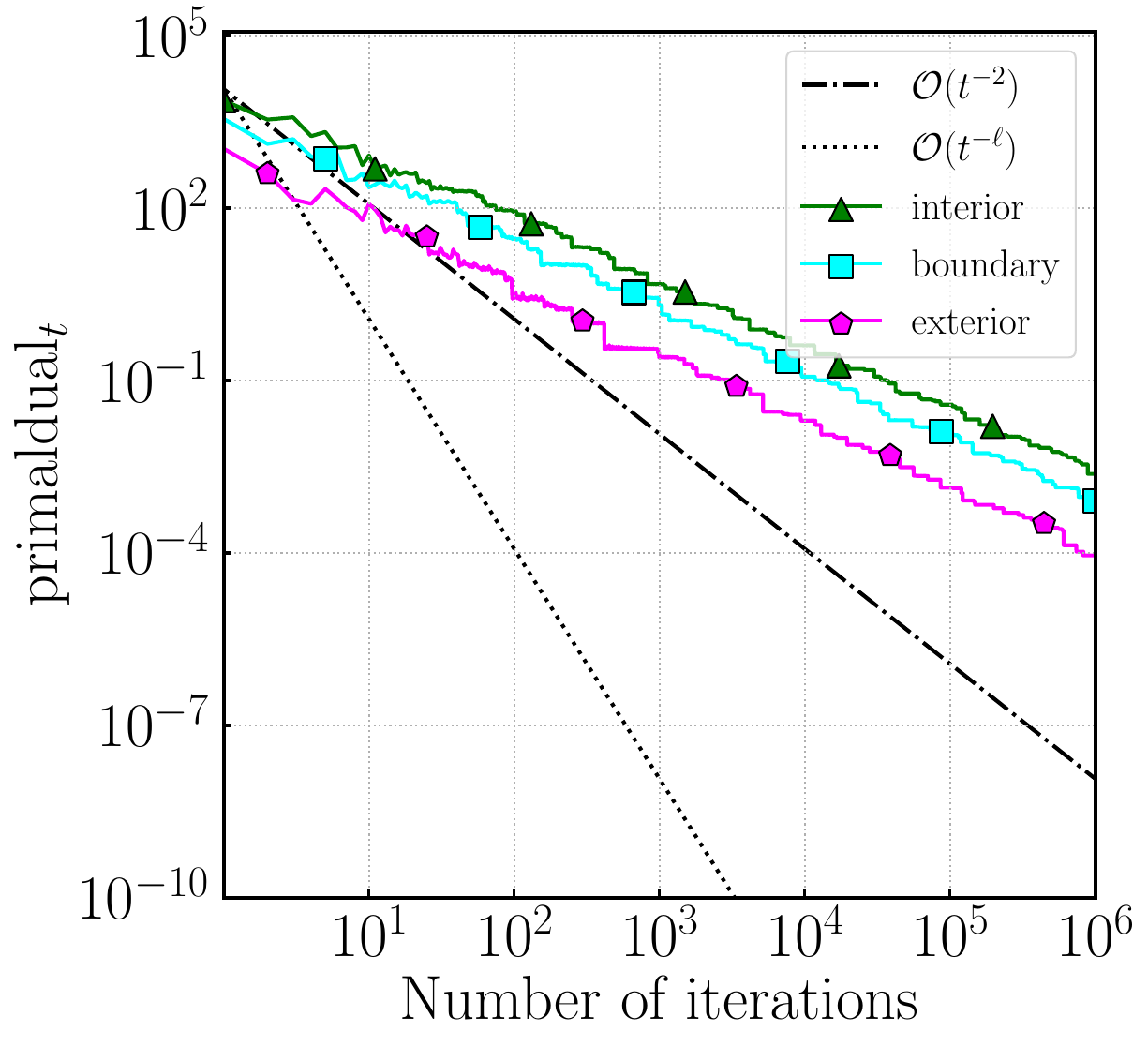}
        \caption{$\ell_1$-ball, $\primaldual_t$.}\label{fig:reg_1_primaldual}
    \end{subfigure}& 
    \begin{subfigure}{.31\textwidth}
    \centering
        \includegraphics[width=1\textwidth]{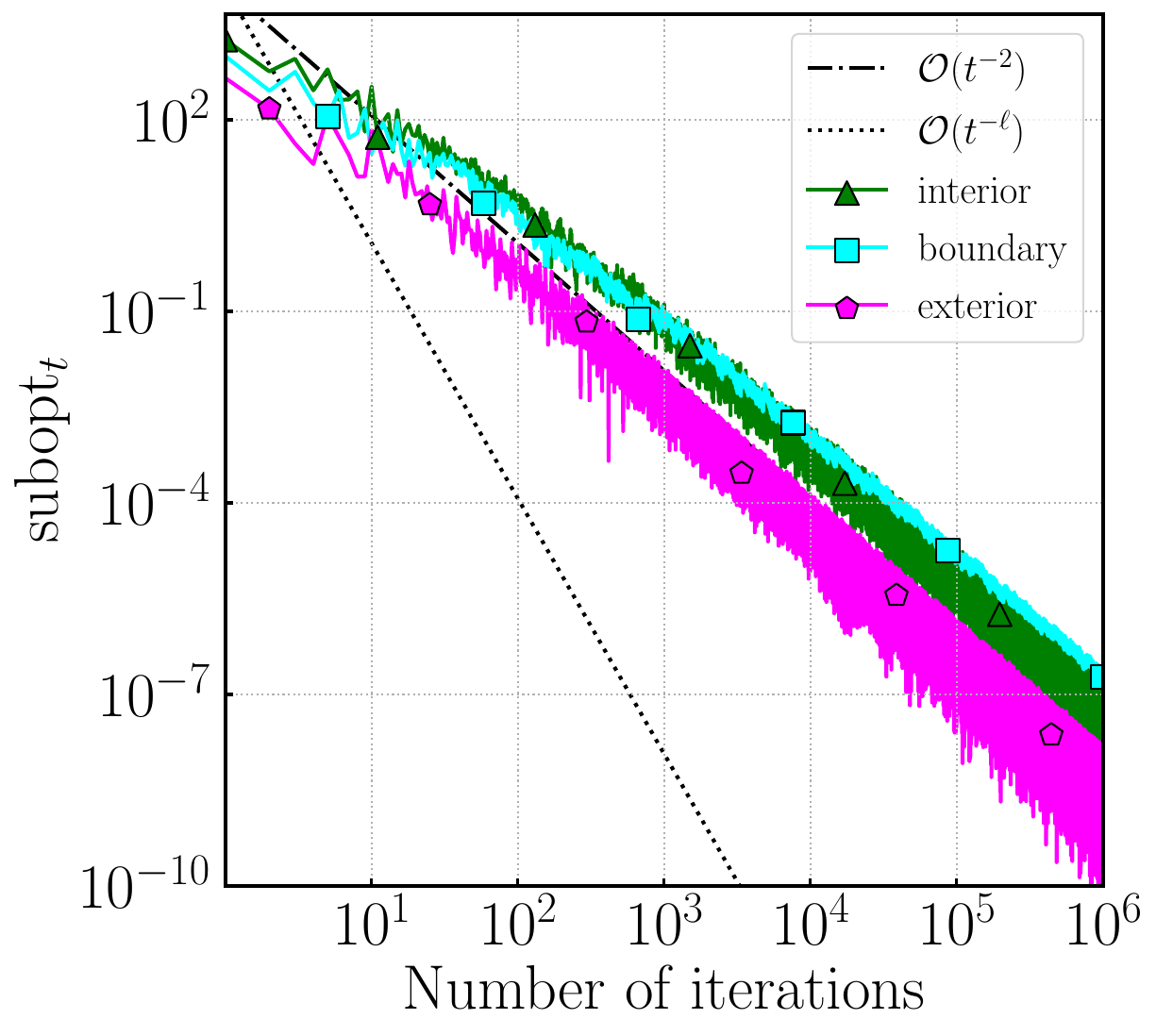}
        \caption{$\ell_1$-ball, $\subopt_t$.}\label{fig:reg_1_subopt}
    \end{subfigure}\\
        \begin{subfigure}{.31\textwidth}
    \centering
        \includegraphics[width=1\textwidth]{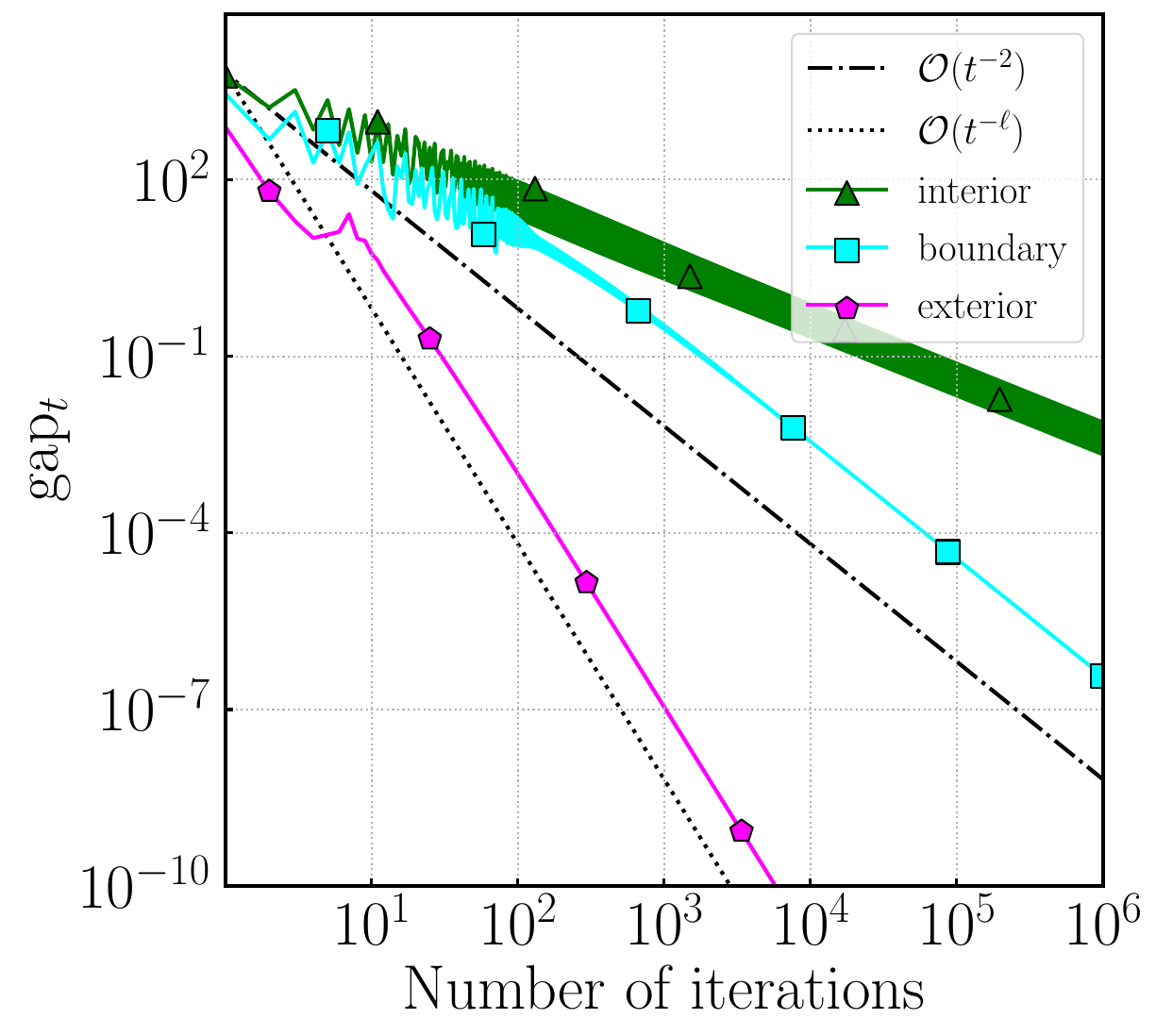}
        \caption{$\ell_2$-ball, $\gap_t$.}\label{fig:reg_2_gap}
    \end{subfigure}& 
    \begin{subfigure}{.31\textwidth}
    \centering
        \includegraphics[width=1\textwidth]{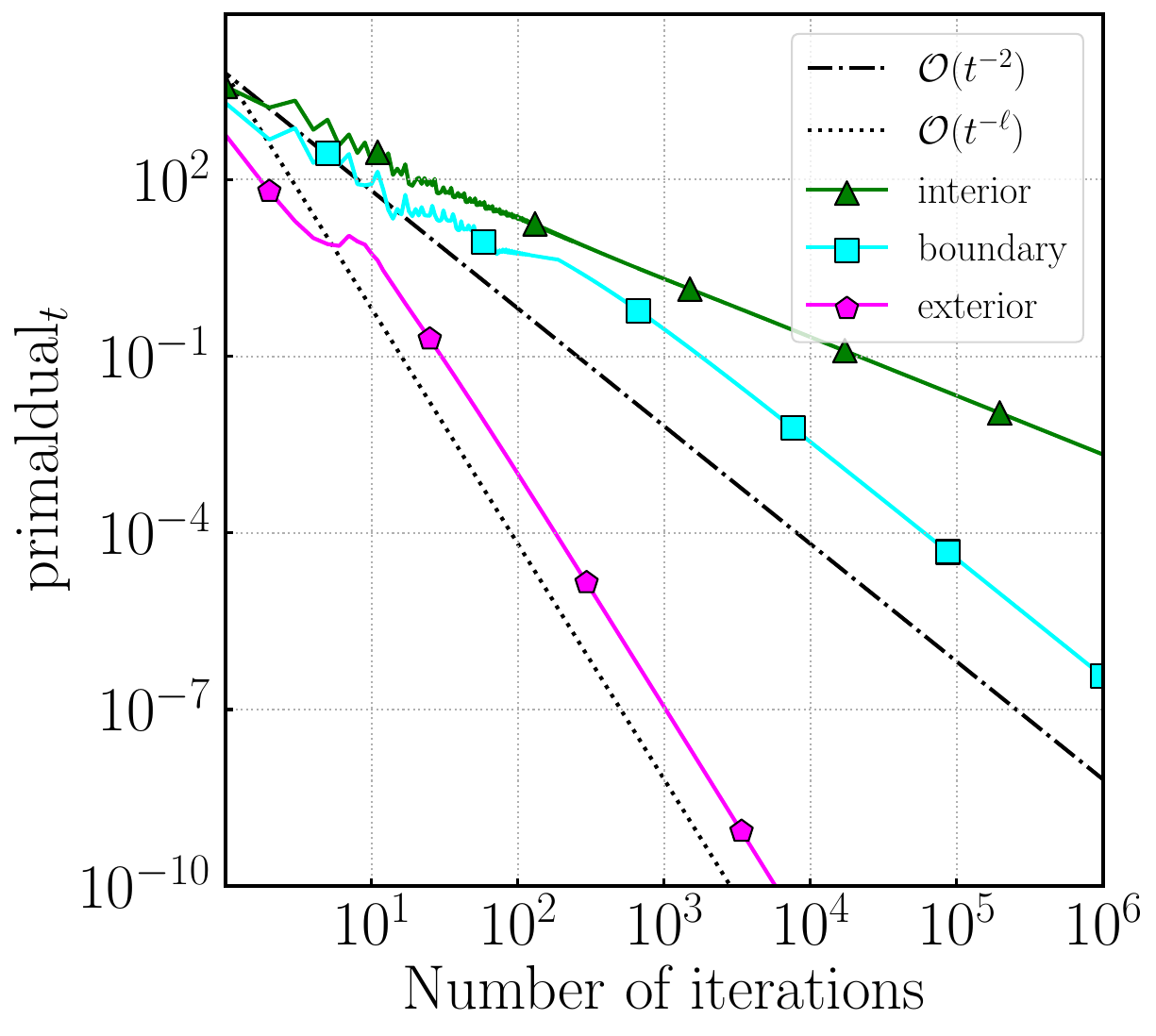}
        \caption{$\ell_2$-ball, $\primaldual_t$.}\label{fig:reg_2_primaldual}
    \end{subfigure}& 
    \begin{subfigure}{.31\textwidth}
    \centering
        \includegraphics[width=1\textwidth]{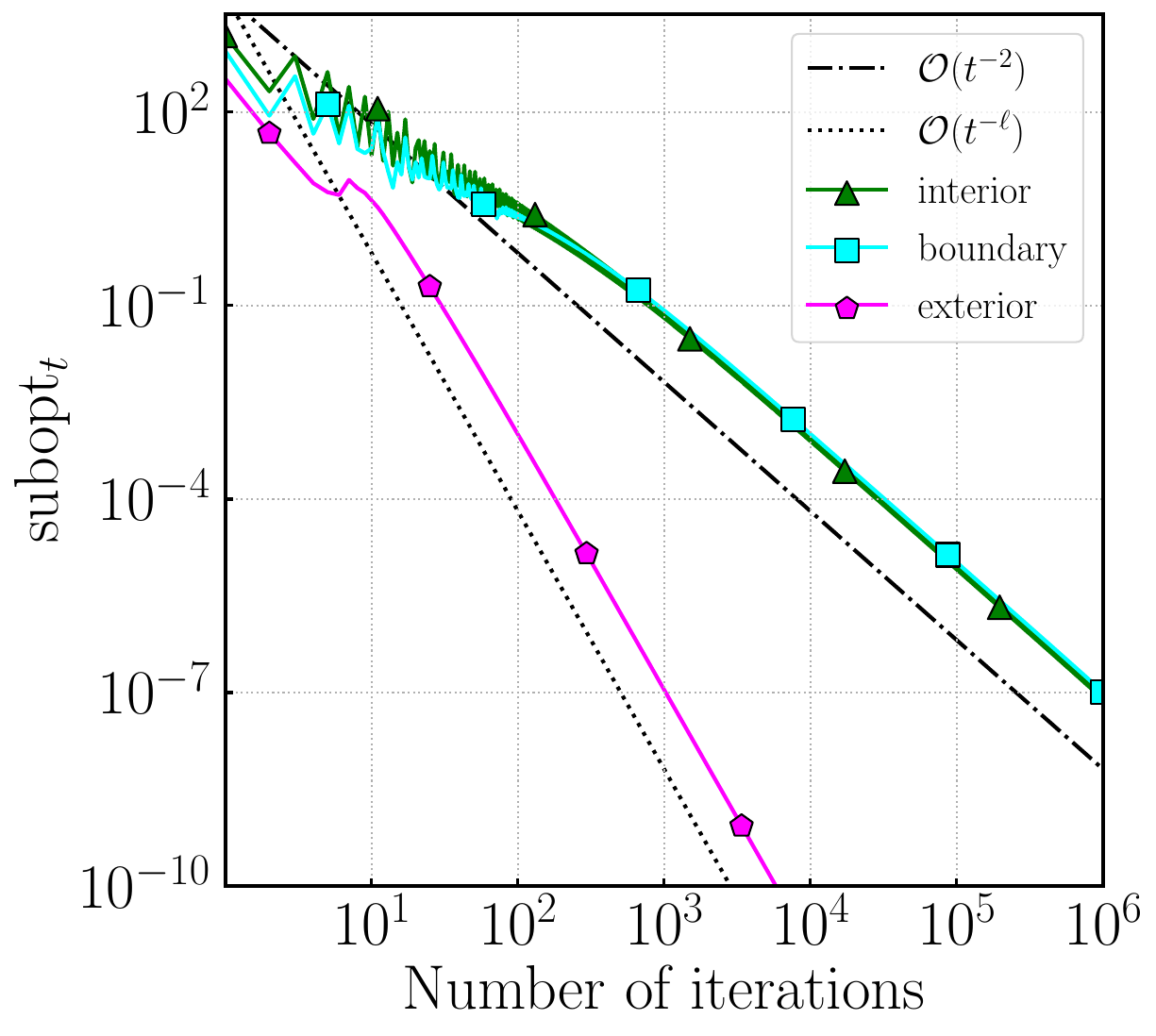}
        \caption{$\ell_2$-ball, $\subopt_t$.}\label{fig:reg_2_subopt}
    \end{subfigure}\\
            \begin{subfigure}{.31\textwidth}
    \centering
        \includegraphics[width=1\textwidth]{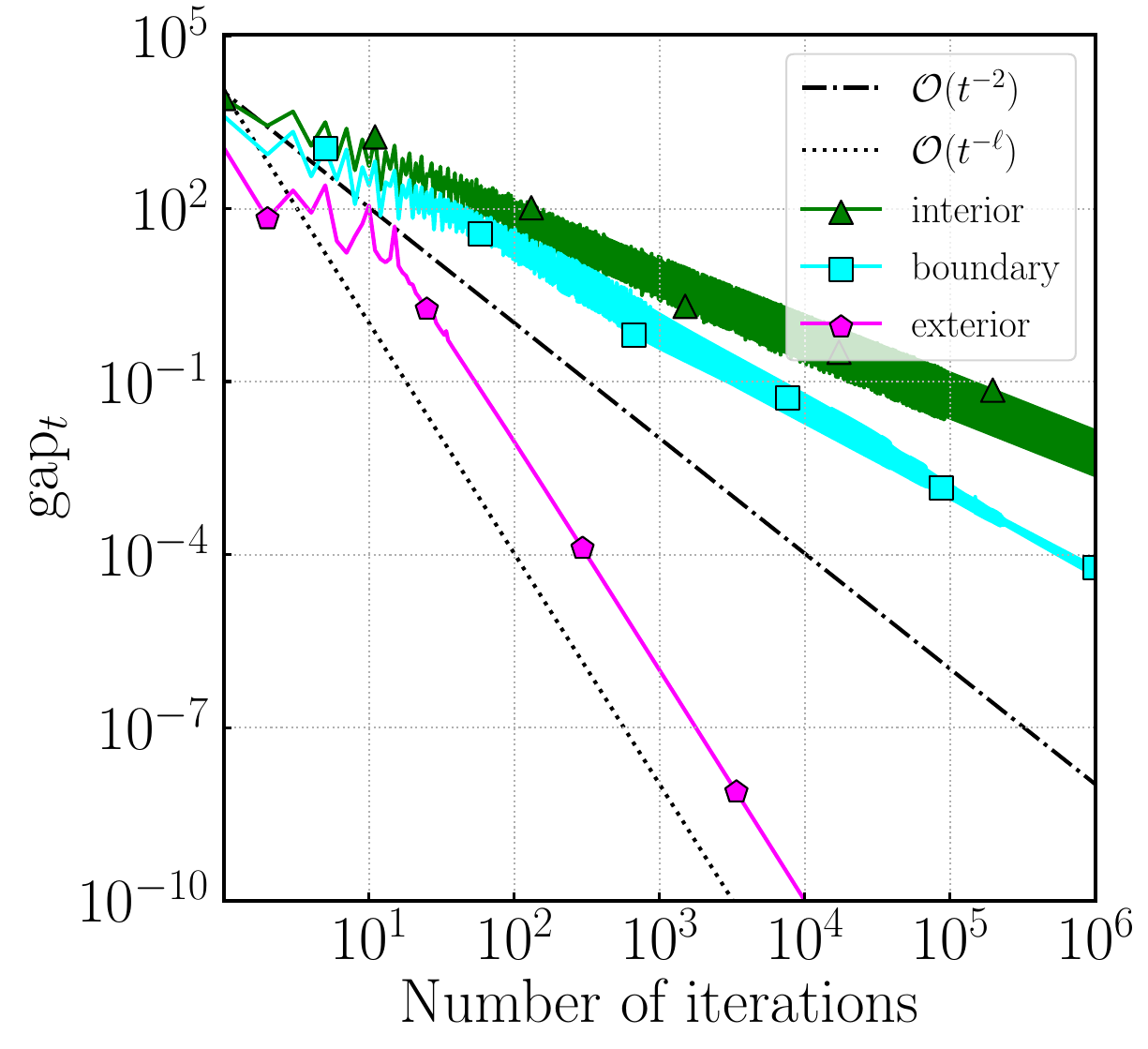}
        \caption{$\ell_5$-ball, $\gap_t$.}\label{fig:reg_5_gap}
    \end{subfigure}& 
    \begin{subfigure}{.31\textwidth}
    \centering
        \includegraphics[width=1\textwidth]{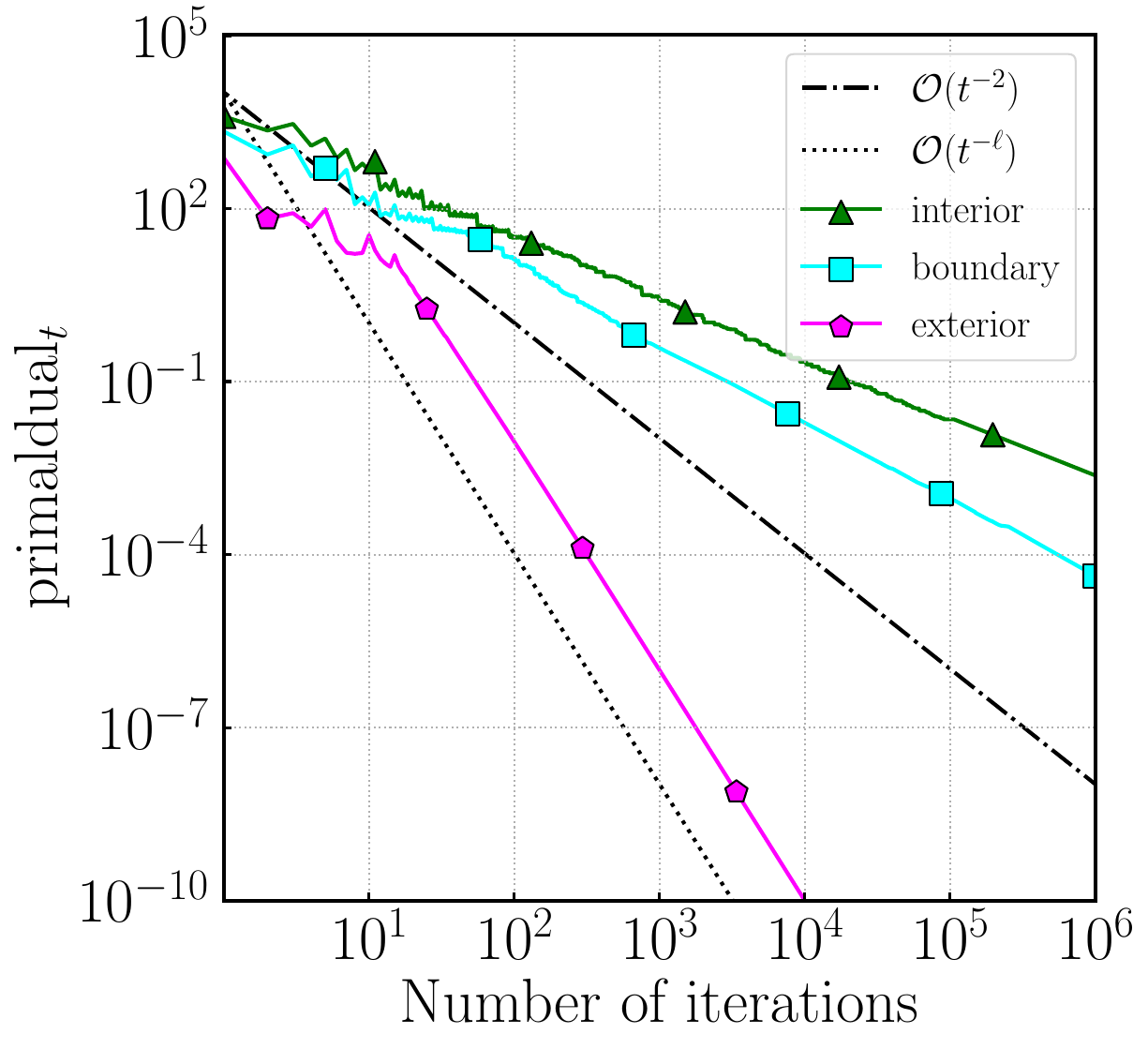}
        \caption{$\ell_5$-ball, $\primaldual_t$.}\label{fig:reg_5_primaldual}
    \end{subfigure}& 
    \begin{subfigure}{.31\textwidth}
    \centering
        \includegraphics[width=1\textwidth]{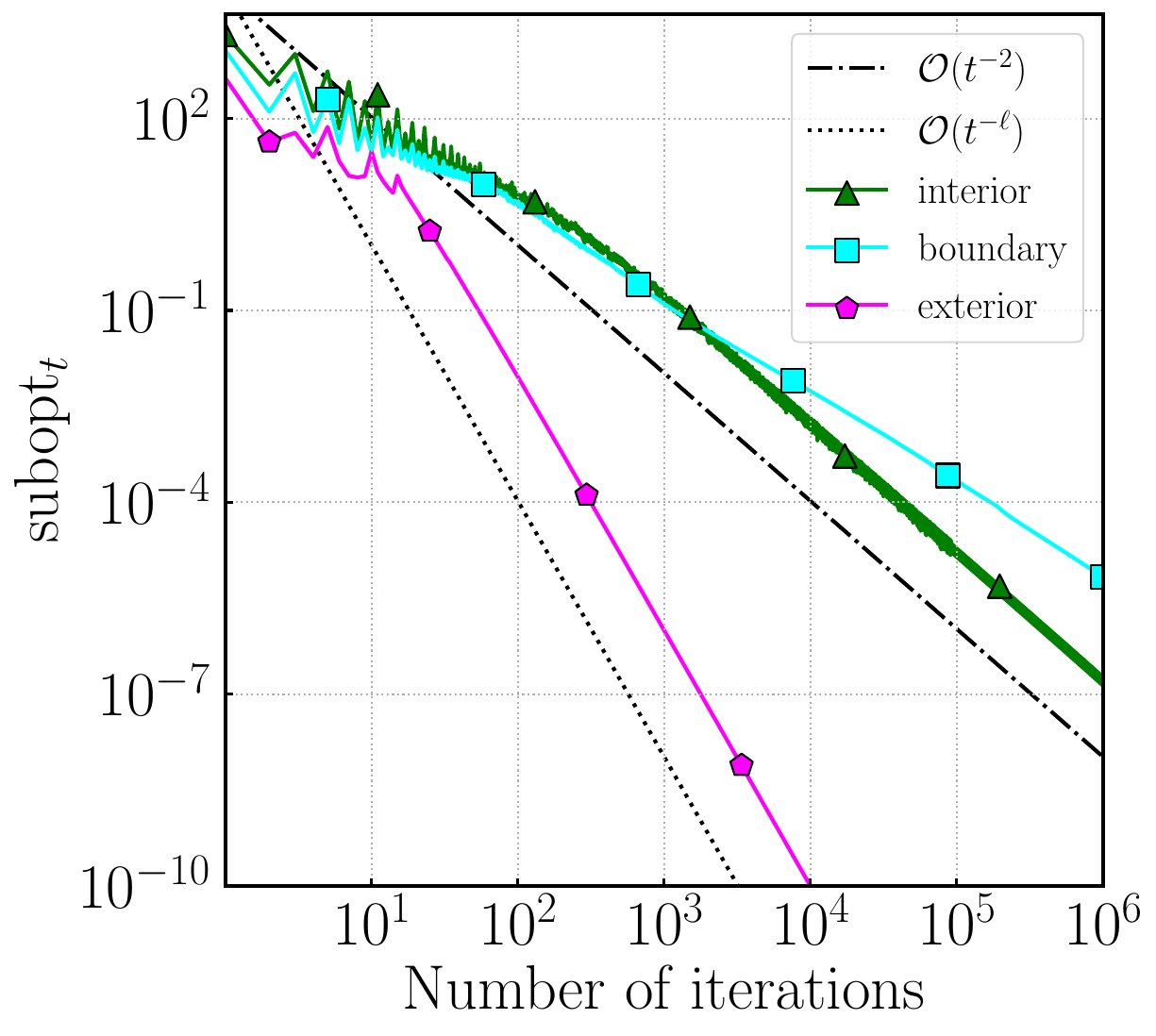}
        \caption{$\ell_5$-ball, $\subopt_t$.}\label{fig:reg_5_subopt}
    \end{subfigure}
\end{tabular}
% \vspace{.3in}
\caption{\textbf{Constrained regression.} 
Convergence rate comparison of \fw{} with step-size $\eta_t = \frac{\ell}{t+\ell}$ for $\ell = 4$ and all $t\in\N$ applied to \eqref{eq:regression} for different locations of the unconstrained optimizer in the relative interior, on the relative boundary, and in the relative exterior of the feasible region. The comparison is performed for different $\ell_p$-balls with $p\in\{1, 2, 5\}$ and the three different optimality measures $\gap_t$, $\primaldual_t$, and $\subopt_t$. Axes are in log scale. 
}\label{fig:reg}
\end{figure}
In this section, we use \fw{} with open-loop step-size $\eta_t = \frac{\ell}{t+\ell}$ for $\ell = 4$ and all $t\in\N$ for constrained regression problems, that is, to solve
\begin{align}\label{eq:regression}
    \min_{\xx\in\R^n, \|\xx\|_p \leq \beta}&\; \frac{1}{2}\|A \xx - \yy\|_2^2,
\end{align}
where $A\in\R^{m \times n}$, $\yy\in\R^m$, $p\in\R_{\geq 1}$, and $\beta > 0$. Denote the unconstrained optimizer by $\xx_{\text{unc}}:= \argmin_{\xx\in\R^n} \frac{1}{2}\|A \xx - \yy\|_2^2$. In Figure~\ref{fig:reg}, for \fw{} with open-loop step-size $\eta_t = \frac{\ell}{t+\ell}$ for $\ell = 4$ and all $t\in\N$, we compare the convergence rate for different values of $\beta \in\|\xx_{\text{unc}}\|_p\cdot \{3/2, 1, 1/2\}$ corresponding to the unconstrained optimizer lying in the relative interior, on the relative boundary, and in the relative exterior of the feasible region, respectively. The comparison is performed for different $\ell_p$-balls with $p\in\{1, 2, 5\}$ and the three different optimality measures $\gap_t$, $\primaldual_t$, and $\subopt_t$. Matrix $A$ and vector $\yy$ are obtained by Z-score normalizing the data of the Boston-housing dataset\footnote{Obtained via the \textsc{sklearn} package, see, e.g., \href{https://scikit-learn.org/0.15/index.html}{https://scikit-learn.org/0.15/index.html}.} with $m = 506$ and $n = 13$. We also plot $\cO(t^{-\ell})$ and $\cO(t^{-2})$ for better visualization.

In Figures~\ref{fig:reg_1_subopt}, \ref{fig:reg_2_subopt}, and~\ref{fig:reg_5_subopt}, we observe that $\subopt_t$ always converges at the accelerated rates predicted by the theoretical results in this paper. 
In some of the remaining plots, we also observe accelerated rates for $\gap_t$ and $\primaldual_t$.

\subsection{Sparsity-constrained logistic regression}\label{sparse.logistic.regression}
\begin{figure}[h]
\centering
\begin{minipage}{.31\textwidth}
  \centering
  \includegraphics[width=\linewidth]{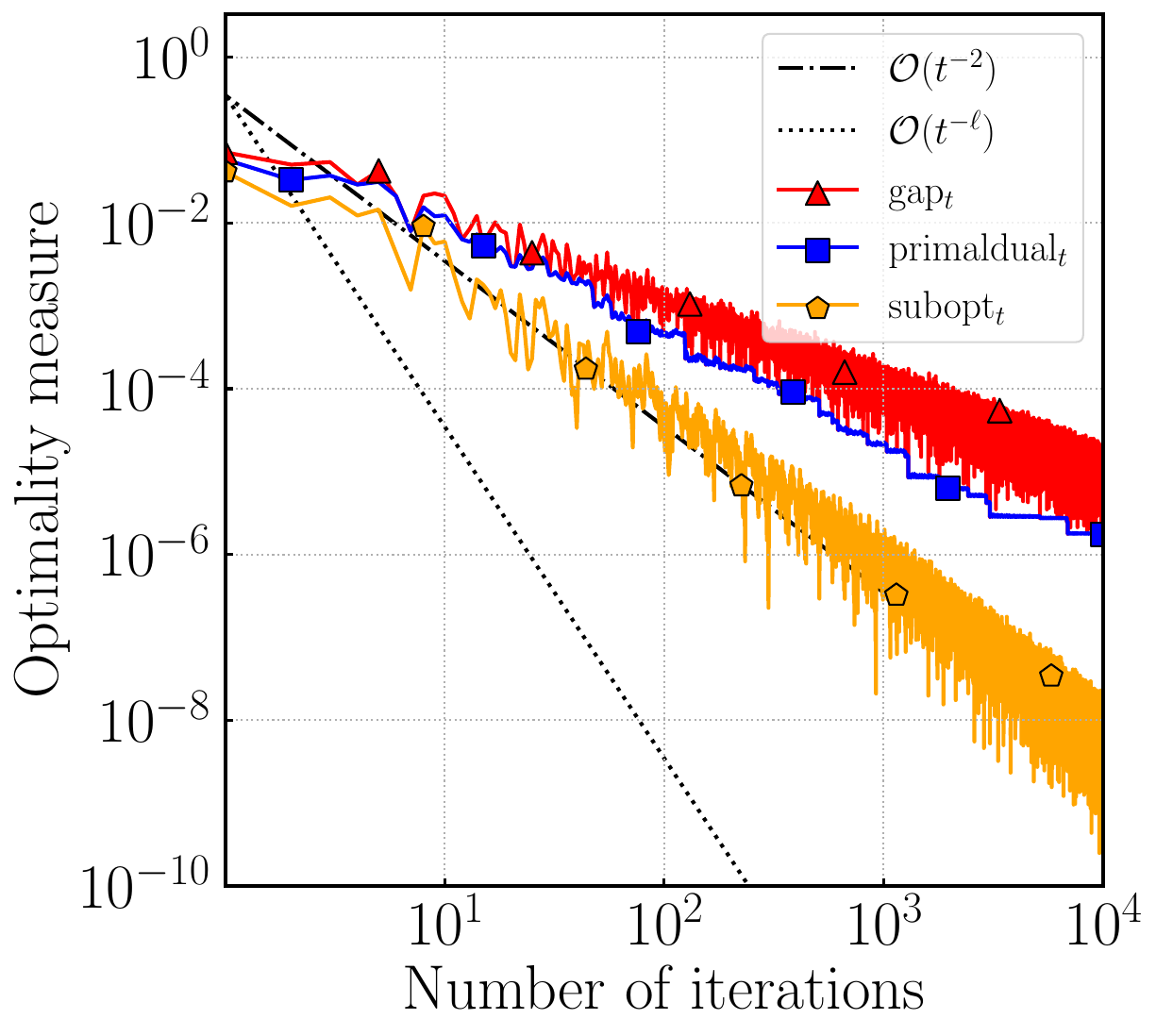}
\end{minipage}%
\caption{\textbf{Sparsity-constrained logistic regression.} Optimality measure comparison of \fw{} with step-size $\eta_t = \frac{\ell}{t+\ell}$ for $\ell = 4$ and all $t\in\N$ applied to \eqref{eq:log_reg}. Axes are in log scale.}
\label{fig:sparse_log_reg}
\end{figure}
In this section, we consider $\ell_1$-constrained logistic regression, which has the problem formulation 
\begin{align}\label{eq:log_reg}
\min_{\xx\in\R^n, \|\xx\|_1\leq 1}&\; \frac{1}{m}\sum_{i=1}^m \log(1+\exp(-b_i \aa_i^\intercal \xx)),
\end{align}
where $\aa_1,\ldots, \aa_m \in\R^d$ are feature vectors and $b_1, \ldots, b_m \in\{-1,+1\}$ are the corresponding labels.
The $\ell_1$-constraints induce sparsity in the iterates of \fw{}. 
In Figure~\ref{fig:sparse_log_reg}, for \fw{} with open-loop step-size $\eta_t = \frac{\ell}{t+\ell}$ for $\ell = 4$ and all $t\in\N$, we compare $\gap_t$, $\primaldual_t$, and $\subopt_t$ on the Z-score normalized Gisette dataset\footnote{Available online at \href{https://archive.ics.uci.edu/ml/datasets/Gisette}{https://archive.ics.uci.edu/ml/datasets/Gisette}.} \cite{guyon2003introduction} with $m=2000$ and $n=5000$. We also plot $\cO(t^{-\ell})$ and $\cO(t^{-2})$ for better visualization. We cannot plot $\fwt$ as in Theorem~\ref{thm:rate-gaps-relaxed} predicting when acceleration kicks in as we do not know the value of $\Afwt$ for which Assumption~\ref{assum.active} is satisfied.

In Figure~\ref{fig:sparse_log_reg}, we observe that $\gap_t = \cO(t^{-1})$, $\primaldual_t = \cO(t^{-1})$, and $\subopt_t = \cO(t^{-2})$, where the latter rate is captured by Theorem~\ref{thm:rate-gaps-relaxed}.

\subsection{Collaborative filtering}\label{collaborative.filtering}
\begin{figure}[ht]
\captionsetup[subfigure]{justification=centering}
% \vspace{.3in}
\begin{tabular}{c c c}
    \begin{subfigure}{.31\textwidth}
    \centering
        \includegraphics[width=1\textwidth]{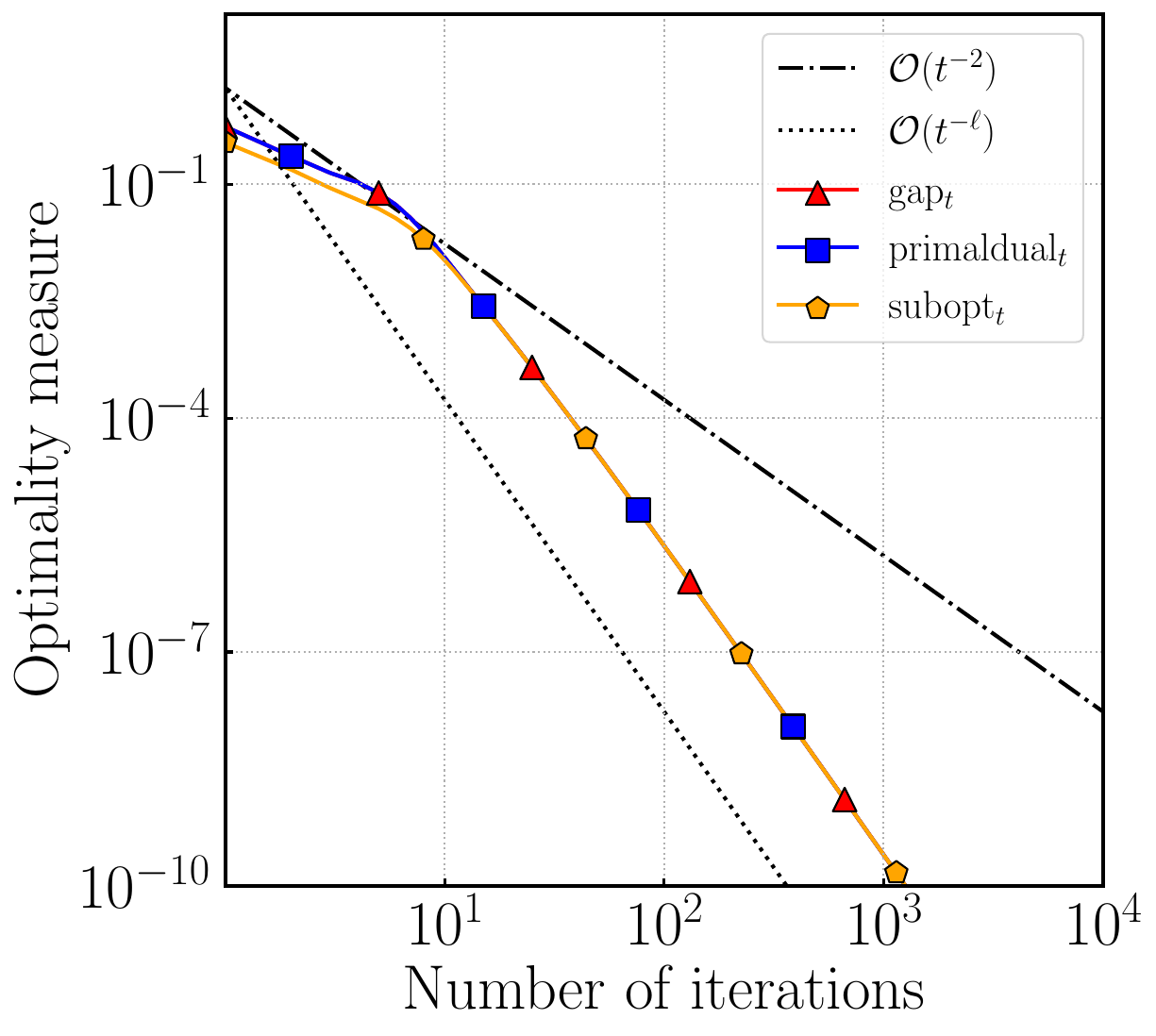}
        \caption{$\beta = 1000$.}\label{fig:col_fil_1000}
    \end{subfigure}& 
    \begin{subfigure}{.31\textwidth}
    \centering
        \includegraphics[width=1\textwidth]{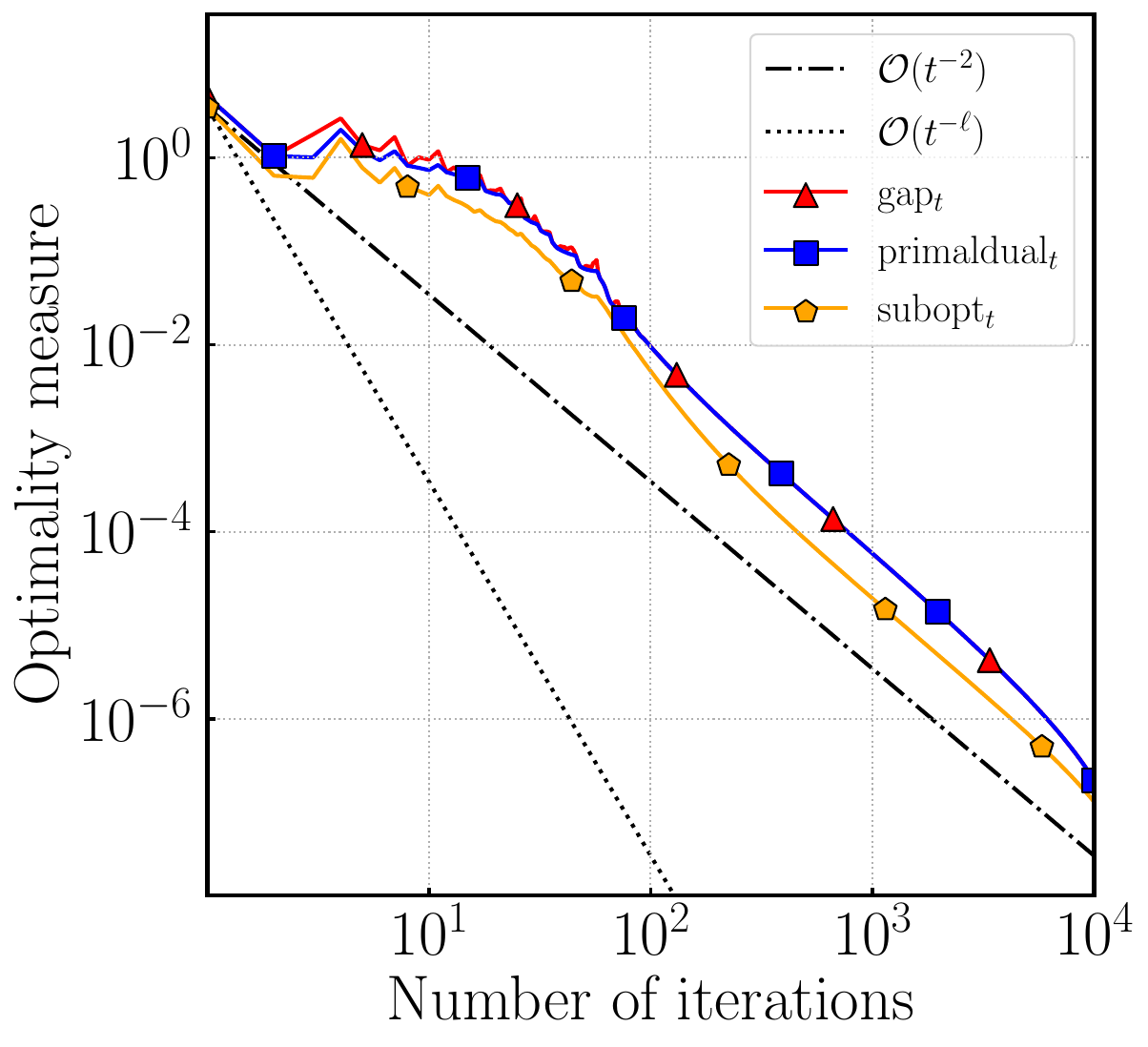}
        \caption{$\beta = 2000$.}\label{fig:col_fil_2000}
    \end{subfigure}& 
    
    \begin{subfigure}{.31\textwidth}
    \centering
        \includegraphics[width=1\textwidth]{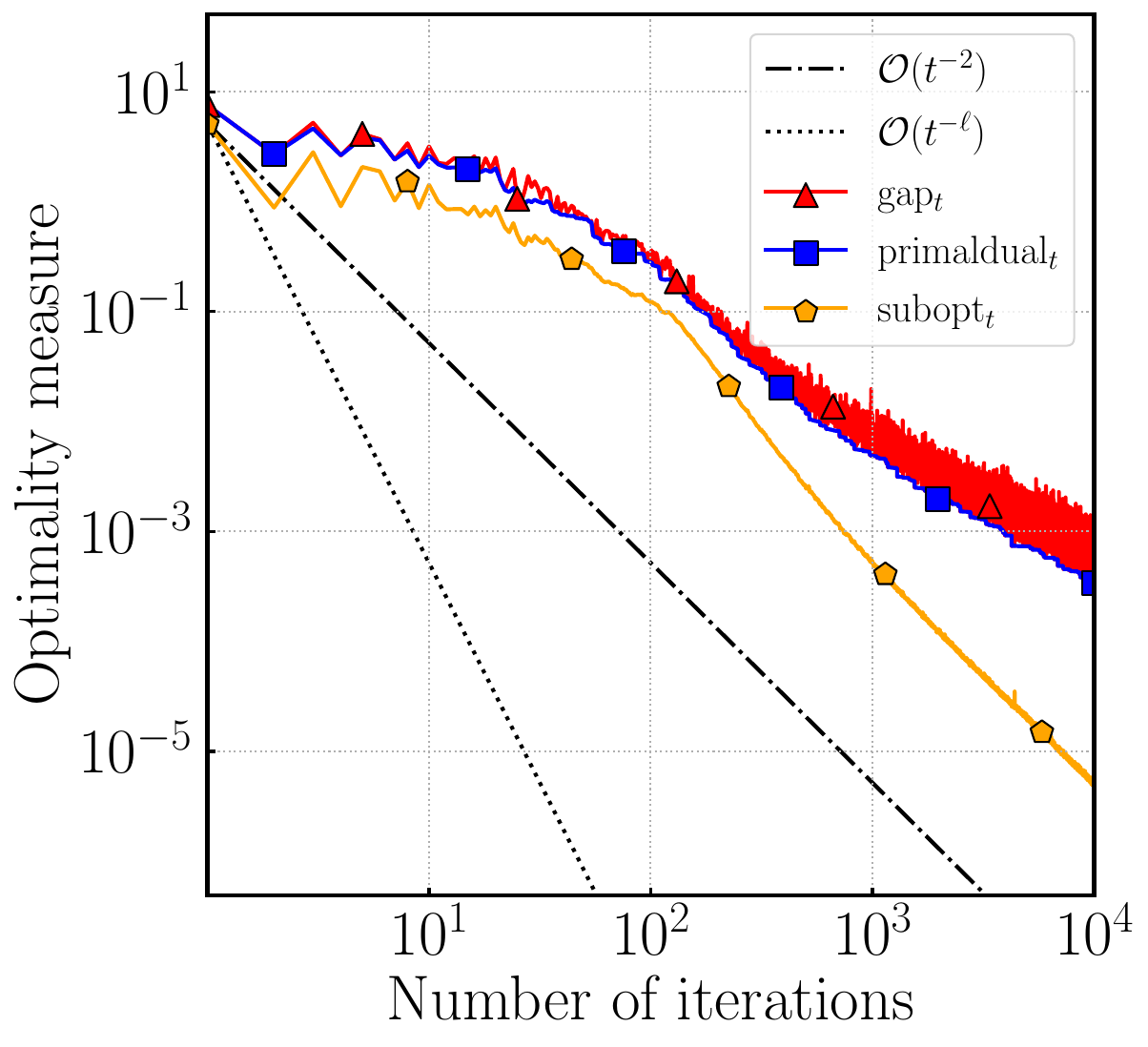}
        \caption{$\beta=3000$.}\label{fig:col_fil_3000}
    \end{subfigure}
\end{tabular}
% \vspace{.3in}
\caption{\textbf{Collaborative filtering.} Optimality measure comparison of \fw{} with step-size $\eta_t = \frac{\ell}{t+\ell}$ for $\ell = 4$ and all $t\in\N$ applied to \eqref{eq:collaborative_filterting} for different radii $\beta\in\{2000, 3000, 4000\}$ of the nuclear norm ball. Axes are in log scale.
}\label{fig:col_fil}
\end{figure}
In this section, we consider the problem of collaborative filtering, which has the problem formulation by \cite{mehta2007robust}
\begin{align}\label{eq:collaborative_filterting}
    \min_{X\in\R^{m\times n}, \|X\|_{\nuc}\leq \beta} &\; \frac{1}{|\cI|} \sum_{(i,j)\in\cI} H_\rho(A_{i,j} - X_{i,j})
\end{align}
where $H_\rho$ is the Huber loss with parameter $\rho > 0$ \cite{huber1992robust}:
\begin{align*}
    H_\rho\colon x\in \R \mapsto  \begin{cases}
    \frac{x^2}{2}, & \text{if} \ |x| \leq \rho\\
    \rho(|x| - \frac{\rho}{2}), & \text{if} \ |x| > \rho,
    \end{cases}
\end{align*}
$A\in\R^{m\times n}$ is a matrix with partially observed entries, that is, there exists an index set $\cI \subseteq \{1,\ldots,m\}\times\{1,\ldots,n\}$ such that only $A_{i,j}$ with $(i,j)\in\cI$ are observed, 
$\|\cdot\|_{\nuc}\colon X\in\R^{m\times n} \mapsto \trace(\sqrt{X^\intercal X})$ is the nuclear norm, and $\beta>0$.

In Figure~\ref{fig:col_fil}, for \fw{} with open-loop step-size $\eta_t = \frac{\ell}{t+\ell}$ for $\ell = 4$ and all $t\in\N$, we compare $\gap_t$, $\primaldual_t$, and $\subopt_t$ on the movielens dataset\footnote{Available online at \href{https://grouplens.org/datasets/movielens/100k/}{https://grouplens.org/datasets/movielens/100k/}.} \cite{harper2015movielens} with $m=943$, $n=1682$, $\rho=1$, and $\beta \in\{1000, 2000, 3000\}$. We also plot $\cO(t^{-\ell})$ and $\cO(t^{-2})$ for better visualization. Since the nuclear norm ball is neither uniformly convex nor a polytope, our analysis does not cover the observed acceleration. 

In Figure~\ref{fig:col_fil_1000}, all optimality measures decay at a rate of $\cO(t^{-\ell})$, in Figure~\ref{fig:col_fil_2000}, all optimality measures decay at a rate of order $\cO(t^{-2})$, and in Figure~\ref{fig:col_fil_3000}, $\gap_t$ and $\primaldual_t$ appear to decay at a slower rate than $\subopt_t$, which again appears to converge at a rate of order $\cO(t^{-\ell})$.
To the best of our knowledge, none of these accelerated rates are explained by the current literature. This raises the open question to characterize the acceleration \fw{} enjoys when optimizing over the nuclear norm ball by, for example, exploiting the findings from \cite{garber2021convergence}.
\section{Discussion}\label{sec.discussion}

Throughout, we derived affine-invariant accelerated convergence rates for \fw{} with open-loop step-sizes of the form $\eta_t = \frac{\ell}{t+\ell}$, $\ell\in\N_{\geq 1}$. Notably, our results subsume the ones in \cite{wirth2023acceleration} up to constant factors. In particular, when the feasible region is strongly convex and the gradient of the objective is bounded away from zero, we confirm the conjecture of \cite{wirth2023acceleration} and extend prior rates of order $\cO(t^{-\ell/2})$ to $\cO(t^{-\ell})$. Furthermore, we establish the first affine-invariant rates of order $\cO(t^{-2})$ in the setting of Wolfe's lower bound, which demonstrates that \fw{} with line-search and short-step can be outperformed by \fw{} with open-loop step-sizes in one of the most important settings of optimization.

As discussed in \cite{pokutta2023frankwolfe}, in the future we intend to study open-loop step-sizes where we let $\ell$ increase gradually. That is, we are going to study step-sizes of the form $\eta_t = \frac{g(t)}{t+g(t)}$ for $g\colon\N \to [0, 1]$, where $g$ is monotone. Preliminary calculations and experiments suggest that \fw{} with $\eta_t = \frac{g(t)}{t+g(t)}$ and $g(t) = 2 + \log(t+1)$ admits arbitrarily fast yet sublinear rates in the strong $(M, 1)$-growth setting and rates of order $\tilde \cO(t^{-\frac{1}{1-r}})$ in the strong $(M, r)$-growth setting where the $\tilde \cO(\cdot)$ expression hides some polylogarithmic factors.

\section*{Declarations}

\subsection*{Funding}
Research reported in this paper was partially supported by the Deutsche Forschungsgemeinschaft (DFG, German Research Foundation) under Germany's Excellence Strategy – The Berlin Mathematics Research Center MATH$^+$ (EXC-2046/1, project ID 390685689, BMS Stipend), and by the Bajaj Family Chair at the Tepper School of Business, Carnegie Mellon University.

\subsection*{Availability of code and data}

The Python code for the experiments reported in this article are publicly available from \href{https://github.com/ZIB-IOL/affine_invariant_open_loop_fw}{GitHub}.

The Boston-housing dataset used in Section~\ref{sec:constrained regression} is publicly available via the open-source \textsc{sklearn} Python package \href{https://scikit-learn.org/0.15/modules/generated/sklearn.datasets.load_boston.html}{https://scikit-learn.org/0.15/index.html}.

The Gisette dataset used in the numerical experiments in Section~\ref{sparse.logistic.regression} is publicly available at the University of California at Irvine Machine Learning Repository
\href{https://archive.ics.uci.edu/ml/datasets/Gisette}{https://archive.ics.uci.edu/ml/datasets/Gisette}.

The movielens dataset used in the numerical experiments in Section~\ref{collaborative.filtering} is publicly available at the GroupLens research lab in the Department of Computer Science and Engineering at the University of Minnesota Twin Cities \href{https://grouplens.org/datasets/movielens/100k/}{https://grouplens.org/datasets/movielens/100k/}.

% \subsubsection*{Acknowledgements}
% Research reported in this paper was partially supported by the Deutsche Forschungsgemeinschaft (DFG, German Research Foundation) under Germany's Excellence Strategy – The Berlin Mathematics Research Center MATH$^+$ (EXC-2046/1, project ID 390685689, BMS Stipend).

\bibliographystyle{apalike}
\bibliography{bibliography}

\begin{thebibliography}{}

\bibitem[Bach, 2021]{bach2021effectiveness}
Bach, F. (2021).
\newblock On the effectiveness of {R}ichardson extrapolation in data science.
\newblock {\em SIAM Journal on Mathematics of Data Science}, 3(4):1251--1277.

\bibitem[Bach et~al., 2012]{bach2012equivalence}
Bach, F., Lacoste-Julien, S., and Obozinski, G. (2012).
\newblock On the equivalence between herding and conditional gradient algorithms.
\newblock In {\em Proceedings of the International Conference on Machine Learning}, pages 1355--1362. PMLR.

\bibitem[Braun et~al., 2022]{braun2022conditional}
Braun, G., Carderera, A., Combettes, C.~W., Hassani, H., Karbasi, A., Mokhtari, A., and Pokutta, S. (2022).
\newblock Conditional gradient methods.
\newblock {\em arXiv preprint arXiv:2211.14103}.

\bibitem[Braun et~al., 2019]{braun2019blended}
Braun, G., Pokutta, S., Tu, D., and Wright, S. (2019).
\newblock Blended conditonal gradients.
\newblock In {\em Proceedings of the International Conference on Machine Learning}, pages 735--743. PMLR.

\bibitem[Bugg et~al., 2022]{bugg2022nonnegative}
Bugg, C.~X., Chen, C., and Aswani, A. (2022).
\newblock Nonnegative tensor completion via integer optimization.
\newblock In Oh, A.~H., Agarwal, A., Belgrave, D., and Cho, K., editors, {\em Proceedings of Advances in Neural Information Processing Systems}.

\bibitem[Carderera et~al., 2021]{carderera2021cindy}
Carderera, A., Pokutta, S., Sch{\"u}tte, C., and Weiser, M. (2021).
\newblock {CINDy}: Conditional gradient-based identification of non-linear dynamics--noise-robust recovery.
\newblock {\em arXiv preprint arXiv:2101.02630}.

\bibitem[Clarkson, 2010]{clarkson2010coresets}
Clarkson, K.~L. (2010).
\newblock Coresets, sparse greedy approximation, and the {F}rank-{W}olfe algorithm.
\newblock {\em ACM Transactions on Algorithms}, 6(4):1--30.

\bibitem[Courty et~al., 2016]{courty2016optimal}
Courty, N., Flamary, R., Tuia, D., and Rakotomamonjy, A. (2016).
\newblock Optimal transport for domain adaptation.
\newblock {\em IEEE Transactions on Pattern Analysis and Machine Intelligence}, 39(9):1853--1865.

\bibitem[Dunn and Harshbarger, 1978]{dunn1978conditional}
Dunn, J.~C. and Harshbarger, S. (1978).
\newblock Conditional gradient algorithms with open loop step size rules.
\newblock {\em Journal of Mathematical Analysis and Applications}, 62(2):432--444.

\bibitem[Frank and Wolfe, 1956]{frank1956algorithm}
Frank, M. and Wolfe, P. (1956).
\newblock An algorithm for quadratic programming.
\newblock {\em Naval Research Logistics Quarterly}, 3(1-2):95--110.

\bibitem[Garber, 2021]{garber2021convergence}
Garber, D. (2021).
\newblock On the convergence of projected-gradient methods with low-rank projections for smooth convex minimization over trace-norm balls and related problems.
\newblock {\em SIAM Journal on Optimization}, 31(1):727--753.

\bibitem[Garber and Hazan, 2015]{garber2015faster}
Garber, D. and Hazan, E. (2015).
\newblock Faster rates for the {F}rank-{W}olfe method over strongly-convex sets.
\newblock In {\em Proceedings of the International Conference on Machine Learning}. PMLR.

\bibitem[Garber and Meshi, 2016]{garber2016linear}
Garber, D. and Meshi, O. (2016).
\newblock Linear-memory and decomposition-invariant linearly convergent conditional gradient algorithm for structured polytopes.
\newblock In {\em Proceedings of the International Conference on Neural Information Processing Systems}, pages 1009--1017. PMLR.

\bibitem[Garber et~al., 2018]{garber2018fast}
Garber, D., Sabach, S., and Kaplan, A. (2018).
\newblock Fast generalized conditional gradient method with applications to matrix recovery problems.
\newblock {\em arXiv preprint arXiv:1802.05581}, 3.

\bibitem[Ghadimi, 2019]{ghadimi2019conditional}
Ghadimi, S. (2019).
\newblock Conditional gradient type methods for composite nonlinear and stochastic optimization.
\newblock {\em Mathematical Programming}, 173:431--464.

\bibitem[Gu{\'e}lat and Marcotte, 1986]{guelat1986some}
Gu{\'e}lat, J. and Marcotte, P. (1986).
\newblock Some comments on {W}olfe's ‘away step’.
\newblock {\em Mathematical Programming}, 35(1):110--119.

\bibitem[Guo et~al., 2017]{guo2017efficient}
Guo, X., Yao, Q., and Kwok, J. (2017).
\newblock Efficient sparse low-rank tensor completion using the {F}rank-{W}olfe algorithm.
\newblock In {\em Proceedings of the AAAI Conference on Artificial Intelligence}, volume~31.

\bibitem[Gutman and Pena, 2021]{gutman2021condition}
Gutman, D.~H. and Pena, J.~F. (2021).
\newblock The condition number of a function relative to a set.
\newblock {\em Mathematical Programming}, 188:255--294.

\bibitem[Guyon and Elisseeff, 2003]{guyon2003introduction}
Guyon, I. and Elisseeff, A. (2003).
\newblock An introduction to variable and feature selection.
\newblock {\em Journal of Machine Learning Research}, 3(Mar):1157--1182.

\bibitem[Hanner, 1956]{hanner1956uniform}
Hanner, O. (1956).
\newblock On the uniform convexity of l p and l p.
\newblock {\em Arkiv f{\"o}r Matematik}, 3(3):239--244.

\bibitem[Harper and Konstan, 2015]{harper2015movielens}
Harper, F.~M. and Konstan, J.~A. (2015).
\newblock The movielens datasets: History and context.
\newblock {\em ACM Transactions on Interactive Intelligent Systems}, 5(4):1--19.

\bibitem[Hiriart-Urruty and Lemar{\'e}chal, 2004]{hiriU04}
Hiriart-Urruty, J.-B. and Lemar{\'e}chal, C. (2004).
\newblock {\em Fundamentals of convex analysis}.
\newblock Springer Science \& Business Media.

\bibitem[Holloway, 1974]{holloway1974extension}
Holloway, C.~A. (1974).
\newblock An extension of the {F}rank and {W}olfe method of feasible directions.
\newblock {\em Mathematical Programming}, 6(1):14--27.

\bibitem[Huber, 1992]{huber1992robust}
Huber, P.~J. (1992).
\newblock Robust estimation of a location parameter.
\newblock {\em Breakthroughs in statistics: Methodology and distribution}, pages 492--518.

\bibitem[Jaggi, 2013]{jaggi2013revisiting}
Jaggi, M. (2013).
\newblock Revisiting {F}rank-{W}olfe: Projection-free sparse convex optimization.
\newblock In {\em Proceedings of the International Conference on Machine Learning}, pages 427--435. PMLR.

\bibitem[Joulin et~al., 2014]{joulin2014efficient}
Joulin, A., Tang, K., and Fei-Fei, L. (2014).
\newblock Efficient image and video co-localization with {F}rank-{W}olfe algorithm.
\newblock In {\em Computer Vision -- ECCV 2014}, pages 253--268, Cham. Springer International Publishing.

\bibitem[Kerdreux et~al., 2021a]{kerdreux2021projection}
Kerdreux, T., d’Aspremont, A., and Pokutta, S. (2021a).
\newblock Projection-free optimization on uniformly convex sets.
\newblock In {\em Proceedings of the International Conference on Artificial Intelligence and Statistics}, pages 19--27. PMLR.

\bibitem[Kerdreux et~al., 2021b]{kerdreux2021affine}
Kerdreux, T., Liu, L., Lacoste-Julien, S., and Scieur, D. (2021b).
\newblock Affine invariant analysis of {F}rank-{W}olfe on strongly convex sets.
\newblock In {\em Proceedings of the International Conference on Machine Learning}, pages 5398--5408. PMLR.

\bibitem[Lacoste-Julien and Jaggi, 2013]{lacoste2013affine}
Lacoste-Julien, S. and Jaggi, M. (2013).
\newblock An affine invariant linear convergence analysis for {F}rank-{W}olfe algorithms.
\newblock {\em arXiv preprint arXiv:1312.7864}.

\bibitem[Lacoste-Julien and Jaggi, 2015]{lacoste2015global}
Lacoste-Julien, S. and Jaggi, M. (2015).
\newblock On the global linear convergence of {F}rank-{W}olfe optimization variants.
\newblock In {\em Proceedings of Advances in Neural Information Processing Systems}, pages 496--504.

\bibitem[Levitin and Polyak, 1966]{levitin1966constrained}
Levitin, E.~S. and Polyak, B.~T. (1966).
\newblock Constrained minimization methods.
\newblock {\em USSR Computational Mathematics and Mathematical Physics}, 6(5):1--50.

\bibitem[Li et~al., 2021]{li2021momentum}
Li, B., Coutino, M., Giannakis, G.~B., and Leus, G. (2021).
\newblock A momentum-guided {F}rank-{W}olfe algorithm.
\newblock {\em IEEE Transactions on Signal Processing}, 69:3597--3611.

\bibitem[Macdonald et~al., 2022]{macdonald2022interpretable}
Macdonald, J., Besan{\c{c}}on, M.~E., and Pokutta, S. (2022).
\newblock Interpretable neural networks with {F}rank-{W}olfe: Sparse relevance maps and relevance orderings.
\newblock In {\em Proceedings of the International Conference on Machine Learning}, pages 14699--14716. PMLR.

\bibitem[Mehta et~al., 2007]{mehta2007robust}
Mehta, B., Hofmann, T., and Nejdl, W. (2007).
\newblock Robust collaborative filtering.
\newblock In {\em Proceedings of the ACM Conference on Recommender Systems}, pages 49--56.

\bibitem[Mu et~al., 2016]{mu2016scalable}
Mu, C., Zhang, Y., Wright, J., and Goldfarb, D. (2016).
\newblock Scalable robust matrix recovery: {F}rank--{W}olfe meets proximal methods.
\newblock {\em SIAM Journal on Scientific Computing}, 38(5):A3291--A3317.

\bibitem[Nesterov, 2018]{nesterov2018complexity}
Nesterov, Y. (2018).
\newblock Complexity bounds for primal-dual methods minimizing the model of objective function.
\newblock {\em Mathematical Programming}, 171(1-2):311--330.

\bibitem[Paty and Cuturi, 2019]{paty2019subspace}
Paty, F.-P. and Cuturi, M. (2019).
\newblock Subspace robust wasserstein distances.
\newblock In {\em Proceedings of the International Conference on Machine Learning}, pages 5072--5081. PMLR.

\bibitem[Pedregosa et~al., 2018]{pedregosa2018step}
Pedregosa, F., Askari, A., Negiar, G., and Jaggi, M. (2018).
\newblock Step-size adaptivity in projection-free optimization.
\newblock {\em arXiv preprint arXiv:1806.05123}.

\bibitem[Pe{\~n}a, 2023]{pena2023affine}
Pe{\~n}a, J.~F. (2023).
\newblock Affine invariant convergence rates of the conditional gradient method.
\newblock {\em SIAM Journal on Optimization}, 33(4):2654--2674.

\bibitem[Pokutta, 2023]{pokutta2023frankwolfe}
Pokutta, S. (2023).
\newblock The {F}rank-{W}olfe algorithm: a short introduction.
\newblock {\em arXiv preprint arXiv:2311.05313}.

\bibitem[Ravi et~al., 2018]{ravi2018constrained}
Ravi, S.~N., Dinh, T., Lokhande, V., and Singh, V. (2018).
\newblock Constrained deep learning using conditional gradient and applications in computer vision.
\newblock {\em arXiv preprint arXiv:1803.06453}.

\bibitem[Tsuji et~al., 2022]{tsuji2022pairwise}
Tsuji, K.~K., Tanaka, K., and Pokutta, S. (2022).
\newblock Pairwise conditional gradients without swap steps and sparser kernel herding.
\newblock In {\em Proceedings of the International Conference on Machine Learning}, pages 21864--21883. PMLR.

\bibitem[Wirth et~al., 2023a]{wirth2023approximate}
Wirth, E., Kera, H., and Pokutta, S. (2023a).
\newblock Approximate vanishing ideal computations at scale.
\newblock In {\em Proceedings of the International Conference on Learning Representations}.

\bibitem[Wirth et~al., 2023b]{wirth2023acceleration}
Wirth, E., Kerdreux, T., and Pokutta, S. (2023b).
\newblock Acceleration of {F}rank-{W}olfe algorithms with open-loop step-sizes.
\newblock In {\em Proceedings of the International Conference on Artificial Intelligence and Statistics}, pages 77--100. PMLR.

\bibitem[Wirth and Pokutta, 2022]{wirth2022conditional}
Wirth, E. and Pokutta, S. (2022).
\newblock Conditional gradients for the approximately vanishing ideal.
\newblock In {\em Proceedings of the International Conference on Artificial Intelligence and Statistics}, pages 2191--2209. PMLR.

\bibitem[Wolfe, 1970]{wolfe1970convergence}
Wolfe, P. (1970).
\newblock Convergence theory in nonlinear programming.
\newblock {\em Integer and Nonlinear Programming}, pages 1--36.

\end{thebibliography}

\appendix

\section{Proof of Lemma~\ref{lemma:telescope}}\label{app:proof_lemma_telescope}
\begin{proof}[Proof of Lemma~\ref{lemma:telescope}]
    It holds that
    \begin{align*}
        &\prod_{i=\fwt}^{t} \left(1-\left(1-\frac{\epsilon}{\ell}\right) \eta_i\right)  = \prod_{i=\fwt}^{t} \frac{i+\epsilon}{i+\ell} & \text{$\triangleright$ since $\eta_i=\frac{\ell}{i+\ell}$}\nonumber\\
        & = \left(\prod_{i=\fwt}^{t} \frac{i}{i+\ell} \right)\left(\prod_{i=\fwt}^{t}\frac{i+\epsilon}{i}\right) & \text{$\triangleright$ since $\fwt \in\N_{\geq 1}$}\\
        & \le \left(\frac{\fwt -1+ \ell}{t+\ell}\right)^\ell \prod_{j=\fwt}^{t}\left(1+\frac{\epsilon}{i}\right)  & \text{$\triangleright$ by Lemma~\ref{lemma:telescope.simple}}\\
        & \leq \left(\frac{\fwt -1+ \ell}{t+\ell}\right)^\ell\exp\left(\epsilon \sum_{i=\fwt}^{t}\frac{1}{i}\right)  & \text{$\triangleright$ for $x\geq 0$, we have  $1+x\leq\exp(x)$}\nonumber\\
        % & \leq \left(\frac{\fwt -1+ \ell}{t-1+\ell}\right)^\ell\exp\left(\epsilon(\log(t-1)-\log(\fwt-1))\right)  & \text{$\triangleright$ since $1/i$ is monotonously decreasing}\nonumber\\
        & \leq \left(\frac{\fwt -1+ \ell}{t+\ell}\right)^\ell\exp\left(\epsilon\left(\frac{\ell}{\fwt} + \sum_{i=\fwt+\ell}^{t+\ell}\frac{1}{i} \right)\right)   \nonumber\\
        & \leq \left(\frac{\fwt -1+ \ell}{t+\ell}\right)^\ell\exp\left(\frac{\epsilon\ell}{\fwt}\right)\cdot \exp\left(\epsilon\int_{\fwt+\ell-1}^{t+\ell}\frac{1}{x}dx\right)& \text{$\triangleright$ since $\frac{1}{i}$ is monotonously decreasing}\nonumber\\     
        & = \left(\frac{\fwt -1+ \ell}{t+\ell}\right)^\ell\exp\left(\frac{\epsilon\ell}{\fwt}\right)\cdot \left(\frac{t+\ell}{\fwt -1+ \ell}\right)^{\epsilon}\nonumber\\
        & \leq \left(\frac{\fwt -1+ \ell}{t+\ell}\right)^{\ell-\epsilon} \exp\left(\frac{\epsilon\ell}{\fwt}\right)\nonumber\\
        & = \left(\frac{\eta_t}{\eta_{\fwt-1}}\right)^{\ell-\epsilon} \exp\left(\frac{\epsilon\ell}{\fwt}\right).
    \end{align*}
\end{proof}

\end{document}